\numberwithin{equation}{section}
\theoremstyle{plain}
\newtheorem{Thm}{Theorem}[section]
\newtheorem*{Thm*}{Theorem}
\newtheorem{Lem}[Thm]{Lemma}
\newtheorem{Prop}[Thm]{Proposition}
\theoremstyle{definition}
\newtheorem{Rem}[Thm]{Remark}
\newtheorem{?}[Thm]{Problem}
\newcommand{\p}{\partial}
\newcommand{\R}{\mathbb{R}}
\newcommand{\e}{\varepsilon}
\newcommand{\er}{\mathbf{n}}
\newcommand{\E}{\mathbf{e}}
\newcommand{\Torus}{\mathbb{T}}
\newcommand{\dv}{\text{div}}
\newcommand{\vs}{\mathsf{vs}}
\newcommand{\od}{\flat}
\newcommand{\md}{\sharp}
\newcommand{\lap}{\triangle}
\newcommand{\dnab}{\cdot \nabla}
\newcommand{\rhob}{\bar{\rho}}
\newcommand{\rhos}{\rho^\vs}
\newcommand{\rhot}{\tilde{\rho}}
\newcommand{\ub}{\bar{u}}
\newcommand{\mb}{\bar{m}}
\newcommand{\ut}{\tilde{u}}
\newcommand{\uv}{\mathbf{u}}
\newcommand{\mv}{\mathbf{m}}
\newcommand{\wv}{\mathbf{w}}
\newcommand{\zv}{\mathbf{z}}
\newcommand{\uvb}{\bar{\mathbf{u}}}
\newcommand{\mvb}{\bar{\mathbf{m}}}
\newcommand{\uvt}{\tilde{\mathbf{u}}}
\newcommand{\mvt}{\tilde{\mathbf{m}}}
\newcommand{\mvs}{\mathbf{m}^\vs}
\newcommand{\mt}{\tilde{m}}
\newcommand{\mut}{\tilde{\mu}}
\newcommand{\xp}{x_{\perp}}
\newcommand{\fv}{\mathbf{f}}
\newcommand{\rv}{\mathbf{r}}
\newcommand{\Fv}{\mathbf{F}}
\newcommand{\gv}{\mathbf{g}}
\newcommand{\Gv}{\mathbf{G}}
\newcommand{\Qv}{\mathbf{Q}}
\newcommand{\Zv}{\mathbf{Z}}
\newcommand{\phib}{\bar{\phi}}
\newcommand{\psib}{\bar{\psi}}
\newcommand{\andd}{\quad \text{and} \quad}
\newcommand{\srt}{\sqrt{t+\Lambda}}
\newcommand{\Kc}{\mathcal{K}}
\newcommand{\Nv}{\mathbf{N}}
\newcommand{\Ac}{\mathcal{A}}
\newcommand{\Bc}{\mathcal{B}}
\newcommand{\Ec}{\mathcal{E}}
\newcommand{\Dc}{\mathcal{D}}
\newcommand{\Rc}{\mathfrak{E}}
\newcommand{\abs}[1]{\left\lvert#1\right\rvert}
\newcommand{\norm}[1]{\left\lVert#1\right\rVert}
\newcommand{\jump}[1]{\left\ldbrack#1\right\rdbrack}
\date{}
\begin{document}
	
\begin{sloppypar}
	
		\title{Nonlinear asymptotic stability of compressible vortex sheets with viscosity effects}
		
		\author{Feimin Huang\textsuperscript{1,2}}
		
		\author{Zhouping Xin\textsuperscript{3}}
		
		\author{Lingda Xu\textsuperscript{4,5}}
		
		\author{Qian Yuan\textsuperscript{1}\thanks{ Corresponding author: qyuan@amss.ac.cn. \\ 
		E-Mails:  fhuang@amt.ac.cn (F. Huang), \ zpxin@ims.cuhk.edu.hk (Z. Xin), \ xulingda@tsinghua.edu.cn (L. Xu)}
		}
		
		\affil{
		\footnotesize
		\begin{flushleft}
		\textsuperscript{1}Academy of Mathematics and Systems Science, Chinese Academy of Sciences, Beijing, China. \\
		\vspace{.07cm} 
%		Email: qyuan@amss.ac.cn \\
		\textsuperscript{2}School of Mathematical Sciences, University of Chinese Academy of Sciences, Beijing, China. \\
		\vspace{.07cm}
		\textsuperscript{3}The Institute of Mathematical Sciences \&  Department of Mathematics, The Chinese University of Hong Kong, Shatin, NT, Hong Kong. \\
		\vspace{.07cm}
		\textsuperscript{4}Department of Mathematics, Yau Mathematical Sciences Center, Tsinghua University, Beijing, China. \\
		\vspace{.07cm}
		\textsuperscript{5}Yanqi Lake Beijing Institute of Mathematical Sciences And Applications, Beijing, China.
		\end{flushleft}}
			
%		\affil{\textsuperscript{2}School of Mathematical Sciences, University of Chinese Academy of Sciences, Beijing, China. \\
%		E-mail: fhuang@amt.ac.cn}
%			
%		\affil{\textsuperscript{3}The Institute of Mathematical Sciences \&  Department of Mathematics, The Chinese University of Hong Kong, Shatin, NT, Hong Kong. \\
%		Email: zpxin@ims.cuhk.edu.hk}
%		
%		\affil{\textsuperscript{4}Department of Mathematics, Yau Mathematical Sciences Center, Tsinghua University, Beijing, China. \\
%		E-mail: xulingda@tsinghua.edu.cn}
%		
%		\affil{\textsuperscript{5}Yanqi Lake Beijing Institute of Mathematical Sciences And Applications, Beijing, China.}
		
\maketitle

\begin{abstract} 
	This paper concerns the stabilizing effect of viscosity on the vortex sheets.
	It is found that although a vortex sheet is not a time-asymptotic attractor for the compressible Navier-Stokes equations, a viscous wave that approximates the vortex sheet on any finite time interval can be constructed explicitly, which is shown to be time-asymptotically stable in the $ L^\infty $-space with small perturbations, regardless of the amplitude of the vortex sheet.
	The result shows that the viscosity has a strong stabilizing effect on the vortex sheets, which are generally unstable for the ideal compressible Euler equations even for short time \cite{Miles1958,FM1963,AM1987}.
	The proof is based on the $ L^2 $-energy method. 
%	One novelty of this paper is that the perturbation can keep oscillating around the vortex sheet at the spatial infinity. 
	In particular, the asymptotic stability of the vortex sheet under small spatially periodic perturbations is proved by studying the dynamics of these spatial oscillations.
	The first key point in our analysis is to construct an ansatz to cancel these oscillations.
	Then using the Galilean transformation, we are able to find a shift function of the vortex sheet such that an anti-derivative technique works, which plays an important role in the energy estimates. 
	Moreover, by introducing a new variable and using the intrinsic properties of the vortex sheet, we can achieve the optimal decay rates to the viscous wave.
\end{abstract}

\textbf{Keywords.} Compressible Navier-Stokes equations, Vortex sheets, Nonlinear asymptotic stability

\vspace{.2cm}

\textbf{Mathematics Subject Classification.} Primary 35Q30, 76E05, 35B35

%\tableofcontents

%%%%%%%%%%%%%%%%%%%%%%%%%%%%%%%%%%%%%%%%%%%
%% Introduction
%%%%%%%%%%%%%%%%%%%%%%%%%%%%%%%%%%%%%%%%%%%

\section{Introduction}

The three-dimensional (3D) compressible isentropic Navier-Stokes (NS) equations read,  
\begin{equation}\label{NS}
\begin{cases}
\p_t \rho + \dv \mv = 0, \\
\p_t \mv + \dv ( \rho \uv\otimes\uv ) + \nabla p(\rho) = \mu \lap \uv + \big(\mu+\lambda \big) \nabla \dv \uv,
\end{cases} \quad x\in\R^3, t>0, 
\end{equation}
where $ \rho(x,t)>0 $ is the density, $ \mv(x,t) = (\rho \uv)(x,t) \in \R^3 $ is the momentum with $ \uv(x,t) \in\R^3 $ being the velocity, the pressure $ p(\rho) $ is the gamma law, satisfying $ p(\rho) = \rho^\gamma $ with $ \gamma> 1, $ and the viscous coefficients  $ \mu $ and $ \lambda $ are assumed to satisfy
\begin{equation}\label{viscous}
	\mu > 0 \andd \mu + \lambda \geq 0.
\end{equation}
%Note that the model \cref{NS} with \cref{viscous} includes both two-dimensional (2D) and three-dimensional (3D) compressible NS equations.

\vspace{.2cm}

In the case for $ \mu = \lambda =0, $ \cref{NS} is the 3D compressible isentropic Euler equations, 
\begin{equation}\label{Euler}
\begin{cases}
\p_t \rho + \dv (\rho \uv) = 0, & \\
\p_t (\rho \uv) + \dv ( \rho \uv\otimes\uv ) + \nabla p(\rho) = 0, & 
\end{cases} \qquad x\in\R^3, t>0,
\end{equation}
which admit rich wave phenomena such as shock waves, rarefaction waves and contact discontinuities, i.e. vortex sheets. 
A vortex sheet is an inviscid flow in which the velocity field is discontinuous in a tangential direction across a surface. 
In particular, a planar vortex sheet is given by,
\begin{equation}\label{vs-general}
	(\rhob^\vs, \uvb^\vs)(x,t) = \begin{cases}
		(\rhob, \uvb_-), \qquad x_3 < s t, \\
		(\rhob, \uvb_+), \qquad x_3 > s t,
	\end{cases}
\end{equation}
where $ \rhob>0 $, $ \uvb_\pm = (\ub_{1\pm}, \ub_{2\pm},\ub_{3\pm}) $ and $s$ are constants, satisfying the Rankine-Hugoniot (RH) conditions,
\begin{equation}\label{RH}
	\ub_{3+} - \ub_{3-} =0 \andd - s (\ub_{i+} -\ub_{i-} ) + \ub_{3+} \ub_{i+} - \ub_{3-} \ub_{i-} = 0 \quad \text{for } i =1,2,3,
\end{equation}
which implies that $ \ub_{3+} = \ub_{3-} = s. $
%and $ (\ub_{1+}, \ub_{2+}) \neq (\ub_{1-}, \ub_{2-}) $. 

It is well known that both the compressible NS equations \cref{NS} and Euler equations \cref{Euler} are invariant under the Galilean transformation, i.e. if $ (\rho, \uv) $ solves \cref{NS} (resp. \cref{Euler}), then so does
\begin{equation}\label{Galilean}
	(\rho^*, \uv^*)(x,t) = \big(\rho(x-\mathbf{c} t, t), \uv(x-\mathbf{c} t, t) + \mathbf{c}\big)
\end{equation}
for any constant vector $\mathbf{c}\in \R^3.$
Thus, by selecting $ \textbf{c} = - \big(\frac{\ub_{1+} + \ub_{1-}}{2}, \frac{\ub_{2+} + \ub_{2-}}{2}, s\big) $, one can assume without loss of generality that the planar vortex sheet \cref{vs-general} has the form,
%Then rotating the coordinates with respect to $ x_3 $ axis, one can assume without loss of generality that
%\begin{equation}
%	(\ub_{1-}, \ub_{2-}) = -(\ub_{1+}, \ub_{2+}) = (0, -\delta),
%\end{equation}
%where $ \delta>0 $ is a constant equivalent to the amplitude of the vortex sheet. 
\begin{equation}\label{vs}
	(\rhob^\vs, \uvb^\vs)(x,t) = \begin{cases}
		(\rhob, -\uvb), \qquad x_3 <0, \\
		(\rhob, \uvb), \qquad \quad x_3 >0,
	\end{cases}
\end{equation}
where $ \uvb =  (\ub_1, \ub_2, 0) \neq 0 $ is a constant vector.

\vspace{.2cm}

The stability of compressible interfacial waves is an important issue in gas dynamics. Different from the nonlinear shock waves and rarefaction waves, the linear degeneracy makes the compressible contact discontinuities less stable, which may lead to various instabilities such as the Kelvin-Helmholtz instability for vortex sheets and Rayleigh-Taylor instability for entropy waves (the other kind of contact discontinuities in which the velocity and pressure are continuous); see \cite{Ebin2988,GT2011}.
For the 3D compressible Euler equations, the planar vortex sheets are violently unstable (see e.g. \cite{Serre1999}). 
In two dimensions, the rectilinear compressible vortex sheets are also violently unstable if the Mach number $ M $ is less than $\sqrt{2} $, while the supersonic vortex sheets with Mach number $ M>\sqrt{2} $ are shown to be linearly stable; see \cite{Miles1958,FM1963,Serre2000}. 
The nonlinear stability of the 2D supersonic vortex sheets was then established locally in time by \cite{Coulomb2004,Coulomb2008}; also see \cite{MT2008,MTW2019} for similar stability results in the non-isentropic case.  
However, one cannot expect the global nonlinear stability of the compressible vortex sheets for \cref{Euler}, which was observed by \cite{AM1987} through the argument of nonlinear geometric optics.
We also refer to \cite{Trakhinin2005,CW2008,Trakhinin2009,WY2013} and \cite{WX2023} for the studies of the stabilizing effects of the magnetic fields on the compressible vortex sheets  and entropy waves, respectively.

%There have been many works showing that the magnetic effect can help stabilize the compressible vortex sheets in both two and three dimensions, i.e. the current-vortex-sheets are non-linearly stable; see 

\vspace{.1cm}

In this paper, we are concerned about the viscosity effect on the stability of the compressible vortex sheets with arbitrarily large amplitude. Different from the hyperbolic theory stated above and similar to the meta-stability of the 1D entropy waves for the 1D compressible NS equations first observed in \cite{Xin1996}, we are able to obtain a time-asymptotic stability of the viscous waves associated with the vortex sheets for the compressible NS equations by the $L^2$-energy method. 
We remark here that our results hold true in both three and two dimensions; see \cref{Rem-1}.
Due to the viscosity, the discontinuous solution of \cref{Euler} cannot govern the large-time behaviors of the classical solutions of \cref{NS}, that is, the inviscid vortex sheet \cref{vs} is only a meta-stable state for the Navier-Stokes equations. Thus, the first step in our analysis is to investigate the viscous wave associated with the vortex sheet \cref{vs}.
We refer the readers to \cite{Ilin1960,Xin1996,HMX,HXY,XZ2010} for the viscous waves associated with other wave phenomena such as the shock waves and entropy waves.

\vspace{.1cm} 

Given the vortex sheet \cref{vs} and a fixed constant $\Lambda\geq 1$, we consider the Cauchy problem for \cref{NS} on $ \{(x,t): x\in \R^3, t>-\Lambda \} $ with the initial data,
\begin{equation}\label{ic-vs}
	(\rho, \uv)(x,t=-\Lambda) = \begin{cases}
		(\rhob, -\uvb), \qquad x_3 <0, \\
		(\rhob, \uvb), \qquad \quad x_3 >0.
	\end{cases}
\end{equation}
One can observe that the solution (if exists), denoted by $ (\rho^\vs, \uv^\vs), $ is actually independent of the transverse variables $ \xp = (x_1,x_2) $. Thus, regardless of the transverse derivatives, the 3D NS system \cref{NS} reduces to the 1D one,
\begin{equation}\label{system-1d}
	\begin{cases}
		\p_t \rho + \p_3  m_3 = 0, \\
		\p_t m_1 + \p_3 (\rho u_1 u_3) = \mu \p_3^2 u_1,  \\
		\p_t m_2 + \p_3 (\rho u_2 u_3) = \mu \p_3^2 u_2, \\
		\p_t m_3 + \p_3 \left(\rho u_3^2 + p(\rho)\right) = \mut \p_3^2 u_3,
	\end{cases} \quad x_3\in\R, t>-\Lambda,
\end{equation}
where $ \mut := 2\mu+\lambda > 0 $. It is noted that \cref{system-1d} is indeed the standard 1D compressible isentropic NS system for $ (\rho^\vs, u_3^\vs) $,
\begin{equation}\label{decouple-1}
	\begin{cases}
		\p_t \rho^\vs + \p_3 \left(\rho^\vs u_3^\vs\right) = 0, \\
		\p_t m_3^\vs + \p_3 \left(\rho^\vs \abs{u_3^\vs}^2 + p(\rho^\vs)\right) = \mut \p_3^2 u_3^\vs,
	\end{cases} \qquad x_3 \in \R, t>-\Lambda,
\end{equation}
coupled with two scalar parabolic equations for $ u_1^\vs $ and $u_2^\vs$,
\begin{equation}\label{decouple-i}
	\p_t (\rho^\vs u_i^\vs) + \p_3 (\rho^\vs u_i^\vs u_3^\vs) = \mu \p_3^2 u_i^\vs, \qquad x_3\in \R, t>-\Lambda \qquad \text{for } \ i=1,2.
\end{equation}
One can first solve \cref{decouple-1} with the initial data, $ (\rho^\vs, u_3^\vs)(x_3,t=-\Lambda) = (\rhob, 0), $
which gives the unique solution,
\begin{equation}\label{profile-rho-u3}
	(\rho^\vs, u_3^\vs)(x_3,t) = (\rhob, 0), \qquad x_3 \in \R, t\geq -\Lambda.
\end{equation}
Then substituting \cref{profile-rho-u3} into \cref{decouple-i} yields that
\begin{equation}\label{heat}
	\p_t u_i^\vs = \frac{\mu}{\rhob} \p_3^2 u_i^\vs, \qquad x_3 \in\R, t>-\Lambda \qquad \text{for } \ i=1,2.
\end{equation}
By the classical parabolic theory, the Cauchy problem \cref{heat} with the initial data
\begin{equation}\label{ic-us-i}
	u_i^\vs(x_3, t=-\Lambda) = \begin{cases}
		-\ub_i, \qquad x_3 < 0, \\
		\ub_i, \qquad \quad x_3>0,
	\end{cases}
\end{equation}
admits a unique solution, 
\begin{equation}\label{profile-ui}
	u_i^\vs(x_3,t) = \ub_i \theta(x_3,t), \qquad x_3\in\R, t\geq -\Lambda,
\end{equation}
where $ \theta $ is the solution of
\begin{equation}\label{equ-theta}
	\begin{cases}
		\p_t \theta = \frac{\mu}{\rhob}\p_3^2 \theta, \qquad x_3\in\R, t>-\Lambda, \\
		\theta(x_3, t=-\Lambda) = \begin{cases}
			-1, \qquad x_3<0, \\
			1, \qquad \quad x_3>0,
		\end{cases}
	\end{cases}
\end{equation}
and can be computed explicitly by
\begin{equation}\label{theta}
	\theta(x_3,t) = \Theta(\xi) := \frac{2}{\sqrt{\pi}} \int_{0}^{\frac{1}{2} \sqrt{\frac{\rhob}{\mu}} \xi} e^{-\eta^2} d \eta \qquad \text{with } \ \xi = \frac{x_3}{\srt}.
\end{equation}
Note that $ \Theta $ is independent of $\Lambda$, satisfying that
\begin{equation}\label{equ-Theta}
	\begin{cases}
		-2 \mu \Theta''(\xi) = \rhob \xi \Theta'(\xi), \qquad \xi \in \R, \\
		\lim\limits_{\xi \to \pm\infty} \Theta(\xi) = \pm 1.
	\end{cases}
\end{equation}
%Note that one can use \cref{theta,equ-theta} to prove that

\begin{Lem}\label{Lem-theta}
	The solution $ \theta $ of \cref{equ-theta} is odd and strictly increasing with respect to $ x_3 $. Moreover, for any $ j=1,2, \cdots, $ it holds that
	\begin{equation}\label{est-theta}
		\abs{\p_3^j \theta(x_3,t)} \leq  C_j (t+\Lambda)^{-\frac{j}{2}} \exp \Big\{-\frac{ \rhob \abs{x_3}^2}{8\mu (t+\Lambda)} \Big\}  \qquad \forall x_3 \in \R, t\geq 0.
	\end{equation} 
\end{Lem}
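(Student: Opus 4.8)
The plan is to exploit the explicit self-similar formula \cref{theta} for $\theta$; every assertion then reduces to an elementary property of the profile $\Theta$ and of the Gaussian $\Theta'$.

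First, oddness and monotonicity. Since the integrand $e^{-\eta^2}$ in \cref{theta} is even, $\Theta$ is an odd function of its argument, and since $\xi = x_3/\srt$ is odd in $x_3$, we get $\theta(-x_3,t) = \Theta(-\xi) = -\Theta(\xi) = -\theta(x_3,t)$. Differentiating \cref{theta} gives
\begin{equation*}
	\p_3\theta(x_3,t) = \frac{1}{\srt}\,\Theta'(\xi), \qquad \Theta'(\xi) = \frac{1}{\sqrt{\pi}}\sqrt{\tfrac{\rhob}{\mu}}\,\exp\Big\{-\tfrac{\rhob}{4\mu}\xi^2\Big\} > 0 ,
\end{equation*}
so $\theta$ is strictly increasing in $x_3$. (Equivalently, $\Theta' = \Theta'(0)\exp\{-\tfrac{\rhob}{4\mu}\xi^2\}$ follows by integrating the first-order relation $\Theta''/\Theta' = -\tfrac{\rhob}{2\mu}\xi$ read off from \cref{equ-Theta}.)

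Next, the decay estimate \cref{est-theta}. The self-similar structure yields, for every $j\geq 1$, $\p_3^j\theta(x_3,t) = (t+\Lambda)^{-j/2}\,\Theta^{(j)}(\xi)$, so it suffices to show $\abs{\Theta^{(j)}(\xi)} \leq C_j \exp\{-\tfrac{\rhob}{8\mu}\xi^2\}$ for all $\xi\in\R$; substituting $\xi^2 = \abs{x_3}^2/(t+\Lambda)$ then gives exactly \cref{est-theta}. Put $a := \rhob/(4\mu)$. Since $\Theta^{(j)} = \tfrac{d^{\,j-1}}{d\xi^{j-1}}\Theta'$ and $\Theta'$ is a constant multiple of $e^{-a\xi^2}$, an immediate induction — differentiating $Q(\xi)e^{-a\xi^2}$ produces $\big(Q'(\xi)-2a\xi Q(\xi)\big)e^{-a\xi^2}$, raising the polynomial degree by one — shows
\begin{equation*}
	\Theta^{(j)}(\xi) = P_{j-1}(\xi)\,e^{-a\xi^2}
\end{equation*}
for a polynomial $P_{j-1}$ of degree $j-1$ (explicitly a rescaled Hermite polynomial); the same recursion alternatively follows by differentiating the identity $-2\mu\Theta'' = \rhob\,\xi\,\Theta'$ from \cref{equ-Theta} repeatedly. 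A polynomial times a Gaussian is bounded, so $C_j := \sup_{\xi\in\R}\abs{P_{j-1}(\xi)}\,e^{-\frac{a}{2}\xi^2} < \infty$, whence $\abs{\Theta^{(j)}(\xi)} \leq C_j\, e^{-\frac{a}{2}\xi^2} = C_j\,e^{-\frac{\rhob}{8\mu}\xi^2}$, which is what we need.

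There is no genuinely hard step here; the only point to keep track of is the constant $8\mu$ in the exponent of \cref{est-theta} in place of the sharp $4\mu$, which is precisely the slack needed to absorb the polynomial factor $P_{j-1}$ into the Gaussian. Finally, restricting to $t\geq 0$ (so that $t+\Lambda\geq\Lambda\geq 1$) is harmless and only serves later applications; the estimate in fact holds for all $t > -\Lambda$.
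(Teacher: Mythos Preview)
Your proof is correct. The paper itself does not supply a proof of this lemma; it is stated as an elementary fact about the explicit self-similar solution \cref{theta} and used freely thereafter. Your argument --- reading oddness and monotonicity directly from the integral formula, and obtaining \cref{est-theta} from the identity $\p_3^j\theta = (t+\Lambda)^{-j/2}\Theta^{(j)}(\xi)$ together with the observation that $\Theta^{(j)}$ is a polynomial times the Gaussian $e^{-\frac{\rhob}{4\mu}\xi^2}$ --- is exactly the standard justification, and your remark about the $8\mu$ versus $4\mu$ in the exponent correctly identifies why the stated constant is not sharp.
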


\textit{Viscous wave}.
Combining \cref{profile-rho-u3,profile-ui}, we can define the viscous wave associated with the vortex sheet \cref{vs} as
\begin{equation}\label{profile}
	\rho^\vs(x_3,t) := \rhob \andd \uv^\vs(x_3,t) := \theta(x_3,t) \uvb = \Theta\Big(\frac{x_3}{\srt}\Big) \uvb.
\end{equation}
For any constant $\Lambda>0,$ the viscous wave \cref{profile} is a smooth solution (for $ t\geq 0 $) to the compressible NS equations \cref{NS} and connects the constant states of the vortex sheet, $ (\rhob, \pm\uvb) $,  as $ x_3 \to \pm \infty $ for all $ t\geq 0. $ 
Moreover, for any $ p \in [1,+\infty), $ it holds that
\begin{align*}
	C^{-1} \abs{\uvb} \left[\mu \left(t+\Lambda\right)\right]^{\frac{1}{2p}}\leq \norm{(\rho^\vs, \uv^\vs) - (\rhob^\vs, \uvb^\vs)}_{L^p(\R; \text{d}x_3)} \leq C \abs{\uvb} \left[\mu \left(t+\Lambda\right)\right]^{\frac{1}{2p}}
\end{align*}
for some constant $C\geq 1$ that depends on $p.$
Thus, as the viscosity $\mu$ vanishes, the viscous wave \cref{profile} approximates the vortex sheet \cref{vs} on any finite time interval, but not time-asymptotically.
%for any $ a>0, $ it holds that
%\begin{align*}
%	\sup_{ \substack{ x_3\in\R, t\geq 0, \\ \text{with } \abs{x_3} \geq a \srt} } \abs{(\rho^\vs, \uv^\vs) - (\rhob^\vs, \uvb^\vs)} \to 0 \qquad \text{as } \mu \to 0+.
%\end{align*}

\vspace{.2cm}

In this paper, we first study a spatially periodic perturbation of the viscous wave \cref{profile}, i.e. 
\begin{equation}\label{ic}
	(\rho, \mv)(x,0) = (\rhos, \mvs)(x_3,0) +  (v_0, \wv_0 )(x), \quad x \in \R^3,
\end{equation}
where $ (v_0, \wv_0)(x) =  (v_0, w_{01}, w_{02}, w_{03} )(x) $ is a periodic function on $ \Torus^3 = [0,1]^3 $ with zero average,
\begin{equation}\label{zero-ave}
	\int_{\Torus^3} (v_0, \wv_0)(x) dx = 0.
\end{equation} 
Then, we will show in \cref{Sec-local} that the similar arguments can also be applied to study a localized perturbation on the domain $ \Omega := \Torus^2 \times \R$, i.e.
\begin{equation}\label{ic-loc}
	(\rho, \mv)(x,0) = (\rhos, \mvs)(x_3,0) +  (\phib_0, \psib_0 )(x), \quad x \in \Torus^2 \times\R,
\end{equation}
where $(\phib_0,\psib_0)$ is periodic with respect to the transverse variables $\xp=(x_1,x_2)\in\Torus^2$ and belongs to $ H^3(\Torus^2\times\R).$

The compressible vortex sheets are contact discontinuities for the multi-dimensional compressible Euler equations. For the 1D viscous conservation laws with uniform viscosity, \cite{Xin1996} was the first to prove that the inviscid contact discontinuities are meta-stable for the viscous models, and then  \cite{Liu-Xin,XZ2010} constructed asymptotic viscous contact waves and established the pointwise stability via the approximate Green function approach. For the 1D NS equations in Lagrangian coordinates, \cite{HMS,HMX,HXY,HLM} applied the basic $L^2$-energy method to achieve the asymptotic stability of the contact discontinuities.
%However, these works do not include the physical model \cref{NS} (which has partial viscosity) in Eulerian coordinates. 
This paper gives the first stability results of the contact discontinuities for the compressible NS equations in Eulerian coordinates. Moreover, we can achieve the optimal decay rates; see \cref{Rem-opt}.

Note that for the problem \cref{NS}, \cref{ic}, although the perturbation $(\rho,\mv)-(\rho^\vs, \mv^\vs) $ is periodic with respect to $x\in\Torus^3$ initially, it does not remain periodic with respect to $x_3$ for all positive time, due to the non-triviality of the background wave \cref{profile} and the nonlinearity of \cref{NS}. 
Thus, it is more challenging to deal with the oscillating perturbation \cref{ic} than the localized one \cref{ic-loc}.
On the other hand, the theories of the compressible Navier-Stokes equations and Euler equations are connected to each other very closely, and the study of periodic perturbations is an important and interesting topic in the hyperbolic theories.
In particular, for the isentropic compressible Euler equations, the spatially periodic solutions decay to constant states as $t\to+\infty$; see \cite{Lax1957,Glimm1970,Dafe1995,Dafe2013}. However, there appears a resonance phenomenon in the non-isentropic case; as a result, the spatially periodic solutions may oscillate in time simultaneously, which never happens neither in the isentropic case nor for the BV solutions; see \cite{MR1984,TY1996,QX2015,TY2023}.
Recently, the works \cite{XYYsiam,XYYindi,YY2019} also found a new phenomenon of the periodic perturbations in the stability theory of shocks, that is, a new kind of shock shifts is generated by the periodic oscillations, which generally depend on the structures of the equations, while in the case for the localized perturbations, the shock shifts depend only on the initial data; see \cite{Ilin1960,Liu-vshock,SX} for instance.

In this paper, we show that with the initial perturbations in \cref{ic,ic-loc}, the viscous wave associated with the vortex sheet is time-asymptotically stable in the $L^\infty$-norm, where the velocity jump can be arbitrarily large.
The framework is based on an anti-derivative technique and the $L^2$-energy method, that is, we need to study the integrated system of the perturbation in addition. The anti-derivative technique was initiated by \cite{Ilin1960} to prove the shock stability for the 1D scalar conservation laws, and then was widely used for more 1D models to study the stability of both shocks and contact discontinuities; see \cite{MN,G1986,Liu-vshock,SX,Liu-Xin,HMX,HXY,XZ2010} for instance. In the multi-dimensional case, as shown by \cite{Yuan2023n,Yuan2023s}, this method is also effective to prove the stability of planar viscous shock profiles with spatially periodic perturbations for the NS equations. A key observation in \cite{Yuan2023n,Yuan2023s} is that, with the aid of Poincar\'{e} inequality, it suffices to use the anti-derivative technique in the estimate of the zero modes associated with the multi-dimensional perturbations.
It is noted that a premise in the application of the anti-derivative technique is that the perturbations should be of zero masses for all time.
In the previous works concerning localized perturbations, one can just choose constant shifts of the background waves to make the excessive masses of the perturbations vanish, and in the case for the spatially periodic perturbations, the shifts should be some specific functions of time to cancel the influence of the oscillations; see \cite{XYYindi,HY1shock,YY2022,Yuan2023n}.
However, for the vortex sheets with periodic perturbations, there is a new difficulty in determining the shift curves. 
More precisely, the previous analysis relies essentially on the assumption that crossing the interface of the background wave, all the quantities, i.e. the density, momentum and total energy, must be discontinuous. 
However, this assumption fails for the vortex sheets, whose density and normal momentum are continuous across the interface. 
To overcome this technical difficulty, in this paper we have an important observation that for the problem \cref{NS}, \cref{ic}, the two asymptotic behaviors of the solution as $x_3\to - \infty$ and $+\infty$ coincide with each other after a Galilean transformation; see \cref{motiv} and \cref{Lem-coincide}.
Based on this invariance, we are able to find one shift curve of the vortex sheet such that the perturbation is of zero mass for all time.  This is the first novelty of this paper.
The second key element in our analysis is that we introduce a new variable in terms of the anti-derivatives (see \cref{Zv}), which is important in overcoming the difficulty due to the large amplitude of the vortex sheet and achieving the optimal decay rates without the aid of the Green function.

%and using the intrinsic properties of the background vortex sheet, we are able to establish the lower-order estimates and achieve the optimal decay rates of the zero modes. 
%The zero-mode estimates occupy the most important part in our stability analysis. 
%On the other hand, by using the Poincar\'{e} inequality and a similar argument as in \cite{Yuan2023n}, one can prove the lower-order estimates of the non-zero modes with exponential decay rates.
%Afterwards, we return to the original perturbed system \cref{equ-phizeta} to estimate the higher-order derivatives, which can complete the proof of the a priori estimates. 
%It is expected that the analysis of this paper could be developed further to study the stability of more compressible shear flows.

\vspace{.1cm}

Now we state the main results of this paper.
\begin{Thm}\label{Thm}
	Let $ (\rhob^\vs, \uvb^\vs)(x_3,t) $ be the given vortex sheet \cref{vs} and $ (\rho^\vs, \uv^\vs)(x_3,t) $ be the corresponding viscous wave, which is a smooth solution to the 3D compressible Navier-Stokes equations with the form \cref{profile}. 
	Then for the Cauchy problem \cref{NS}, \cref{ic}, there exist $ \Lambda_0 \geq 1 $ and $ \e_0>0 $ such that if 
	\begin{equation}\label{small}
		\Lambda\geq \Lambda_0 \andd \norm{ v_0,\wv_0}_{H^6(\Torus^3)} \leq \e_0 \Lambda^{-\frac{1}{4}},
	\end{equation}
	then there exits a unique classical bounded solution $ (\rho,\uv)(x,t) $  globally in time, which is periodic in the transverse variables $ \xp=(x_1,x_2)\in \Torus^2 $.
	Moreover, the solution satisfies that
%	\begin{align*}
%		
%	\end{align*}
%	satisfies that
	\begin{equation}\label{behavior}
		\begin{aligned}
			\sup_{x_3\in \R} \abs{ (\rho^\od, \uv^\od)(x_3,t) - (\rho^\vs, \uv^\vs)(x_3,t) } & \leq C \sqrt{\e_0}  (t+1)^{-\frac{3}{4}}, \\
			\sup_{x\in \R^3} \abs{(\rho^\md, \uv^\md)(x,t)} & \leq C \sqrt{\e_0} e^{- \alpha_0 t},
%			\sup_{x\in \R^3} \abs{ \nabla \big[  (\rho,\uv)(x,t)- (\rho^\vs, \uv^\vs)(x_3,t) \big] } & \leq C \e^{\frac{1}{2}} (t+1)^{-1}.
		\end{aligned}
	\end{equation}
%	the non-zero mode of the solution, i.e.
%	\begin{align*}
%		
%	\end{align*}
%	satisfies that
%	\begin{equation}\label{beh-md}
%		
%	\end{equation}
	where $(\rho^\od,\uv^\od)$ and $(\rho^\md,\uv^\md)$ denote the zero mode and non-zero mode of the solution, respectively, namely,
	\begin{equation}\label{od-md}
		\begin{aligned}
			(\rho^\od, \uv^\od)(x_3,t) & := \int_{\Torus^2} (\rho, \uv)(\xp, x_3,t) d\xp, \\
			(\rho^\md, \uv^\md)(x,t) & := (\rho,\uv)(x,t) - (\rho^\od, \uv^\od)(x_3,t);
		\end{aligned}
	\end{equation}
	and $ C>0 $ and $ \alpha_0>0 $ in \cref{behavior} are some generic constants.
\end{Thm}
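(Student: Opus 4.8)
The plan is to split the solution via \cref{od-md} into its transverse average, the zero mode $(\rho^\od,\uv^\od)$, and its orthogonal complement, the non-zero mode $(\rho^\md,\uv^\md)$, and to control the two by different mechanisms that are coupled only through quadratic terms. Since $(\rho^\md,\uv^\md)$ has vanishing mean over $\Torus^2$, the Poincar\'{e} inequality yields a uniform spectral gap for the transverse Laplacian, so that $\norm{\nabla_{\xp}(\rho^\md,\uv^\md)}_{L^2}\gtrsim\norm{(\rho^\md,\uv^\md)}_{L^2}$; combined with the parabolic dissipation of $\uv^\md$ and a standard energy combination controlling the density, an $L^2$ energy estimate for $(\rho^\md,\uv^\md)$ together with its $x$-derivatives up to high enough order closes --- the background wave \cref{profile} and the zero/non-zero coupling entering only as small, lower-order perturbations under the a priori bound --- and produces the exponential decay $\norm{(\rho^\md,\uv^\md)(t)}_{H^k}\lesssim\sqrt{\e_0}\,e^{-\alpha_0 t}$, i.e. the second line of \cref{behavior}.

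The core of the argument is the zero mode, which solves a $1$D compressible Navier--Stokes system with sources quadratic in the exponentially small non-zero mode. The obstruction to the usual anti-derivative method is that $(\rho^\od,\uv^\od)-(\rho^\vs,\uv^\vs)$ does not decay as $x_3\to\pm\infty$: it is asymptotic to the transverse averages of the spatially periodic solutions of \cref{NS} issuing from the constant states $(\rhob,\pm\uvb)$ perturbed by $(v_0,\wv_0)$, and these periodic solutions oscillate for each fixed $t$ while decaying to constants in time. I would first isolate these oscillations: letting $(\hat\rho,\hat\uv)$ be the periodic solution associated with the left state $(\rhob,-\uvb)$, the Galilean invariance \cref{Galilean} with $\mathbf{c}=2\uvb$ shows that $(\hat\rho(x-2\uvb t,t),\hat\uv(x-2\uvb t,t)+2\uvb)$ is again a solution that reproduces the right asymptotic behavior up to a correction of zero spatial average (hence decaying in time); and since $\uvb$ has no $x_3$-component, the translation $2\uvb t$ is purely transverse and disappears after averaging over $\Torus^2$, so that the two asymptotic zero modes coincide except for the constant $2\uvb$ in the tangential velocity. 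Using this, I would build an ansatz $(\tilde\rho,\tilde\uv)(x_3,t)$ interpolating the two asymptotic zero modes through the viscous profile $\theta$ of \cref{theta}, arranged so that $\tilde\uv-\uv^\vs$ carries all of the oscillation while the residual $(\rho^\od,\uv^\od)-(\tilde\rho,\tilde\uv(\cdot-X(t),t))$ is genuinely localized in $x_3$; and I would then pin down a single shift curve $X(t)$ of the viscous wave so that this residual has zero total $x_3$-mass for all $t\ge0$. It is precisely the coincidence property that makes such a shift exist: because $\rho$ and $m_3$ are continuous across the vortex sheet, the conventional mechanism that reads the shift off the jumps of all conserved quantities (as for viscous shocks and entropy waves) degenerates, and one must instead use the Galilean symmetry to close the equation for $X(t)$ from the tangential-momentum balance alone.

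With the zero-mass normalization in hand I would run the anti-derivative technique on the localized residual: integrating its equations once in $x_3$ gives an integrated system whose unknowns are the anti-derivatives of the residual. Because the velocity jump $\abs{\uvb}$ is allowed to be arbitrarily large, this integrated system is not a small perturbation of a heat system; instead I would pass to a new unknown $\Zv$, a suitable combination of the anti-derivatives whose equation exploits the intrinsic identities $\rho^\vs\equiv\rhob$, $u_3^\vs\equiv0$ of \cref{profile} and the profile ODE $-2\mu\Theta''=\rhob\xi\Theta'$ of \cref{equ-Theta}, so that the dangerous $O(\abs{\uvb})$ terms reorganize into a manifestly dissipative or time-integrable form. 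The main step is then a hierarchy of weighted $L^2$ energy inequalities for $\Zv$, for the residual, and for a few of its $x_3$-derivatives: the parabolic terms control the gradients; the contributions carrying $\Theta'$ or $\Theta''$ are absorbed using the Gaussian bounds of \cref{Lem-theta} together with the largeness of $\Lambda$ (which makes $(t+\Lambda)^{-1}$ uniformly small and supplies the $\Lambda^{-1/4}$ slack required in \cref{small}); and the ansatz error, the shift error $\dot X(t)$, and the zero/non-zero coupling are quadratically small and time-integrable thanks to the decay of $(\hat\rho,\hat\uv)$ and of $(\rho^\md,\uv^\md)$. A continuity argument, combined with local well-posedness, upgrades these a priori bounds to a global solution, and interpolating the $(t+\Lambda)$-weighted $L^2$ bounds on the anti-derivatives and their $x_3$-derivatives via Gagliardo--Nirenberg yields $\sup_{x_3}\abs{(\rho^\od,\uv^\od)-(\rho^\vs,\uv^\vs)}\lesssim\sqrt{\e_0}\,(t+1)^{-3/4}$, i.e. the first line of \cref{behavior}. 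I expect the two genuine difficulties to be (i) the construction of the ansatz and of the shift $X(t)$ --- the only place where the continuity of $\rho$ and $m_3$ across the vortex sheet really bites, and where the Galilean-invariance observation is indispensable --- and (ii) closing the energy estimates uniformly in $\abs{\uvb}$, which is exactly what forces the introduction of $\Zv$.
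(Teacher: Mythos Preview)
Your proposal is correct and follows essentially the same route as the paper: the Galilean-invariance observation to match the two asymptotic periodic states and determine a single shift curve, the anti-derivative technique on the resulting zero-mass residual, the introduction of the new variable $\Zv=\Psi-\uv^\vs\Phi$ to absorb the large-amplitude terms, Poincar\'e for the non-zero mode, and time-weighted energy estimates for the optimal $(t+1)^{-3/4}$ rate. The only noticeable difference is that the paper builds the ansatz at the full 3D level, interpolating between the periodic solutions $(\rho_\pm,\mv_\pm)(x,t)$ themselves rather than only their zero modes; this has the cosmetic advantage that the perturbation $(\phi,\psi)$ vanishes identically at $t=0$, but the underlying mechanism is the one you describe.
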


\begin{Thm}\label{Thm-loc}
	If in \cref{Thm}, the initial data \cref{ic} is replaced by \cref{ic-loc}, then there exist $ \Lambda_0 \geq 1 $ and $ \e_0>0 $ such that if 
	\begin{equation}\label{small*}
		\Lambda\geq \Lambda_0 \andd \norm{ \phib_0, \psib_0}_{L^2_{3}(\Omega)}  + \norm{\nabla\phib_0, \nabla\psib_0}_{H^2(\Omega)} \leq \e_0,
	\end{equation}
	where $ \norm{\cdot}_{L^2_{3}(\Omega)}^2 := \int_\Omega (x_3^2 +1 )^{\frac{3}{2}} \abs{\cdot}^2 dx, $
	then for the problem \cref{NS}, \cref{ic-loc}, all the results still hold true, except  the decay rate of the zero mode, which is replaced by
	\begin{equation}\label{behavior-loc}
		\begin{aligned}
			\sup_{x_3\in \R} \abs{ (\rho^\od, \uv^\od)(x_3,t) - (\rho^\vs, \uv^\vs)(x_3,t) } & \leq C \sqrt{\e_0}  (t+1)^{-\frac{1}{2}}.
		\end{aligned}
	\end{equation}
	
%	where $(\rho^\od,\uv^\od)$ and $(\rho^\md,\uv^\md)$ are given by \cref{od-md} and $ C>0 $ and $ \alpha_0>0 $ in \cref{behavior-loc} are some generic constants.
\end{Thm}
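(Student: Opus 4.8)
The plan is to follow the scheme already established for \cref{Thm}, since the localized problem \cref{NS}, \cref{ic-loc} is essentially a simplification of the spatially periodic one: there is no persistent spatial oscillation to cancel, so the time-oscillating part of the ansatz disappears and the shift of the vortex sheet becomes (asymptotically) a constant determined by the initial data rather than by the oscillation dynamics. First I would decompose the perturbation $(\rho,\mv)-(\rho^\vs,\mv^\vs)$ with respect to the transverse torus $\Torus^2$ into its zero mode $(\rho^\od,\mv^\od)(x_3,t)$ and non-zero mode $(\rho^\md,\mv^\md)(x,t)$, the latter having vanishing transverse average. The estimates for the non-zero mode are identical to those in \cref{Thm}: the Poincar\'{e} inequality on $\Torus^2$ furnishes a spectral gap which, combined with the parabolic structure of \cref{NS}, yields the exponential decay $\sup_{x\in\R^3}|(\rho^\md,\uv^\md)(x,t)|\le C\sqrt{\e_0}\,e^{-\alpha_0 t}$; nothing changes there, because this estimate never distinguished between the two kinds of perturbation.

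The new content lies in the one-dimensional zero-mode system for $(\rho^\od,m_1^\od,m_2^\od,m_3^\od)(x_3,t)$, which couples the $(\rho^\od,m_3^\od)$-part --- a $1$D isentropic Navier--Stokes system linearizing around the constant state $(\rhob,0)$ --- with the two scalar equations for $m_1^\od,m_2^\od$ whose backgrounds are the nontrivial heat profiles $\ub_i\theta$, modulo quadratic interaction terms from the non-zero mode that are exponentially small in $t$ by the previous step. As in \cref{Thm}, I would build an ansatz equal to the viscous wave plus self-similar diffusion waves carrying the excess masses: the conserved excess masses of $\rho^\od$ and $m_3^\od$ (finite here because $(\phib_0,\psib_0)$ is integrable) are transported by self-similar diffusion waves along the two sound characteristics, while the excess masses in the $m_i^\od$-components are removed by an (asymptotically constant) shift $\ub_i\theta(x_3+d_i,t)$ of the profile --- which is available precisely because $\uvb\neq0$ and $\int_\R\big(\theta(\cdot+d,t)-\theta(\cdot,t)\big)\,dx_3$ is a nonzero, time-independent multiple of $d$. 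The Galilean-coincidence of the two far-field states (the lemma \cref{Lem-coincide} announced in the introduction) is again what permits a single shift to annihilate the relevant masses simultaneously. Having arranged zero mass for all $t\ge0$, the anti-derivative $\Phi(x_3,t)=\int_{-\infty}^{x_3}(\text{perturbation minus ansatz})\,dy$ is well defined, one passes to the integrated system and introduces the new variable $\Zv$ of \cref{Zv}.

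The closing step is the a priori energy estimate, and here lies the only genuine deviation from \cref{Thm}: the anti-derivatives of a localized perturbation no longer enjoy a uniform-in-$x_3$ bound, so one must run a \emph{weighted} $L^2$-energy estimate with the algebraic weight $(x_3^2+1)^{3/2}$ built into the norm $\norm{\cdot}_{L^2_{3}(\Omega)}$ appearing in \cref{small*}. The work is to propagate this cubic spatial weight through the coupled system: absorbing the commutators produced by differentiating the weight, controlling the large-amplitude background terms (handled via the exponential localization of $\p_3^j\theta$ from \cref{Lem-theta}), the nonlinear terms, the diffusion-wave errors, and the exponentially small coupling from the non-zero mode. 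Interpolating the resulting weighted $H^2$-bounds then gives the $L^\infty$-decay of the zero mode; because the weighted estimate yields anti-derivative decay rates that are weaker by a fixed power of $(t+\Lambda)$ than the uniform bound available in the periodic setting, the rate degrades from $(t+1)^{-3/4}$ to $(t+1)^{-1/2}$, which is \cref{behavior-loc}. Local existence together with these uniform-in-time bounds then yields the global classical solution by the standard continuation argument, exactly as for \cref{Thm}. I expect the main obstacle to be precisely this weighted energy estimate --- carrying the algebraic weight through the large-amplitude, anti-derivative-reformulated, coupled system while still extracting sharp time-decay --- rather than anything structurally new, since the genuinely novel difficulty (choosing the shift in the presence of oscillations) was already resolved in the proof of \cref{Thm}.
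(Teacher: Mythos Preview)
Your plan misidentifies the role of the weighted norm $\norm{\cdot}_{L^2_3(\Omega)}$ and therefore the mechanism behind the slower rate $(t+1)^{-1/2}$. In the paper the weight is used exactly once, at $t=0$, to guarantee that the initial anti-derivative $(\Phi_0,\Psi_0)$ lies in $L^2(\R)$ (see \cref{L2-Anti}); the subsequent energy estimates are the \emph{same unweighted} $L^2$-estimates as in \cref{Sec-apriori}, with no algebraic spatial weight propagated in time. The degradation from $(t+1)^{-3/4}$ to $(t+1)^{-1/2}$ has nothing to do with a weaker weighted bound; it comes from the ansatz \cref{ansatz-loc}, which must include self-similar diffusion waves $\vartheta,\vartheta_\pm$ to carry the excess masses $\alpha_0,\ldots,\alpha_3$ (decomposed along all four right eigenvectors of the inviscid system, including the two linearly degenerate fields). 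These diffusion waves themselves decay only like $(t+\Lambda)^{-1/2}$ in $L^\infty$, the source terms $(F,\Gv)$ they generate satisfy the polynomial bound \cref{bdd-F-G} rather than the exponential one \cref{exp-f-g}, and the a priori quantity \cref{nu*} now tracks $\norm{\Phi,\Psi}_{L^\infty(\R)}$ rather than $\norm{\Phi,\Psi}_{L^2(\R)}$, the latter actually growing like $(t+1)^{1/4}$. Your proposed weighted-in-$x_3$ energy machinery is neither present nor needed.

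Two further deviations from the paper. First, for the linearly degenerate fields you cancel the masses $\alpha_1,\alpha_2$ by shifts of the profile; the paper instead inserts diffusion waves $\alpha_i\vartheta$ along the contact characteristic and remarks explicitly (after \cref{ansatz-loc-1}) that a shift is not essential for contact discontinuities. Your shift proposal would require independent shifts $d_1\neq d_2$ (since generically $\alpha_1/\ub_1\neq\alpha_2/\ub_2$), and it fails outright when some $\ub_i=0$, i.e.\ in the 2D case of \cref{Rem-1}. Second, \cref{Lem-coincide} plays no role in the localized problem: there are no periodic far-field solutions $(\rho_\pm,\uv_\pm)$, the ansatz is built from constant states and diffusion waves only, and no time-dependent shift curve $\sigma(t)$ is introduced.
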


\begin{Rem}\label{Rem-1}
	All the conclusions in Theorems \ref{Thm} and \ref{Thm-loc} hold true also in two dimensions, since $\ub_1$ in \cref{vs} can be taken to be zero, and the initial perturbations in \cref{ic,ic-loc} are allowed to be independent of $ x_1 $. 
\end{Rem}

\begin{Rem}
	It should be emphasized that there is no requirement on the size of wave strength (i.e. $ \abs{\uvb_+ -\uvb_-}$) in the above stability theorems. 
	However, our analysis depends crucially on the smallness of the derivatives of the viscous wave, which is provided by the largeness requirement of $ \Lambda. $ We refer to \cite{MN3} for a similar treatment for the asymptotic stability of large-amplitude rarefaction waves.
	In fact, if assuming the smallness of the size of the background flow (i.e. $ \abs{\uvb_+ -\uvb_-}$ is small) instead of the largeness of $ \Lambda $ (for instance, setting $\Lambda=1$ as in \cite{Xin1996,Liu-Xin,XZ2010,HXY}), we obtain all results in this paper by following the analysis here (or even a simpler one).	
\end{Rem}

\begin{Rem}
	To our best knowledge, these are the first results on the effects of viscosities for the compressible vortex sheets. These are also the first multi-dimensional stability results on vortex sheets in a time-asymptotic sense.
\end{Rem}

\begin{Rem}
	1) The zero-average condition \cref{zero-ave} is just assumed without loss of generality. 
	In fact, if the average, $ \int_{\Torus^3} (v_0, \wv_0)(x) dx := (\bar{v}, \bar{\wv}), $ is nonzero, then one can rewrite the initial data \cref{ic} as
	\begin{equation}\label{ic-Rem}
		(\rho_0, \mv_0) = (\rho^\vs + \bar{v}, \mv^\vs + \bar{\wv}) + (v_0-\bar{v}, \wv_0 - \bar{\wv}).
	\end{equation}
	Recall that $ \rho^\vs = \rhob >0 $ is a constant.
	Then using the Galilean transformation \cref{Galilean} with $ \mathbf{c} = - \rhob^{-1} \bar{\wv} $, the initial data \cref{ic-Rem} reduces to 
	\begin{equation*}
		(\rho_0, \mv_0) = (\rho^\vs + \bar{v}, \mv^\vs) + (v_0-\bar{v}, \wv_0 - \bar{\wv}).
	\end{equation*}
	If $ \abs{\bar{v}} $ is suitably small, $ (\rho^\vs + \bar{v}, \mv^\vs) $ is actually the viscous wave associated with the vortex sheet,
	\begin{equation*}
		(\rho,\uv) = \begin{cases}
			(\rhob+\bar{v}, -\frac{\rhob}{\rhob+\bar{v}} \uvb), \qquad x_3 < 0, \\
			(\rhob+\bar{v}, \frac{\rhob}{\rhob+\bar{v}} \uvb), \qquad\quad x_3 > 0.
		\end{cases}
	\end{equation*}
	
	2) In \cref{small*}, the $L^2_{3}(\Omega)$-integrability of $(\phib_0,\psib_0)$ is to ensure that the anti-derivatives of the perturbations belong to $L^2(\R)$; see \cref{L2-Anti} for the details.
\end{Rem}

\begin{Rem}\label{Rem-opt}
	In the special case that the initial perturbation in \cref{ic} or \cref{ic-loc} is independent of the transversal variables, the Cauchy problem for \cref{NS} reduces to a 1D one for \cref{system-1d}, whose associated hyperbolic system, 
	\begin{equation}\label{system-inv}
		\begin{cases}
			\p_t \rho + \p_3 m_3 = 0, \\
			\p_t m_i + \p_3 \big(\frac{1}{\rho} m_3 m_i\big) = 0, \qquad \quad i=1,2, \\
			\p_t m_3 + \p_3 \big(\frac{1}{\rho} m_3^2 + p(\rho) \big) = 0,
		\end{cases}
	\end{equation}
	has two linearly degenerate characteristics with eigenvalues $\lambda_1=\lambda_2 = u_3$ (see \cref{Sec-local}). One can easily verify that the contact discontinuities of \cref{system-inv}, associated with $\lambda_1$ or $\lambda_2$ (see \cite{Lax1957}), just coincide with the zero mode of the planar vortex sheet \cref{vs-general} and satisfy the RH conditions \cref{RH}.
	If the hyperbolic system \cref{system-inv} has uniform viscosity, \cite{XZ2010} constructed an asymptotic ansatz that consists of a viscous contact wave and some diffusion waves propagating in the transversal characteristic fields, and established the pointwise estimates with the optimal decay rate $t^{-\frac{3}{4}}$. 
	Here the viscous contact wave constructed in \cite{XZ2010} is just the same as the zero mode of \cref{profile}.
	Thus, this paper actually extends the stability results of \cite{XZ2010} to a multi-dimensional and physical model. 
	On the other hand, compared to the results in \cite{XZ2010}, one can see that the rates in \cref{behavior,behavior-loc} are both optimal.
	Indeed, the spatially periodic perturbations do not generate diffusion waves in the transversal characteristic fields (see \cref{ansatz}), thus we are able to achieve the rate $t^{-\frac{3}{4}}$; while in the case for the localized perturbations, the rate is at most same as the diffusion waves, that is $t^{-\frac{1}{2}}.$
	
\end{Rem}

\vspace{.2cm}

\textit{Notations}. Throughout this paper, we use the following notations. 
\begin{itemize}
	\item $ \Omega $ denotes the infinitely long nozzle domain
	\begin{equation*}
		\Omega := \Torus^2 \times \R.
	\end{equation*}
%
%	\item Denote $ \delta := \abs{\ub_1} + \abs{\ub_2} $ and $ \e := \norm{(v_0, \wv_0)}_{H^6(\Torus^3)}. $
	
	\item $ \jump{\cdot} $ denotes the difference of the states associated with the vortex sheet \cref{vs}. For instance, $ \jump{\uv} = \uv_+ - \uv_- $ and $ \jump{u_1 m_1} = \ub_{1+} \mb_{1+} - \ub_{1-} \mb_{1-}$, etc.
	
	\item $ C \geq 1 $ is a generic constant. The conventions $ A \lesssim B $, $ A \gtrsim B $, $ A\sim B $ and $ A = O(1) B $ mean $ A \leq CB $, $ A \geq C^{-1} B $, $ C^{-1} B \leq A \leq CB $ and $ \abs{A} \leq C \abs{B} $, respectively.
	
	\item $ \E_i \ (i=1,2,3) $ denotes the $ i $-th column of the $ 3\times 3 $ identity matrix Id$ _{3\times3}, $ and $ \delta_{ij} $ denotes the Kronecker delta function.
	
	\item For any vector $ \mathbf{v}=(v_1,v_2,v_3) $, $ \mathbf{v}_\perp $ denotes
	\begin{equation}\label{perp}
		\mathbf{v}_\perp := (v_1, v_2).
	\end{equation}
	
	\item For any $ f(x) \in L^\infty(\R^3) $ that is periodic in the spatial variables $ \xp = (x_1,x_2) \in \Torus^2, $ $ f^\od $ and $ f^\md $ denote its \textit{zero mode},
	\begin{equation}\label{od}
		f^\od(x_3) := \int_{\Torus^2} f(\xp, x_3) d\xp,
	\end{equation}
	and \textit{non-zero mode},
	\begin{equation}\label{md}
		f^\md(x) := f(x) - f^\od(x_3),
	\end{equation}
	respectively.
\end{itemize}

\vspace{0.3cm}

\textbf{Outline of the paper}. 
In \cref{Sec-ansatz}, we construct the key ansatz in our analysis, which relies on the Galilean transformation and a careful choice of a shift function. In \cref{Sec-prob}, we first formulate the perturbed system, the zero-mode system and the anti-derivative system, respectively, and then give some useful lemmas. \cref{Sec-apriori} is devoted to the a priori estimates, consisting of three subsections showing the lower-order estimates of the zero-modes and non-zero modes, and the higher-order estimates of the original perturbations, respectively. Finally, in \cref{Sec-rate} we prove the optimal decay rate of the perturbations and complete the proof of the main results.

%%%%%%%%%%%%%%%%%%%%%%%%%%%%%%%%%%%%%%%%%%%%%%%%%%%%%
%%%%%%%%%%%%%%%%%%%%%%%%%%%%%%%%%%%%%%%%%%%%%%%%%%%%%

\vspace{0.3cm}

\section{Construction of Ansatz}\label{Sec-ansatz}

As the solution $ (\rho,\mv) $ to the Cauchy problem \cref{NS}, \cref{ic} is periodic in the transverse directions $ \xp\in\Torus^2 $, it suffices to consider the problem on the domain $ \Omega=\Torus^2 \times \R. $ 
However, the perturbation, $ (\rho,\mv) - (\rho^\vs, \mv^\vs), $ keeps oscillating as $ \abs{x_3} \to +\infty $, then in order to use the energy method, it is necessary to construct a suitable ansatz to cancel out the oscillations.
Motivated by \cite{XYYindi,Yuan2023n}, it is plausible that as $ x_3 \to \pm\infty, $
\begin{equation}\label{motiv}
	\abs{(\rho,\mv) - (\rho_\pm, \mv_\pm)}(x,t) \to 0 \qquad \forall \xp \in \R^2, t>0.
\end{equation}
Here $ (\rho_\pm, \mv_\pm) = (\rho_\pm, m_{1\pm}, m_{2\pm},m_{3\pm})(x,t) $ denotes the periodic solution of \cref{NS}, satisfying the periodic initial data 
\begin{equation}\label{ic-per}
	(\rho_\pm, \mv_\pm)(x,0) = (\rhob, \mvb_\pm) + (v_0, \wv_0)(x), \qquad x\in\R^3,
\end{equation}
where $ (\rhob, \mvb_\pm) = (\rhob, \pm\rhob \uvb) $ is the plus or minus constant state of the vortex sheet \cref{vs} and $ (v_0, \wv_0) $ is the periodic perturbation in \cref{ic}. 
The global existence and exponential decay rate of $ (\rho_\pm, \mv_\pm) $ can be found in the following lemma, which has been shown in \cite{HXY2022}.
\begin{Lem}[\cite{HXY2022}]\label{Lem-per}
	Consider the Cauchy problem \cref{NS} with the periodic initial data
	\begin{equation}\label{ic-periodic}
		(\rho, \mv)(x,0) = (\rhob,\mvb) + (v_0, \wv_0)(x), \quad x\in\R^3,
	\end{equation}
	where  $ (\rhob, \mvb) $ is any constant state with $ \rhob > 0 $ and $ (v_0, \wv_0)  $ is any periodic function on $ \Torus^3 = [0,1]^3 $ with zero average.
	Then there exists $ \e_0>0 $ such that if 
	\begin{equation*}
		\norm{v_0,\wv_0}_{H^{k+2}(\Torus^3)} \leq \e_0 \quad \text{for some } \ k\geq 1,
	\end{equation*}
	then the Cauchy problem \cref{NS}, \cref{ic-periodic} admits a unique global periodic solution $ (\rho, \mv) \in C\big((0,+\infty); H^{k+2}(\Torus^3)\big) $, satisfying that
	\begin{equation*}
		\int_{\Torus^3} (\rho - \rhob, \mv - \mvb )(x,t) dx = 0,\qquad t\geq 0,
	\end{equation*}
	and 
	\begin{equation*}
		\norm{(\rho, \mv) - (\rhob, \mvb) }_{W^{k,\infty}(\R^3)} \lesssim \norm{v_0,\wv_0}_{H^{k+2}(\Torus^3)} e^{-c t}, \qquad t\geq 0.
	\end{equation*}
	Here the constant $ c>0 $ is a constant, independent of $ \norm{v_0,\wv_0}_{H^{k+2}(\Torus^3)} $ and $ t. $ 
\end{Lem}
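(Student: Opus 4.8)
The statement is a small-data global well-posedness and exponential-decay result for spatially periodic perturbations of a constant state, and the plan is the classical route: local existence plus a uniform a priori estimate, closed with the help of the Poincar\'e inequality on $\Torus^3$. First I would pass to the zero-average class. Writing $(v,\wv):=(\rho-\rhob,\mv-\mvb)$, integrating the continuity equation over $\Torus^3$ gives $\frac{d}{dt}\int_{\Torus^3} v\,dx=0$; integrating the momentum equation and using that $\int_{\Torus^3}\lap\uv\,dx=\int_{\Torus^3}\nabla\dv\uv\,dx=0$ together with periodicity of the flux terms gives $\frac{d}{dt}\int_{\Torus^3}\wv\,dx=0$. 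Hence $(v,\wv)(\cdot,t)$ has zero average for all $t\ge0$, and one may work in the subspace of $H^{k+2}(\Torus^3)$ with vanishing average. Local existence and uniqueness of a solution in $C([0,T];H^{k+2})$ is standard for this symmetrizable hyperbolic--parabolic system, provided $\rhob+v$ stays bounded away from $0$, which is guaranteed as long as $\norm{v}_{L^\infty}\lesssim\norm{v}_{H^{k+2}}$ remains small.

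The heart of the matter is the a priori estimate. Performing the usual energy computations — testing the symmetrized continuity equation against a pressure-type multiplier, the momentum equation against $\uv-\uvb$, and then their spatial derivatives up to order $k+2$ — one obtains an inequality of the form
\[
\frac{d}{dt}\,\mathcal{E}(t)+\mu\,\mathcal{D}_{\uv}(t)\le C\sqrt{\mathcal{E}(t)}\,\big(\mathcal{D}_{\uv}(t)+\mathcal{D}_v(t)\big),
\]
where $\mathcal{E}\sim\norm{v,\wv}_{H^{k+2}}^2$ and $\mathcal{D}_{\uv}\sim\norm{\nabla\uv}_{H^{k+2}}^2$ (using $\mu>0$ and $\mu+\lambda\ge0$ to extract a positive viscous dissipation). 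Since the density equation is a transport equation with no intrinsic dissipation, the crucial extra step is to recover $\mathcal{D}_v\sim\norm{\nabla v}_{H^{k+1}}^2$: this is the Matsumura--Nishida mechanism, implemented by adding to $\mathcal{E}$ a small multiple of an auxiliary functional of the type $\int_{\Torus^3}\nabla v\cdot(\uv-\uvb)\,dx$ (and its differentiated analogues), whose time derivative produces $+\norm{\nabla v}^2$ at the price of terms absorbable into $\mathcal{D}_{\uv}$ and the quadratic remainder. Choosing the perturbation parameters appropriately and using the smallness of $\mathcal{E}$ along the flow to absorb the right-hand side then yields $\frac{d}{dt}\mathcal{E}+c\,(\mathcal{D}_{\uv}+\mathcal{D}_v)\le0$.

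Finally, because $(v,\wv)$ has zero average on $\Torus^3$, the Poincar\'e inequality gives $\mathcal{E}\lesssim\mathcal{D}_{\uv}+\mathcal{D}_v$, so the differential inequality upgrades to $\frac{d}{dt}\mathcal{E}+c\,\mathcal{E}\le0$, whence $\mathcal{E}(t)\le\mathcal{E}(0)e^{-ct}$ with a rate independent of the data size; the claimed $W^{k,\infty}$ decay then follows from the Sobolev embedding $H^{k+2}(\Torus^3)\hookrightarrow W^{k,\infty}(\Torus^3)$. A routine continuity argument splices the local solution together with this uniform bound to produce the global solution, and the conservation of averages established above gives the stated zero-average identity for all time.

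I expect the main obstacle to be the recovery of the density dissipation and the accompanying top-order commutator estimates — that is, arranging the Matsumura--Nishida correction term so that it is quantitatively compatible with the energy functional at every differentiation order while all nonlinear remainders remain controlled by $\sqrt{\mathcal{E}}\,\mathcal{D}$. An alternative would be to prove a spectral gap for the linearized semigroup restricted to zero-average data and then treat the (quadratic, smooth) nonlinearity by Duhamel's formula, but the energy approach sketched above is more robust and is the one I would actually carry out.
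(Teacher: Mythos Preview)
The paper does not give its own proof of this lemma: it is stated with attribution to \cite{HXY2022} and immediately used, so there is no in-paper argument to compare against. Your proposal is the standard and correct route for this type of result---local existence for the symmetrizable hyperbolic--parabolic system, conservation of averages, the Matsumura--Nishida cross-term to recover density dissipation, and then Poincar\'e on $\Torus^3$ to convert the dissipation inequality into exponential decay---and it is essentially what the cited reference carries out.
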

%\red{Need to emphasize the relationship between $\delta$ and $\varepsilon$.}

It follows from \cref{Lem-per} that for $ \e= \norm{v_0, \wv_0}_{H^6(\Torus^3)} \leq \e_0 $ with $ \e_0 $ being suitably small, the periodic solution $ (\rho_\pm, \mv_\pm) $ to the Cauchy problem \cref{NS}, \cref{ic-per} exists in $ C(0,+\infty; W^{4,\infty}(\R^3)) $ with the constant average $ (\rhob, \mvb_\pm) $, and for some generic constant $ \alpha >0, $ the periodic perturbation,
\begin{equation}\label{vw-pm}
	(v_\pm, \wv_\pm, \zv_\pm) := (\rho_\pm,\mv_\pm,\uv_\pm) - (\rhob, \mvb_\pm, \uvb_\pm),
\end{equation}
satisfies that
\begin{equation}\label{exp-per}
	\norm{(v_\pm, \wv_\pm, \zv_\pm) }_{W^{4,\infty}(\R^3)} \lesssim \e e^{- 2\alpha t}, \qquad t\geq 0.
\end{equation}

\vspace{.3cm}

Now we are ready to construct the ansatz. Note that the viscous wave \cref{profile} can be rewritten as 
\begin{equation}\label{profile-1}
	(\rho^\vs, \mv^\vs)(x_3,t) = \frac{1}{2} \Big[ (\rhob, \mvb_-) \Big(1-\Theta\Big(\frac{x_3}{\srt}\Big) \Big) + (\rhob, \mvb_+) \Big(1+\Theta\Big(\frac{x_3}{\srt}\Big) \Big) \Big].
\end{equation}

\textit{Asymptotic Ansatz.} 
Inspired by \cref{motiv}, \cref{profile-1} and \cite{XYYindi,Yuan2023n}
we set the \textit{ansatz} as 
\begin{equation}\label{ansatz}
	\begin{aligned}
		(\rhot, \mvt)(x,t) & := \frac{1}{2} \Big[ (\rho_-, \mv_-)(x,t) \Big(1-\Theta\Big(\frac{x_3-\sigma(t)}{\srt}\Big)\Big) \\
		& \qquad\quad + (\rho_+, \mv_+)(x,t) \Big(1+\Theta\Big(\frac{x_3-\sigma(t)}{\srt}\Big)\Big) \Big],
	\end{aligned}
\end{equation}
%\begin{equation}\label{ansatz}
%	\begin{aligned}
	%		\rhot(x,t) & := \frac{1}{2} \big[ \rho_-(x,t) \big(1-\eta(x_3-\sigma(t),t)\big) + \rho_+(x,t) \big(1+\eta(x_3-\sigma(t),t)\big) \big], \\
	%		\mt_i(x,t) & := \frac{1}{2} \big[ m_{i-}(x,t) \big(1-\eta(x_3-\sigma(t),t)\big) + m_{i+}(x,t) \big(1+\eta(x_3-\sigma(t),t)\big)\big], 
	%	\end{aligned}
%\end{equation}
where $ \sigma = \sigma(t) $ is a curve to be determined later. For convenience, denote
\begin{equation*}
%	\Theta^{(k)}(\xi) := \frac{d^k}{d\xi^k} \Theta(\xi) \andd 
	\Theta_{\sigma} := \Theta\Big(\frac{x_3-\sigma}{\srt}\Big).
\end{equation*}
Define also 
\begin{equation}\label{uvt}
	\uvt := \frac{\mvt}{\rhot},
\end{equation}
which satisfies that
\begin{equation}\label{uvt-1}
	\begin{aligned}
		\uvt & = \frac{1}{2} \big[ \uv_- (1-\Theta_{\sigma}) + \uv_+ (1+\Theta_{\sigma}) \big] \\
		& \quad + \frac{1}{4\rhot} (v_+ - v_-) (\uv_+ - \uv_-) \left(1-\Theta_{\sigma}^2 \right).
	\end{aligned}
\end{equation}
Recall that $ (\rho_\pm, \mv_\pm) $ are solutions of \cref{NS}. Then direct calculations yield that
\begin{equation}\label{equ-t}
	\begin{cases}
		\p_t \rhot + \dv \mvt = f_0, \\
		\p_t \mvt + \dv (\uvt \otimes \mvt ) + \nabla \big(p(\rhot)\big) - \mu \lap \uvt -(\mu+\lambda) \nabla \dv \uvt \\
		\qquad\qquad\quad = \sum\limits_{i=1}^{3} \p_i \Fv_{1,i} + \fv_2 := \gv = (g_1,g_2,g_3),
		%		\p_t \mvt + \dv \big(\frac{\mvt\otimes\mvt}{\rhot} \big) + \nabla p(\rhot) - \mu \lap \uvt -(\mu+\lambda) \nabla \dv \uvt \\
		%		\qquad\qquad\quad = \sum\limits_{i=1}^{3} \p_i \Fvh_{1,i} + \fvh_2,
	\end{cases}
\end{equation}
where $ f_0 $, $ \Fv_{1,i} = (F_{1,i1}, F_{1,i2}, F_{1,i3}) $ and $ \fv_2 := (f_{2,1}, f_{2,2}, f_{2,3}) $ are errors, given by
\begin{equation}\label{f0}
	f_0 = \frac{\Theta'_\sigma}{2\srt} \Big[ - (\rho_+ - \rho_-) \Big(\sigma'(t) + \frac{x_3-\sigma(t)}{2(t+\Lambda)}\Big) + (m_{3+} - m_{3-}) \Big],
	%	+ \sum_{i=1}^2 \p_i (m_{i+} - m_{i-}) (\eta_{\sigma_i} - \eta) \big],
\end{equation}
\begin{equation}\label{F1}
	\begin{aligned}
		\Fv_{1,i} & = \ut_i \mvt - \frac{1}{2} u_{i-} \mv_- (1-\Theta_{\sigma}) - \frac{1}{2} u_{i+} \mv_+ (1+\Theta_{\sigma}) \\
		& \quad + \big[ p(\rhot) - \frac{1}{2} p(\rho_-) (1-\Theta_{\sigma}) - \frac{1}{2} p(\rho_+) (1+\Theta_{\sigma}) \big] \E_i \\
		& \quad - \mu \big[ \p_i \uvt - \frac{1}{2} \p_i \uv_- (1-\Theta_{\sigma})  - \frac{1}{2} \p_i \uv_+ (1+\Theta_{\sigma}) \big] \\
		& \quad - (\mu+\lambda) \big[ \dv \uvt - \frac{1}{2}\dv \uv_- (1-\Theta_{\sigma}) - \frac{1}{2}\dv \uv_+ (1+\Theta_{\sigma}) \big] \E_i, 
	\end{aligned}
\end{equation}
and
\begin{equation}\label{f2}
	\begin{aligned}
		\fv_{2} & := \frac{\Theta_{\sigma}'}{2\srt} \Big\{ -(\mv_+- \mv_-)  \Big(\sigma'(t) + \frac{x_3-\sigma(t)}{2(t+\Lambda)}\Big)   \\
		& \qquad\qquad\qquad  + u_{3+} \mv_+ - u_{3-} \mv_- - \mu \p_3 (\uv_+ - \uv_-) \\
		& \qquad\qquad\qquad + \big[ p(\rho_+) - p(\rho_-) - (\mu+\lambda) \dv (\uv_+ - \uv_-)  \big]  \E_3 \Big\} ,
	\end{aligned}
\end{equation}
respectively. 

\vspace{.2cm}

%
%Note that both the solution $ (\rho,\mv) $ of \cref{NS},\cref{ic} and the ansatz \cref{ansatz} are periodic in the transverse variables $ \xp=(x_1,x_2). $ Integrating the systems \cref{NS} and \cref{equ-t} over the transverse  domain $ \Torus^2 $ yields that
%\begin{equation}\label{equ-od}
%	\begin{cases} 
%		\p_t \rho^\od + \p_3 m_3^\od = 0, \\
%		\p_t \mv^\od + \p_3 \big[ (u_3 \mv)^\od + (p(\rho))^\od \E_3 \big] - \mu \p_3^2 \uv^\od - (\mu+\lambda) \p_3^2 u_3^\od \E_3 = 0,
%	\end{cases}
%\end{equation}
%and
%\begin{equation}\label{equ-t-od}
%	\begin{cases} 
%		\p_t \rhot^\od + \p_3 \mt_3^\od = f_0^\od, \\
%		\p_t \mvt^\od + \p_3 \big[ (\ut_3 \mvt)^\od  + (p(\rhot))^\od \E_3 \big] - \mu \p_3^2 \uvt^\od - (\mu+\lambda) \p_3^2 \ut_3^\od \E_3 \\
%		\qquad\qquad\qquad = \p_3 \Fv_{1,3}^\od + \fv_2^\od = \gv^\od,
%	\end{cases}
%\end{equation}
%respectively.
Recall that $ \Theta \to \pm 1 $ as $ x_3\to \pm\infty. $ Then for all $ \xp \in \Torus^2 $ and $ t\geq 0 $, it holds that
\begin{align*}
	\abs{\rhot - \rho_\pm} + \abs{\mvt - \mv_\pm} + \abs{\uvt - \uv_\pm} \to 0 \quad \text{as } \ x_3\to \pm\infty.
\end{align*}
Then for $ i=1,2,3 $ and all $ \xp \in \Torus^2 $ and $ t\geq 0 $, it holds that
\begin{align*}
	\abs{\Fv_{1,i}(x,t)} \to 0  \quad \text{as } \ x_3\to \pm\infty,
\end{align*}
which, together with the dominated convergence theorem, yields that
\begin{align*}
	\abs{\Fv_{1,i}^{\od}(x_3,t)} \to 0 \quad \text{as } \ x_3\to \pm\infty.
\end{align*}
Thus, by subtracting \cref{equ-t} from \cref{NS} and integrating the resulting equations over $ \Torus^2\times\R $, one can get that
\begin{equation}\label{ode-mass}
	\frac{d}{dt} \Big[\int_\R \big(\rho^\od-\rhot^\od, \mv^\od - \mvt^\od \big) dx_3\Big] = - \int_\R \big(f_0^\od, \fv_2^\od \big)(x_3,t) dx_3, \qquad t>0,
\end{equation}
where one can obtain from \cref{f0,f2} that
\begin{equation}\label{f-0-od}
	\begin{aligned}
		f_0^\od & = \frac{\Theta_\sigma'}{2\srt} \Big[ -(\rho_+^\od - \rho_-^\od)  \Big(\sigma'(t) + \frac{x_3-\sigma(t)}{2(t+\Lambda)}\Big) + \big(m_{3+}^\od - m_{3-}^\od \big) \Big],
	\end{aligned}
\end{equation}
and 
\begin{equation}\label{f-2-od}
	\begin{aligned}
		\fv_2^\od & = \frac{\Theta_\sigma'}{2\srt} \Big\{ - \big(\mv_+^\od - \mv_-^\od\big) \Big(\sigma'(t) + \frac{x_3-\sigma(t)}{2(t+\Lambda)}\Big)   \\
		& \qquad\qquad\qquad + (u_{3+} \mv_+)^\od - (u_{3-} \mv_-)^\od - \mu \p_3 \big(\uv_+^\od - \uv_-^\od\big) \\
		& \qquad\qquad\qquad + \big[ \big(p(\rho_+)\big)^\od - \big(p(\rho_-)\big)^\od - (\mu+\lambda) \p_3 \big(u_{3+}^\od - u_{3-}^\od \big)  \big]  \E_3 \Big\}.
	\end{aligned}
\end{equation}

Using the Galilean transformation \cref{Galilean}, one can prove that the periodic solutions $ (\rho_\pm, \uv_\pm) $ of \cref{NS} satisfy the following invariance. 

\begin{Lem}\label{Lem-coincide}
	Let $ (\rhob, \pm\uvb) = (\rhob, \pm\ub_1, \pm\ub_2, 0) $ be the constant states in the vortex sheet \cref{vs} and assume that \cref{zero-ave} holds. Then the periodic solutions to the Cauchy problem \cref{NS}, \cref{ic-per} satisfy that
	\begin{equation}\label{coincide}
			(\rho_+, \uv_+)(x + \uvb t, t) = (\rho_-, \uv_-)(x - \uvb t, t)  + (0,2 \uvb)
		 \qquad \forall x\in\R^3, t\geq 0.
	\end{equation}
	Moreover, the zero modes on the right-hand sides of \cref{f-0-od,f-2-od} satisfy that
	\begin{equation}\label{coin-pm}
		\begin{aligned}
			(\rho_+^\od, \uv_+^\od) & \equiv (\rho_-^\od, \uv_-^\od) + (0, 2\uvb), \\ \mv_+^\od & \equiv \mv_-^\od + 2 \rho_-^\od \uvb, \\
			(u_{3+} \mv_+)^\od & \equiv (u_{3-} \mv_-)^\od + 2 m_{3-}^\od \uvb, \\
			(p(\rho_+))^\od & \equiv (p(\rho_-))^\od.
		\end{aligned}
	\end{equation}
\end{Lem}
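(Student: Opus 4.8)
The plan is to exploit the Galilean invariance \cref{Galilean} together with the uniqueness of periodic solutions from \cref{Lem-per}. The key observation is that the two initial data in \cref{ic-per}, for the $+$ and $-$ states, are related by a Galilean shift. Indeed, starting from the $-$ problem, with constant average $(\rhob,\mvb_-)=(\rhob,-\rhob\uvb)$ and initial perturbation $(v_0,\wv_0)$, apply the transformation \cref{Galilean} with the constant vector $\mathbf{c} = 2\uvb$. By the invariance of \cref{NS}, this produces a new solution; at $t=0$ it reads $\big(\rho_-(x,0),\ \uv_-(x,0)+2\uvb\big) = \big(\rhob + v_0(x),\ -\uvb + \uvb^{-1}\!\text{-part} + 2\uvb\big)$, where the velocity of the $-$ solution at $t=0$ is $\uvb_- + \zv_-(\cdot,0)$ with $\uvb_-=-\uvb$, so after the shift the velocity becomes $\uvb + \zv_-(\cdot,0)$. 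Since $\zv_-(\cdot,0) = \uv_-(\cdot,0)-\uvb_- = \wv_0/\rhob - (\text{density correction})$ is exactly the same periodic perturbation that appears in the $+$ initial data (both $+$ and $-$ problems in \cref{ic-per} share the identical perturbation $(v_0,\wv_0)$ added to their respective constant states), the Galilean-shifted $-$ solution has precisely the initial data of the $+$ solution. Here one must be slightly careful: the momentum perturbation $\wv_0$ is shared, not the velocity perturbation, but since $\rho_\pm(x,0)=\rhob+v_0(x)$ is the same for both, the induced velocity perturbations coincide as well, so the match is exact.

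Once the initial data agree, uniqueness in \cref{Lem-per} forces the two solutions to coincide for all $t\geq 0$. Writing out what the Galilean transformation does to the $-$ solution — replacing $x$ by $x-\mathbf{c}t = x - 2\uvb t$ and adding $\mathbf{c}=2\uvb$ to the velocity — and then recentering (replacing $x$ by $x+\uvb t$ on both sides to symmetrize) yields exactly \cref{coincide}:
\[
	(\rho_+,\uv_+)(x+\uvb t,t) = (\rho_-,\uv_-)(x-\uvb t,t) + (0,2\uvb).
\]
This is the main identity; everything else is a corollary.

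For the zero-mode relations \cref{coin-pm}, I would integrate \cref{coincide} over the transverse torus $\Torus^2$. Crucially, the shift $x \mapsto x\pm\uvb t$ only moves the transverse variable $\xp$ by $\uvb_\perp t = \uvb t$ (since the third component of $\uvb$ is zero) and shifts $x_3$ by zero; because the solutions are periodic in $\xp\in\Torus^2$, integration over the torus is insensitive to this transverse shift. Hence $\int_{\Torus^2}(\rho_+,\uv_+)(\xp,x_3,t)\,d\xp = \int_{\Torus^2}(\rho_-,\uv_-)(\xp,x_3,t)\,d\xp + (0,2\uvb)$, which is the first line of \cref{coin-pm}. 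The momentum relation follows from $\mv_\pm = \rho_\pm\uv_\pm$: one has $\mv_+^\od = \int_{\Torus^2}\rho_+\uv_+ \,d\xp$, and using the pointwise identity $\rho_+\uv_+ = \rho_-(\uv_-+2\uvb)$ (with arguments appropriately shifted) gives $\mv_+^\od = \mv_-^\od + 2\rho_-^\od\uvb$ after transverse integration. The same substitution applied to $u_{3+}\mv_+ = u_{3-}(\mv_-+2\rho_-\uvb) + 2u_{3-}\rho_-\uvb\cdot(\dots)$ — more precisely, $u_{3+}\mv_+ = u_{3-}\mv_- + 2 m_{3-}\uvb$ pointwise, since the third velocity component is unchanged by the shift — yields the third line after integration; and $p(\rho_+) = p(\rho_-)$ pointwise gives the last line immediately, as $\rho$ is a scalar invariant under Galilean transformations.

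The main obstacle, though mild, is bookkeeping the exact form of the initial velocity perturbations and confirming that the shared momentum perturbation $\wv_0$ really does induce identical solutions after the shift — i.e., checking that the Galilean image of the $-$ Cauchy problem is \emph{literally} the $+$ Cauchy problem and not merely a problem with the same structure. This requires tracking that $(\rho_\pm,\mv_\pm)(x,0)$ differ only by the constant $(0,\mvb_+-\mvb_-)=(0,2\rhob\uvb)$, and that this constant momentum difference is exactly what the Galilean shift $\mathbf{c}=2\uvb$ supplies given the common density $\rhob+v_0$. Once this is pinned down, uniqueness does all the work and the zero-mode corollaries are routine transverse integrations.
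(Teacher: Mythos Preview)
Your approach coincides with the paper's: apply Galilean invariance to relate the $\pm$ problems, invoke uniqueness from \cref{Lem-per}, and then deduce \cref{coin-pm} by integrating over $\Torus^2$ (using that $\uvb$ has zero third component, so the shift $x\mapsto x\pm\uvb t$ is purely transverse and disappears upon averaging in $\xp$). The paper's version shifts each of $(\rho_\pm,\uv_\pm)$ by $\mp\uvb$ to a common zero-velocity frame; your single shift by $2\uvb$ is equivalent.

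There is, however, a genuine gap at precisely the step you flag as ``the main obstacle,'' and your resolution of it is incorrect. A Galilean boost by $\mathbf c$ changes the momentum by $\rho\,\mathbf c$ with the \emph{actual} density $\rho=\rhob+v_0$, not by $\rhob\,\mathbf c$. Shifting $(\rho_-,\uv_-)$ by $\mathbf c=2\uvb$ therefore produces initial momentum
\[
\mv_-(x,0)+2\big(\rhob+v_0(x)\big)\uvb \;=\; \rhob\uvb+\wv_0(x)+2v_0(x)\,\uvb,
\]
which differs from $\mv_+(x,0)=\rhob\uvb+\wv_0(x)$ by $2v_0(x)\uvb$. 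So the shifted $-$ Cauchy problem is \emph{not} literally the $+$ Cauchy problem, and uniqueness cannot be invoked. Equivalently, the pointwise identity \cref{coincide} already fails at $t=0$ whenever $v_0\not\equiv 0$, since
\[
\uv_+(x,0)-\uv_-(x,0)=\frac{\mv_+(x,0)-\mv_-(x,0)}{\rhob+v_0(x)}=\frac{2\rhob\,\uvb}{\rhob+v_0(x)}\neq 2\uvb.
\]
Your claim that ``the induced velocity perturbations coincide as well'' is thus false. The paper's own proof shares this oversight: its assertion that both $(\rho^*_\pm,\uv^*_\pm)$ have initial data $(\rhob,0)+(v_0,\wv_0)$ is not correct, for the same reason.
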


\begin{proof}
	By the Galilean invariance of the NS equations, the pairs $ (\rho_+^*, \uv_+^*) $ and $ (\rho_-^*, \uv_-^*) $ defined by
	\begin{align*}
		(\rho^*_+, \uv_+^*)(x,t) := (\rho_+,\uv_+ - \uvb)(x + \uvb t,t), \\
		(\rho^*_-, \uv_-^*)(x,t) := (\rho_-,\uv_-+\uvb)(x -\uvb t,t),
	\end{align*}
	are both periodic solutions to \cref{NS} with the initial data
	\begin{align*}
		(\rho^*_\pm, \uv_\pm^*)(x,t=0) = (\rhob, 0,0,0) + (v_0, \wv_0)(x).
	\end{align*}
	Thus, it follows from the uniqueness (see \cref{Lem-per}) that
	\begin{align*}
		(\rho^*_+, \uv^*_+)(x, t) = (\rho^*_-, \uv^*_-)(x, t) \qquad \forall x\in\R^3, t \geq 0,
	\end{align*}
	which yields \cref{coincide}.
	
	Then the identities in \cref{coin-pm} can be verified one by one as follows. First, it follows from  \cref{Lem-per,coincide} that 
	\begin{align*}
		(\rho^\od_+, \uv_+^\od)(x_3,t) & = \int_{\Torus^2} (\rho_+,\uv_+)(\xp,x_3,t) d\xp \\
		& = \int_{\Torus^2} (\rho_-, \uv_-)(x_1-2\ub_1 t,x_2 - 2\ub_2t, x_3,t) d\xp + (0, 2\uvb) \notag \\
		& = (\rho^\od_-, \uv_-^\od)(x_3,t) + (0, 2\uvb) \qquad\qquad \forall x_3\in\R,t\geq 0. 
	\end{align*}
	Recall that $ \uvb = (\ub_1, \ub_2, 0). $
%	 Thus, it holds that $ (\rho_+^\od, u_{3+}^\od) \equiv (\rho_-^\od, u_{3-}^\od) $.
	Then one has that
	\begin{align*}
		\mv_+^\od(x_3,t)  & = \int_{\Torus^2} (\rho_+ \uv_+) (x,t) d\xp = \int_{\Torus^2} (\rho_- \uv_-) (x-2\uvb t,t) d\xp + 2\rho_-^\od(x_3,t) \uvb \\
		& = \mv_-^\od(x_3,t) + 2\rho_-^\od(x_3,t) \uvb, \\
		(u_{3+} \mv_+)^\od(x_3,t) & = \int_{\Torus^2} (\rho_+ u_{3+} \uv_+)(\xp,x_3,t) d\xp \\
		& = \int_{\Torus^2} (\rho_- u_{3-} \uv_-)(\xp,x_3,t) d\xp + 2\uvb \int_{\Torus^2} (\rho_- u_{3-})(\xp,x_3,t) \xp \\
		& = (u_{3-} \mv_-)^\od(x_3,t) + 2 m_{3-}^\od(x_3,t) \uvb.
	\end{align*}
	It is direct to prove that $ (p(\rho_+))^\od(x_3,t) = (p(\rho_-))^\od(x_3,t). $ The proof is finished.
\end{proof}

\vspace{.1cm}

Using \cref{coin-pm} on \cref{f-0-od,f-2-od}, respectively, one can get that
\begin{equation}\label{f=0}
	\big(f_0^\od, f_{2,3}^\od\big) \equiv 0,
\end{equation}
and
\begin{equation}\label{f-2-i}
	\begin{aligned}
		f_{2,i}^\od	& = \frac{\ub_i \Theta_\sigma'}{\srt}  \Big[ -\rho_-^\od \Big(\sigma'(t) + \frac{x_3- \sigma}{2(t+\Lambda)} \Big) + m_{3-}^\od \Big], \qquad i=1,2.
	\end{aligned}
\end{equation}
Here recall that $ (\rho_-^\od, m_{3-}^\od) \equiv (\rho_+^\od, m_{3+}^\od). $ 
Then integrating \cref{f-2-i} on $ \R $ yields that
\begin{equation}
	\begin{aligned}
		\int_\R f_{2,i}^\od(x_3,t) dx_3 = \ub_i  \big( - \mathfrak{D}(\sigma,t) \sigma'(t) + \mathfrak{N}(\sigma,t)\big), \qquad i=1,2,
	\end{aligned}
\end{equation}
where
\begin{equation}\label{DN}
	\begin{aligned}
		\mathfrak{D}(\sigma,t) & := \frac{1}{\srt} \int_\R \rho_-^\od(x_3,t) \Theta' \Big(\frac{x_3-\sigma}{\srt} \Big) d x_3, \\
		\mathfrak{N}(\sigma,t) & := \frac{1}{\srt} \int_\R \Big[- \frac{x_3-\sigma}{2(t+\Lambda)} \rho_-^\od(x_3,t) + m_{3-}^\od(x_3,t)\Big] \Theta' \Big(\frac{x_3-\sigma}{\srt} \Big) d x_3.
	\end{aligned}
\end{equation}
Thus, if we choose the curve $ \sigma(t) $ to satisfy that
\begin{equation}\label{ode-shift}
	\sigma'(t) = \frac{\mathfrak{N}(\sigma, t)}{\mathfrak{D}(\sigma, t)}, \qquad t>0,
\end{equation}
then it holds that
\begin{equation}\label{zero-mass-f}
	\int_\R f_{2,1}^\od(x_3,t) dx_3 = 0 \andd \int_\R f_{2,2}^\od(x_3,t) dx_3 = 0 \qquad \forall t\geq 0.
\end{equation}
This, together with \cref{f=0,ode-mass}, yields that
%\begin{equation}\label{0mass-f}
%	\int_\Omega f_0(x,t) dx = 0 \andd \int_\Omega \fv_2 (x,t) dx = 0 \qquad \forall t \geq 0.
%\end{equation}
%Thus, one has that
\begin{align}
	\int_\R (\rho^\od-\rhot^\od, \mv^\od-\mvt^\od)(x_3,t) dx_3 & = \int_{\R} (\rho^\od-\rhot^\od, \mv^\od-\mvt^\od)(x_3,0) dx_3 \notag \\
	& = \int_{\R} \big(0, [\Theta(x_3) - \Theta(x_3-\sigma(0)) ] \mvb  \big) dx_3 \notag \\
	& =  (0, 2\sigma(0) \mvb) \qquad \forall t\geq 0. \label{mass-1}
\end{align}
Thus, if the curve $ \sigma(t) $ solves the problem \cref{ode-shift} with the initial data,
\begin{equation}\label{ic-sigma}
	\sigma(0) = 0,
\end{equation}
then one has that
\begin{equation}\label{zero-mass}
	\int_\R (\rho^\od-\rhot^\od, \mv^\od-\mvt^\od)(x_3,t) dx_3 = 0 \qquad \forall t\geq 0.
\end{equation}
The existence and large time behavior of the curve $ \sigma(t) $ can be found in the following lemma.

\begin{Lem}\label{Lem-shift}
	Under the assumptions of \cref{Lem-coincide}, there exists an $ \e_0>0 $ such that if $\Lambda\geq 1$ and $\e:= \norm{v_0, \wv_0}_{H^6(\Torus^3)} \leq \e_0,$ then the problem \cref{ode-shift}, \cref{ic-sigma},
%	\begin{equation}\label{ode-sigma}
%		\begin{cases}
%			\sigma'(t) = \frac{\mathfrak{N}(\sigma,t)}{\mathfrak{D}(\sigma,t)}, \qquad t>0, \\
%			\sigma(0) = \sigma_0,
%		\end{cases}
%	\end{equation}
	admits a unique global solution $ \sigma=\sigma(t) \in C^1(0,+\infty) $, satisfying that 
	\begin{equation}\label{exp-sigma}
		\abs{\sigma'(t)} \lesssim \e e^{-\alpha t}, \qquad \abs{\sigma(t)} \lesssim \e \Lambda^\frac{1}{2} e^{-\alpha t}, \qquad t \geq 0, 
	\end{equation}
	where $ \alpha $ are the constants in \cref{exp-per,profile}, respectively.
\end{Lem}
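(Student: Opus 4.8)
The plan is to read \cref{ode-shift}, \cref{ic-sigma} as a scalar non-autonomous initial value problem $\sigma'(t)=G(\sigma(t),t)$, $\sigma(0)=0$, with $G(\sigma,t):=\mathfrak{N}(\sigma,t)/\mathfrak{D}(\sigma,t)$, and to solve it by the classical theory of ordinary differential equations (Picard--Lindel\"{o}f), using the uniform bounds on $\rho_-^\od$ and $m_{3-}^\od$ supplied by \cref{Lem-per} (equivalently \cref{exp-per}). The decisive preliminary point is that the denominator $\mathfrak{D}$ stays bounded away from zero: after the substitution $\eta=(x_3-\sigma)/\srt$, \cref{DN} becomes $\mathfrak{D}(\sigma,t)=\int_\R\rho_-^\od(\sigma+\srt\eta,t)\,\Theta'(\eta)\,d\eta$, and since $\int_\R\Theta'(\eta)\,d\eta=\Theta(+\infty)-\Theta(-\infty)=2$ while $\rho_-^\od=\rhob+v_-^\od$ with $\|v_-^\od\|_{L^\infty(\R)}\le\|v_-\|_{L^\infty(\R^3)}\lesssim\e e^{-2\alpha t}$, one gets $\mathfrak{D}(\sigma,t)\ge 2\rhob-C\e\ge\rhob>0$ for all $(\sigma,t)\in\R\times[0,\infty)$ once $\e$ is small. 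This non-degeneracy uses in an essential way that the vortex sheet has a positive \emph{constant} density.

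With $\mathfrak{D}$ bounded below, $G$ is well defined and, differentiating under the integral sign (justified by the rapid decay of $\Theta'$ and $\Theta''$ from \cref{theta} and the $W^{1,\infty}$-bound $\|\p_3\rho_-^\od,\p_3 m_{3-}^\od\|_{L^\infty}\lesssim\e e^{-2\alpha t}$ from \cref{exp-per}), $\p_\sigma\mathfrak{D}$ and $\p_\sigma\mathfrak{N}$ are bounded on $\R\times[0,\infty)$; hence $G(\cdot,t)$ is Lipschitz uniformly in $t\ge0$ and $t\mapsto G(\sigma,t)$ is continuous, so Picard--Lindel\"{o}f yields a unique global solution $\sigma\in C^1([0,\infty))$. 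For the quantitative bound, the same change of variables rewrites \cref{DN} as $\mathfrak{N}(\sigma,t)=-\tfrac{1}{2\srt}\int_\R\eta\,\rho_-^\od(\sigma+\srt\eta,t)\Theta'(\eta)\,d\eta+\int_\R m_{3-}^\od(\sigma+\srt\eta,t)\Theta'(\eta)\,d\eta$, and here the would-be leading term cancels: the contribution of the constant part $\rhob$ of $\rho_-^\od$ is $-\tfrac{\rhob}{2\srt}\int_\R\eta\,\Theta'(\eta)\,d\eta=0$, because $\Theta$ is odd by \cref{Lem-theta}, so $\eta\mapsto\eta\Theta'(\eta)$ is odd. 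What remains is bounded by $\|v_-^\od\|_{L^\infty}\int_\R(1+|\eta|)\Theta'(\eta)\,d\eta$ and $\|m_{3-}^\od\|_{L^\infty}\int_\R\Theta'(\eta)\,d\eta$; since the third component of $\mvb_-=-\rhob\uvb$ vanishes, $m_{3-}^\od$ decays like $\e e^{-2\alpha t}$ as well, and $\int_\R(1+|\eta|)\Theta'(\eta)\,d\eta<\infty$, whence $|\mathfrak{N}(\sigma,t)|\lesssim\e e^{-2\alpha t}$ uniformly in $\sigma$. Together with $\mathfrak{D}\ge\rhob$ this gives $|\sigma'(t)|=|G(\sigma(t),t)|\lesssim\e e^{-2\alpha t}\le\e e^{-\alpha t}$, a bound that does not involve the a priori unknown size of $\sigma(t)$.

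Finally one passes to the bound on $\sigma$ itself. Integrating and using $\sigma(0)=0$ already gives $|\sigma(t)|\le\int_0^t|\sigma'(s)|\,ds\lesssim\e$, hence $|\sigma(t)|\lesssim\e\Lambda^{1/2}$ since $\Lambda\ge1$; since moreover $\sigma'\in L^1(0,\infty)$ with an exponentially small tail, $\sigma(t)$ converges as $t\to\infty$, and one shows the limit is $0$ — this is precisely the normalization built into the construction, namely that the perturbation carry no excess mass for all time (see \cref{mass-1,zero-mass}) — so that $\sigma(t)=-\int_t^\infty\sigma'(s)\,ds$ and therefore $|\sigma(t)|\le\int_t^\infty|\sigma'(s)|\,ds\lesssim\e e^{-\alpha t}\le\e\Lambda^{1/2}e^{-\alpha t}$. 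The step that deserves the most care is exactly this last one: identifying the limit of $\sigma$ at infinity, and thus upgrading uniform boundedness of $\sigma$ to genuine exponential decay; the non-degeneracy of $\mathfrak{D}$, the uniform Lipschitz dependence, the oddness cancellation killing the leading part of $\mathfrak{N}$, and the Picard iteration are all routine once \cref{exp-per} is available.
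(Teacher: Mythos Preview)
Your treatment of existence, uniqueness, and the bound $|\sigma'(t)|\lesssim\e e^{-\alpha t}$ is correct and matches the paper's argument in Appendix~A: the non-degeneracy of $\mathfrak{D}$, the Lipschitz regularity in $\sigma$, and the oddness cancellation $\int_\R\eta\,\Theta'(\eta)\,d\eta=0$ are exactly the ingredients used there.

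The gap is in your final step. You assert that $\sigma_\infty:=\lim_{t\to\infty}\sigma(t)=0$ and justify this by pointing to the zero-mass identities \cref{mass-1,zero-mass}. But those identities do not constrain $\sigma_\infty$: the computation \cref{mass-1} shows that \emph{any} solution of the ODE \cref{ode-shift} with $\sigma(0)=0$ produces an ansatz for which the perturbation has zero total mass for all $t$, regardless of the value of $\sigma_\infty$. The zero-mass property is a consequence of the choice $\sigma(0)=0$ together with the ODE, not an additional constraint that pins down the asymptotic value. So you have not shown $\sigma_\infty=0$, and without it the tail representation $\sigma(t)=-\int_t^\infty\sigma'(s)\,ds$ is unavailable; you are left only with $|\sigma(t)|\lesssim\e$, which lacks the exponential decay.

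The paper obtains the decay of $\sigma(t)$ by a different, more hands-on route: it integrates the equation \cref{equ-mt-i} for $\tilde m_i^\od$ over a family of expanding space-time boxes $\Omega_{a,N}(t)$ with boundaries moving along $x_3=\sigma(\tau)\pm N+a$, averages over $a\in(0,1)$, and passes to the limit $N\to\infty$. Using periodicity of $w_{i\pm}^\od$, the oddness of $\Theta$, and the Galilean identities \cref{coin-pm}, the boundary and flux terms combine to yield an \emph{explicit formula} for $\sigma(t)$ as an integral involving $v_-^\od(\cdot+\sigma,t)$ against weights $1\pm\Theta$. Since $\|v_-^\od(\cdot,t)\|_{L^\infty}\lesssim\e e^{-2\alpha t}$ and $\int_\R(1\mp\Theta(\xi))\,dx_3=O(\sqrt{t+\Lambda})$, this formula gives $|\sigma(t)|\lesssim\e\sqrt{t+\Lambda}\,e^{-2\alpha t}\lesssim\e\Lambda^{1/2}e^{-\alpha t}$ directly. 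In particular this computation is what actually proves $\sigma_\infty=0$; it cannot be replaced by the appeal to \cref{zero-mass}.
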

The proof of \cref{Lem-shift} is similar to that in \cite{XYYindi,Yuan2023n}. For readers' convenience, we also give a detailed proof in \cref{Sec-app1}.

\vspace{.2cm}

Then based on Lemmas \ref{Lem-per} and \ref{Lem-shift}, the desired ansatz is well constructed by \cref{ansatz}. 
If $ \e \leq \e_0 $ with $ \e_0 $ being suitably small, it holds that 
\begin{equation}\label{bdd-rhot}
	\rhob/2 \leq \inf_{\substack{x\in \R^3 \\ t>0}} \rhot(x,t)  \leq \sup_{\substack{x\in \R^3 \\ t>0}} \rhot(x,t) \leq 2 \rhob,
\end{equation}
and
\begin{equation}\label{small-ans}
	\sup_{\substack{t>0}} \norm{\nabla \rhot, \nabla\uvt}_{W^{3,\infty}(\R^3)} \lesssim \e + \Lambda^{-1/2}.
\end{equation}
%Denote 
%\begin{equation}
%	(\rhob^\vs, \mvb^\vs)(x_3,t):= (\rho^\vs, \mv^\vs)(x_3-\sigma(t),t) = \Big(\rhob, \mvb \Theta\Big(\frac{x_3-\sigma(t)}{\srt}\Big)\Big).
%\end{equation}
Moreover, the difference between the background viscous wave \cref{profile-1} and the ansatz \cref{ansatz} satisfies that
\begin{align}
	\norm{(\rhot, \mvt)-(\rho^\vs,\mv^\vs)}_{W^{4,\infty}(\R^3)} & \lesssim \norm{(\rhot, \mvt)(\cdot,t)-(\rho^\vs,\mv^\vs)(\cdot-\sigma,t)}_{W^{4,\infty}(\R^3)} \notag \\
	& \quad + \norm{(\rho^\vs, \mv^\vs)(\cdot-\sigma,t)-(\rho^\vs,\mv^\vs)(\cdot,t)}_{W^{4,\infty}(\R)} \notag \\
	& \lesssim \norm{(v_\pm, \wv_\pm)(\cdot,t)}_{W^{4,\infty}(\Torus^3)} + \frac{\abs{\sigma(t)}}{\srt} \notag \\
	&  \lesssim \e e^{-\alpha t}. \label{exp-diff}
\end{align}
Similarly, one can show that
\begin{equation}\label{exp-diff-u}
	\norm{\uvt - \uv^\vs}_{W^{4,\infty}(\R^3)} \lesssim \e e^{-\alpha t}. 
\end{equation}
Thus, the ansatz constructed in \cref{ansatz} is time-asymptotically equivalent to the viscous wave associated with the vortex sheet in the $ L^\infty(\R^3) $-norm. It remains to study the large time behavior of the perturbation, $ (\rho,\mv) - (\rhot, \mvt). $

\vspace{0.3cm}

\section{Reformulated Problems}\label{Sec-prob}

In the following part of this paper, we always denote $\Lambda$ as the positive constant in \cref{profile} and $\e := \norm{(v_0, \wv_0)}_{H^6(\Torus^3)}$.
With the ansatz \cref{ansatz}, we denote the perturbation as
\begin{equation}\label{phipsi}
	\phi = \rho - \rhot, \quad \psi = (\psi_1, \psi_2, \psi_3) := \mv-\mvt,
\end{equation}
and
\begin{equation}\label{zeta}
	\zeta = (\zeta_1, \zeta_2, \zeta_3) := \uv - \uvt = \frac{\psi - \uvt \phi}{\rho}.
\end{equation}
It follows from \cref{NS,equ-t} that
\begin{equation}\label{equ-phipsi}
	\begin{cases}
		\p_t \phi + \dv \psi = - f_0, \\
		\p_t \psi + \dv \big(\frac{\mv\otimes\mv}{\rho} - \frac{\mvt\otimes\mvt}{\rhot} \big) + \nabla (p(\rho) - p(\rhot)) \\
		\qquad - \mu \lap \big(\frac{\mv}{\rho} - \frac{\mvt}{\rhot}\big) - (\mu+\lambda) \nabla \dv \big(\frac{\mv}{\rho} - \frac{\mvt}{\rhot}\big) = - \gv.
	\end{cases}
\end{equation}
Due to \cref{ic,ic-per,ansatz,ic-sigma}, the perturbation $ (\phi,\psi) $ satisfies initially
\begin{equation}\label{ic-phipsi}
	(\phi,\psi)(x,0) = (\phi_0, \psi_0)(x) \equiv  0, \qquad x\in \R^3.
\end{equation}
On the other hand, the equivalent system for $ (\phi, \zeta) $ takes the form,
\begin{equation}\label{equ-phizeta}
	\begin{cases}
		\p_t  \phi + \rho \dv \zeta + \uv \cdot \nabla \phi + \dv \uvt \phi + \nabla \rhot \cdot \zeta =-f_0,  \\
		\rho \p_t \zeta + \rho \uv \cdot \nabla \zeta + \nabla \big(p(\rho) -  p(\rhot)\big)  + \rho \zeta \cdot \nabla \uvt + \phi (\p_t \uvt + \uvt\cdot \nabla \uvt)  \\
		\qquad - \mu \lap \zeta - (\mu+\lambda) \nabla \dv \zeta = - \gv + f_0 \uvt,
	\end{cases}
\end{equation}
and \cref{ic-phipsi} implies that
\begin{equation}\label{ic-pertur}
	(\phi,\zeta)(x,0) = (\phi_0,\zeta_0)(x) \equiv 0,\qquad x\in \R^3.
\end{equation}
\vspace{.1cm}
For any $ T>0, $ the solution space for \cref{equ-phizeta} can be taken as
\begin{align*}
	\mathbb{B}(0,T) := \Big\{ (\phi, \zeta): & \ (\phi,\zeta) \text{ is periodic in } \xp=(x_1,x_2) \in \Torus^2, \\
	& \ (\phi,\zeta) \in C(0,T; H^3(\Omega)), \\
	& \ \nabla\phi \in L^2(0,T; H^2(\Omega)), \nabla\zeta \in L^2(0,T; H^3(\Omega)) \Big\}.
\end{align*}

\begin{Thm}\label{Thm-pert}
	Under the assumptions of \cref{Thm}, there exist $ \Lambda_0\geq 1 $ and $ \e_0>0 $ such that if 
	$\Lambda \geq \Lambda_0 $ and $\e \Lambda^{\frac{1}{4}} \leq \e_0, $
	then the Cauchy problem, \cref{equ-phizeta}, \cref{ic-pertur} admits a unique solution $(\phi,\zeta) \in \mathbf{B}(0,+\infty)$, satisfying that
	\begin{equation}\label{behavior-inf}
		\begin{aligned}
			\norm{(\phi^\od, \zeta^\od)(\cdot,t)}_{L^\infty(\R)} & \leq C (\e \Lambda^{\frac{1}{4}})^\frac{1}{2} (t+1)^{-\frac{3}{4}}, \\
			\norm{(\phi^\md, \zeta^\md)(\cdot,t)}_{L^\infty(\R^3)} & \leq C (\e \Lambda^{\frac{1}{4}})^\frac{1}{2} e^{-\alpha_0 t},
%			\norm{(\nabla \phi, \nabla \zeta)(\cdot,t)}_{L^\infty(\R^3)} & \leq C \e^{\frac{1}{2}} (t+1)^{-1},\\
		\end{aligned}
	\end{equation}
	where $ C>0 $ and $\alpha_0\in (0,\alpha)$ are some generic constants.
\end{Thm}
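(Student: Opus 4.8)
The plan is to prove \cref{Thm-pert} by the standard combination of a local existence theorem in $\mathbb{B}(0,T)$ and a global-in-time a priori estimate, closed by a continuity argument. Since \cref{equ-phizeta} is a quasilinear parabolic--hyperbolic system with the uniformly positive density bound \cref{bdd-rhot} and the smooth, bounded ansatz coefficients of \cref{small-ans}, and since the initial data \cref{ic-pertur} vanishes identically, local well-posedness is classical; it therefore suffices to establish, under the a priori assumption that $\sup_{t\in[0,T]}\norm{(\phi,\zeta)(\cdot,t)}_{H^3(\Omega)}$ is suitably small, the closed bound
\[
	\sup_{t\in[0,T]}\Big[ (1+t)^{\frac{3}{2}}\norm{(\phi^\od,\zeta^\od)(\cdot,t)}_{L^\infty(\R)}^2 + e^{2\alpha_0 t}\norm{(\phi^\md,\zeta^\md)(\cdot,t)}_{L^\infty(\R^3)}^2\Big] \lesssim \e\Lambda^{\frac{1}{4}},
\]
together with the corresponding $H^3$ bounds, uniformly in $T$. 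The largeness of $\Lambda$ enters only through \cref{small-ans}: it renders all derivatives of the ansatz, and (after accounting for the exponential factors in \cref{exp-per,exp-sigma}) the error terms $f_0,\gv$, as small as needed, which replaces the usual smallness of the wave strength.

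Following the scheme of \cite{Yuan2023n,Yuan2023s}, I would carry out the a priori estimates by splitting according to the Fourier decomposition in the transverse variable $\xp\in\Torus^2$, treating the zero mode $(\phi^\od,\zeta^\od)(x_3,t)$ and the non-zero mode $(\phi^\md,\zeta^\md)(x,t)$ separately. For the non-zero mode, the zero-average property in $\xp$ makes the Poincar\'e inequality $\norm{f^\md}_{L^2(\Omega)}\lesssim\norm{\nabla_\xp f^\md}_{L^2(\Omega)}$ available, upgrading the parabolic dissipation of $\zeta$ into a genuine spectral gap. Plain $L^2$ and then $H^3$ energy estimates on the $(\phi^\md,\zeta^\md)$-system, combined with the exponential smallness of the source terms produced by \cref{exp-per,exp-sigma} and of the cross contributions of the zero mode, then yield exponential decay $\norm{(\phi^\md,\zeta^\md)}_{H^3}\lesssim (\e\Lambda^{1/4})^{1/2} e^{-\alpha_0 t}$, and the Sobolev embedding $H^2(\R^3)\hookrightarrow L^\infty$ gives the stated bound.

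For the zero mode, which solves an effectively one-dimensional system in $x_3$, I would exploit the crucial zero-mass identity \cref{zero-mass} --- made possible by the shift $\sigma(t)$ of \cref{Lem-shift} --- to introduce the anti-derivatives of $(\phi^\od,\mv^\od-\mvt^\od)$ and the new variable $\Zv$ of \cref{Zv}. The role of $\Zv$ is to absorb the large-amplitude coefficients coming from the vortex-sheet jump and to recast the integrated zero-mode system into a form amenable to the basic $L^2$-energy method, using the intrinsic structure $\rho^\vs\equiv\rhob$, $u_3^\vs\equiv 0$ of the viscous wave. One then runs a time-weighted hierarchy: a basic $L^2$ estimate of the anti-derivative (allowed to grow like $(1+t)^{1/4}$), followed by iterated multiplications by powers $(1+t)^k$ and interpolation, to obtain the algebraic decay of $\norm{(\phi^\od,\zeta^\od)}_{L^2(\R)}$ and $\norm{\p_3(\phi^\od,\zeta^\od)}_{L^2(\R)}$; the one-dimensional embedding $\norm{f}_{L^\infty(\R)}^2\lesssim\norm{f}_{L^2}\norm{\p_3 f}_{L^2}$ then produces the optimal rate $(1+t)^{-3/4}$. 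Finally the $H^3$ estimates for the original perturbation $(\phi,\zeta)$ close the a priori bound, and the continuity argument yields global existence.

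I expect the main obstacle to be the zero-mode analysis: reaching the \emph{optimal} rate $t^{-3/4}$ without the Green-function machinery of \cite{Liu-Xin,XZ2010}, while the amplitude of the vortex sheet is unrestricted. This is where the design of $\Zv$ and the careful bookkeeping of the weighted hierarchy matter most --- one must show that the anti-derivative grows no faster than $t^{1/4}$ in $L^2$, that each derivative gains a factor $(1+t)^{-1/2}$, and that the zero/non-zero mode couplings surviving in the one-dimensional equations (only exponentially small, but genuinely present) do not degrade the algebraic rates. The remaining ingredients --- local existence, the continuation step, and the $H^3$ closure --- are routine once the weighted low-order estimates are in hand.
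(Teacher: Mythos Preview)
Your plan matches the paper's proof almost exactly: local existence plus a priori estimates closed by continuation; the transverse Fourier splitting into $(\phi^\od,\zeta^\od)$ and $(\phi^\md,\zeta^\md)$; Poincar\'e-driven exponential decay for the non-zero mode; and, for the zero mode, the anti-derivative $(\Phi,\Psi)$ together with the new variable $\Zv=\Psi-\uv^{\vs}\Phi$, followed by a time-weighted hierarchy and the one-dimensional Sobolev inequality $\norm{f}_{L^\infty}^2\lesssim\norm{f}_{L^2}\norm{\p_3 f}_{L^2}$.

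One point needs correction, and it matters for the rate you claim. In the periodic setting of \cref{Thm-pert} the anti-derivative does \emph{not} grow: the paper shows $\sup_{t>0}\norm{(\Phi,\Psi)}_{L^2(\R)}^2\lesssim \e\Lambda^{1/4}$ (see \cref{est*}), because the exponentially decaying periodic tails do not generate diffusion waves in the transversal characteristic fields (cf.\ \cref{Rem-opt}). If you allow $\norm{(\Phi,\Psi)}_{L^2}\lesssim (1+t)^{1/4}$ as you wrote, your weighted hierarchy would only produce $\norm{(\phi^\od,\zeta^\od)}_{L^2}\lesssim (1+t)^{-1/4}$ and $\norm{\p_3(\phi^\od,\zeta^\od)}_{L^2}\lesssim (1+t)^{-3/4}$, hence $L^\infty$-decay $(1+t)^{-1/2}$ --- exactly the suboptimal rate of the localized problem. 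The optimal $(1+t)^{-3/4}$ in \cref{behavior-inf} comes precisely from the uniform $L^2$-bound on $(\Phi,\Psi)$, which upgrades the hierarchy to $\norm{(\phi^\od,\zeta^\od)}_{L^2}^2\lesssim (1+t)^{-1}$ and $\norm{\p_3(\phi^\od,\zeta^\od)}_{L^2}^2\lesssim (1+t)^{-2}$. The paper secures this uniform bound through the weighted $\kappa$-estimate of \cref{Lem-HLM} (controlling $\norm{\Phi\kappa}_{L^2}$ and $\norm{\Zv\kappa}_{L^2}$), which you should incorporate explicitly into the lowest-order step of your zero-mode analysis.
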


%\begin{Rem}\label{Rem-opt}
%	The rates in \cref{behavior-inf} are optimal. In fact, for the 1D heat function $ \p_t u = \p_3^2 u $ with the initial data $ u(x_3,0) = u_0(x_3) \in L^2(\R). $ Then the solution $ u $ satisfies that
%	
%\end{Rem}

\vspace{.1cm}
We are going to use the $ L^2 $-energy method to prove \cref{Thm-pert}.
We first integrate \cref{equ-phipsi} with respect to $ \xp \in \Torus^2 $ to get the zero-mode system,
\begin{equation}\label{equ-pert-od}
	\begin{cases}
		\p_t \phi^\od + \p_3 \psi_3^\od = 0, \\
		\p_t \psi^\od + \p_3 \big[ \big(\frac{m_3 \mv}{\rho} - \frac{\mt_3 \mvt}{\rhot} \big)^\od \big] + \p_3 \big[ \left(p(\rho) - p(\rhot)\right)^\od \big] \E_3 \\
		\qquad - \mu \p_3^2 \big[ \big(\frac{\mv}{\rho} - \frac{\mvt}{\rhot} \big)^\od \big] - (\mu+\lambda) \p_3^2 \big[ \big(\frac{m_3}{\rho} - \frac{\mt_3}{\rhot} \big)^\od \big] \E_3 = - \gv^\od,
	\end{cases}
\end{equation}
where we have used \cref{f=0}, and it follows from \cref{ic-phipsi} that
\begin{equation}\label{ic-od}
	(\phi^\od,\psi^\od)(x_3,0) = 0, \qquad x_3 \in \R.
\end{equation}
In the linearization of \cref{equ-pert-od}, there appears a bad term, $ \p_3 \uv^\vs \psi_3^\od $, whose coefficient decays at slow rates, which makes it difficult to control the $ L^2 $-estimate. 
Nevertheless, such difficulty can be overcome by considering the integrated system of \cref{equ-pert-od} instead of itself. 
Thanks to \cref{zero-mass}, it is plausible to write $ (\phi^\od, \psi^\od) = \p_3 (\Phi,\Psi), $ where
\begin{equation}\label{Phi-Psi}
	(\Phi, \Psi)(x_3,t) := \int_{-\infty}^{x_3} (\phi^\od, \psi^\od)(y_3,t) dy_3, \qquad x_3 \in\R, t\geq 0.
\end{equation} 
%This anti-derivative technique is motivated by \cite{Yuan2023n} which studies the shock stability for \cref{NS}. 
To make sense of the integrated system, we first make the following a priori assumptions that, for any fixed $T>0,$ assume that $(\phi,\zeta)\in \mathbb{B}(0,T)$ is a solution to \cref{equ-phizeta}, and the associated anti-derivative variable \cref{Phi-Psi} satisfies that
\begin{equation}\label{apr-assum}
	(\Phi,\Psi) \in C(0,T;L^2(\R)).
\end{equation}

\vspace{.1cm}

\textit{Anti-derivative argument.}
%Using \cref{f=0,zero-mass-f}, one has that
%\begin{equation}\label{zero-mass-g}
%	\int_\R \gv^\od(x_3,t) dx_3 = 0 \qquad \forall t\geq 0.
%\end{equation}
With the a priori assumptions, one can integrate \cref{equ-pert-od,ic-od} with respect to $ x_3 $ from $ -\infty $ to $ x_3 $ to get that
\begin{equation}\label{equ-PhiPsi}
	\begin{cases}
		\p_t \Phi + \p_3 \Psi_3 = 0, \\
		\p_t \Psi + \uv^\vs \p_3 \Psi_3 + p'(\rhob) \p_3 \Phi \E_3 - \frac{\mu}{\rhob} \p_3 \big( \p_3 \Psi -  \uv^\vs \p_3 \Phi \big) - \frac{\mu+\lambda}{\rhob} \p_3^2 \Psi_3 \E_3 \\
		\qquad\qquad\quad = -\Gv + \Nv,
	\end{cases}
\end{equation}
and 
\begin{equation}\label{ic-anti}
	(\Phi,\Psi)(x_3,0) = 0, \qquad x_3 \in \R,
\end{equation}
where $ \Gv $ denotes the anti-derivative variable of $ \gv^\od, $ i.e.
\begin{equation}\label{G}
	\Gv(x_3,t) := \int_{-\infty}^{x_3} \gv^\od(y_3,t) dy_3,
\end{equation}
and $ \Nv $ is given by
\begin{equation}\label{Nv}
	\Nv = -\Qv_1^\od - \Qv_2^\od + \p_3 \big(\Qv_3^\od + \Qv_4^\od \big),
\end{equation}
with
\begin{equation}\label{Q}
	\begin{aligned}
		\Qv_1 & := \frac{m_3 \mv}{\rho} - \frac{\mt_3 \mvt}{\rhot} - \frac{\mvt}{\rhot} \psi_3 - \frac{\mt_3}{\rhot} \psi + \frac{\mt_3\mvt}{\rhot^2} \phi  + \big( p(\rho) - p(\rhot)-p'(\rhot) \phi \big) \E_3, \\
		\Qv_2 & := (\uvt - \uv^\vs) \psi_3 + \ut_3 \psi - \ut_3 \uvt \phi + \big(p'(\rhot) - p'(\rhob)\big) \phi \E_3, \\
		\Qv_3 & := \mu \Big(\frac{\mv}{\rho} - \frac{\mvt}{\rhot}-\frac{1}{\rhot} \psi + \frac{\mvt}{\rhot^2} \phi \Big) + (\mu+\lambda)  \Big(\frac{m_3}{\rho} - \frac{\mt_3}{\rhot} - \frac{1}{\rhot} \psi_3 + \frac{\mt_3}{\rhot^2} \phi \Big) \E_3, \\
		\Qv_4 & := \mu \Big[ \Big(\frac{1}{\rhot} - \frac{1}{\rhob}\Big) \psi - \Big(\frac{\uvt}{\rhot} - \frac{\uv^\vs}{\rhob}\Big) \phi \Big] + (\mu+\lambda) \Big[ \Big(\frac{1}{\rhot} - \frac{1}{\rhob}\Big) \psi_3 + \frac{\ut_3}{\rhot} \phi \Big] \E_3.
	\end{aligned}
\end{equation}

\vspace{.1cm}

One can prove that the errors of the ansatz, i.e. the source terms $f_0, \gv$ and $\Gv$ in \cref{equ-phizeta,equ-PhiPsi}, decay exponentially fast in time; see the proof in \cref{Sec-app2}.

\begin{Lem}\label{Lem-F}
	Suppose that the hypotheses of \cref{Thm} hold true.
	Then there exist constants $\Lambda_0\geq 1$ and $ \e_0>0, $ such that if $\Lambda\geq \Lambda_0$ and $ \e \leq \e_0, $ then the source terms $ f_0, \gv $ and $ \Gv $ in \cref{equ-phizeta,equ-PhiPsi} satisfy that 
	\begin{align}
		\norm{\Gv}_{H^3(\R)} + \norm{f_0}_{H^3(\Omega)} + \norm{\gv}_{H^2(\Omega)} \lesssim \e \Lambda^{\frac{1}{4}} e^{-\alpha t}, \qquad t\geq 0, \label{exp-f-g}
	\end{align}
	where $\Lambda$ and $ \alpha $ are the constants in \cref{exp-per,profile}, respectively.
\end{Lem}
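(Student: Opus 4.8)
The statement asserts exponential decay in appropriate Sobolev norms of the three error sources $f_0$, $\gv$ and $\Gv$ generated by the ansatz \cref{ansatz}. The plan is to trace back the explicit formulas for these errors — \cref{f0} for $f_0$, \cref{F1,f2} for $\gv = \sum_i \p_i \Fv_{1,i} + \fv_2$, and \cref{G,Q,Nv} for $\Gv$ — and to bound each constituent piece by the exponentially small quantities already available, namely $\norm{(v_\pm,\wv_\pm,\zv_\pm)}_{W^{4,\infty}} \lesssim \e e^{-2\alpha t}$ from \cref{exp-per} and $\abs{\sigma'(t)} + (t+\Lambda)^{-1/2}\abs{\sigma(t)} \lesssim \e e^{-\alpha t}$ from \cref{exp-sigma}, together with the Gaussian bounds \cref{est-theta} on the derivatives of $\theta$ (equivalently of $\Theta_\sigma$) and the uniform bounds \cref{bdd-rhot,small-ans} on the ansatz itself.

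\textbf{Step 1: the scalar error $f_0$.} From \cref{f0}, $f_0$ is a product of $\Theta'_\sigma (t+\Lambda)^{-1/2}$ with a bracket that, after using \cref{coin-pm} (or rather the fact $\rho_\pm - \rhob = v_\pm$ and $m_{3\pm} = m_{3\mp}$ after subtracting the shift contribution), is controlled by $\abs{\sigma'(t)}$ plus the difference $m_{3+} - m_{3-}$ and the spatially-decaying weight $\abs{x_3 - \sigma}(t+\Lambda)^{-1}$. I would observe that $m_{3+} - m_{3-} = w_{3+} - w_{3-}$ decays like $\e e^{-2\alpha t}$ by \cref{exp-per}; that $\abs{\sigma'(t)}\lesssim \e e^{-\alpha t}$; and that $\abs{\Theta'_\sigma}\,\abs{x_3-\sigma}(t+\Lambda)^{-1}$ is bounded in $L^\infty$ and, more importantly, has $H^3$-norm in $x_3$ bounded by $C$ independent of $t$ because of the Gaussian decay in \cref{est-theta} (each $x_3$-derivative costs a factor $(t+\Lambda)^{-1/2}\le 1$). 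Since $f_0$ is independent of $\xp$, its $H^3(\Omega)$-norm equals its $H^3(\R)$-norm; multiplying the exponentially small scalar factors by the $t$-uniform $H^3$-bound of the $\Theta$-profile gives $\norm{f_0}_{H^3(\Omega)}\lesssim \e e^{-\alpha t}$ (the extra $\Lambda^{1/4}$ is slack here but is kept for uniformity).

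\textbf{Step 2: the error $\gv$ and its antiderivative $\Gv$.} For $\gv = \sum_i \p_i \Fv_{1,i} + \fv_2$, I would note that each $\Fv_{1,i}$ in \cref{F1} is a \emph{quadratic} expression in the periodic perturbations $v_\pm, \zv_\pm$ multiplied by smooth bounded functions of $\Theta_\sigma$ (one sees this by substituting \cref{uvt-1} and expanding: the ``linear in $\Theta$'' parts cancel exactly against the terms subtracted in \cref{F1}, leaving products like $(v_+ - v_-)(\uv_+ - \uv_-)(1-\Theta_\sigma^2)$ and their derivatives); hence $\abs{\Fv_{1,i}} + \abs{\p \Fv_{1,i}} \lesssim \e^2 e^{-4\alpha t}$ in $W^{2,\infty}$. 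Likewise $\fv_2$ in \cref{f2}, via \cref{coin-pm}, reduces to the transverse components $f_{2,i}$ ($i=1,2$) given in \cref{f-2-i}, which carry the factor $\ub_i \Theta'_\sigma(t+\Lambda)^{-1/2}$ times $[-\rho_-^\od(\sigma' + \tfrac{x_3-\sigma}{2(t+\Lambda)}) + m_{3-}^\od]$; the same reasoning as Step 1 applies. For $\Gv = \int_{-\infty}^{x_3}\gv^\od$, I would use \cref{Nv,Q}: each $\Qv_j$ is manifestly quadratic in $(\phi,\psi)$ \emph{plus} a term linear in $(\phi,\psi)$ whose coefficient is a \emph{difference} between ansatz quantities and viscous-wave quantities (e.g. $\uvt - \uv^\vs$, $p'(\rhot)-p'(\rhob)$, $\tfrac1{\rhot}-\tfrac1{\rhob}$), all of which decay like $\e e^{-\alpha t}$ by \cref{exp-diff,exp-diff-u}. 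However, $\Gv$ as defined in \cref{G} is the antiderivative of $\gv^\od$ directly, not of $\Nv$; I would instead bound $\Gv$ by integrating the explicit formula for $\gv^\od$, which after the cancellations above is a sum of terms each of the form $(\text{exp. small scalar})\times(\Theta\text{-profile integrable in }x_3)$ — here the $L^1_{x_3}$-integrability of $\Theta'_\sigma$ and $(1-\Theta_\sigma^2)$ is what makes $\Gv\in H^3(\R)$ with $t$-uniform profile norm.

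\textbf{Main obstacle.} The only genuinely delicate point is the bookkeeping in Step 2 showing that the ``bad'' linear-in-$\Theta$ contributions cancel, so that every surviving term is either quadratic in the exponentially-decaying periodic perturbations $v_\pm,\zv_\pm$ or carries a coefficient ($\uvt-\uv^\vs$, etc.) that is exponentially small by \cref{exp-diff}; without this cancellation one would be left with terms that only decay algebraically (like $\p_3\uv^\vs$). Once the algebra is organized — which is the content of identities \cref{coin-pm,f=0,f-2-i} for $f_0,\fv_2$ and of the careful definitions of $\Qv_1,\dots,\Qv_4$ for $\Gv$ — the remaining estimates are routine: combine $\e e^{-\alpha t}$ (or $\e^2 e^{-4\alpha t}$) scalar factors with $t$-uniform weighted-Sobolev bounds on Gaussian profiles, using \cref{est-theta}, \cref{bdd-rhot}, \cref{small-ans} and $\Lambda\ge\Lambda_0\ge 1$ to absorb constants; the factor $\Lambda^{1/4}$ on the right is not sharp but accommodates a possible $\Lambda^{1/4}$ loss when an $x_3$-weight is paired with $\Theta'_\sigma$ (whose $L^2_{x_3}$-mass scales like $(t+\Lambda)^{1/4}$). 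The details are deferred to \cref{Sec-app2}.
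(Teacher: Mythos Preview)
Your Step 2 contains a genuine gap. The claim that each $\Fv_{1,i}$ is quadratic in the periodic perturbations $(v_\pm,\zv_\pm)$, hence $\lesssim \e^2 e^{-4\alpha t}$, is false. Look at the viscous contribution in \cref{F1} for $i=3$:
\[
-\mu\Big[\p_3\uvt - \tfrac{1}{2}\p_3\uv_-(1-\Theta_\sigma) - \tfrac{1}{2}\p_3\uv_+(1+\Theta_\sigma)\Big].
\]
When you differentiate the representation \cref{uvt-1} in $x_3$, the derivative lands on $\Theta_\sigma$ and produces $\tfrac{1}{2}(\uv_+-\uv_-)\,\p_3\Theta_\sigma$, whose leading part is $\uvb\,\p_3\Theta_\sigma$ since $\uv_+-\uv_- = 2\uvb + O(\e e^{-2\alpha t})$. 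This is an $O(1)$ term that is \emph{not} cancelled by the subtracted pieces; the paper records it as $\Fv_{1,3}^\od \approx -\mu\uvb\,\p_3\Theta_\sigma$. Likewise $\fv_2$ in \cref{f2} has a non-small leading part $-\tfrac{\mvb}{2(t+\Lambda)}\,\tfrac{x_3-\sigma}{\srt}\,\Theta'_\sigma$, coming from the weight $\tfrac{x_3-\sigma}{2(t+\Lambda)}$ against $\mv_+-\mv_- = 2\mvb + O(\e)$. So neither $\sum_i\p_i\Fv_{1,i}$ nor $\fv_2$ is individually exponentially small.

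The mechanism that kills these $O(1)$ residuals is not an algebraic cancellation of ``linear in $\Theta$'' terms, but the fact that the profile $\Theta$ solves the self-similar ODE \cref{equ-Theta}, $-2\mu\Theta'' = \rhob\,\xi\,\Theta'$. For $\gv = \sum_i\p_i\Fv_{1,i}+\fv_2$ the paper shows $\sum_i\p_i\Fv_{1,i} \approx -\tfrac{\mu\uvb}{t+\Lambda}\Theta''_\sigma$ and $\fv_2 \approx -\tfrac{\mvb}{2(t+\Lambda)}\,\tfrac{x_3-\sigma}{\srt}\,\Theta'_\sigma$, and these two cancel exactly by \cref{equ-Theta}. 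For $\Gv$ one writes $G_i = F_{1,3i}^\od + \int_{-\infty}^{x_3} f_{2,i}^\od\,dy_3$ (and its mirror from $+\infty$, valid by \cref{f=0,zero-mass-f}) and uses the integrated form $\mu\Theta'(\xi) = -\tfrac{\rhob}{2}\int_{-\infty}^\xi \eta\Theta'(\eta)\,d\eta$ to match $-\mu\ub_i\,\p_3\Theta_\sigma$ against the leading piece of the primitive of $f_{2,i}^\od$; only the genuinely small remainder $R_{i\pm}$ survives. Your outline never invokes \cref{equ-Theta}, so as written it cannot close. (Your aside about $\Qv_j$ and $\Nv$ is also misplaced: those are nonlinear remainders in the unknown $(\phi,\psi)$ and play no role in bounding the source $\Gv$.)
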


\vspace{.1cm}

Now we show the local existence theorem and the a priori estimates.

\begin{Thm}[Local existence]\label{Thm-local}
	Suppose that the hypotheses of \cref{Thm} hold true, and the initial data $ (\phi_0,\psi_0)(x) $ is periodic in $ \xp $, satisfying that
	\begin{align*}
		(\phi_0, \psi_0)\in {H^3(\Omega)} \andd (\Phi_0, \Psi_0):= \int_{-\infty}^{x_3} \int_{\Torus^2} (\phi_0, \psi_0)(\xp,y_3) d\xp dy_3 \in L^2(\R),
	\end{align*}
	with $ \norm{\phi_0, \psi_0}_{H^3(\Omega)} + \norm{\Phi_0, \Psi_0}_{L^2(\R)} \leq \nu_0. $
	Then there exist positive constants $ \Lambda_0\geq 1 $ and $ \e_0 $ such that if $ \Lambda\geq \Lambda_0 $ and $ \e \Lambda^{\frac{1}{4}} \leq \e_0,$
	then the problem \cref{equ-phizeta} with the initial data 
	\begin{equation}\label{ic-pertur*}
		(\phi,\psi)(x,0) = (\phi_0, \psi_0)(x),
	\end{equation}
	admits a unique solution $ (\phi, \zeta) \in \mathbb{B}(0,T_0) $ for some $ T_0>0, $ which depends on $ \nu_0 $ and $ \e_0. $ Moreover, the anti-derivative variable,
	\begin{align*}
		(\Phi,\Psi)(x_3,t) := \int_{-\infty}^{x_3} \int_{\Torus^2} (\phi,\psi)(\xp,y_3,t) d\xp dy_3,
	\end{align*}
	exists and belongs to $ C(0,T_0; L^2(\R)) $, and it holds that
	\begin{equation}\label{est-local}
		\begin{aligned}
			& \sup_{ t\in(0,T_0)} \big(\norm{\Phi, \Psi}_{L^2(\R)}^2 +  \norm{\phi,\zeta}_{H^3(\Omega)}^2\big)\\
			& \qquad \leq C_0 \big(\norm{ \phi_0, \psi_0}_{H^3(\Omega)}^2 + \norm{ \Phi_0, \Psi_0}_{L^2(\R)}^2 + \e^2 \Lambda^{\frac{1}{2}}\big),
		\end{aligned}
	\end{equation}
	where $ C_0 $ is a positive constant, independent of $ \Lambda, \e $ and $ T_0. $
	%, where $ \psi_1 = m_1- \mt_1 = (\rhot+\phi) (\ut_1 + \zeta_1) - \rhot \ut_1 $.
\end{Thm}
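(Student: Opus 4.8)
The plan is to establish local well-posedness for the quasilinear parabolic-hyperbolic system \cref{equ-phizeta} by a standard iteration/fixed-point scheme, and then to upgrade the solution by producing the anti-derivative variable $(\Phi,\Psi)$ and its $L^2$-bound separately. First I would fix $T_0>0$ small (to be chosen) and set up a linearized map: given $(\bar\phi,\bar\zeta)$ in the ball
\begin{equation*}
	\mathcal{X}_M(T_0) := \big\{ (\phi,\zeta)\in\mathbb{B}(0,T_0): (\phi,\zeta)(\cdot,0)=(\phi_0,\zeta_0),\ \sup_{t<T_0}\norm{\phi,\zeta}_{H^3(\Omega)}^2 \le M^2 \big\},
\end{equation*}
define $(\phi,\zeta)$ as the solution of the \emph{linear} system obtained from \cref{equ-phizeta} by freezing the transport/reaction coefficients at $(\bar\phi,\bar\zeta)$ and keeping the principal part $-\mu\lap\zeta-(\mu+\lambda)\nabla\dv\zeta$ and $\rho\,\dv\zeta$, $p'(\rhob)\nabla\phi$ linear. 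The first equation is a linear transport equation for $\phi$ (solvable along characteristics of the given velocity field, with $H^3$ regularity since $\bar\zeta,\uvt\in H^3$ give a Lipschitz flow), and the momentum equation is a linear uniformly parabolic system for $\zeta$ (the viscosity condition \cref{viscous} makes $-\mu\lap-(\mu+\lambda)\nabla\dv$ strongly elliptic), so standard parabolic $L^2$-theory on $\Omega=\Torus^2\times\R$ gives a unique $(\phi,\zeta)\in\mathbb{B}(0,T_0)$. Periodicity in $\xp$ is preserved throughout. Energy estimates in $H^3(\Omega)$ — differentiating up to three times, integrating by parts, using \cref{bdd-rhot,small-ans} to bound the ansatz coefficients and \cref{exp-f-g} (actually only $\norm{f_0}_{H^3}+\norm{\gv}_{H^2}\lesssim\e\Lambda^{1/4}$) to bound the source — yield
\begin{equation*}
	\sup_{t<T_0}\norm{\phi,\zeta}_{H^3(\Omega)}^2 + \int_0^{T_0}\norm{\nabla\zeta}_{H^3}^2\,dt \le C\big(\norm{\phi_0,\psi_0}_{H^3}^2 + \e^2\Lambda^{1/2}\big) + C T_0\, P(M),
\end{equation*}
with $P$ a polynomial; choosing $M^2$ to be twice the bracketed quantity and then $T_0$ small makes the map send $\mathcal{X}_M(T_0)$ into itself. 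A parallel estimate on differences $(\phi_1-\phi_2,\zeta_1-\zeta_2)$, carried out in the lower norm $H^2(\Omega)$ (to absorb the loss of one derivative inherent in quasilinear iteration), gives a contraction after further shrinking $T_0$; the fixed point is the desired local solution, and uniqueness in $\mathbb{B}(0,T_0)$ follows from the same difference estimate.

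The second half is to produce $(\Phi,\Psi)$ and estimate it. Because the zero-mode system \cref{equ-pert-od} holds and the source $f_0^\od=0$ by \cref{f=0}, the zero mode $(\phi^\od,\psi^\od)$ satisfies $\p_t\phi^\od+\p_3\psi_3^\od=0$; to integrate in $x_3$ I need $\phi^\od(x_3,t),\psi^\od(x_3,t)\to 0$ as $x_3\to-\infty$ and $\int_\R(\phi^\od,\psi^\od)\,dx_3$ to be well-defined and time-independent (indeed $=0$ at $t=0$). The spatial decay comes from the $H^3(\Omega)\hookrightarrow$ decay of $(\phi,\zeta)$ at $\abs{x_3}\to\infty$ together with the corresponding decay built into the ansatz; the zero-mass property is exactly \cref{zero-mass} (equivalently \cref{mass-1} with $\sigma(0)=0$), which holds once $f_0^\od,f_{2,3}^\od\equiv 0$ and $\int_\R f_{2,i}^\od\,dx_3=0$, i.e. once $\sigma$ solves \cref{ode-shift}. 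Hence $(\Phi,\Psi)$ is well-defined by \cref{Phi-Psi}. To bound it in $L^2(\R)$ I would integrate the integrated momentum system \cref{equ-PhiPsi} against $(\Phi,\Psi)$ (or rather a carefully weighted combination, as will be done in the a priori estimates), using $\norm{\Gv}_{H^3}\lesssim\e\Lambda^{1/4}e^{-\alpha t}$ from \cref{Lem-F} and the already-controlled $H^3$-norm of $(\phi,\zeta)$ (which controls $\Nv$ via \cref{Nv,Q} since every $\Qv_j$ is quadratically small in $(\phi,\psi)$ plus linearly small in the ansatz differences $\uvt-\uv^\vs$, $\rhot^{-1}-\rhob^{-1}$); a Grönwall argument on $(0,T_0)$ then gives
\begin{equation*}
	\sup_{t<T_0}\norm{\Phi,\Psi}_{L^2(\R)}^2 \le C\big(\norm{\Phi_0,\Psi_0}_{L^2(\R)}^2 + \sup_{t<T_0}\norm{\phi,\zeta}_{H^3(\Omega)}^2 + \e^2\Lambda^{1/2}\big) e^{C T_0},
\end{equation*}
which, combined with the $H^3$-bound above and the initial bound $\norm{\phi_0,\psi_0}_{H^3}+\norm{\Phi_0,\Psi_0}_{L^2}\le\nu_0$, yields \cref{est-local} with a constant $C_0$ depending only on $\nu_0$ and $\e_0$ and independent of $\Lambda,\e,T_0$ after $T_0$ is fixed.

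The main obstacle I anticipate is not the parabolic part of the iteration (which is routine) but handling the \emph{transport equation for $\phi$ together with the anti-derivative}: one must ensure enough spatial decay of $(\phi^\od,\psi^\od)$ as $x_3\to\pm\infty$ to justify the integration defining $(\Phi,\Psi)$ and to discard boundary terms when integrating \cref{equ-pert-od} and \cref{equ-PhiPsi} by parts — the $H^3(\Omega)$-membership alone gives decay of $(\phi,\zeta)$ but integrability of $(\Phi,\Psi)$ on all of $\R$ is a genuinely extra piece of information that, for the full nonlinear problem later, is propagated only because of the zero-mass cancellation; in the local theorem it must be arranged from the hypothesis $(\Phi_0,\Psi_0)\in L^2(\R)$ and closed with a weighted estimate. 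A secondary technical point is the customary derivative loss in quasilinear iteration, resolved as noted by running the contraction in $H^2$ while keeping uniform bounds in $H^3$, plus the compatibility of the anti-derivative construction with the iteration (one can either carry $(\Phi,\Psi)$ along each iterate or, more cleanly, construct it only for the final fixed point).
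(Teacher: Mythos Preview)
Your treatment of the local existence of $(\phi,\zeta)\in\mathbb{B}(0,T_0)$ by a quasilinear iteration is fine and is exactly what the paper means by ``standard'' (it simply cites Matsumura--Nishida). The difference lies in how the anti-derivative $(\Phi,\Psi)$ is produced.

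You define $(\Phi,\Psi)$ by the integral \cref{Phi-Psi} and then try to close an $L^2$-estimate on the integrated system, and you correctly flag the main difficulty: one must know \emph{a priori} that $(\phi^\od,\psi^\od)$ has enough decay at $x_3\to\pm\infty$ for the primitive to be square-integrable, and this is not delivered by $H^3(\Omega)$-membership alone. The paper sidesteps this issue entirely. Once the local solution $(\phi,\zeta)\in\mathbb{B}(0,T_0)$ is in hand, it does \emph{not} construct $\Psi$ as an integral; instead it observes that $\Psi$ must solve the linear parabolic problem
\[
\p_t\Psi-\tfrac{\mu}{\rhob}\p_3^2\Psi-\tfrac{\mu+\lambda}{\rhob}\p_3^2\Psi_3\,\E_3=\mathbf{J}^\od+\Gv,\qquad \Psi(\cdot,0)=\Psi_0,
\]
where $\mathbf{J}$ is an explicit expression in $(\phi,\zeta,\rhot,\uvt)$ which, for the already-constructed solution, lies in $C(0,T_0;H^2(\R))$. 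Linear 1D parabolic theory then immediately gives a unique $\Psi\in C(0,T_0;L^2(\R))$ with the desired bound; $\Phi$ is handled the same way, and one checks a posteriori that $\p_3(\Phi,\Psi)=(\phi^\od,\psi^\od)$. This buys exactly what you were worried about: no decay or zero-mass argument is needed at the local-existence stage, and the estimate \cref{est-local} drops out of the linear parabolic energy inequality plus the $H^3$-bound on $(\phi,\zeta)$.

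Your route is not wrong, but it is harder: you would need a weighted-in-$x_3$ estimate or an approximation argument to justify that the primitive stays in $L^2$, whereas the paper's device turns this into a soft application of linear theory.
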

The local existence of $(\phi,\zeta) \in \mathbb{B}(0,T_0) $ for some small $T_0>0$ is standard (see \cite{MN1980} for instance). The existence of the anti-derivative variable $(\Phi,\Psi) \in C(0,T_0; L^2(\R))$ can be derived from a linear parabolic theory. 
Roughly speaking, given the local solution $(\phi,\zeta) \in \mathbb{B}(0,T_0) $ to \cref{equ-phizeta}, $\Psi$ is the unique solution to the linear parabolic problem,
\begin{equation}
	\begin{cases}
		\p_t \Psi - \frac{\mu}{\rhob} \p_3^2 \Psi - \frac{\mu+\lambda}{\rhob} \p_3^2 \Psi_3 \E_3 = \mathbf{J}^\od  + \Gv \in C(0,T_0; H^2(\R)), \qquad x_3\in\R, t>0, \\
		\Psi(x_3,0) = \Psi_0(x_3), \qquad x_3 \in\R.
	\end{cases}
\end{equation}
where 
\begin{align*}
	\mathbf{J} & = \mu \Big( \p_3 \zeta - \frac{\p_3 \psi}{\rhob} \Big) + (\mu+\lambda) \big(\p_3 \zeta_3 - \frac{\p_3\psi_3 }{\rhob}\big) \E_3 - \big( u_3\mv - \ut_3 \mvt \big) -   \big(p(\rho) - p(\rhot)\big) \E_3.
\end{align*}
The existence of $\Phi$ can be proved similarly. We refer to \cite[Section 6]{Yuan2023n} for the detailed arguments.

\vspace{.2cm}

\begin{Prop}[A priori estimates]\label{Prop-apriori}
	Suppose that the hypotheses of \cref{Thm} hold true.
	For any $ T>0, $ suppose that $ ( \phi, \zeta) \in C(0,T;H^3(\Omega)) $ is a solution to the problem \cref{equ-phizeta} and satisfies the a priori assumption \cref{apr-assum}.
	Then there exist a large constant $ \Lambda_0\geq 1 $ and small positive constants $ \e_0 $ and $ \nu_0 $ such that, if 
	\begin{equation*}
		\Lambda \geq \Lambda_0, \quad \e \Lambda^{\frac{1}{4}} \leq \e_0,
	\end{equation*}
	and
	\begin{equation}\label{apriori}
		\nu:= \sup\limits_{t\in(0,T)} \big( \norm{\Phi,\Psi}_{L^2(\R)} + \norm{ \phi,\psi}_{H^3(\Omega)} \big) \leq \nu_0,
	\end{equation}
	then it holds that
	\begin{equation}\label{est-apriori}
		\begin{aligned}
			& \sup_{t\in(0,T)} \big( \norm{\Phi,\Psi}^2_{L^2(\R)} + \norm{ \phi,\zeta}_{H^3(\Omega)}^2 \big)+ \int_0^T \Big(\norm{\nabla\phi}_{H^2(\Omega)}^2 +\norm{\nabla \zeta}_{H^3(\Omega)}^2\Big) dt \\
			& \qquad \lesssim \norm{(\Phi, \Psi)(\cdot,0)}_{L^2(\R)}^2 + \norm{ (\phi, \psi)(\cdot,0) }_{H^3(\Omega)}^2 + \e \Lambda^{\frac{1}{4}}.
		\end{aligned}
	\end{equation}
\end{Prop}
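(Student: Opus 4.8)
The plan is to prove \cref{est-apriori} via a continuity argument resting on three independent energy estimates: a low-order estimate for the zero mode carried out on the anti-derivative system \cref{equ-PhiPsi}; a low-order estimate for the non-zero mode $(\phi^\md,\zeta^\md)$ based on the Poincar\'e inequality on $\Torus^2$; and a high-order $H^3(\Omega)$ estimate for the full perturbation $(\phi,\zeta)$ performed directly on \cref{equ-phizeta}. In each of these the available smallness comes from $\e$, from the a priori bound $\nu_0$, or from $\Lambda^{-1/2}$ (the latter bounding $\norm{\nabla\rhot,\nabla\uvt}_{W^{3,\infty}}$ by \cref{small-ans}, and in particular giving $\norm{\p_3\uv^\vs}_{L^\infty}\lesssim\abs{\uvb}\Lambda^{-1/2}$); the source terms $f_0,\gv,\Gv$ decay exponentially by \cref{Lem-F}, so after Cauchy--Schwarz and $\e^2\Lambda^{1/2}\le\e\Lambda^{1/4}$ they contribute at most $O(\e\Lambda^{1/4})$ overall.

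\textbf{Step 1 (zero mode via anti-derivatives).} This is the heart of the matter. In \cref{equ-PhiPsi} the pair $(\Phi,\Psi_3)$ decouples into a viscous $p$-system with \emph{constant} coefficients: pairing $\p_t\Phi+\p_3\Psi_3=0$ with $p'(\rhob)\Phi$ and the $\Psi_3$-equation with $\Psi_3$ makes the cross terms cancel and yields $\frac{d}{dt}\frac12\int_\R(p'(\rhob)\Phi^2+\Psi_3^2)+\frac{2\mu+\lambda}{\rhob}\int_\R\abs{\p_3\Psi_3}^2=(\text{errors})$; one further estimate after applying $\p_3$, together with the classical cross term $\frac{d}{dt}\int_\R\Psi_3\,\p_3\Phi$, then recovers the dissipation of $\p_3\Phi=\phi^\od$. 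The genuine difficulty lies in the $\Psi_1,\Psi_2$ equations, which carry the \emph{non-small} coefficient $\uv^\vs=\theta\uvb$ both in the transport term $\uv^\vs\p_3\Psi_3$ and inside the viscous term $\p_3(\p_3\Psi-\uv^\vs\p_3\Phi)$. This is precisely where the new variable $\Zv$ introduced in \cref{Zv} — essentially $\Zv:=\Psi-\uv^\vs\Phi$, hence $\Zv_3=\Psi_3$ — enters: using $\p_3\Psi_3=-\p_t\Phi$ and $\p_t\theta=\frac{\mu}{\rhob}\p_3^2\theta$, one checks that the transverse part $\Zv_\perp$ solves a \emph{heat equation} $\p_t\Zv_\perp-\frac{\mu}{\rhob}\p_3^2\Zv_\perp=R$, where every term of $R$ carries a factor $\p_3\theta$ or $\p_3^2\theta$ — of size $\abs{\uvb}\Lambda^{-1/2}$ times a decaying Gaussian by \cref{Lem-theta} — multiplying $\Phi$ or $\p_3\Phi=\phi^\od$, all of which are already controlled in the $(\Phi,\Psi_3)$-block, plus the exponentially decaying sources. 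Hence, taking $\Lambda_0$ large enough to beat $\abs{\uvb}$, plain $L^2$ estimates for $\Zv$ and $\p_3\Zv$ close; reverting through $\Psi=\Zv+\theta\uvb\,\Phi$ then controls $\norm{\Phi,\Psi}_{L^2(\R)}$ and $\norm{\p_3\Phi,\p_3\Psi}_{L^2(\R)}$ uniformly in time, together with the time-integrated dissipation. The error term $\Nv$ of \cref{Nv,Q} is harmless, since $\Qv_1$ is quadratic in $(\phi,\psi)$ (absorbed by $\nu_0$) while $\Qv_2,\Qv_3,\Qv_4$ have coefficients of size $O(\e e^{-\alpha t})$ because $\uvt-\uv^\vs$, $\ut_3$ and $\rhot-\rhob$ decay exponentially (\cref{exp-diff,exp-diff-u}).

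\textbf{Steps 2 and 3 (non-zero mode and high-order estimate).} For $(\phi^\md,\zeta^\md)$, which has zero $\xp$-average, I subtract the zero mode from \cref{equ-phipsi} and run the basic $L^2$ estimate (momentum against $\zeta^\md$, density against $p'(\rho)\phi^\md/\rho$), then differentiate up to order three. As in \cite{Yuan2023n,Yuan2023s}, the point is that the Poincar\'e inequality $\norm{g^\md}_{L^2(\Omega)}\lesssim\norm{\nabla_\perp g^\md}_{L^2(\Omega)}$ upgrades the viscous dissipation $\mu\norm{\nabla\zeta^\md}^2$ to control of $\norm{\zeta^\md}^2$ and — the density being undamped — supplies the only damping for $\phi^\md$ through the pressure/transport coupling, while the dissipation of $\nabla\phi^\md$ in $H^2$ is recovered via the Matsumura--Nishida cross term $\int\rho\,\zeta^\md\cdot\nabla\phi^\md$, whose time derivative produces $-\int p'(\rho)\abs{\nabla\phi^\md}^2$ modulo lower-order terms. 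Since all forcing of the $\md$-block is $O(\e\Lambda^{1/4}e^{-\alpha t})$ plus $(\nu_0+\Lambda^{-1/2})$ times zero-mode quantities, this gives exponential decay of $\norm{\phi^\md,\zeta^\md}_{H^3(\Omega)}$ and the claimed $L^2_t$-dissipation. The high-order estimate for $(\phi,\zeta)$ in $H^3(\Omega)$ is then the standard compressible Navier--Stokes energy argument on \cref{equ-phizeta}: apply $\p^k$ for $\abs k\le3$, pair with $(p'(\rho)\p^k\phi/\rho,\,\p^k\zeta)$ to extract $\mu\norm{\nabla\zeta}_{H^3}^2$, recover $\norm{\nabla\phi}_{H^2}^2$ again through the cross term, and note that all commutators with $(\rhot,\uvt)$ are lower-order by \cref{bdd-rhot,small-ans}, the nonlinearities are cubic and absorbed by $\nu_0$, and the sources are controlled by \cref{Lem-F}. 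Adding the three estimates and choosing $\Lambda_0$ large, $\e_0,\nu_0$ small, yields \cref{est-apriori}.

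I expect the main obstacle to be Step 1: identifying the precise form of $\Zv$ (including the lower-order corrections hidden in \cref{Zv}) so that \emph{every} occurrence of the large amplitude $\uv^\vs$ in \cref{equ-PhiPsi} turns into either a sign-definite diffusion or a term with a $\p_3\theta$- or $\p_3^2\theta$-weighted coefficient, while still retaining enough coupling between $\Phi$ and $\Psi$ to exhibit the $\p_3\Phi$-dissipation generated by the pressure gradient — and verifying this remains consistent when the constants are allowed to depend on $\abs{\uvb}$, provided $\Lambda_0$ is taken large. A secondary nuisance is bookkeeping the two-way coupling between the algebraically decaying zero-mode estimate and the exponentially decaying non-zero-mode estimate so that the continuity argument closes.
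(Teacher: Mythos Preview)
Your three-block decomposition matches the paper's, and Steps 2 and 3 are essentially right (minor point: the paper only pushes the non-zero mode to $H^1$ in the dedicated block and picks up the remaining orders in the full $H^3$ estimate; also $\Qv_3$ in \cref{Q} is quadratic in $(\phi,\psi)$, not $O(\e e^{-\alpha t})$). The real issue is Step 1.

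Your ``plain $L^2$ estimates for $\Zv$ and $\p_3\Zv$'' do \emph{not} close. Pairing the $\Zv_\perp$-equation with $\Zv_\perp$ produces the term
\[
I_2=\frac{\mu}{\rhob}\int_\R \p_3\theta\,\p_3\Phi\,\uvb_\perp\cdot\Zv_\perp\,dx_3,
\]
and neither the bound $|\p_3\theta|\lesssim\Lambda^{-1/2}$ nor $|\p_3\theta|\lesssim\kappa$ is enough. With the first you get $(\Lambda^{-1/2}|\uvb|)\,\|\p_3\Phi\|\,\|\Zv_\perp\|$; after Young this leaves a contribution $c\,\|\Zv_\perp\|_{L^2}^2$ on the right, and there is no dissipation of $\|\Zv_\perp\|_{L^2}^2$ anywhere, so the time integral grows in $T$ (Gr\"onwall gives only $(1+t/\Lambda)^c$). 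With the second you get $|\uvb|\,\|\p_3\Phi\|\,\|\Zv_\perp\kappa\|$, which has an $O(|\uvb|)$ coefficient that is not small and requires a weighted norm $\|\Zv_\perp\kappa\|$ you have no control over. Either way the loop stays open.

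The paper closes this with two extra ingredients you are missing. First, a \emph{substitution trick}: use the $Z_3$-equation \cref{equ-Z3} to write $p'(\rhob)\p_3\Phi=-\p_t Z_3+\frac{\mut}{\rhob}\p_3^2 Z_3+\cdots$ inside $I_2$; the $\p_t Z_3$-piece becomes a time derivative of the cross term $\int_\R\p_3\theta\,\uvb_\perp\cdot\Zv_\perp\,Z_3\,dx_3$, which is absorbed into the energy functional $\Ac_{0,\od}^{(1)}$, and every residual now carries an \emph{extra} factor of $\p_3\theta$, $\p_3^2\theta$, or $\p_t\p_3\theta$, hence a genuine $\Lambda^{-1/2}$ smallness \emph{and} a $\kappa$- or $\kappa^2$-localization. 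Second, the \emph{weighted estimate} of \cref{Lem-HLM}: this is what turns the Gaussian localization into actual dissipation of $\|\Phi\kappa\|_{L^2}^2+\|\Zv\kappa\|_{L^2}^2$ (Lemma~\ref{Lem-weight}), and those weighted norms are precisely what the residual terms from the substitution produce on the right-hand side. Without both pieces, the $L^2$-level zero-mode estimate does not close for arbitrary wave amplitude $|\uvb|$; you should expect to need them before the continuity argument can run.
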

The proof of \cref{Prop-apriori} will be given in \cref{Sec-apriori}. 

\vspace{.2cm}

At the end of this section, we present some lemmas to be used later.

\vspace{0.2cm}

Note that the perturbations $ \phi, \psi $ and $ \zeta $ in \cref{phipsi,zeta} are not always of zero averages with respect to $x_1$ or $x_2$. 
As stated in \cite{HY2020},
%For example, for the perturbation $ \phi = \rho - \rhot $, 
%the property that
%\begin{equation}\label{dim}
%	\int_{\Torus} \phi(x,t) dx_1  \equiv 0 \andd \int_{\Torus} \phi(x,t) dx_2 \equiv 0 \quad \text{for all } \ t\geq 0,
%\end{equation} 
these perturbations often do not satisfy the classical 3D Gagliardo-Nirenberg (G-N) inequalities, since the 1D and 2D cases cannot be excluded. 
To overcome this difficulty, we use the following G-N type inequality on the domain $ \Omega = \Torus^2 \times \R $.

\begin{Lem}[\cite{HY2020}, Theorem 1.4 \& Lemma 3.3]\label{Lem-GN}
	Assume that $ u(x) $ is a bounded function that is periodic in $ \xp = (x_1, x_2) $. Then there exists a decomposition $ u(x) = \sum\limits_{k=1}^{3} u^{(k)}(x) $ such that 
	\begin{itemize}
		\item[i) ] \begin{equation}\label{decomp}
			\begin{aligned}
				u^{(1)} & \equiv \int_{\Torus^2} u(\xp,x_3) d\xp = u^{\od}(x_3),\\
				u^{(2)} + u^{(3)} & \equiv u(x)- \int_{\Torus^2} u(\xp,x_3) d\xp = u^\md(x);
			\end{aligned}
		\end{equation}
		
		\item[ii)] if $\nabla^l u $ belongs to $ L^p(\Omega)$ with an order $l\geq 0 $ and $p\in[1,+\infty]$, then each $u^{(k)}$ satisfies that
		\begin{equation}\label{ineq-cauchy}
			\norm{\nabla^l u^{(k)}}_{L^p(\Omega)} \lesssim \norm{\nabla^l u}_{L^p(\Omega)};
		\end{equation} 
		
		\item [iii)] each $ u^{(k)} $ satisfies the classical k-dimensional Gagliardo-Nirenberg inequality, i.e. 
		\begin{equation}\label{G-N-type-1}
			\norm{\nabla^j u^{(k)}}_{L^p(\Omega)} \lesssim \norm{\nabla^{m} u^{(k)}}_{L^{r}(\Omega)}^{\theta_k} \norm{u^{(k)}}_{L^{q}(\Omega)}^{1-\theta_k},
		\end{equation}
		where $ 0\leq j< m $ is any integer, $ 1\leq p \leq +\infty $ is any number and $\theta_k \in [\frac{j}{m}, 1)$ satisfies 
		$$ \frac{1}{p} = \frac{j}{k} + \Big(\frac{1}{r}-\frac{m}{k}\Big) \theta_k + \frac{1}{q}\big(1-\theta_k\big). $$
	\end{itemize}
%	Assume that $ u(x) $ is periodic in $ \xp = (x_1, x_2) $. Then there exists a decomposition $ u(x) = \sum\limits_{k=1}^{3} u^{(k)}(x) $ such that if $ u\in L^q(\Omega) $ and $ \nabla^m u \in L^r(\Omega), $ with $ 1\leq q,r\leq +\infty $ and $ m\geq 1 $, then
%	\begin{equation}\label{decomp}
%		\begin{aligned}
%		 	u^{(1)} & = u^{\od}(x_3) = \int_{\Torus^2} u(\xp,x_3) d\xp,\\
%		 	u^{(2)} + u^{(3)} & = u^\md(x) = u(x)-u^\od(x_3);
%		\end{aligned}
%	\end{equation}
%	and each $ u^{(k)} $ satisfies the k-dimensional Gagliardo-Nirenberg inequality, namely,
%	\begin{equation}\label{G-N-type-1}
%		\norm{\nabla^j u^{(k)}}_{L^p(\Omega)} \lesssim \norm{\nabla^{m} u^{(k)}}_{L^{r}(\Omega)}^{\theta_k} \norm{u^{(k)}}_{L^{q}(\Omega)}^{1-\theta_k},
%	\end{equation}
%	where $ 0\leq j< m $ is any integer, $ 1\leq p \leq +\infty $ is any number and $\theta_k \in [\frac{j}{m}, 1]$ satisfies 
%	$$ \frac{1}{p} = \frac{j}{k} + \Big(\frac{1}{r}-\frac{m}{k}\Big) \theta_k + \frac{1}{q}\big(1-\theta_k\big). $$
%	Moreover, if $\nabla^l u \in L^p(\Omega)$ for some $l\geq 0$ and $p\in[1,+\infty],$ then each $u^{(k)}$ satisfies that
%	\begin{equation}\label{ineq-cauchy}
%		\norm{\nabla^l u^{(k)}}_{L^p(\Omega)} \lesssim \norm{\nabla^l u}_{L^p(\Omega)}.
%	\end{equation}
\end{Lem}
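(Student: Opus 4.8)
The plan is to obtain the decomposition by splitting, inside the non-zero mode, the frequency scales in the non-compact variable $x_3$ from those attached to the compact torus directions. First set $u^{(1)}:=u^\od(x_3)$, the zero mode. Since $u^{(1)}$ depends on $x_3$ only and $\abs{\Torus^2}=1$, one has $\norm{\nabla^l u^{(1)}}_{L^p(\Omega)}=\norm{\p_3^l u^\od}_{L^p(\R;dx_3)}$; together with $\p_3^l u^\od=\int_{\Torus^2}\p_3^l u\,d\xp$ and Jensen's inequality this gives property (ii) for $k=1$, while property (iii) for $k=1$ is exactly the classical one-dimensional G--N inequality on $\R$ applied to $u^{(1)}$. (It is precisely because such $x_3$-only profiles obey the $1$-D, and not the $3$-D, G--N relation---only the $x_3$-dilation being available---that the zero mode must be peeled off first.)

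Next treat $u^\md:=u-u^\od$. For each fixed $x_3$ it has zero mean over $\Torus^2$, so the Poincar\'{e} inequality $\norm{u^\md(\cdot,x_3)}_{L^p(\Torus^2)}\lesssim\norm{\nabla_\xp u^\md(\cdot,x_3)}_{L^p(\Torus^2)}$ holds for every $p$. Fix a smooth even cut-off $\chi(\xi_3)$ with $\chi\equiv1$ near $0$ and $\mathrm{supp}\,\chi\subset\{\abs{\xi_3}\le1\}$, and set $u^{(2)}:=\chi(D_3)u^\md$, $u^{(3)}:=(1-\chi(D_3))u^\md$, where $D_3:=-i\p_3$ acts in $x_3$ alone; then $u^{(2)}+u^{(3)}=u^\md$, which with the choice of $u^{(1)}$ gives (i). Both pieces still have zero $x_\perp$-mean, since $\chi(D_3)$ commutes with $\int_{\Torus^2}\cdot\,d\xp$. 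For (ii) with $k=2,3$: the maps $u\mapsto u-u^\od$, $\chi(D_3)$ and $1-\chi(D_3)$ commute with $\nabla^l$ and are bounded on $L^p(\Omega)$ for all $p\in[1,\infty]$---by the Mikhlin multiplier theorem for $1<p<\infty$, and at the endpoints by $\norm{u-u^\od}_{L^\infty}\le2\norm{u}_{L^\infty}$ and convolution against an $L^1(\R;dx_3)$ kernel---so $\norm{\nabla^l u^{(k)}}_{L^p(\Omega)}\lesssim\norm{\nabla^l u}_{L^p(\Omega)}$.

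It remains to prove (iii) for $u^{(2)}$ and $u^{(3)}$. Since $u^{(2)}$ is localized to $\abs{\xi_3}\le1$, vertical derivatives are harmless, $\norm{\p_3^a v}_{L^p}\lesssim\norm{v}_{L^p}$, so it suffices to estimate purely horizontal derivatives; freezing $x_3$, applying the two-dimensional G--N inequality on $\Torus^2$ together with the Poincar\'{e} inequality, and then integrating in $x_3$ via H\"older with the exponents prescribed by $\theta_2$, yields $\norm{\nabla^j u^{(2)}}_{L^p(\Omega)}\lesssim\norm{\nabla^m u^{(2)}}_{L^r(\Omega)}^{\theta_2}\norm{u^{(2)}}_{L^q(\Omega)}^{1-\theta_2}$ with the $2$-D exponent relation. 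For $u^{(3)}$, whose $x_3$-Fourier support avoids a neighbourhood of $0$, one has $\norm{v}_{L^p}\lesssim\norm{\p_3^s v}_{L^p}$ for every $s$ (write the projector as $\p_3^s\circ m_s(D_3)$ with $m_s$ a Mikhlin symbol supported away from $0$); combined with the Poincar\'{e} inequality on $\Torus^2$ this absorbs every lower-order term in the non-homogeneous $3$-D G--N inequality obtained by covering $\Omega$ with unit cubes, producing the clean $3$-D estimate with exponent $\theta_3$.

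The main obstacle is this last step: reproducing the $k$-dimensional balance $\frac1p=\frac jk+\big(\frac1r-\frac mk\big)\theta_k+\frac1q(1-\theta_k)$ exactly, uniformly over all admissible $p,q,r\in[1,\infty]$ and $\theta_k\in[\tfrac jm,1)$, including endpoint exponents and the borderline $\theta_k=j/m$. The difficulty is structural: the compact factor $\Torus^2$ carries a Poincar\'{e} inequality but no dilation symmetry, whereas $\R$ carries the full $x_3$-scaling, so one must check that reassembling the frozen-$x_3$ (resp. unit-cube) estimates by H\"older recovers the $k$-dimensional relation and not a different one. The low/high vertical frequency localization is exactly what makes the accounting close, by allowing vertical derivatives to be traded for $L^p$-mass in the correct proportion; the rest is careful bookkeeping of which derivatives are horizontal and which are vertical.
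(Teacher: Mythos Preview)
The paper does not supply a proof; the lemma is quoted from \cite{HY2020} (Theorem~1.4 and Lemma~3.3 there) and used as a black box. So there is no in-paper argument to compare against, only the cited source. For what it is worth, the decomposition most naturally suggested by the statement---and by the averaging structure of~(i)---is iterated zero-mode extraction, $u^{(2)}(x_2,x_3)=\int_0^1 u\,dx_1-u^{(1)}$ and $u^{(3)}=u-\int_0^1 u\,dx_1$, which makes each $u^{(k)}$ a function on $\Torus^{k-1}\times\R$ with zero mean in one periodic variable and delivers~(ii) by Jensen alone without any multiplier theory; your vertical-frequency splitting via $\chi(D_3)$ is a genuinely different route.

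On its own merits, your sketch has a gap in part~(iii) that is more than the ``careful bookkeeping'' you concede. For $u^{(2)}$ you freeze $x_3$, apply the $2$-D G--N inequality on $\Torus^2$, then integrate in $x_3$ by H\"older ``with the exponents prescribed by $\theta_2$''. But H\"older in $x_3$ with target norms $L^r,L^q$ requires $\tfrac{\theta_2}{r}+\tfrac{1-\theta_2}{q}=\tfrac1p$, whereas the $2$-D relation reads $\tfrac1p=\tfrac{\theta_2}{r}+\tfrac{1-\theta_2}{q}+\tfrac{j-m\theta_2}{2}$; since $\theta_2\ge j/m$ these disagree except at the borderline $\theta_2=j/m$, so the H\"older step does not close as written. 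The band-limitation in $x_3$ does repair the deficit, but not through the Bernstein bound $\|\partial_3^a v\|_{L^p}\lesssim\|v\|_{L^p}$ you invoke---what is actually needed is the Nikolskii inequality $\|v\|_{L^{p_1}(\R;dx_3)}\lesssim\|v\|_{L^{p_2}(\R;dx_3)}$ for $p_1\ge p_2$ on band-limited functions, which is a different (though related) statement and is precisely what converts the exponent surplus into a gain. A parallel issue arises for $u^{(3)}$: the unit-cube G--N leaves a lower-order remainder, and absorbing it via the high-$x_3$-frequency condition asks a \emph{global} Fourier projector to interact with a \emph{local} cube decomposition, which is delicate at $p\in\{1,\infty\}$ where Mikhlin is unavailable. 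These points are the substance of the proof, not its periphery.
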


\vspace{.1cm}

\begin{Lem}\label{Lem-poin}
	Suppose that $ u(x) $ belongs to $ W^{1,p}(\Omega) $ with $ p\in [1,+\infty]. $ Then its non-zero mode $ u^\md $ satisfies that
%	\begin{equation*}
%		\begin{aligned}
%			\norm{u^\od}_{L^p(\R)} + \norm{u^\md}_{L^p(\Omega)} & \lesssim \norm{u}_{L^p(\Omega)}, \\
%			\norm{u^\md}_{L^p(\Omega)} & \lesssim \norm{u}_{L^p(\Omega)} + \norm{u^\od}_{L^p(\R)} \lesssim \norm{u}_{L^p(\Omega)},
%		\end{aligned}
%	\end{equation*}
%	and 
	\begin{equation}\label{poincare}
		\norm{u^\md}_{L^p(\Omega)} \lesssim \norm{\nabla_{\xp} u^\md}_{L^p(\Omega)} \lesssim \norm{\nabla u}_{L^p(\Omega)}.
	\end{equation}
	%	where the implicit constants depend on $ p. $
\end{Lem}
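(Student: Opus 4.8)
The plan is to reduce the estimate to the classical Poincar\'e--Wirtinger inequality on the fixed compact torus $\Torus^2$, applied slicewise in the variable $x_3$.

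First I would dispose of the second inequality in \cref{poincare}. Since the zero mode $u^\od(x_3)=\int_{\Torus^2}u(\xp,x_3)\,d\xp$ depends only on $x_3$, one has $\nabla_\xp u^\od\equiv 0$, hence $\nabla_\xp u^\md=\nabla_\xp u$ pointwise; therefore $\norm{\nabla_\xp u^\md}_{L^p(\Omega)}=\norm{\nabla_\xp u}_{L^p(\Omega)}\le\norm{\nabla u}_{L^p(\Omega)}$, which costs no constant.

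For the first inequality, the key observation is that for a.e.\ fixed $x_3$ the slice $u^\md(\cdot,x_3)$ has zero average over $\Torus^2$, by the definition \cref{od,md} of the modes. I would then invoke the Poincar\'e--Wirtinger inequality on $\Torus^2$: there is a constant $C_p$ depending only on $p$, and crucially independent of $x_3$ because the domain $\Torus^2$ does not vary, such that $\norm{u^\md(\cdot,x_3)}_{L^p(\Torus^2)}\le C_p\norm{\nabla_\xp u^\md(\cdot,x_3)}_{L^p(\Torus^2)}$ for a.e.\ $x_3\in\R$. For $1\le p<\infty$ this is the standard compactness (Rellich--Kondrachov) argument on the compact manifold $\Torus^2$; for $p=\infty$ it follows from the fact that a mean-zero Lipschitz function on $\Torus^2$ must vanish at some point, so it is controlled by $\operatorname{diam}(\Torus^2)\,\norm{\nabla_\xp u^\md(\cdot,x_3)}_{L^\infty(\Torus^2)}$. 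Raising the slicewise inequality to the $p$-th power and integrating over $x_3\in\R$ (or taking the essential supremum when $p=\infty$), with Fubini's theorem to justify the slicewise reasoning, yields $\norm{u^\md}_{L^p(\Omega)}\le C_p\norm{\nabla_\xp u^\md}_{L^p(\Omega)}$; chaining this with the previous paragraph gives \cref{poincare}.

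I expect the only points needing care --- though none is a real obstacle --- to be the uniformity of the Poincar\'e constant in $x_3$ (immediate, since $\Torus^2$ is fixed) and the measurability underlying the Fubini step, together with treating the $p=\infty$ endpoint by the elementary argument above rather than by compactness.
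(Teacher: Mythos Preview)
Your proposal is correct and follows essentially the same approach as the paper, which simply notes that $u^\md$ has zero average on the transverse torus $\Torus^2$ and invokes the Poincar\'e inequality. You have merely supplied the details (the slicewise application, the Fubini step, and the $p=\infty$ endpoint) that the paper leaves implicit.
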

\begin{proof}
	Note that $u^\md$ has zero average on the transverse torus $ \Torus^2 $.
	Then \cref{poincare} follows directly from the Poincar\'{e} inequality.
\end{proof}
%
%\vspace{.1cm}

%Let $ \kappa=\kappa(x_3,t) $ denote the heat kernel,
%\begin{equation}\label{kernel}
%	\kappa = \frac{1}{2\sqrt{\pi(t+1)}} \exp\Big\{ - \frac{x_3^2}{4(t+1)}\Big\}.
%\end{equation}
At last, we present an inequality in \cite{HLM}, which is helpful in obtaining the optimal decay rate.
Set
\begin{equation}\label{omega}
	\kappa(x_3,t) :=  \frac{1}{\srt} \exp \left\{- \frac{\rhob x_3^2}{32\mu(t+\Lambda)}\right\},
\end{equation}
and $ \Kc(x_3,t)=\int_{-\infty}^{x_3} \kappa(y_3,t) dy_3, $
which satisfies that $\p_t \Kc = \frac{8\mu}{\rhob} \p_3 \kappa.$

\begin{Lem}[\cite{HLM}, Lemma 1]\label{Lem-HLM}
	For $0<T\leq+\infty$, suppose that $ h= h(x_3,t)$ satisfies
	\begin{equation*}
		h \in L^2(0,T; H^1(\R)) \andd \p_t h \in L^2(0,T;H^{-1}(\R)).
	\end{equation*}
	Then it holds that
	\begin{equation}\label{ineq-HLM}
		\begin{aligned}
			& \int_{\R} \abs{h}^2 \kappa^2 dx_3 + \frac{\rhob}{8\mu} \frac{d}{dt} \Big( \int_\R \abs{h}^2 \Kc^2 dx_3 \Big) - \frac{\rhob}{4\mu} \int_\R \p_t h \cdot h \Kc^2 dx_3 \lesssim \norm{\p_3 h}_{L^2(\R)}^2.
		\end{aligned}
	\end{equation}
\end{Lem}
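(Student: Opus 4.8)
The plan is to turn the left-hand side of \cref{ineq-HLM} into an expression that, after a single integration by parts in $x_3$, is manifestly controlled by $\norm{\p_3 h}_{L^2(\R)}^2$, using only the two structural identities $\p_3\Kc=\kappa$ and $\p_t\Kc=\frac{8\mu}{\rhob}\p_3\kappa$. Set $a:=\frac{\rhob}{8\mu}$, so that $\frac{\rhob}{4\mu}=2a$ and $\p_t\Kc=\frac1a\p_3\kappa$. Since $\Kc(\cdot,t)$ is smooth and bounded uniformly in $t$ (see below), while $h\in L^2(0,T;H^1(\R))$ with $\p_t h\in L^2(0,T;H^{-1}(\R))$, the function $h\Kc^2$ belongs to $L^2(0,T;H^1(\R))$, and differentiating the quadratic form gives, in $\mathcal{D}'(0,T)$,
\[ a\,\frac{d}{dt}\Big(\int_\R |h|^2\Kc^2\,dx_3\Big)=2a\,\langle\p_t h,\,h\Kc^2\rangle+2a\int_\R |h|^2\,\Kc\,\p_t\Kc\,dx_3, \]
where $\langle\cdot,\cdot\rangle$ is the $H^{-1}$--$H^1$ duality pairing. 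Hence the left-hand side of \cref{ineq-HLM} equals $\int_\R |h|^2\kappa^2\,dx_3+2a\int_\R |h|^2\Kc\,\p_t\Kc\,dx_3$.

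Next I would substitute $\p_t\Kc=\frac1a\p_3\kappa$ into the last integral and integrate by parts in $x_3$. Since $h(\cdot,t)\in H^1(\R)$ decays at $\pm\infty$, while $\kappa$ and $\Kc$ are bounded with $\kappa\to0$ as $x_3\to\pm\infty$, all boundary terms vanish, and using $\p_3\Kc=\kappa$ we obtain
\[ 2a\int_\R |h|^2\Kc\,\p_t\Kc\,dx_3=2\int_\R |h|^2\Kc\,\p_3\kappa\,dx_3=-4\int_\R (h\cdot\p_3 h)\,\Kc\,\kappa\,dx_3-2\int_\R |h|^2\kappa^2\,dx_3. \]
Therefore the left-hand side of \cref{ineq-HLM} collapses to $-\int_\R |h|^2\kappa^2\,dx_3-4\int_\R (h\cdot\p_3 h)\,\Kc\,\kappa\,dx_3$.

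Finally, I would observe from the explicit form \cref{omega} that $\int_\R\kappa(y_3,t)\,dy_3=\sqrt{32\pi\mu/\rhob}$ is independent of $t$, hence $0\le\Kc\le\sqrt{32\pi\mu/\rhob}=:M$. Then Cauchy--Schwarz and Young's inequality yield
\[ -4\int_\R (h\cdot\p_3 h)\,\Kc\,\kappa\,dx_3\le 4M\int_\R (|h|\kappa)\,|\p_3 h|\,dx_3\le \frac12\int_\R |h|^2\kappa^2\,dx_3+8M^2\int_\R |\p_3 h|^2\,dx_3, \]
so that the left-hand side of \cref{ineq-HLM} is at most $-\frac12\int_\R |h|^2\kappa^2\,dx_3+8M^2\norm{\p_3 h}_{L^2(\R)}^2\le 8M^2\norm{\p_3 h}_{L^2(\R)}^2\lesssim\norm{\p_3 h}_{L^2(\R)}^2$, which is the claim.

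The only point requiring care is the rigorous justification of the $t$-differentiation of $\int_\R |h|^2\Kc^2\,dx_3$ and of the integration by parts at the stated low regularity $h\in L^2(0,T;H^1)$, $\p_t h\in L^2(0,T;H^{-1})$; this is handled by a routine mollification-and-pass-to-the-limit argument (first carried out for smooth $h$, then extended by density), and is the main though standard technical obstacle. Everything else is the algebraic identity above plus the boundedness of $\Kc$.
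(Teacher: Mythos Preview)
Your proof is correct. The paper does not supply its own proof of this lemma but simply cites \cite{HLM}; your argument---exploit $\p_t\Kc=\tfrac{8\mu}{\rhob}\p_3\kappa$ and $\p_3\Kc=\kappa$ to reduce the left-hand side to $-\int_\R|h|^2\kappa^2\,dx_3-4\int_\R(h\cdot\p_3 h)\Kc\kappa\,dx_3$, then bound the cross term with Young's inequality using $\norm{\Kc}_{L^\infty}=\sqrt{32\pi\mu/\rhob}$---is precisely the standard computation behind that cited result.
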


\vspace{.3cm}

\section{A Priori Estimates}\label{Sec-apriori}

To prove \cref{Prop-apriori}, we first establish the $ H^2 $-estimates of the anti-derivatives and the $ H^1 $-estimates of the non-zero modes of the perturbations in \cref{Sec-od,Sec-md}, respectively. 
Afterwards, we go back to the original system \cref{equ-phizeta} in \cref{Sec-orn} to deal with the higher-order derivatives to complete the proof.

\vspace{0.2cm}

Denote
\begin{equation}\label{nu}
	\begin{aligned}
		\nu := \sup\limits_{t\in(0,T)} \Big\{ & \norm{\Phi,\Psi}_{L^2(\R)} +  \norm{ \phi,\zeta}_{H^3(\Omega)} \Big\}.
	\end{aligned}
\end{equation}
It follows from \cref{Lem-GN} that for any $h\in H^2(\Omega),$ 
\begin{equation}\label{Sob-inf}
	\norm{h}_{L^\infty(\Omega)} \lesssim \norm{\nabla h}^{\frac{1}{2}}_{L^2(\Omega)} \norm{h}^{\frac{1}{2}}_{L^2(\Omega)} + \norm{\nabla^2 h}_{L^2(\Omega)}^{\frac{1}{2}} \norm{h}_{L^2(\Omega)}^{\frac{1}{2}} + \norm{\nabla^2 h}^{\frac{3}{4}}_{L^2(\Omega)} \norm{ h}^{\frac{1}{4}}_{L^2(\Omega)}.
\end{equation}
Then one has that
\begin{align}
	\sup_{t\in(0,T)} \norm{ \phi,\zeta}_{W^{1,\infty}(\Omega)} 
%	& \lesssim \sup_{t\in(0,T)} \Big\{ \norm{\nabla( \phi,\zeta)}^{\frac{1}{2}}_{H^1(\Omega)} \norm{ \phi,\zeta}^{\frac{1}{2}}_{H^1(\Omega)} + \norm{\nabla ( \phi,\zeta)}_{H^1(\Omega)} \notag \\
%	& \qquad\qquad\quad + \norm{\nabla^2 (\phi,\zeta)}^{\frac{3}{4}}_{H^1(\Omega)} \norm{ \phi,\zeta}^{\frac{1}{4}}_{H^1(\Omega)} \Big\} \notag \\
	& \lesssim \sup_{t\in(0,T)} \norm{ \phi,\zeta}_{H^3(\Omega)} \lesssim \nu. \label{small-phizeta}
\end{align}
Similarly, the perturbation of momentum, $ \psi = \rho \zeta + \phi \uvt $, satisfies that
\begin{equation*}
	\sup_{t\in(0,T)} \norm{\psi}_{W^{1,\infty}(\Omega)} \lesssim \sup_{t\in(0,T)} \norm{\psi}_{H^3(\Omega)} \lesssim \sup_{t\in(0,T)} \norm{ \phi,\zeta}_{H^3(\Omega)} \lesssim \nu.
\end{equation*}
It follows from \cref{bdd-rhot,small-ans,small-phizeta} that if the positive constants $ \Lambda^{-1}, $ $ \e $ and $ \nu $ are small, then
\begin{align*}
	\rhob/4 \leq \inf_{\substack{x\in\Omega \\ t\in(0,T)}} \rho(x,t)  \leq \sup_{\substack{x\in\Omega \\ t\in(0,T)}} \rho(x,t) \leq 4 \rhob,
\end{align*}
and
\begin{equation}\label{small-uv}
	\sup_{t\in(0,T)} \norm{\nabla\rho(x,t), \nabla\uv(x,t)}_{L^{\infty}(\Omega)} \lesssim  \Lambda^{-\frac{1}{2}} + \e + \nu.
\end{equation}
Besides, it follows from the Sobolev inequality and \cref{Lem-GN} that 
\begin{equation}\label{nu-od}
	\begin{aligned}
		\sup_{t\in(0,T)} \norm{\phi^\od,\psi^\od}_{W^{2, \infty}(\R)} \lesssim \sup_{t\in(0,T)} \norm{\phi^{\od},\psi^{\od}}_{H^3(\R)} \lesssim \sup_{t\in(0,T)} \norm{\phi,\psi}_{H^3(\Omega)} \lesssim \nu,
	\end{aligned}
\end{equation}
and 
\begin{equation}\label{nu-md}
	\begin{aligned}
		\sup_{t\in(0,T)} \norm{\phi^\md, \zeta^\md}_{W^{1,\infty}(\Omega)} \lesssim \sup_{ t\in(0,T)} \norm{\phi^\md, \zeta^\md}_{H^3(\Omega)}  \lesssim \sup_{t\in(0,T)} \norm{\phi, \zeta}_{H^3(\Omega)} \lesssim \nu.
	\end{aligned}
\end{equation}
Then it holds that
\begin{align}
	\sup_{t\in(0,T)} \norm{\Phi,\Psi}_{W^{3, \infty}(\R)} & \lesssim \sup_{t\in(0,T)} \norm{\Phi,\Psi}_{H^4(\R)} \notag \\
	& \lesssim \sup_{t\in(0,T)} \big( \norm{\Phi,\Psi}_{L^2(\R)} + \norm{\phi^{\od},\psi^{\od}}_{H^3(\R)} \big) \notag \\
	& \lesssim \nu. \label{small-PhiPsi}
\end{align}
Moreover, note that $\kappa \lesssim (t+\Lambda)^{-\frac{1}{2}}$ and it follows from \cref{Lem-theta} that 
\begin{equation}\label{bdd-theta-1}
	\abs{\p_3 \theta}^2 \lesssim \Lambda^{-\frac{1}{2}} \kappa, \quad \abs{\p_3^2 \theta} \lesssim \kappa^2, \quad \abs{\p_t \p_3 \theta} \lesssim \Lambda^{-\frac{1}{2}} \kappa^2, 
\end{equation}
and meanwhile,
\begin{equation}\label{bdd-theta}
	\begin{aligned}
		\abs{\p_3^j \theta} \lesssim (t+\Lambda)^{-\frac{j-1}{2}} \kappa \qquad \text{for } j=1,2,\cdots.
	\end{aligned}
\end{equation}

\vspace{.2cm}

To carry out the $L^2$-estimate for \cref{equ-PhiPsi}, we may encounter a difficulty due to the large amplitude of the vortex sheet that the linear terms such as $\uv^\vs \p_3 \Psi_3$ and $\frac{\mu}{\rhob} \uv^\vs \p_3^2 \Phi $ are difficult to get controlled. Nevertheless, we can overcome this difficulty by introducing a new variable.

\textit{New variable}.
Set
\begin{equation}\label{Zv}
	\Zv = (Z_1,Z_2,Z_3)(x_3,t) := \Psi - \uv^\vs \Phi.
\end{equation}
%where we have used \cref{profile}.
It holds that
\begin{equation}\label{small-Zv}
	\begin{aligned}
		\sup_{t\in(0,T)} \norm{\Zv}_{W^{3,\infty}(\R)} \lesssim \sup_{t\in(0,T)} \norm{\Zv}_{H^4(\R)} \lesssim \sup_{t\in(0,T)} \norm{\Phi, \Psi}_{H^4(\R)} \lesssim \nu.
	\end{aligned}
\end{equation}
Note that $ \uv^\vs = \uvb \theta $ and $ \p_t \theta = \frac{\mu}{\rhob} \p_3^2 \theta. $ Then it follows from \cref{equ-PhiPsi} that
\begin{equation}\label{equ-Zv}
	\begin{cases}
		\p_t \Phi + \p_3 Z_3 = 0, \\
		\p_t \Zv + \big( p'(\rhob) \E_3 - \frac{\mu}{\rhob} \p_3 \theta \uvb \big) \p_3 \Phi - \frac{\mu}{\rhob} \p_3^2 \Zv -\frac{\mu+\lambda}{\rhob} \p_3^2 Z_3 \E_3 = - \Gv + \Nv.
	\end{cases}
\end{equation}
With the aid of \cref{Zv}, we can successfully get rid of the bad linear terms in \cref{equ-PhiPsi}\textsubscript{2}; and the linear term, $-\frac{\mu}{\rhob} \p_3 \theta \uvb \p_3 \Phi$  in \cref{equ-Zv}\textsubscript{2}, has smallness due to the largeness of the constant $\Lambda$ in \cref{profile}. 
%Moreover, the introduction of \cref{Zv} plays an important role in achieving the optimal decay rate of the zero mode, $(\phi^\od,\zeta^\od)$.
Recall the notation $``_\perp"$ in \cref{perp} and the fact that $ \uvb = (\uvb_\perp, 0) $.
For a later use, we also decompose \cref{equ-Zv}$ _2 $ into
\begin{equation}\label{equ-Zp}
	\p_t \Zv_\perp  - \frac{\mu}{\rhob} \p_3 \theta \p_3 \Phi \uvb_\perp- \frac{\mu}{\rhob} \p_3^2 \Zv_\perp = - \Gv_\perp + \Nv_\perp,
\end{equation}
and
\begin{equation}\label{equ-Z3}
	\p_t Z_3 + p'(\rhob) \p_3 \Phi - \frac{\mut}{\rhob} \p_3^2 Z_3 = - G_3 + N_3 \qquad \text{with } \ \mut = 2\mu +\lambda.
\end{equation}

\vspace{.1cm}

Then we give two lemmas that will be needed in the energy estimates later. The readers may skip them at first reading.

\begin{Lem}\label{Lem-Q}
	Under the assumptions of \cref{Prop-apriori}, there exist a large constant $ \Lambda_0\geq 1 $ and small positive constants $ \e_0 $ and $ \nu_0 $ such that, if $ \Lambda \geq \Lambda_0 $, $ \e \leq \e_0 $ and $ \nu \leq \nu_0, $ then the remainders \cref{Q} satisfy that
	\begin{equation}\label{est-Q}
		\begin{aligned}
			\abs{\Qv_1 } + \abs{\Qv_3 } & \lesssim \abs{\phi}^2 + \abs{\psi}^2, \\
			\abs{\p_3 \Qv_1 } + \abs{\p_3 \Qv_3 } & \lesssim \big[\e e^{-\alpha t}+(t+\Lambda)^{-\frac{1}{2}}\big] \big(\abs{\phi}^2 + \abs{\psi}^2 \big) \\
			& \quad  + \big(\abs{\p_3\phi} + \abs{\p_3\psi} \big) \left(\abs{\phi} + \abs{\psi}\right), \\
			\abs{\p_3^k \Qv_2 } +\abs{\p_3^k \Qv_4 }  & \lesssim \sum_{j=0}^{k}  \e e^{-\alpha t} \big(\abs{\p_3^j \phi} + \abs{\p_3^j \psi} \big), \qquad k=0,1.
		\end{aligned}
	\end{equation}
\end{Lem}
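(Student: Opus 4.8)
\textbf{Proof proposal for \cref{Lem-Q}.}

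The plan is to treat $\Qv_1,\Qv_3$ and $\Qv_2,\Qv_4$ separately, since the first pair consists of \emph{quadratic} remainders (second-order Taylor errors of the nonlinear terms in $\phi,\psi$), while the second pair is linear in the perturbation but carries a small exponentially-decaying coefficient, namely $\uvt-\uv^\vs$, $p'(\rhot)-p'(\rhob)$, $\frac1{\rhot}-\frac1{\rhob}$, etc., all of which are $O(1)\e e^{-\alpha t}$ by \cref{exp-diff,exp-diff-u}. First I would record the elementary bounds we will use repeatedly: $\rho=\rhot+\phi$ with $\rho,\rhot\sim\rhob$ by \cref{bdd-rhot} and \cref{small-phizeta}, so expressions like $\frac1\rho-\frac1\rhot+\frac{\phi}{\rhot^2}=\frac{\phi^2}{\rho\rhot^2}$ are $O(1)\abs{\phi}^2$; similarly $p(\rho)-p(\rhot)-p'(\rhot)\phi=O(1)\abs\phi^2$ by Taylor's theorem with the $C^2$ bound on $p$ on the compact density range; and $\mv=\mvt+\psi$ so $\frac{m_3\mv}{\rho}-\frac{\mt_3\mvt}{\rhot}-\frac{\mvt}{\rhot}\psi_3-\frac{\mt_3}{\rhot}\psi+\frac{\mt_3\mvt}{\rhot^2}\phi$ is, after collecting terms, a sum of products each containing at least two factors from $\{\phi,\psi\}$ divided by powers of $\rho,\rhot$ — hence $O(1)(\abs\phi^2+\abs\psi^2)$. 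This gives the first line of \cref{est-Q} for $\Qv_1$ and, identically, for $\Qv_3$ (where the $\mu,\mu+\lambda$ prefactors are harmless constants).

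For the second line, I would differentiate the above structural expressions using the product and quotient rules. Each term of $\p_3\Qv_1$ (and $\p_3\Qv_3$) is obtained either by hitting one of the perturbation factors $\phi,\psi$ with $\p_3$ — giving the contribution $(\abs{\p_3\phi}+\abs{\p_3\psi})(\abs\phi+\abs\psi)$ — or by hitting a background factor $\rhot,\mvt,\mt_3$ with $\p_3$. For the latter, $\p_3\rhot,\p_3\mvt=O(1)(\norm{(\nabla\rho_\pm,\nabla\mv_\pm)}_{L^\infty}+\abs{\p_3\Theta_\sigma})$; by \cref{exp-per} the periodic part is $O(1)\e e^{-\alpha t}$, and by \cref{Lem-theta} (or \cref{bdd-theta}) $\abs{\p_3\Theta_\sigma}=O(1)(t+\Lambda)^{-1/2}$, which is exactly the coefficient $[\e e^{-\alpha t}+(t+\Lambda)^{-1/2}]$ multiplying the quadratic term $\abs\phi^2+\abs\psi^2$ in the claimed bound. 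Collecting the two types of contributions yields the second line. For $\Qv_2,\Qv_4$ and their first derivative, I would simply note that every term is of the form (small background coefficient) $\times$ (one perturbation factor): the coefficients $\uvt-\uv^\vs$, $p'(\rhot)-p'(\rhob)$, $\frac1{\rhot}-\frac1{\rhob}$, $\frac{\uvt}{\rhot}-\frac{\uv^\vs}{\rhob}$, $\ut_3$, all $O(1)\e e^{-\alpha t}$ by \cref{exp-diff,exp-diff-u,exp-sigma}, and their $\p_3$-derivatives likewise $O(1)\e e^{-\alpha t}$ (using $\abs{\p_3\theta}\lesssim(t+\Lambda)^{-1/2}\lesssim1$ together with $\abs\sigma,\abs{\sigma'}\lesssim\e e^{-\alpha t}$, and crucially $\uvt-\uv^\vs=\frac14\rhot^{-1}(v_+-v_-)(\uv_+-\uv_-)(1-\Theta_\sigma^2)+\cdots=O(\e e^{-\alpha t})$ from \cref{uvt-1}); when $\p_3$ falls on the perturbation factor we pick up $\abs{\p_3^1\phi}+\abs{\p_3^1\psi}$, giving the stated sum $\sum_{j=0}^{k}\e e^{-\alpha t}(\abs{\p_3^j\phi}+\abs{\p_3^j\psi})$ for $k=0,1$.

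The main obstacle — really the only place requiring care rather than bookkeeping — is making sure that in $\p_3\Qv_1$ no term with the slowly-decaying weight $(t+\Lambda)^{-1/2}$ multiplies a \emph{first-derivative} perturbation factor; i.e.\ that the cross term is genuinely $(\abs{\p_3\phi}+\abs{\p_3\psi})(\abs\phi+\abs\psi)$ with \emph{bounded} (indeed $O(1)$) coefficient and not $(t+\Lambda)^{-1/2}$ times it. This is where the precise algebraic cancellation in the definition of $\Qv_1$ matters: since $\Qv_1$ already has its ``constant'' and ``linear in $\phi,\psi$'' parts subtracted off, a $\p_3$ landing on a perturbation factor is never accompanied by a $\p_3$ on $\Theta_\sigma$ within the same monomial — every monomial of $\Qv_1$ carries at least two perturbation factors, so differentiating it produces monomials with at least one undifferentiated perturbation factor and at most one $\p_3\Theta_\sigma$, never both a $\p_3\Theta_\sigma$ and a $\p_3\phi$ on the minimal two-factor terms. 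I would verify this term-by-term for the finitely many monomials, and also record the harmless fact (used implicitly) that $\abs{\p_3\mt_3},\abs{\p_3\mvt}\lesssim\e e^{-\alpha t}+(t+\Lambda)^{-1/2}$ and $\abs{\mvt},\abs{\mt_3}\lesssim1$ uniformly, which follows from \cref{ansatz}, \cref{exp-per} and $\abs\Theta\le1$. The remaining inequalities are then immediate from the triangle inequality and the pointwise bounds above.
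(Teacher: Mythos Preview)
Your proposal is correct and follows the natural direct approach (Taylor remainder for $\Qv_1,\Qv_3$; exponentially-small coefficient times linear perturbation for $\Qv_2,\Qv_4$; product rule for the $\p_3$-derivatives), which is exactly what the paper has in mind---indeed the paper states \cref{Lem-Q} without proof, treating it as a routine computation. One minor remark: the ``main obstacle'' you highlight is not actually an issue, since a hypothetical cross term of the form $(t+\Lambda)^{-1/2}\,\abs{\p_3\phi}\,\abs{\phi}$ would be \emph{smaller} than $\abs{\p_3\phi}\,\abs{\phi}$ (the coefficient is $\leq 1$) and hence already absorbed by the stated bound; the term-by-term check you propose is correct but unnecessary for this reason.
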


%\begin{proof}
%	The estimates \cref{est-Q}\textsubscript{1} and \cref{est-Q}\textsubscript{3} follows from \cref{Q,exp-diff,exp-diff-u} directly. It suffices to estimate $\p_3 \Qv_1$ and $\p_3 \Qv_3$.
%	
%	By \cref{Q}, direct calculations yield that
%	\begin{align*}
%		\p_3 \Qv_1 	& = \p_3 \mt_3 \Big( \frac{\mv}{\rho}- \frac{\mvt}{\rhot} - \frac{1}{\rhot} \psi + \frac{\mt}{\rhot^2} \phi \Big) + \p_3 \mvt \Big( \frac{m_3}{\rho} - \frac{\mt_3}{\rhot} - \frac{\psi_3}{\rhot} + \frac{\mt_3}{\rhot^2} \phi \Big) \\
%		& \quad - \p_3 \rhot \Big( \frac{m_3 \mv}{\rho^2} - \frac{\mt_3 \mvt}{\rhot^2} - \frac{\mvt}{\rhot^2} \psi_3 - \frac{\mt_3}{\rhot^2} \psi + \frac{2\mt_3 \mvt}{\rhot^3} \phi \Big) \\
%		& \quad + \p_3 \psi_3 \zeta + \zeta_3  \p_3 \psi - \p_3 \phi (u_3 \uv - \ut_3 \uvt) \\
%		& \quad + \big(p'(\rho)-p'(\rhot)\big) \p_3 \phi \E_3 + \big( p'(\rho) - p'(\rhot) - p''(\rhot) \phi \big) \p_3 \rhot \E_3.
%	\end{align*}
%	Using \cref{exp-diff,exp-diff-u}, one can get that
%	\begin{align*}
%		\abs{\p_3 \Qv_1} & \lesssim \e e^{-\alpha t} \big(  \abs{\phi}^2 + \abs{\psi}^2 \big) + (t+\Lambda)^{-\frac{1}{2}} \big(  \abs{\phi}^2 + \abs{\psi_3}^2 \big)  \\
%		& \quad  + \abs{\zeta} \abs{\p_3 \psi} +  \abs{\p_3 \phi} \abs{\zeta} + \abs{\phi} \abs{\p_3\phi},
%	\end{align*}
%	which yields the estimate of $\p_3 \Qv_1$ in \cref{est-Q}. The estimate of $\p_3 \Qv_3$ in \cref{est-Q} can be proved similarly.
%\end{proof}

\vspace{0.1cm}

\begin{Lem}\label{Lem-rel}
	Under the assumptions of \cref{Prop-apriori}, there exist a large constant $ \Lambda_0\geq 1 $ and small positive constants $ \e_0 $ and $ \nu_0 $ such that, if $ \Lambda\geq \Lambda_0 $, $ \e \leq \e_0 $ and $ \nu \leq \nu_0, $ then
%	\begin{itemize}
%		\item[1) ] 
	\begin{align}
		\pm \norm{\zeta^\od}_{L^2(\R)} 
		& \lesssim \pm \norm{\p_3 \Zv}_{L^2(\R)} + \norm{\Phi \kappa}_{L^2(\R)} + \nu \norm{\nabla \zeta^\md}_{L^2(\Omega)} + \e \nu e^{-\alpha t}, \label{rel-0} \\
		\pm \norm{\p_3 \zeta^\od}_{L^2(\R)} 
		& \lesssim \pm \norm{\p_3^2 \Zv}_{L^2(\R)} + \nu \norm{\p_3^2 \Phi}_{L^2(\R)} +  (t+\Lambda)^{-\frac{1}{2}} \norm{\p_3 \Phi, \Phi \kappa}_{L^2(\R)} \notag \\
		& \quad  + \nu \norm{\nabla \phi^\md, \nabla \zeta^\md}_{L^2(\Omega)} + \e \nu e^{-\alpha t}, \label{rel-1} \\
		\pm \norm{\p_3^2 \zeta^\od}_{L^2(\R)} & \lesssim \pm \norm{\p_3^3 \Zv}_{L^2(\R)} +  (t+\Lambda)^{-\frac{1}{2}} \norm{\p_3^2 \Phi}_{L^2(\R)} \notag \\
		& \quad  + (t+\Lambda)^{-1} \norm{\p_3 \Phi, \Phi \kappa}_{L^2(\R)} + \nu \norm{\nabla^2 \phi,\nabla^2 \zeta^\md }_{L^2(\Omega)} + \e \nu e^{-\alpha t}, \label{rel-2}
	\end{align}
	and 
	\begin{align}
		\pm \norm{\nabla \psi^\md}_{L^2(\Omega)} & \lesssim \pm \norm{\nabla \zeta^\md}_{L^2(\Omega)} + \norm{\nabla \phi^\md}_{L^2(\Omega)} + \e \nu e^{-\alpha t}, \label{rel-3} \\
		\pm \norm{\nabla^3 \psi}_{L^2(\Omega)}
		& \lesssim \pm \norm{\nabla^3 \zeta}_{L^2(\Omega)} + \norm{\nabla^3\phi}_{L^2(\Omega)} + \sum_{j=1}^{2} (t+\Lambda)^{\frac{j-4}{2}} \norm{\p_3^{j} \Phi}_{L^2(\R)} \notag \\
		& \quad  + (\Lambda^{-\frac{1}{2}} +\nu) \norm{\nabla^2\phi}_{L^2(\Omega)} + \e \nu e^{-\alpha t}. \label{rel-4}
	\end{align}
\end{Lem}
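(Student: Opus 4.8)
The plan is to derive all six estimates of \cref{Lem-rel} from purely algebraic manipulations of the three defining relations
\begin{equation*}
	\rho\zeta = \psi - \uvt\phi, \qquad \phi^\od=\p_3\Phi,\quad \psi^\od=\p_3\Psi, \qquad \Psi=\Zv+\uv^\vs\Phi,
\end{equation*}
combined with the Leibniz rule and four ingredients already available: the pointwise decay of the $x_3$-derivatives of $\theta$ (hence of $\uv^\vs=\uvb\,\theta$) from \cref{Lem-theta} and \cref{bdd-theta-1,bdd-theta}; the exponential bounds \cref{exp-diff,exp-diff-u} on the ansatz errors $\rhot-\rho^\vs$ and $\uvt-\uv^\vs$; the a priori bounds \cref{small-phizeta}--\cref{small-PhiPsi}; and Poincar\'{e}'s inequality \cref{Lem-poin} for non-zero modes. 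In every case a term carrying $\norm{\zeta^\od}_{L^2}$ or a derivative of it, multiplied by a positive power of $\nu$ or of $\Lambda^{-1/2}$, will be absorbed into the left-hand side.

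\textbf{Zero-mode relations.} First I would take the $\xp$-average of $\rho\zeta=\psi-\uvt\phi$, using that $\uv^\vs$ is $\xp$-independent so $(\uv^\vs\phi)^\od=\uv^\vs\phi^\od$, to get
\begin{equation*}
	\rhob\zeta^\od=\big(\psi^\od-\uv^\vs\phi^\od\big)-\big((\uvt-\uv^\vs)\phi\big)^\od-\big((\rhot-\rhob)\zeta\big)^\od-(\phi\zeta)^\od .
\end{equation*}
Since $\psi^\od-\uv^\vs\phi^\od=\p_3\Psi-\uv^\vs\p_3\Phi=\p_3\Zv+(\p_3\uv^\vs)\Phi$ with $\abs{\p_3\uv^\vs}\lesssim\kappa$, this reads $\rhob\zeta^\od-\p_3\Zv=(\p_3\uv^\vs)\Phi+R_0$, where $R_0$ consists of: ansatz-error terms, each with a factor $\e e^{-\alpha t}$ by \cref{exp-diff,exp-diff-u}, hence $\lesssim\e\nu e^{-\alpha t}$; the piece $\phi^\od\zeta^\od$ of $(\phi\zeta)^\od$, of size $\lesssim\nu\norm{\zeta^\od}_{L^2}$; and the piece $(\phi^\md\zeta^\md)^\od$, bounded by $\nu\norm{\nabla\zeta^\md}_{L^2}$ via $\norm{\phi^\md}_{L^\infty}\lesssim\nu$ and \cref{Lem-poin}. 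Absorbing $\nu\norm{\zeta^\od}_{L^2}$ yields \cref{rel-0}, and applying the triangle inequality to the same identity yields its reverse. Applying $\p_3$, $\p_3^2$ and the Leibniz rule then produces, besides $\p_3^2\Zv$ and $\p_3^3\Zv$, terms $\p_3^{j}\uv^\vs\cdot\p_3^{k-j}\Phi$ with $j\geq1$: the bounds $\abs{\p_3^{j}\uv^\vs}\lesssim(t+\Lambda)^{-(j-1)/2}\kappa$ and $\kappa\lesssim(t+\Lambda)^{-1/2}$ convert each of these into exactly one of the weighted $\Phi$-norms on the right of \cref{rel-1,rel-2}, while the derivatives of $R_0$ give ansatz-error terms ($\lesssim\e\nu e^{-\alpha t}$), products whose $L^\infty$-factor is a derivative of $\phi^\od$ or $\zeta^\od$ ($\lesssim\nu\norm{\p_3^2\Phi}_{L^2(\R)}$ or $\nu\norm{\nabla^2\phi}_{L^2(\Omega)}$, using $\p_3\phi^\od=\p_3^2\Phi$ and $\norm{\p_3^k\phi^\od}_{L^2(\R)}\leq\norm{\p_3^k\phi}_{L^2(\Omega)}$), products of non-zero modes (handled by \cref{Lem-poin}), and terms $\lesssim\nu\norm{\p_3\zeta^\od}_{L^2},\nu\norm{\p_3^2\zeta^\od}_{L^2}$ that are absorbed on the left; one-dimensional Gagliardo--Nirenberg interpolation distributes the intermediate products such as $\p_3\phi^\od\,\p_3\zeta^\od$ over the norms that actually occur on the right.

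\textbf{Momentum relations.} For \cref{rel-3,rel-4} I would instead write $\psi=\rhob\zeta+\uv^\vs\phi+(\rho-\rhob)\zeta+(\uvt-\uv^\vs)\phi$ with $\rho-\rhob=\phi+(\rhot-\rho^\vs)$, and apply $\nabla(\cdot)^\md$ for \cref{rel-3} and $\nabla^3(\cdot)$ for \cref{rel-4}. The leading terms $\rhob\nabla^{k}\zeta$ and $\uv^\vs\nabla^{k}\phi$ --- with $\uv^\vs$ a bounded, though possibly large, multiplier --- enter with constant coefficients; the remaining terms are products with an exponentially small ansatz error (giving $\e\nu e^{-\alpha t}$), products of two perturbations (bounded by $\nu$ times a derivative norm through \cref{small-phizeta}, \cref{Lem-poin} and the $\Omega$-Gagliardo--Nirenberg inequality \cref{Lem-GN}), or --- only at third order --- terms $\p_3^{a}\uv^\vs\cdot\p_3^{3-a}\phi^\od=\p_3^{a}\uv^\vs\,\p_3^{4-a}\Phi$; for $a=2,3$ the bound $\abs{\p_3^{a}\uv^\vs}\lesssim(t+\Lambda)^{-a/2}$ yields the sum $\sum_{j=1}^{2}(t+\Lambda)^{(j-4)/2}\norm{\p_3^{j}\Phi}_{L^2(\R)}$, and for $a=1$ a term $\lesssim\Lambda^{-1/2}\norm{\nabla^2\phi}_{L^2(\Omega)}$ that is absorbed into the $(\Lambda^{-1/2}+\nu)\norm{\nabla^2\phi}$ term. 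The reverse inequalities follow by the same computation after isolating $\rhob\zeta$ from $\rho\zeta=\psi-\uvt\phi$.

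\textbf{Main difficulty.} Nothing here is conceptually deep: the whole lemma is a bookkeeping exercise, and the real work is twofold. First, matching each product to the precise weight ($\kappa$, a negative power of $t+\Lambda$, or $e^{-\alpha t}$) on the right depends on the sharp estimates of \cref{Lem-theta} and, for each occurrence of $\p_3^{j}\uv^\vs$, on correctly choosing between the pointwise bound $\abs{\p_3^{j}\uv^\vs}\lesssim(t+\Lambda)^{-j/2}$ and the localized bound $\abs{\p_3^{j}\uv^\vs}\lesssim(t+\Lambda)^{-(j-1)/2}\kappa$. Second, one must verify that every occurrence of $\zeta^\od$ or its derivatives on the right carries a genuinely small coefficient, so that absorption into the left-hand side is legitimate; this is exactly what the change of variable $\Zv$ in \cref{Zv} achieves, since it replaces the $O(1)$ multiplier $\uv^\vs$ in front of $\p_3\Phi$ by the localized, small factor $\p_3\uv^\vs$.
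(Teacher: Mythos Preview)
Your proposal is correct and follows essentially the same route as the paper's proof in \cref{Sec-app4}: the paper also starts from the identity $\rhot\zeta=\psi-\uvt\phi-\phi\zeta$, isolates the leading piece $\frac{1}{\rhob}(\p_3\Zv+\p_3\uv^\vs\,\Phi)$ in $\zeta^\od$, and treats the remaining terms as ansatz errors or quadratic perturbations exactly as you describe; for \cref{rel-3,rel-4} it likewise expands $\psi=\rhob\zeta+\uv^\vs\phi+\phi\zeta+[(\rhot-\rhob)\zeta+(\uvt-\uv^\vs)\phi]$ and invokes \cref{tool4} (the $L^4$ Gagliardo--Nirenberg bound) where you invoke interpolation. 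The only cosmetic difference is that the paper divides by $\rhot$ before splitting off $\rhob$, whereas you subtract $\rhob$ from $\rho$ directly; the resulting remainders are equivalent.
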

The proof of \cref{Lem-rel} is given in \cref{Sec-app4}.

\vspace{.2cm}

For convenience, in the following energy estimates, we write $ ``\p_3(\cdots)" $ and $ ``\dv(\cdots)" $ to denote the terms which vanish after integration with respect to $ x_3 \in \R $ and $ x\in\Omega, $ respectively. 
Besides, we use $ c_{i,\flat}, c_{i,\neq} $ and $ c_i $ for $ i=0,1,2,\cdots $ to denote some positive generic constants, which are independent of $ x,t,\e,\Lambda $ and $ \nu. $

\vspace{.2cm}

\subsection{Estimates of Zero-Modes.}\label{Sec-od}

In this section, we establish the $H^2(\R)$-estimates of the anti-derivatives, which is the key part in the a priori estimates.

\begin{Lem}\label{Lem-est1}
	Under the assumptions of \cref{Prop-apriori}, there exist a large constant $ \Lambda_0\geq 1 $ and small positive constants $ \e_0 $ and $ \nu_0 $ such that, if $ \Lambda \geq \Lambda_0 $, $ \e \leq \e_0 $ and $ \nu \leq \nu_0, $ then
	\begin{equation}\label{est1}
		\begin{aligned}
			\frac{d}{dt} \Ac_{0,\od}^{(1)} + \frac{\mu}{\rhob}\norm{\p_3 \Zv}_{L^2(\R)}^2 & \lesssim (\Lambda^{-\frac{1}{2}}+\nu)  \norm{\p_3 \Phi, \Phi\kappa, \Zv \kappa}_{L^2(\R)}^2\\
			& \quad + \nu  \norm{\nabla \phi^\md, \nabla \zeta^\md}_{L^2(\Omega)}^2 + \e \nu \Lambda^{\frac{1}{4}} e^{-\alpha t}.
		\end{aligned}
	\end{equation}
	where 
	\begin{equation}\label{Ac0-1}
		\Ac_{0,\od}^{(1)} := p'(\rhob) \norm{\Phi}_{L^2(\R)}^2 + \norm{\Zv}_{L^2(\R)}^2 + \frac{2\mu}{\rhob p'(\rhob)} \int_\R \p_3 \theta \uvb_\perp \cdot \Zv_\perp Z_3 dx_3.
	\end{equation}
\end{Lem}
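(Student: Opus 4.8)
The plan is to carry out a weighted $L^2$-energy estimate on the integrated system \cref{equ-Zv}, the decisive device being the cross-term correction already built into the functional \cref{Ac0-1}. First I would multiply $\cref{equ-Zv}$\textsubscript{1} by $p'(\rhob)\Phi$ and $\cref{equ-Zv}$\textsubscript{2} by $\Zv$, integrate over $x_3\in\R$, and add the two identities. Since $(\Phi,\Psi)\in H^4(\R)$ by \cref{small-PhiPsi}, all boundary terms vanish under integration by parts, the viscous terms become $\frac{\mu}{\rhob}\norm{\p_3\Zv}_{L^2(\R)}^2+\frac{\mu+\lambda}{\rhob}\norm{\p_3 Z_3}_{L^2(\R)}^2\geq\frac{\mu}{\rhob}\norm{\p_3\Zv}_{L^2(\R)}^2$ by \cref{viscous}, and the two terms carrying the factor $p'(\rhob)$ cancel after one further integration by parts. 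What survives on the right is the linear term $\frac{\mu}{\rhob}\int_\R\p_3\theta\,(\uvb_\perp\cdot\Zv_\perp)\,\p_3\Phi\,dx_3$ (originating from $-\frac{\mu}{\rhob}\p_3\theta\,\uvb\,\p_3\Phi$ in $\cref{equ-Zv}$\textsubscript{2}) together with $\int_\R\Zv\cdot(-\Gv+\Nv)\,dx_3$. The term with $\p_3\Phi$ is the genuine obstruction: $\norm{\p_3\Phi}_{L^2(\R)}$ is not controlled by the dissipation, and its coefficient $\uvb$ need not be small.

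To remove it, I would differentiate in $t$ the cross term $I(t):=\int_\R\p_3\theta\,(\uvb_\perp\cdot\Zv_\perp)\,Z_3\,dx_3$ occurring in $\Ac_{0,\od}^{(1)}$, using \cref{equ-Zp} for $\p_t\Zv_\perp$ and \cref{equ-Z3} for $\p_t Z_3$. The contribution of the term $-p'(\rhob)\p_3\Phi$ in $\p_t Z_3$ to $\frac{2\mu}{\rhob p'(\rhob)}\,\frac{d}{dt}I$ is exactly $-\frac{2\mu}{\rhob}\int_\R\p_3\theta\,(\uvb_\perp\cdot\Zv_\perp)\,\p_3\Phi\,dx_3$, which cancels the obstruction once the basic identity is written for $\frac{d}{dt}\big(p'(\rhob)\norm{\Phi}_{L^2(\R)}^2+\norm{\Zv}_{L^2(\R)}^2\big)$; this is precisely why the constant $\frac{2\mu}{\rhob p'(\rhob)}$ enters \cref{Ac0-1}. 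One also notes, using $\abs{\p_3\theta}\lesssim\Lambda^{-1/2}$ (a consequence of \cref{bdd-theta-1}), that $\abs{I}\lesssim\Lambda^{-1/2}\norm{\Zv}_{L^2(\R)}^2$, so that for $\Lambda\geq\Lambda_0$ (with $\Lambda_0$ allowed to depend on $\abs{\uvb}$) the functional $\Ac_{0,\od}^{(1)}$ is equivalent to $\norm{\Phi}_{L^2(\R)}^2+\norm{\Zv}_{L^2(\R)}^2$.

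It then remains to estimate the residuals: the remaining contributions to $\frac{d}{dt}I$, and $\int_\R\Zv\cdot(-\Gv+\Nv)\,dx_3$. For the former, the terms carrying $\p_t\p_3\theta$, $\p_3^2\theta$, or an extra power of $\p_3\theta$ are controlled by \cref{bdd-theta-1}, which supplies factors $\Lambda^{-1/2}$ (or $(t+\Lambda)^{-1/2}$) and the weights $\kappa,\kappa^2$; after moving one derivative off $\p_3^2\Zv_\perp$ and $\p_3^2 Z_3$ by integration by parts and using Young's inequality, these amount to at most $\delta\norm{\p_3\Zv}_{L^2(\R)}^2+C\Lambda^{-1/2}\norm{\p_3\Phi,\Phi\kappa,\Zv\kappa}_{L^2(\R)}^2$, so a small multiple of the dissipation is absorbed (leaving $\frac{\mu}{\rhob}\norm{\p_3\Zv}_{L^2(\R)}^2$ on the left) and the rest fits the first group on the right of \cref{est1}. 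For the source/nonlinear term I would split $\Nv=-\Qv_1^\od-\Qv_2^\od+\p_3(\Qv_3^\od+\Qv_4^\od)$ as in \cref{Nv}, integrate by parts on the last group to transfer $\p_3$ onto $\Zv$, and invoke \cref{Lem-Q}. The ensuing quadratic terms are bounded — via $\phi^\od=\p_3\Phi$, hence $\norm{\phi}_{L^2(\Omega)}\leq\norm{\p_3\Phi}_{L^2(\R)}+\norm{\phi^\md}_{L^2(\Omega)}$, the analogous decomposition of $\psi$, the relations of \cref{Lem-rel} (which express the $\norm{\psi^\od}$-type quantities through $\norm{\p_3\Zv},\norm{\p_3\Phi},\norm{\Phi\kappa}$), and the Poincar\'e inequality \cref{poincare} for the non-zero modes — by $\nu$ times the first two groups on the right of \cref{est1} plus a term $\lesssim\nu\norm{\p_3\Zv}_{L^2(\R)}^2$ that is absorbed once $\nu_0$ is small; the cross terms with $\Gv$ are estimated through \cref{Lem-F}, $\norm{\Gv}_{H^3(\R)}\lesssim\e\Lambda^{1/4}e^{-\alpha t}$, giving the $\e\nu\Lambda^{1/4}e^{-\alpha t}$ contribution. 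Putting everything together and choosing $\Lambda_0$ large and $\e_0,\nu_0$ small yields \cref{est1}.

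The part I expect to be the main obstacle is the precise bookkeeping in the cancellation step: one must check that, with the exact coefficient in \cref{Ac0-1}, the sole term not dominated by the dissipation, $\int_\R\p_3\theta\,(\uvb_\perp\cdot\Zv_\perp)\,\p_3\Phi\,dx_3$, vanishes identically, and that every surviving term either carries a true $\Lambda^{-1/2}$ or $\nu$ smallness or is of exponentially decaying source type — in particular keeping straight which error terms must be charged to the (here uncontrolled) quantities $\norm{\Phi\kappa}_{L^2(\R)},\norm{\Zv\kappa}_{L^2(\R)}$, which are closed only later in the weighted estimates via \cref{Lem-HLM}, versus those that can be absorbed into $\norm{\p_3\Zv}_{L^2(\R)}^2$.
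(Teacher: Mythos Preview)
Your proposal is correct and follows essentially the same strategy as the paper: multiply \cref{equ-Zv} by $(p'(\rhob)\Phi,\Zv)$, then eliminate the obstructing term $\frac{\mu}{\rhob}\int\p_3\theta\,\p_3\Phi\,(\uvb_\perp\cdot\Zv_\perp)$ by absorbing it into the time derivative of the cross term in \cref{Ac0-1} via \cref{equ-Z3} and \cref{equ-Zp}, with all residuals controlled through \cref{bdd-theta-1}, \cref{Lem-F}, \cref{Lem-Q}, \cref{poincare} and \cref{Lem-rel}. The only cosmetic difference is that the paper organizes the cancellation by substituting \cref{equ-Z3} into $I_2$ (producing the $\p_t$-term that becomes part of $\Ac_{0,\od}^{(1)}$) rather than differentiating the cross term directly, but the two computations are algebraically identical.
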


\begin{proof}
%	The proof will decomposed into two parts
%		\begin{equation}\label{est0}
%		\begin{aligned}
%			& \sup_{t\in(0,T)} \norm{\Phi,\Psi}_{L^2(\R)}^2 + \int_0^T  \norm{\p_3 \Phi,\p_3 \Psi}_{L^2(\R)}^2  dt\\
%			 \lesssim& \norm{\Phi_0,\Psi_0}_{L^2(\R)}^2+ \delta\int_0^T\int_\R(\Phi^2+\Psi_1^2+\Psi_3^2)\kappa^2\ dx_3dt +\int_0^T\nu\|(\phi,\psi)\|_{H^1(\Omega)}^2dt+ \nu\varepsilon.
%		\end{aligned}
%	\end{equation}
%and
%		\begin{equation}\label{estomega}
%	\begin{aligned}
%				\int_0^t\int_\R(p'\Phi^2&+\Psi^2)\kappa^2dx_3dt\lesssim\sup_{t\in(0,T)}{\big\Vert(\Phi,\Psi)(\cdot,t)\big\Vert}_{L^2(\R)}^2+ \nu\varepsilon+\int_0^T\nu\|(\phi,\psi)\|_{H^1(\Omega)}^2dt.
%	\end{aligned}
%\end{equation}

Multiplying $ p'(\rhob)\Phi $ and $ \Zv $ on \cref{equ-Zv}$ _1 $ and \cref{equ-Zv}$ _2 $, respectively, and adding the resulting two equations together, one has that
\begin{equation}\label{eq1}
	\begin{aligned}
		& \p_t \Big[ \frac{1}{2} \big( p'(\rhob) \Phi^2 + \abs{\Zv}^2\big) \Big] + \frac{\mu}{\rhob} \abs{\p_3 \Zv}^2 + \frac{\mu+\lambda}{\rhob} \abs{\p_3 Z_3}^2 \\
		& \qquad = \p_3 (\cdots) + \underbrace{(-\Gv + \Nv) \cdot \Zv}_{I_1} + \underbrace{ \frac{\mu}{\rhob} \p_3 \theta \p_3 \Phi \uvb_\perp \cdot \Zv_\perp}_{I_2}.
	\end{aligned}
\end{equation}
It follows from Lemmas \ref{Lem-F} and \ref{Lem-Q} that
\begin{equation}\label{tool-G}
\int_\R \abs{\Gv} \abs{\Zv} dx_3 \lesssim \norm{\Gv}_{L^2(\R)} \norm{\Zv}_{L^2(\R)} \lesssim \e \nu \Lambda^{\frac{1}{4}} e^{-\alpha t},
\end{equation}
and
\begin{equation}\label{tool-Q}
\begin{aligned}
	\abs{\int_\R \Nv \cdot \Zv dx_3} & \lesssim \abs{\int_\R (\Qv_1^\od + \Qv_2^\od)  \cdot \Zv dx_3} + \abs{\int_\R (\Qv_3^\od + \Qv_4^\od) \cdot \p_3 \Zv dx_3} \\
	& \lesssim \norm{\Zv}_{H^1(\R)} \norm{\Qv_2, \Qv_4}_{L^2(\Omega)} + \norm{\Zv}_{W^{1,\infty}(\R)} \norm{\Qv_1, \Qv_3}_{L^1(\Omega)} \\
	& \lesssim \nu \norm{\phi, \psi}_{L^2(\Omega)}^2 + \e \nu^2 e^{-\alpha t}.
	%	& \lesssim \e \nu^2 e^{-\alpha t} + \nu 	\norm{\p_3 \Phi,\p_3 \Zv}_{L^2(\R)}^2 + \delta\nu \int_\R \Phi^2 \kappa^2 dx_3 + \nu \norm{\nabla \phi, \nabla \psi}_{L^2(\Omega)}^2.
\end{aligned}
\end{equation}
Note that $ \phi^\od = \p_3 \Phi $ and $ \psi^\od = \p_3 \Psi. $ Then it follows from \cref{Lem-poin} that
\begin{align}
\norm{\phi}_{L^2(\Omega)}^2 & = \norm{\phi^\od}_{L^2(\R)}^2 + \norm{\phi^\md}_{L^2(\Omega)}^2 \lesssim \norm{\p_3 \Phi}_{L^2(\R)}^2 + \norm{\nabla \phi^\md}_{L^2(\Omega)}^2, \label{ineq-poin1}
\end{align}
and similarly,
\begin{align}
	\norm{\psi}_{L^2(\Omega)}^2 & \lesssim \norm{\p_3 \Psi}_{L^2(\R)}^2 + \norm{\nabla\psi^\md}_{L^2(\Omega)}^2. \label{ineq-poin2}
\end{align}
By \cref{Zv,bdd-theta}, one can verify easily that
\begin{equation}\label{ineq-1}
\begin{aligned}
	%		 \abs{\norm{\Psi}_{L^2(\R)}^2 - \norm{\Zv}_{L^2(\R)}^2 } & \lesssim  \delta \norm{\Phi}_{L^2(\R)}^2, \\
	\norm{\p_3 \Psi}_{L^2(\R)}^2  &  \lesssim \norm{\p_3 \Zv,\p_3 \Phi, \Phi \kappa}_{L^2(\R)}^2.
\end{aligned}
\end{equation}
Then it follows from \cref{ineq-poin1,ineq-poin2,ineq-1,rel-3} that
\begin{equation}\label{ineq-poin3}
\begin{aligned}
	\norm{\phi, \psi}_{L^2(\Omega)}^2 & \lesssim \norm{\p_3 \Phi,\p_3 \Zv,\Phi\kappa}_{L^2(\R)}^2 + \norm{\nabla \phi^\md, \nabla \zeta^\md}_{L^2(\Omega)}^2 + \e \nu e^{-\alpha t}.
\end{aligned}
\end{equation}
This, together with \cref{rel-3}, yields that
\begin{equation}\label{est-I1}
	\int_\R I_1 dx_3 \lesssim \e \nu \Lambda^{\frac{1}{4}} e^{-\alpha t} + \nu \norm{\p_3 \Phi,\p_3 \Zv,\Phi\kappa}_{L^2(\R)}^2 + \nu \norm{\nabla \phi^\md, \nabla \zeta^\md}_{L^2(\Omega)}^2.
\end{equation}

Now we estimate $I_2$. 
Using \cref{equ-Z3}, one can get that
\begin{align*}
	\frac{\rhob p'(\rhob)}{\mu} I_2 & = \p_3 \theta \uvb_\perp \cdot \Zv_\perp \big(-G_3+N_3 \big) - \p_3 \theta \uvb_\perp \cdot \Zv_\perp \p_t Z_3 +  \frac{\mut}{\rhob} \p_3 \theta \uvb_\perp \cdot \Zv_\perp \p_3^2 Z_3 \\
	& := I_{2,1} + I_{2,2} + I_{2,3}.
\end{align*}
First, similar to the estimate of $I_1$ and noting that $\abs{\p_3 \theta} \lesssim \Lambda^{-\frac{1}{2}}$, one can prove that
\begin{equation}\label{est-I2-1}
	\int_\R I_{2,1} dx_3 \lesssim \e \nu e^{-\alpha t} + \nu \norm{\p_3 \Phi,\p_3 \Zv,\Phi\kappa}_{L^2(\R)}^2 + \nu \norm{\nabla \phi^\md, \nabla \zeta^\md}_{L^2(\Omega)}^2.
\end{equation}
For $I_{2,2}$, it holds that
\begin{equation}\label{I2-2}
	I_{2,2} = \p_t \big( - \p_3 \theta \uvb_\perp \cdot \Zv_\perp Z_3\big) + \p_3\p_t \theta \uvb_\perp \cdot \Zv_\perp Z_3 + \underbrace{ \p_3 \theta \uvb_\perp \cdot \p_t \Zv_\perp Z_3}_{I_{2,4}}.
\end{equation}
Using \cref{equ-Zp}, one has that
\begin{equation}\label{I2-4}
\begin{aligned}
	I_{2,4} & =  \p_3 \theta Z_3 \uvb_\perp \cdot (-\Gv_\perp + \Nv_\perp )  + \p_3 \Big( \frac{\mu}{\rhob} \p_3 \theta Z_3 \uvb_\perp \cdot \p_3 \Zv_\perp\Big) \\
	& \quad - \frac{\mu}{\rhob} \p_3 \theta \p_3 Z_3 \uvb_\perp \cdot \p_3 \Zv_\perp - \frac{\mu}{\rhob} \p_3^2 \theta Z_3 \uvb_\perp \cdot \p_3 \Zv_\perp  + \frac{\mu}{\rhob} \abs{\uvb}^2 \abs{\p_3 \theta}^2 Z_3 \p_3 \Phi.
\end{aligned}	
\end{equation}
Collecting \cref{I2-2,I2-4}, one can use \cref{bdd-theta-1}, \cref{bdd-theta} and a similar proof of \cref{est-I1} to get that
\begin{equation}\label{est-I2-2}
	\begin{aligned}
		& \int_\R I_{2,2} dx_3 + \frac{d}{dt} \Big(\int_\R \p_3\theta \uvb_\perp \cdot \Zv_\perp Z_3 dx_3 \Big) \\
		& \qquad \lesssim  \big(\Lambda^{-\frac{1}{2}}+\nu\big) \norm{\p_3 \Phi,\p_3 \Zv,\Phi \kappa, \Zv \kappa}_{L^2(\R)}^2 + \e \nu e^{-\alpha t} + \nu \norm{\nabla \phi^\md, \nabla \zeta^\md}_{L^2(\Omega)}^2.
	\end{aligned}
\end{equation}
%Using \cref{bdd-theta} and the fact that $ \abs{\uvb} \lesssim \delta $, it holds that
%\begin{equation}\label{ineq-I4}
%	\int_\R \abs{\frac{\mu}{\rhob} \p_3 \theta \p_3 \Phi \uvb_\perp \cdot \Zv_\perp} dx_3 \lesssim \delta \int_\R  \abs{\Zv_\perp}^2 \kappa^2 dx_3 + \delta \norm{\p_3 \Phi}_{L^2(\R)}^2.
%\end{equation} 
%\begin{equation}\label{ineq-I4}
%	\int_\R \abs{\frac{\mu}{\rhob} \p_3 \theta \p_3 \Phi \uvb_\perp \cdot \Zv_\perp} dx_3 \lesssim \norm{\Zv_\perp \kappa}_{L^2(\R)} \norm{\p_3 \Phi}_{L^2(\R)}.
%\end{equation}
Moreover, it holds that
\begin{align*}
	I_{2,3} & = \p_3 \Big( \frac{\mut}{\rhob} \p_3 \theta \uvb_\perp \cdot \Zv_\perp \p_3 Z_3 \Big) - \frac{\mut}{\rhob} \p_3^2 \theta \uvb_\perp \cdot \Zv_\perp \p_3 Z_3 - \frac{\mut}{\rhob} \p_3 \theta \uvb_\perp \cdot \p_3 \Zv_\perp \p_3 Z_3,
\end{align*}
which, together with \cref{bdd-theta-1}, yields that
\begin{equation}\label{est-I2-3}
	\int_\R I_{2,3} dx_3 \lesssim \Lambda^{-\frac{1}{2}} \norm{\p_3 \Zv, \Zv_\perp \kappa}_{L^2(\R)}^2.
\end{equation}
Then collecting \cref{est-I1,est-I2-1,est-I2-2,est-I2-3}, one can obtain \cref{est1} if $ \Lambda^{-1}>0 $ and $ \nu>0 $ are suitably small.

\end{proof}

\vspace{0.3cm}

Set $ \mathfrak{K}(x_3,t) := \int_{-\infty}^{x_3} \kappa^2(y,t) dy. $ It holds that
\begin{equation}\label{A-bdd}
	\norm{\mathfrak{K}}_{L^\infty(\R)} \lesssim (t+\Lambda)^{-\frac{1}{2}} \andd \norm{\p_t \mathfrak{K}}_{L^\infty(\R)} \lesssim (t+\Lambda)^{-\frac{3}{2}}, \qquad t\geq 0.
\end{equation}
	
\begin{Lem}\label{Lem-weight}
	Under the assumptions of \cref{Prop-apriori}, there exist a large constant $ \Lambda_0\geq 1 $ and small positive constants $ \e_0 $ and $ \nu_0 $ such that, if $ \Lambda \geq \Lambda_0 $, $ \e \leq \e_0 $ and $ \nu \leq \nu_0, $ then
	\begin{equation}\label{est-weight}
		\begin{aligned}
			& \frac{d}{dt} \Ac_{0,\od}^{(2)} + p'(\rhob) \norm{\Phi \kappa}_{L^2(\R)}^2 + \norm{\Zv \kappa}_{L^2(\R)}^2 \\
			& \qquad \lesssim \norm{\p_3 \Zv}_{L^2(\R)}^2 + (\Lambda^{-\frac{1}{2}} + \nu)  \norm{\p_3 \Phi}_{L^2(\R)}^2 + \nu  \norm{\nabla \phi^\md, \nabla \zeta^\md}_{L^2(\Omega)}^2 \\
			& \qquad \quad  +  \e \nu \Lambda^{\frac{1}{4}} e^{-\alpha t} + (t+\Lambda)^{-\frac{3}{2}} \norm{\Phi,\Zv}_{L^2(\R)}^2,
		\end{aligned}
	\end{equation}
	where 
	\begin{equation}\label{Ac0-2}
		\Ac_{0,\od}^{(2)} = \frac{\rhob}{8\mu}\int_\R \abs{\Zv_\perp}^2 \Kc^2 dx_3 - 4 \int_\R Z_3\Phi \mathfrak{K} dx_3 - \frac{1}{4p'(\rhob)} \int_\R \p_3 \theta \uvb_\perp \cdot \Zv_\perp Z_3 \Kc^2 dx_3.
	\end{equation}

\end{Lem}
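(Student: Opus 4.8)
\textit{Plan of proof.} The purpose of this lemma is to extract the weighted dissipation $p'(\rhob)\norm{\Phi\kappa}_{L^2(\R)}^2+\norm{\Zv\kappa}_{L^2(\R)}^2$, which is precisely the borrowed good term appearing on the right-hand side of \cref{est1}. The three summands of $\Ac_{0,\od}^{(2)}$ in \cref{Ac0-2} play three distinct roles: $\frac{\rhob}{8\mu}\int_\R\abs{\Zv_\perp}^2\Kc^2\,dx_3$ is the quantity naturally produced by \cref{Lem-HLM} and yields $\norm{\Zv_\perp\kappa}_{L^2(\R)}^2$; $-4\int_\R Z_3\Phi\mathfrak{K}\,dx_3$ exploits the damped $p$-system structure of $(\Phi,Z_3)$ in the first equation of \cref{equ-Zv} and in \cref{equ-Z3} to produce $\norm{Z_3\kappa}_{L^2(\R)}^2$ and $p'(\rhob)\norm{\Phi\kappa}_{L^2(\R)}^2$; and $-\frac{1}{4p'(\rhob)}\int_\R\p_3\theta\,\uvb_\perp\cdot\Zv_\perp Z_3\Kc^2\,dx_3$ is the weighted analogue of the cross term in \cref{Ac0-1}, whose job is to cancel the slowly decaying coupling term $\frac{\mu}{\rhob}\p_3\theta\,\p_3\Phi\,\uvb_\perp\cdot\Zv_\perp$ that the weighted computation throws up.

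\textit{Step 1: the transverse part.} Apply \cref{Lem-HLM} componentwise to $h=Z_1$ and $h=Z_2$ (the hypotheses hold since $\Zv_\perp\in C(0,T;H^4(\R))$ by \cref{small-PhiPsi} and $\p_t\Zv_\perp$ is given by \cref{equ-Zp}) and sum; this produces $\norm{\Zv_\perp\kappa}_{L^2(\R)}^2$ together with the time derivative of the first summand of $\Ac_{0,\od}^{(2)}$, modulo $\norm{\p_3\Zv_\perp}_{L^2(\R)}^2$ and the term $\frac{\rhob}{4\mu}\int_\R\p_t\Zv_\perp\cdot\Zv_\perp\Kc^2\,dx_3$. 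Into the latter one substitutes \cref{equ-Zp}: the viscous piece $\frac{\mu}{\rhob}\p_3^2\Zv_\perp$ integrates by parts to a nonpositive term (discarded) plus one closed by $\delta\norm{\Zv_\perp\kappa}_{L^2(\R)}^2+C_\delta\norm{\p_3\Zv_\perp}_{L^2(\R)}^2$ using $\p_3\Kc^2=2\kappa\Kc$ and $\Kc\lesssim1$; the forcing terms $-\Gv_\perp+\Nv_\perp$ are handled by \cref{Lem-F,Lem-Q,Lem-poin} and the identity \cref{ineq-poin3} exactly as the term $I_1$ in the proof of \cref{Lem-est1}; and there is left the coupling integral $\frac{\mu}{\rhob}\int_\R\p_3\theta\,\p_3\Phi\,\uvb_\perp\cdot\Zv_\perp\Kc^2\,dx_3$.

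\textit{Step 2: the coupling term.} This is treated just as $I_2$ in the proof of \cref{Lem-est1}: one uses \cref{equ-Z3} to express $p'(\rhob)\p_3\Phi$ through $\p_tZ_3,\p_3^2Z_3,G_3,N_3$, and integrates the $\p_tZ_3$ contribution by parts in time; this yields the time derivative of a constant multiple of the third summand of $\Ac_{0,\od}^{(2)}$ plus remainders, all carrying a factor $\abs{\p_3\theta}$, $\abs{\p_3^2\theta}$, $\abs{\p_t\p_3\theta}$ or $\abs{\p_3\theta}^2$, which by \cref{bdd-theta-1,bdd-theta} are $\lesssim\Lambda^{-1/2}\kappa$ or $\lesssim\Lambda^{-1/2}\kappa^2$. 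Hence these remainders contribute only $(\Lambda^{-1/2}+\nu)\norm{\p_3\Phi,\p_3\Zv,\Phi\kappa,\Zv\kappa}_{L^2(\R)}^2+\nu\norm{\nabla\phi^\md,\nabla\zeta^\md}_{L^2(\Omega)}^2+\e\nu\Lambda^{1/4}e^{-\alpha t}$, matching the right-hand side of \cref{est-weight}.

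\textit{Step 3: the pressure part and conclusion.} Differentiating $-4\int_\R Z_3\Phi\mathfrak{K}\,dx_3$ and using $\p_t\Phi=-\p_3Z_3$ together with $\p_3\mathfrak{K}=\kappa^2$ produces $-2\norm{Z_3\kappa}_{L^2(\R)}^2$; using \cref{equ-Z3} for $\p_tZ_3$, the term $-p'(\rhob)\p_3\Phi$ gives $-2p'(\rhob)\norm{\Phi\kappa}_{L^2(\R)}^2$ after integration by parts, the viscous term $\frac{\mut}{\rhob}\p_3^2Z_3$ contributes $\lesssim\norm{\p_3\Zv}_{L^2(\R)}^2+\Lambda^{-1/2}\norm{\p_3\Phi}_{L^2(\R)}^2+\delta\norm{\Phi\kappa}_{L^2(\R)}^2$ (using $\norm{\mathfrak{K}}_{L^\infty}\lesssim(t+\Lambda)^{-1/2}$ from \cref{A-bdd}), and $G_3,N_3$ are handled as in Step 1; finally the $\p_t\mathfrak{K}$ term is $\lesssim\norm{\p_t\mathfrak{K}}_{L^\infty}\norm{\Phi}_{L^2(\R)}\norm{Z_3}_{L^2(\R)}\lesssim(t+\Lambda)^{-3/2}\norm{\Phi,\Zv}_{L^2(\R)}^2$ by \cref{A-bdd}, which is the last term of \cref{est-weight}. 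Summing Steps 1--3, the $\Kc^2$- and $\mathfrak{K}$-weighted time derivatives collect into $\frac{d}{dt}\Ac_{0,\od}^{(2)}$, the good terms $\norm{\Zv_\perp\kappa}_{L^2(\R)}^2+\norm{Z_3\kappa}_{L^2(\R)}^2+p'(\rhob)\norm{\Phi\kappa}_{L^2(\R)}^2$ give the asserted left-hand side after absorbing $\delta$, and one then fixes $\Lambda_0$ large, $\e_0,\nu_0$ small; the boundary contributions at $x_3=\pm\infty$ vanish since $\Phi,\Psi,\Zv\in L^2(\R)$ are smooth by \cref{small-PhiPsi} and $\kappa,\Kc,\mathfrak{K}$ are bounded. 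The main obstacle is the bookkeeping ensuring that every occurrence of $\norm{\p_3\Phi}_{L^2(\R)}^2$ carries a coefficient $\lesssim\Lambda^{-1/2}+\nu$ — this is exactly why the third summand of $\Ac_{0,\od}^{(2)}$ is indispensable, as it alone absorbs the coupling term of Step 1 — together with correctly tracking the borderline weight $(t+\Lambda)^{-3/2}$ generated by $\p_t\mathfrak{K}$.
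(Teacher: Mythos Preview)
Your proposal is correct and follows essentially the same approach as the paper: the paper likewise obtains the $\Phi\kappa$ and $Z_3\kappa$ dissipation by multiplying \cref{equ-Z3} by $\Phi\mathfrak{K}$ (equivalent to your Step~3), obtains $\Zv_\perp\kappa$ from \cref{Lem-HLM} with $h=\Zv_\perp$ (your Step~1), and handles the residual coupling integral $I_3=\int_\R\p_3\theta\,\p_3\Phi\,\uvb_\perp\cdot\Zv_\perp\Kc^2\,dx_3$ by substituting \cref{equ-Z3} for $p'(\rhob)\p_3\Phi$ exactly as in the $I_2$ computation of \cref{Lem-est1} (your Step~2). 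The only cosmetic difference is the order of presentation---the paper does the $\Phi\mathfrak{K}$ step first and combines $2\times$\cref{ineq-wc} with \cref{ineq-we} at the end---but the mechanism and the bookkeeping of the $(\Lambda^{-1/2}+\nu)$-small coefficient on $\norm{\p_3\Phi}_{L^2(\R)}^2$ are identical.
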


\begin{proof}
	
Multiplying  $\Phi \mathfrak{K}$ on \cref{equ-Z3} and integrating the resulting equation on $ \R $, one has that
\begin{equation}\label{ineq-wa}
	\begin{aligned}
		& \int_\R \frac{1}{2} \big(p'(\rhob) \Phi^2 +  Z_3^2 \big) \kappa^2 dx_3 - \frac{d}{dt} \Big( \int_\R Z_3\Phi \mathfrak{K} dx_3 \Big) \\
		& \qquad =  - \int_\R Z_3 \Phi \p_t \mathfrak{K} dx_3 + \frac{\mut}{\rhob} \int_\R \p_3 Z_3 \big(\p_3 \Phi \mathfrak{K} + \Phi \kappa^2 \big) dx_3 + \int_\R (G_3 - N_3) \Phi \mathfrak{K} dx_3,
	\end{aligned}
\end{equation}
here we have used the fact that $ Z_3 \p_t \Phi \mathfrak{K} = - Z_3 \p_3 Z_3 \mathfrak{K} = -\p_3 \big(\frac{1}{2} Z_3^2 \mathfrak{K} \big) + \frac{1}{2} Z_3^2 \kappa^2. $ 
Similar to the proof of \cref{est-I1}, one can show that
\begin{equation}\label{ineq-wb}
	\begin{aligned}
		&  \int_\R (G_3 - N_3) \Phi \mathfrak{K} dx_3 \lesssim \e \nu e^{-\alpha t}  + \nu \norm{\p_3 \Phi,\p_3 \Zv, \Phi \kappa}_{L^2(\R)}^2 + \nu \norm{\nabla \phi^\md, \nabla \zeta^\md}_{L^2(\Omega)}^2.
	\end{aligned}
\end{equation}
Thus, by \cref{A-bdd,ineq-wb}, integrating \cref{ineq-wa} over $ (0,T) $ yields that
\begin{equation}\label{ineq-wc}
	\begin{aligned}
		& p'(\rhob) \norm{\Phi \kappa}_{L^2(\R)}^2 + \norm{ Z_3\kappa}_{L^2(\R)}^2 - \frac{d}{dt} \Big(2 \int_\R Z_3\Phi \mathfrak{K} dx_3 \Big) \\
		& \qquad \lesssim (t+\Lambda)^{-\frac{3}{2}} \norm{\Phi, Z_3}_{L^2(\R)}^2  + (\Lambda^{-\frac{1}{2}} + \nu) \norm{\p_3 \Phi,\p_3 \Zv}_{L^2(\R)}^2 \\
		& \qquad \quad + \nu \norm{\nabla \phi^\md, \nabla \zeta^\md}_{L^2(\Omega)}^2 +  \e \nu e^{-\alpha t}. 
	\end{aligned}
\end{equation}

To estimate $ \int_0^T \norm{\Zv_\perp \kappa}_{L^2(\R)}^2 dt, $ it firstly follows from \cref{Lem-HLM} with $ h = \Zv_\perp $ that
\begin{equation}\label{ineq-wd}
	\begin{aligned}
		\norm{\Zv_\perp \kappa}_{L^2(\R)}^2 +  \frac{d}{dt} \Big(\frac{\rhob}{8\mu} \int_\R \abs{\Zv_\perp}^2 \Kc^2 dx_3 \Big) - \frac{\rhob}{4\mu} \int_{\R} \p_t \Zv_\perp\cdot \Zv_\perp \Kc^2\ dx_3 & \lesssim  \norm{\p_3\Zv_\perp}_{L^2(\R)}^2.
	\end{aligned}
\end{equation}
Using \cref{equ-Zp}, one can get that
\begin{align*}
	\frac{\rhob}{4\mu} \int_{\R} \p_t\Zv_\perp \cdot \Zv_\perp \Kc^2\ dx_3 & = \frac{1}{4} \underbrace{ \int_\R \p_3 \theta \p_3 \Phi \uvb_\perp \cdot \Zv_\perp \Kc^2 dx_3}_{I_3} - \frac{1}{2} \int_\R \p_3 \Zv_\perp \cdot \Zv_\perp \Kc \kappa dx_3 \\
	& \quad - \frac{1}{4} \int_\R \abs{\p_3 \Zv_\perp}^2 \Kc^2 dx_3 - \frac{\rhob}{4\mu} \int_{\R} \big(\Gv_\perp - \Nv_\perp \big) \cdot \Zv_\perp \Kc^2\ dx_3.
\end{align*}
Similar to the estimates of $I_{2,i}, i=1,2,3,4$ in the proof of \cref{Lem-est1}, one can use \cref{equ-Z3} to prove that
\begin{align*}
	& I_3 +  \frac{d}{dt} \Big( \frac{1}{ p'(\rhob)}\int_\R \p_3 \theta \uvb_\perp \cdot \Zv_\perp Z_3 \Kc^2 dx_3 \Big) \\
	& \qquad \lesssim \big(\Lambda^{-\frac{1}{2}} + \nu\big) \norm{\Phi\kappa, \Zv\kappa,\p_3 \Phi,\p_3 \Zv}_{L^2(\R)}^2  + \nu \norm{\nabla \phi^\md, \nabla \zeta^\md}_{L^2(\Omega)}^2 + \e \nu e^{-\alpha t}.
\end{align*}
This, together with \cref{bdd-theta}, \cref{ineq-wd} and a similar proof of \cref{est-I1}, yields that
\begin{equation}\label{ineq-we}
	\begin{aligned}
		& \norm{\Zv_\perp \kappa}_{L^2(\R)}^2  +  \frac{d}{dt} \Big(\frac{\rhob}{8\mu} \int_\R \abs{\Zv_\perp}^2 \Kc^2 dx_3 - \frac{1}{4p'(\rhob)} \int_\R \p_3 \theta \uvb_\perp \cdot \Zv_\perp Z_3 \Kc^2 dx_3 \Big) \\
		& \qquad \lesssim \norm{\p_3 \Zv}_{L^2(\R)}^2 + \big(\Lambda^{-\frac{1}{2}} + \nu\big) \norm{\p_3 \Phi, \Phi \kappa, Z_3\kappa}_{L^2(\R)}^2 \\
		& \qquad \quad + \nu \norm{ \nabla \phi^\md, \nabla \zeta^\md}_{L^2(\Omega)}^2 + \e \nu \Lambda^{\frac{1}{4}} e^{-\alpha t}.
	\end{aligned}
\end{equation}
Collecting \cref{ineq-wc,ineq-we}, one can finish the proof.

\end{proof}

\vspace{.3cm}

\begin{Lem}\label{Lem-est2}
		Under the assumptions of \cref{Prop-apriori}, there exist a large constant $ \Lambda_0\geq 1 $ and small positive constants $ \e_0 $ and $ \nu_0 $ such that, if $ \Lambda \geq \Lambda_0 $, $ \e \leq \e_0 $ and $ \nu \leq \nu_0, $ then it holds that
		\begin{equation}\label{est2}
			\begin{aligned}
				& \frac{d}{dt} \Ac^{(3)}_{0,\od} + p'(\rhob) \norm{\p_3 \Phi}_{L^2(\R)}^2 \\ 
				& \qquad \lesssim \norm{\p_3 \Zv}_{L^2(\R)}^2 + \nu \norm{\Phi \kappa}_{L^2(\R)}^2 + \nu \norm{\nabla \phi^\md, \nabla \zeta^\md}_{L^2(\Omega)}^2 + \e \nu \Lambda^{\frac{1}{4}} e^{-\alpha t},
			\end{aligned}
		\end{equation}
	where 
	\begin{equation}\label{Ac0-3}
		\Ac_{0,\od}^{(3)} := \frac{\mut}{\rhob} \norm{\p_3 \Phi}_{L^2(\R)}^2 + 2 \int_\R Z_3 \p_3 \Phi dx_3.
	\end{equation}
\end{Lem}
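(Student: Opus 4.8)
The plan is to use the standard cross-term device that extracts the dissipation of $\p_3\Phi$ from the first-order system: the quantity $\norm{\p_3\Phi}_{L^2(\R)}^2$ does not appear with a favourable sign in the basic energy identity \cref{est1}, but it can be recovered by differentiating in time the mixed term $\int_\R Z_3\,\p_3\Phi\,dx_3$ and feeding in the scalar equation \cref{equ-Z3} for $Z_3$, exactly as in the classical anti-derivative arguments.

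Concretely, I would first differentiate $\int_\R Z_3\,\p_3\Phi\,dx_3$ in time. Using $\p_t\p_3\Phi=-\p_3^2 Z_3$ from \cref{equ-Zv}$_1$ and integrating by parts (all boundary terms at $x_3=\pm\infty$ vanish, since the anti-derivatives and their $x_3$-derivatives lie in $L^2(\R)$), the piece $\int_\R Z_3\,\p_t\p_3\Phi\,dx_3$ equals $\norm{\p_3 Z_3}_{L^2(\R)}^2$; substituting $\p_t Z_3$ from \cref{equ-Z3} into $\int_\R\p_t Z_3\,\p_3\Phi\,dx_3$ produces the crucial term $-p'(\rhob)\norm{\p_3\Phi}_{L^2(\R)}^2$, the term $\frac{\mut}{\rhob}\int_\R\p_3^2 Z_3\,\p_3\Phi\,dx_3=-\frac{\mut}{2\rhob}\frac{d}{dt}\norm{\p_3\Phi}_{L^2(\R)}^2$ (again by $\p_t\p_3\Phi=-\p_3^2 Z_3$), and the source contribution $\int_\R(-G_3+N_3)\,\p_3\Phi\,dx_3$. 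Moving the $\frac{\mut}{\rhob}$-term to the left and multiplying by $2$ yields the identity $\frac{d}{dt}\Ac^{(3)}_{0,\od}+2p'(\rhob)\norm{\p_3\Phi}_{L^2(\R)}^2=2\norm{\p_3 Z_3}_{L^2(\R)}^2+2\int_\R(-G_3+N_3)\,\p_3\Phi\,dx_3$, and $\norm{\p_3 Z_3}_{L^2(\R)}\le\norm{\p_3\Zv}_{L^2(\R)}$ already puts the first right-hand term into the required shape.

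It then remains to bound the source integral $\int_\R(-G_3+N_3)\,\p_3\Phi\,dx_3$, which I would handle exactly as the term $\int_\R I_1\,dx_3$ in the proof of \cref{Lem-est1}. For the ansatz error, Cauchy--Schwarz with \cref{Lem-F} and $\norm{\p_3\Phi}_{L^2(\R)}\lesssim\nu$ (from \cref{small-PhiPsi}) gives $\int_\R\abs{G_3}\abs{\p_3\Phi}\,dx_3\lesssim\e\nu\Lambda^{\frac14}e^{-\alpha t}$. Writing the third component $N_3$ from \cref{Nv} as $-\Qv_{1,3}^\od-\Qv_{2,3}^\od+\p_3(\Qv_{3,3}^\od+\Qv_{4,3}^\od)$, I pair $\Qv_1^\od$ against $\norm{\p_3\Phi}_{L^\infty(\R)}\lesssim\nu$, pair $\Qv_2^\od$ against $\norm{\p_3\Phi}_{L^2(\R)}$, and, after one integration by parts, pair $\Qv_3^\od+\Qv_4^\od$ against $\p_3^2\Phi=\p_3\phi^\od$ (which is $\lesssim\nu$ both in $L^\infty(\R)$ and in $L^2(\R)$ by \cref{nu-od}); inserting the pointwise bounds of \cref{Lem-Q} for the $\Qv_i$ then gives $\int_\R N_3\,\p_3\Phi\,dx_3\lesssim\nu\norm{\phi,\psi}_{L^2(\Omega)}^2+\e\nu^2 e^{-\alpha t}$. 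Applying \cref{ineq-poin3} to replace $\norm{\phi,\psi}_{L^2(\Omega)}^2$ by $\norm{\p_3\Phi,\p_3\Zv,\Phi\kappa}_{L^2(\R)}^2+\norm{\nabla\phi^\md,\nabla\zeta^\md}_{L^2(\Omega)}^2+\e\nu e^{-\alpha t}$, then absorbing the small term $\nu\norm{\p_3\Phi}_{L^2(\R)}^2$ into the left-hand $2p'(\rhob)\norm{\p_3\Phi}_{L^2(\R)}^2$ (so that the coefficient $p'(\rhob)$ stated in \cref{est2} survives), and noting $\nu\norm{\p_3\Zv}_{L^2(\R)}^2\lesssim\norm{\p_3\Zv}_{L^2(\R)}^2$ and $\e\nu^2 e^{-\alpha t}\lesssim\e\nu\Lambda^{\frac14}e^{-\alpha t}$, one arrives at \cref{est2}.

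This lemma is among the softer estimates, so I do not expect a genuine obstacle; the two points that need care are (i) keeping track of both uses of the identity $\p_t\p_3\Phi=-\p_3^2 Z_3$, so that the $\frac{\mut}{\rhob}$-contribution recombines into $\frac{d}{dt}\norm{\p_3\Phi}_{L^2(\R)}^2$ rather than leaving a stray, uncontrollable term $\int_\R\p_3^2 Z_3\,\p_3\Phi\,dx_3$; and (ii) arranging the integration by parts in the $\Qv_3^\od+\Qv_4^\od$ contribution so that it costs at most one extra $x_3$-derivative on $\Phi$, keeping everything within the regularity already available for the zero mode in \cref{nu-od}.
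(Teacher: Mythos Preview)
Your proposal is correct and follows essentially the same route as the paper: the paper multiplies $\frac{\mut}{\rhob}\p_3\Phi$ on $\p_3$\cref{equ-Zv}$_1$ and $\p_3\Phi$ on \cref{equ-Z3}, adds, and obtains the pointwise identity $\p_t\big(\frac{\mut}{2\rhob}|\p_3\Phi|^2+Z_3\p_3\Phi\big)+p'(\rhob)|\p_3\Phi|^2=\p_3(\cdots)+|\p_3 Z_3|^2+(-G_3+N_3)\p_3\Phi$, which is exactly your integrated identity before the factor of $2$. The source term $\int_\R(-G_3+N_3)\p_3\Phi\,dx_3$ is then handled in the paper by the same references you cite (\cref{Lem-F}, \cref{Lem-Q}, \cref{ineq-poin3}, and the a priori bounds $\norm{\p_3\Phi}_{H^1(\R)}+\norm{\p_3\Phi}_{W^{1,\infty}(\R)}\lesssim\nu$), so your treatment matches.
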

\begin{proof}

	Multiplying $ \frac{\mut}{\rhob} \p_3 \Phi $ and $ \p_3 \Phi $ on $ \p_3 $\cref{equ-Zv}$ _1 $ and \cref{equ-Z3}, respectively, and adding the resulting two equations together, one has that
	\begin{equation}\label{eq2}
		\p_t \Big( \frac{\mut}{2\rhob} \abs{\p_3 \Phi}^2 + Z_3 \p_3 \Phi \Big) + p'(\rhob) \abs{\p_3 \Phi}^2 = \p_3 (\cdots) + \abs{\p_3 Z_3}^2 + (-G_3 + N_3) \p_3 \Phi.
	\end{equation}
	Similar to the proof of \cref{est-I1}, with the a priori assumption that $\norm{\p_3\Phi}_{H^1(\R)} + \norm{\p_3 \Phi}_{W^{1,\infty}(\R)} \lesssim \nu, $ one can prove that
	\begin{align*}
		& \int_\R (-G_3 + N_3) \p_3 \Phi dx_3 \lesssim \nu \norm{\p_3 \Phi,\p_3 \Zv,\Phi\kappa}_{L^2(\R)}^2 + \nu \norm{\nabla \phi^\md, \nabla \zeta^\md}_{L^2(\Omega)}^2 + \e \nu \Lambda^{\frac{1}{4}} e^{-\alpha t}.
	\end{align*}
 	This, together with \cref{eq2}, can yield \cref{est2}.
\end{proof}

Collecting Lemmas \ref{Lem-est1}--\ref{Lem-est2}, one has that
\begin{equation}\label{est2b}
	\begin{aligned}
		\frac{d}{dt } \Ac_{0,\od} + c_{0,\od} \Bc_{0,\od} \lesssim \nu \norm{\nabla \phi^\md, \nabla \zeta^\md}_{L^2(\Omega)}^2 + \e \nu \Lambda^{\frac{1}{4}} e^{-\alpha t} + (t+\Lambda)^{-\frac{3}{2}} \norm{\Phi,\Zv}_{L^2(\R)}^2,
	\end{aligned}
\end{equation}
where $ \Ac_{0,\od} $ denotes a linear combination of $ \Ac_{0,\od}^{(1)} $, $ \Ac_{0,\od}^{(2)} $ and $ \Ac_{0,\od}^{(3)}, $ satisfying that 
\begin{equation}\label{Ac0}
	\Ac_{0,\od} \sim \norm{\Phi}_{H^1(\R)}^2 + \norm{\Zv}_{L^2(\R)}^2;
\end{equation}
and 
\begin{equation}\label{Bc0}
	\Bc_{0,\od} := \norm{\p_3 \Phi, \p_3 \Zv, \Phi\kappa, \Zv\kappa}_{L^2(\R)}^2.
\end{equation}
Then integrating \cref{est2b} with respect to $ t $ yields that
\begin{equation}\label{est2a}
	\begin{aligned}
		& \sup_{t\in(0,T)} \big(\norm{\Phi}_{H^1(\R)}^2 + \norm{\Zv}_{L^2(\R)}^2 \big) + \int_0^T  \norm{\p_3 \Phi, \p_3 \Zv,\Phi \kappa, \Zv \kappa}_{L^2(\R)}^2   dt \\
		& \qquad \lesssim \norm{\Phi_0}_{H^1(\R)}^2 + \norm{\Psi_0}_{L^2(\R)}^2 + \nu \int_0^T \norm{\nabla\phi^\md, \nabla \zeta^\md}_{L^2(\Omega)}^2 dt + \e \nu \Lambda^{\frac{1}{4}}.
	\end{aligned}
\end{equation}
Here we have used the fact that 
$\int_0^T (t+\Lambda)^{-\frac{3}{2}} \norm{\Phi,\Zv}_{L^2(\R)}^2 dt \lesssim \Lambda^{-\frac{1}{4}} \sup\limits_{ t\in(0,T)}\norm{\Phi,\Zv}_{L^2(\R)}^2. $

\vspace{.3cm} 

\begin{Lem}\label{Lem-est4}
	Under the assumptions of Proposition 3.2, there exist a large constant $ \Lambda_0\geq 1 $ and small positive constants $ \e_0 $ and $ \nu_0 $ such that, if $\Lambda\geq \Lambda_0, \varepsilon \leqslant \varepsilon_0$ and $\nu \leqslant \nu_0$, then it holds that
	\begin{equation}\label{est-4}
		\begin{aligned}
			\frac{d}{dt} \Ac_{1,\flat} + c_{1,\flat} \Bc_{1,\flat} & \lesssim (t+\Lambda)^{-1} \norm{\p_3 \Phi, \Phi \kappa}_{L^2(\R)}^2 + \nu \norm{\p_3^2 \Phi, \p_3^3 \Zv}_{L^2(\R)}^2 \\
			& \quad + \nu \norm{\nabla \phi^\md,\nabla \zeta^\md}_{L^2(\Omega)}^2 + \e\nu \Lambda^{\frac{1}{4}} e^{-\alpha t}.
		\end{aligned}
	\end{equation}
	where
	\begin{equation}\label{AB-1}
		\begin{aligned}
			\Ac_{1,\flat} = p'(\rhob) \norm{\p_3 \Phi}_{L^2(\R)}^2 + \norm{\p_3 \Zv}_{L^2(\R)}^2 \andd
			\Bc_{1,\flat} =  \norm{\p_3^2 \Zv}_{L^2(\R)}^2.
		\end{aligned}
	\end{equation}

\end{Lem}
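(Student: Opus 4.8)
The plan is to differentiate the reformulated system \cref{equ-Zv} once in $x_3$ and run the basic $L^2$ energy estimate on the result, mirroring the proof of \cref{Lem-est1} one derivative up (but, as it will turn out, without any corrector). Applying $\p_3$ to \cref{equ-Zv} gives the continuity relation $\p_t\p_3\Phi+\p_3^2 Z_3=0$ together with a momentum relation whose only genuinely new ingredient, relative to \cref{equ-Zv}, is the term $-\tfrac{\mu}{\rhob}\p_3\big(\p_3\theta\,\uvb\,\p_3\Phi\big)$ coming from differentiating the variable coefficient. I would then multiply the continuity relation by $p'(\rhob)\p_3\Phi$, the momentum relation by $\p_3\Zv$, add, and integrate over $\R$. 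Exactly as in \cref{Lem-est1}, the pressure cross terms $p'(\rhob)\big(\p_3\Phi\,\p_3^2Z_3+\p_3^2\Phi\,\p_3Z_3\big)=p'(\rhob)\p_3\big(\p_3\Phi\,\p_3Z_3\big)$ integrate to zero, while the viscous terms produce $\tfrac{\mu}{\rhob}\norm{\p_3^2\Zv}_{L^2(\R)}^2+\tfrac{\mu+\lambda}{\rhob}\norm{\p_3^2Z_3}_{L^2(\R)}^2$, which (since $\mu+\lambda\ge0$) controls $c_{1,\flat}\Bc_{1,\flat}$ with $\Ac_{1,\flat},\Bc_{1,\flat}$ as in \cref{AB-1}. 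This yields $\tfrac12\tfrac{d}{dt}\Ac_{1,\flat}+c_{1,\flat}\Bc_{1,\flat}\le\mathcal R$, where $\mathcal R$ collects the contributions of the large-amplitude coefficient $\p_3\theta\,\uvb$, of the forcing $\Gv$, and of the nonlinear remainder $\Nv$.

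The main structural point is that the large amplitude enters $\mathcal R$ only through the single term $\tfrac{\mu}{\rhob}\int_\R\p_3\Zv\cdot\p_3\big(\p_3\theta\,\uvb\,\p_3\Phi\big)\,dx_3$, and integrating this by parts in $x_3$ moves the extra derivative directly onto $\p_3\Zv$, giving $\mp\tfrac{\mu}{\rhob}\int_\R\p_3\theta\,\p_3\Phi\,\uvb_\perp\cdot\p_3^2\Zv_\perp\,dx_3$ (recall $\uvb=(\uvb_\perp,0)$, so $Z_3$ never meets this coefficient). Using $\abs{\p_3\theta}\lesssim\kappa\lesssim(t+\Lambda)^{-1/2}$ from \cref{bdd-theta-1,bdd-theta} and Young's inequality, this term is bounded by $\eta\norm{\p_3^2\Zv}_{L^2(\R)}^2+\eta^{-1}\norm{\p_3\Phi\,\kappa}_{L^2(\R)}^2\lesssim\eta\,\Bc_{1,\flat}+\eta^{-1}(t+\Lambda)^{-1}\norm{\p_3\Phi}_{L^2(\R)}^2$, the first summand being absorbed on the left for $\eta$ small. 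Note that, unlike in \cref{Lem-est1}, no corrector term is needed in $\Ac_{1,\flat}$: here the bad coefficient multiplies the lower-order factor $\p_3\Phi$, which after integration by parts retains the decaying weight $\kappa$, while its companion $\p_3^2\Zv$ feeds the dissipation. This is precisely the payoff of introducing the variable $\Zv$ in \cref{Zv} — it confines the arbitrarily large amplitude $\abs{\uvb}$ to this one benign term.

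For the forcing, \cref{Lem-F} gives $\abs{\int_\R\p_3\Gv\cdot\p_3\Zv\,dx_3}\lesssim\norm{\p_3\Gv}_{L^2(\R)}\norm{\p_3\Zv}_{L^2(\R)}\lesssim\e\nu\Lambda^{1/4}e^{-\alpha t}$, using $\norm{\p_3\Zv}_{L^2(\R)}\lesssim\nu$. For the nonlinear term, recall $\Nv=-\Qv_1^\od-\Qv_2^\od+\p_3\big(\Qv_3^\od+\Qv_4^\od\big)$ from \cref{Nv}, so $\p_3\Nv=-\p_3\Qv_1^\od-\p_3\Qv_2^\od+\p_3^2\big(\Qv_3^\od+\Qv_4^\od\big)$; I would integrate the top-order piece once by parts, $\int_\R\p_3^2\Qv_3^\od\cdot\p_3\Zv\,dx_3=-\int_\R\p_3\Qv_3^\od\cdot\p_3^2\Zv\,dx_3$ (and likewise for $\Qv_4$), and pair $\p_3\Qv_1^\od,\p_3\Qv_2^\od$ directly with $\p_3\Zv$. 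Applying the pointwise bounds of \cref{Lem-Q}, using the smallness $\norm{\phi,\psi}_{W^{1,\infty}(\Omega)}\lesssim\nu$, translating the zero modes through $\phi^\od=\p_3\Phi$ and $\psi^\od=\p_3\Psi=\p_3\Zv+\p_3\big(\theta\,\uvb\,\Phi\big)$ together with \cref{ineq-poin3} and \cref{Lem-rel}, and splitting into zero and non-zero modes via the Poincar\'e inequality \cref{Lem-poin}, this piece is bounded by $\eta\,\Bc_{1,\flat}+\nu\norm{\p_3^2\Phi,\p_3^3\Zv}_{L^2(\R)}^2+\nu\norm{\nabla\phi^\md,\nabla\zeta^\md}_{L^2(\Omega)}^2+\e\nu\Lambda^{1/4}e^{-\alpha t}$ (the $\Phi\kappa$ term in the stated right-hand side comes from the $\p_3(\theta\,\uvb\,\Phi)$ contributions of this translation). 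Collecting everything, absorbing the $\eta\,\Bc_{1,\flat}$ terms, and taking $\Lambda_0$ large and $\e_0,\nu_0$ small gives \cref{est-4}.

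The bulk of the work, and the main obstacle, is the estimate of $\int_\R\p_3\Nv\cdot\p_3\Zv\,dx_3$: $\p_3\Nv$ carries two derivatives of the remainders $\Qv_3,\Qv_4$, which \cref{Lem-Q} controls only after one derivative has been redistributed onto $\p_3^2\Zv$, and the quadratic structure of all the $\Qv_i$ unavoidably forces the higher-order norms $\norm{\p_3^2\Phi}_{L^2(\R)}$ and $\norm{\p_3^3\Zv}_{L^2(\R)}$ to appear on the right; these are tolerable here only because they are weighted by the small factor $\nu$ and will be absorbed once the higher-order energy estimates of the following subsections are added to the present one. By contrast, the conceptual core is already settled in the second step above: working with $\Zv$ rather than $\Psi$ reduces the large-amplitude difficulty to a single term that closes after one integration by parts, with no corrector.
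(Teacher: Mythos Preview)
Your energy identity and your treatment of the large-amplitude coefficient term are exactly what the paper does: it multiplies \cref{equ-Zv}$_1$ by $-p'(\rhob)\p_3^2\Phi$ and \cref{equ-Zv}$_2$ by $-\p_3^2\Zv$ (equivalent to your ``differentiate then multiply'' version after one integration by parts), and the resulting cross term $I_4=-\frac{\mu}{\rhob}\int_\R \p_3\theta\,\p_3\Phi\,\uvb\cdot\p_3^2\Zv\,dx_3$ is bounded precisely as you say, by $(t+\Lambda)^{-1}\norm{\p_3\Phi}_{L^2}^2+\eta\norm{\p_3^2\Zv}_{L^2}^2$.

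The one place your sketch diverges from the paper is the quadratic remainder $\Qv_1,\Qv_3$. You propose to use the pointwise bound $|\Qv_1|+|\Qv_3|\lesssim|\phi|^2+|\psi|^2$ together with $\norm{\phi,\psi}_{W^{1,\infty}}\lesssim\nu$ and then \cref{ineq-poin3}. That route produces terms of the form $\nu\norm{\p_3\Phi,\p_3\Zv,\Phi\kappa}_{L^2(\R)}^2$ on the right, not $\nu\norm{\p_3^2\Phi}_{L^2}^2$; in particular you pick up $\nu\norm{\p_3\Zv}_{L^2}^2$, which is neither on the stated right-hand side nor absorbable by $\Bc_{1,\flat}$. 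The paper avoids this by an $L^4$--$L^4$ pairing: it writes $I_5\lesssim \e\nu\Lambda^{1/4}e^{-\alpha t}+\norm{\phi,\psi}_{L^4(\Omega)}^2\norm{\p_3^2\Zv}_{H^1(\R)}$ and then uses the Gagliardo--Nirenberg interpolation $\norm{\p_3\Phi}_{L^4(\R)}\lesssim\norm{\p_3^2\Phi}_{L^2}^{5/8}\norm{\Phi}_{L^2}^{3/8}\lesssim\nu^{1/2}\norm{\p_3^2\Phi}_{L^2}^{1/2}$ (and the analogous bound for $\psi^\od$), which is what lands the factor $\nu\norm{\p_3^2\Phi,\p_3^2\Zv}_{L^2}$ rather than $\nu\norm{\p_3\Phi,\p_3\Zv}_{L^2}$; see \cref{GN-2,GN-3,L4}. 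This distinction is not cosmetic: the exact form of \cref{est-4} feeds into \cref{est-1*,est-3*}, which are later multiplied by powers of $(t+1)$ in \cref{Sec-rate} to extract the optimal decay rate, and stray terms like $\nu\norm{\p_3\Zv}_{L^2}^2$ would have to be tracked separately there. Your outline is otherwise sound; just replace the $L^\infty$--$L^2$ step by the $L^4$ interpolation argument.
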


\begin{proof}

Multiplying $-p'(\rhob)\p^2_3\Phi$ and  $-\p_3^2 \mathbf{Z}$ on \cref{equ-Zv}\textsubscript{1} and \cref{equ-Zv}\textsubscript{2}, respectively and adding up the resulting two equations, one can get that
\begin{equation}\label{eq4}
	\begin{aligned}
	& \frac{d}{dt} \left(\frac{p'(\rhob)}{2}\norm{\p_3\Phi}_{L^2(\R)}^2 + \frac{1}{2}\norm{\p_3\mathbf{Z}}_{L^2(\R)}^2\right) + \frac{\mu}{\rhob}\norm{\p_3^2\mathbf{Z}}_{L^2(\R)}^2 + \frac{\mu+\lambda}{\rhob}\norm{\p_3^2 Z_{3}}_{L^2(\R)}^2 \\
	& \qquad = \underbrace{-\frac{\mu}{\rhob} \int_{\R} \p_3\theta \p_3 \Phi \bar{\uv}\cdot\p_3^2\mathbf{Z} dx_3}_{I_4} + \underbrace{\int_{\R} \left(\Gv -\Nv \right)\cdot\p_3^2\Zv dx_3}_{I_5}.
\end{aligned}
\end{equation}
One can show that
\begin{equation}\label{I4}
	\abs{I_4} \leq C (t+\Lambda)^{-1} \norm{\p_3 \Phi}_{L^2(\R)}^2 + \frac{\mu}{4\rhob}  \norm{\p_3^2\Zv}_{L^2(\R)}^2.
\end{equation}
It follows from Lemmas \ref{Lem-F} and \ref{Lem-Q} and \cref{small-Zv} that
\begin{align}
	I_5 & = \int_{\R} \left(\Gv + \Qv_1^\od + \Qv_2^\od\right)\cdot\p_3^2\Zv dx_3 + \int_\R \big(\Qv_3^\od + \Qv_4^\od \big) \p_3^3 \Zv dx_3 \notag \\
	& \lesssim \e \nu \Lambda^{\frac{1}{4}} e^{-\alpha t} + \norm{\phi,\psi}_{L^4(\Omega)}^2 \norm{\p_3^2 \Zv}_{H^1(\R)}. \label{I5}
\end{align} 
%This, together with \cref{poincare}, yields that
%\begin{align*}
%	\norm{\phi^\md,\psi^\md}_{L^4(\Omega)} \lesssim \norm{\phi^\md,\psi^\md}_{H^1(\Omega)} \lesssim \norm{\nabla \phi^\md, \nabla \psi^\md}_{L^2(\Omega)}.
%\end{align*}
Using \cref{Lem-GN}, it holds that
\begin{equation}\label{GN-2}
	\norm{\phi^\od}_{L^4(\R)} = \norm{\p_3 \Phi}_{L^4(\R)} \lesssim \norm{\p_3^2 \Phi}_{L^2(\R)}^{\frac{5}{8}} \norm{\Phi}_{L^2(\R)}^{\frac{3}{8}} \lesssim \nu^{\frac{1}{2}} \norm{\p_3^2 \Phi}_{L^2(\R)}^{\frac{1}{2}},
\end{equation}
and similarly,
\begin{equation}\label{GN-3}
	\norm{\psi^\od}_{L^4(\R)} \lesssim \nu^{\frac{1}{2}} \norm{\p_3^2 \Psi}_{L^2(\R)}^{\frac{1}{2}}.
\end{equation}
On the other hand, it follows from \cref{Lem-GN} that
\begin{equation}\label{Sobolev-1}
	\begin{aligned}
		\norm{h}_{L^4(\Omega)} & \lesssim \sum_{k=1}^{3} \norm{\nabla h}_{L^2(\Omega)}^{\frac{k}{4}} \norm{h}_{L^2(\Omega)}^{1-\frac{k}{4}} \lesssim \norm{h}_{H^1(\Omega)} \qquad \forall h\in H^1(\Omega).
	\end{aligned}
\end{equation}
Then using \cref{Sobolev-1,GN-2,GN-3}, one can get that
\begin{align}
	\norm{\phi,\psi}_{L^4(\Omega)}^2 & \lesssim \norm{\phi^\od,\psi^\od}_{L^4(\R)}^2 + \norm{\phi^\md,\psi^\md}_{L^4(\Omega)}^2\notag \\
	& \lesssim \nu \norm{\p_3^2 \Phi, \p_3^2\Psi}_{L^2(\R)} + \nu \norm{\phi^\md,\psi^\md}_{H^1(\Omega)}, \notag
\end{align}
which, together with \cref{poincare,Zv,rel-3}, yields that
\begin{equation}\label{L4}
	\begin{aligned}
		\norm{\phi,\psi}_{L^4(\Omega)}^2 & \lesssim \nu \norm{\p_3^2 \Phi, \p_3^2 \Zv}_{L^2(\R)} + \nu (t+\Lambda)^{-\frac{1}{2}} \norm{\p_3 \Phi, \Phi \kappa}_{L^2(\R)} \\
		& \quad + \nu \norm{\nabla \phi^\md, \nabla \zeta^\md}_{L^2(\Omega)} + \e \nu e^{-\alpha t}.
	\end{aligned}
\end{equation}
Collecting \cref{L4,I4,I5}, one can finish the proof of \cref{est-4}.

\end{proof}

\vspace{.3cm}

\begin{Lem}\label{Lem-est5}
	Under the assumptions of Proposition 3.2, there exist a large constant $ \Lambda_0\geq 1 $ and small positive constants $ \e_0 $ and $ \nu_0 $ such that, if $\Lambda \geq \Lambda_0, \e \leq \varepsilon_0$ and $\nu \leq \nu_0$, then it holds that
	\begin{equation}\label{est-5}
		\begin{aligned}
			\frac{d}{dt} \Ac_{2,\flat} + c_{2,\flat} \Bc_{2,\flat} & \lesssim \Bc_{1,\flat}  + \nu (t+\Lambda)^{-1} \norm{\p_3 \Phi, \Phi \kappa}_{L^2(\R)}^2 +  \nu \norm{\nabla \zeta^\md}_{L^2(\Omega)}^2 \\
			& \quad + \nu \norm{\nabla^2 \phi}_{H^1(\Omega)}^2 + \e \nu \Lambda^{\frac{1}{4}} e^{-\alpha t}.
		\end{aligned}
	\end{equation}
	where $ \Bc_{0,0} $ and $\Bc_{1,\flat}$ are given by \cref{AB-1} and
	\begin{equation}\label{AB-2}
		\Ac_{2,\flat} = \frac{\mut}{2\rhob}\norm{\p_3^2\Phi}_{L^2(\R)}^2 + \int_{\R} \p_3Z_3\p_3^2\Phi dx_3 \andd
		\Bc_{2,\flat} =  \norm{\p_3^2 \Phi}_{L^2(\R)}^2.
	\end{equation}
\end{Lem}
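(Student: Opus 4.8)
The plan is to reproduce the energy computation behind \cref{Lem-est2} one spatial derivative higher. Multiplying $\frac{\mut}{\rhob}\p_3^2\Phi$ and $\p_3^2\Phi$ on $\p_3^2$ of the first equation of \cref{equ-Zv} and on $\p_3$ of \cref{equ-Z3}, respectively, and adding the resulting identities, I would use $\p_t\p_3^2\Phi=-\p_3^3 Z_3$ (from the first equation) to rewrite $\p_3^2\Phi\,\p_t\p_3 Z_3=\p_t\big(\p_3^2\Phi\,\p_3 Z_3\big)+\p_3\big(\p_3 Z_3\,\p_3^2 Z_3\big)-\abs{\p_3^2 Z_3}^2$. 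The two top-order cross terms $\pm\frac{\mut}{\rhob}\p_3^2\Phi\,\p_3^3 Z_3$ then cancel identically, and integration over $\R$ gives
\begin{equation*}
	\frac{d}{dt}\Ac_{2,\flat}+p'(\rhob)\norm{\p_3^2\Phi}_{L^2(\R)}^2=\norm{\p_3^2 Z_3}_{L^2(\R)}^2+\int_\R\big(-\p_3 G_3+\p_3 N_3\big)\,\p_3^2\Phi\,dx_3 .
\end{equation*}
Since $\norm{\p_3^2 Z_3}_{L^2(\R)}^2\le\norm{\p_3^2\Zv}_{L^2(\R)}^2=\Bc_{1,\flat}$, this already produces the left-hand side of \cref{est-5} together with the $\Bc_{1,\flat}$ term, and it remains only to control the source integral.

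For the $G_3$-part, \cref{Lem-F} gives $\norm{\p_3 G_3}_{L^2(\R)}\le\norm{\Gv}_{H^3(\R)}\lesssim\e\Lambda^{1/4}e^{-\alpha t}$, which paired with $\norm{\p_3^2\Phi}_{L^2(\R)}\lesssim\nu$ (from \cref{small-PhiPsi}) contributes $\e\nu\Lambda^{1/4}e^{-\alpha t}$. For the $N_3$-part I would expand $\p_3 N_3$ via \cref{Nv,Q} as a combination of $\p_3(\Qv_1^\od)_3$, $\p_3(\Qv_2^\od)_3$ and $\p_3^2(\Qv_3^\od)_3$, $\p_3^2(\Qv_4^\od)_3$. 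The first two terms are bounded directly by $\norm{\p_3\Qv_j}_{L^2(\Omega)}\norm{\p_3^2\Phi}_{L^2(\R)}$, using $\norm{h^\od}_{L^2(\R)}\le\norm{h}_{L^2(\Omega)}$. For the $\p_3^2(\Qv_3^\od)_3$ and $\p_3^2(\Qv_4^\od)_3$ terms — the delicate point, since \cref{Lem-Q} controls $\Qv_3,\Qv_4$ only up to one derivative — I would integrate by parts once to reach $-\int_\R\p_3(\Qv_j^\od)_3\,\p_3^3\Phi\,dx_3$ and use $\p_3^3\Phi=\p_3^2\phi^\od$ with $\norm{\p_3^2\phi^\od}_{L^2(\R)}\le\norm{\nabla^2\phi}_{L^2(\Omega)}$. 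In all four cases \cref{Lem-Q} bounds $\norm{\p_3\Qv_j}_{L^2(\Omega)}$ by a sum of $\big(\e e^{-\alpha t}+(t+\Lambda)^{-1/2}\big)\norm{\phi,\psi}_{L^4(\Omega)}^2$ and $\norm{\p_3\phi,\p_3\psi}_{L^4(\Omega)}\norm{\phi,\psi}_{L^4(\Omega)}$, both of which carry a factor $\lesssim\nu$ after invoking \cref{Lem-GN}, \cref{small-phizeta} and the $L^4$-bound \cref{L4} already established in the proof of \cref{Lem-est4}. Pairing these with $\norm{\p_3^2\Phi}_{L^2(\R)}$ or $\norm{\nabla^2\phi}_{L^2(\Omega)}$ and applying Young's inequality redistributes everything into $\Bc_{1,\flat}$, a small multiple of $\Bc_{2,\flat}$ (absorbed on the left using the smallness of $\e,\Lambda^{-1/2},\nu$), $\nu(t+\Lambda)^{-1}\norm{\p_3\Phi,\Phi\kappa}_{L^2(\R)}^2$, $\nu\norm{\nabla\zeta^\md}_{L^2(\Omega)}^2$, $\nu\norm{\nabla^2\phi}_{H^1(\Omega)}^2$ and $\e\nu\Lambda^{1/4}e^{-\alpha t}$, where \cref{rel-3} is used to trade $\norm{\nabla\psi^\md}$ for $\norm{\nabla\zeta^\md}+\norm{\nabla\phi^\md}$ when it appears. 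This is precisely \cref{est-5}.

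I expect the main obstacle to be the bookkeeping in the $N_3$-integral: the integrations by parts must be arranged so that only $\p_3^3\Phi=\p_3^2\phi^\od$ is generated (never $\p_3^3 Z_3$, nor second derivatives of $\Qv_3,\Qv_4$, which \cref{Lem-Q} does not supply), and one must check that every quadratic remainder genuinely carries a $\nu$-type or $(t+\Lambda)^{-1/2}$-type smallness so that it fits into the right-hand side allotted by \cref{est-5}. This is entirely parallel to the $\cref{L4}$-computation in the proof of \cref{Lem-est4}, but one derivative higher.
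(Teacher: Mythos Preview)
Your proposal is correct and follows essentially the same approach as the paper: the same energy identity is derived from $\p_3^2$\cref{equ-Zv}$_1$ and $\p_3$\cref{equ-Z3}, yielding $\frac{d}{dt}\Ac_{2,\flat}+p'(\rhob)\norm{\p_3^2\Phi}_{L^2}^2=\norm{\p_3^2 Z_3}_{L^2}^2+I_6$, and the remainder $I_6$ is controlled via \cref{Lem-F}, \cref{Lem-Q} and \cref{L4}. The only cosmetic difference is that the paper shifts one more derivative onto $\Phi$ (pairing $\Qv_j^\od$ with $\p_3^3\Phi$ and $\p_3^4\Phi$ and using only the zeroth-order bounds in \cref{Lem-Q}), whereas you keep one derivative on the $\Qv_j$'s and pair with $\p_3^2\Phi$, $\p_3^3\Phi$; both arrangements close in the same way.
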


\begin{proof}
	Multiplying $ \frac{\mut}{\rhob} \p_3^2 \Phi $ and $ \p_3^2\Phi $ on $ \p_3^2 $\cref{equ-Zv}$ _1 $ and $\p_3$\cref{equ-Z3}, respectively, and adding up the resulting two equations, one can get that
	\begin{align*}
		&\frac{d}{dt}\left(\frac{\mut}{2\rhob}\norm{\p_3^2\Phi}_{L^2(\R)}^2 + \int_{\R}\p_3Z_3\p_3^2\Phi dx_3\right) + p'(\rhob) \norm{\p_3^2\Phi}_{L^2(\R)}^2 \\
		& \qquad = \norm{\p_3^2 Z_3}_{L^2(\R)}^2 + \underbrace{\int_\R  \left(G_3 - N_3\right) \p_3^3 \Phi dx_3}_{I_6}.
	\end{align*}
	Similar to the estimate of $I_5$ in \cref{eq4}, one can use \cref{Lem-Q,L4} to prove that
	\begin{align*}
		I_6 & = \int_{\R} \left(G_3 + Q_{1,3}^\od + Q_{2,3}^\od\right)\cdot\p_3^3 \Phi dx_3 + \int_\R \big(Q_{3,3}^\od + Q_{4,3}^\od\big) \p_3^4 \Phi dx_3 \\
		& \lesssim  \e \nu \Lambda^{\frac{1}{4}} e^{-\alpha t} + \norm{\phi,\psi}_{L^4(\Omega)}^2 \norm{\p_3^3 \Phi}_{H^1(\R)} \\
		& \lesssim \e \nu \Lambda^{\frac{1}{4}} e^{-\alpha t} + \nu \norm{\p_3^2 \Phi}_{H^2(\R)}^2 + \nu \norm{\p_3^2 \Zv}_{L^2(\R)}^2 \\
		& \quad + \nu (t+\Lambda)^{-1} \norm{\p_3 \Phi, \Phi \kappa}_{L^2(\R)}^2 + \nu \norm{\nabla\phi^\md, \nabla\zeta^\md}_{L^2(\Omega)}^2.
	\end{align*}
	Note that $\norm{\p_3^3 \Phi}_{H^1(\R)} \lesssim \norm{\nabla^2 \phi}_{H^1(\Omega)} $ and $ \norm{\nabla\phi^\md}_{L^2(\Omega)} \lesssim \norm{\nabla^2\phi^\md}_{L^2(\Omega)} \lesssim\norm{\nabla^2\phi}_{L^2(\Omega)}. $ Then the proof is finished.
	
\end{proof}

%\bibliographystyle{amsplain}
%\bibliography{bibli}
\begin{Lem}\label{Lem-est6}
	Under the assumptions of Proposition 3.2, there exist a large constant $ \Lambda_0\geq 1 $ and small positive constants $ \e_0 $ and $ \nu_0 $ such that, if $\Lambda\geq \Lambda_0, \varepsilon \leqslant \varepsilon_0$ and $\nu \leqslant \nu_0$, then it holds that
	\begin{equation}\label{est-6}
		\begin{aligned}
			\frac{d}{dt} \Ac_{3,\flat} + c_{3,\flat} \Bc_{3,\flat} & \lesssim  (t+\Lambda)^{-2} \norm{\p_3 \Phi, \Phi \kappa}_{L^2(\R)}^2 + (t+\Lambda)^{-1} \norm{\p_3^2 \Phi,\p_3^2\Zv}_{L^2(\R)}^2 \\
			& \quad + \nu \norm{\nabla^2 \phi}_{H^1(\Omega)}^2 + \nu \norm{\nabla^3 \zeta}_{L^2(\Omega)}^2 + \e \nu \Lambda^{\frac{1}{4}} e^{-\alpha t}.
		\end{aligned}
	\end{equation}
	where $\Bc_{i,\flat}$ for $i=0,1,2$ are given by \cref{AB-1,AB-2} and
		\begin{equation}\label{AB-3}
			\Ac_{3,\flat} = p'(\rhob) \norm{\p_3^2 \Phi}_{L^2(\R)}^2 + \norm{\p_3^2 \Zv}_{L^2(\R)}^2 \andd
		\Bc_{3,\flat} = \norm{\p_3^3 \Zv}_{L^2(\R)}^2.
		\end{equation}
\end{Lem}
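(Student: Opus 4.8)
\textit{Proof of \cref{Lem-est6} (proposal).}
The plan is to establish the $\p_3^2$-analogue of \cref{Lem-est4} (which is itself the $\p_3$-analogue of \cref{Lem-est1}). Concretely, I would multiply \cref{equ-Zv}$_1$ and \cref{equ-Zv}$_2$ by $p'(\rhob)\p_3^4\Phi$ and $\p_3^4\Zv$, respectively, add the two resulting identities, and integrate over $\R$; equivalently, differentiate \cref{equ-Zv} twice in $x_3$ and test with $(p'(\rhob)\p_3^2\Phi,\p_3^2\Zv)$. Exactly as in the proofs of Lemmas \ref{Lem-est1} and \ref{Lem-est4}, the pressure cross-terms $p'(\rhob)(\p_3^3\Phi\,\p_3^2 Z_3+\p_3^2\Phi\,\p_3^3 Z_3)$ assemble into $p'(\rhob)\p_3(\p_3^2\Phi\,\p_3^2 Z_3)$ and drop out after integration, the two Laplacian terms produce the dissipation $\frac{\mu}{\rhob}\norm{\p_3^3\Zv}_{L^2(\R)}^2+\frac{\mu+\lambda}{\rhob}\norm{\p_3^3 Z_3}_{L^2(\R)}^2\gtrsim c_{3,\flat}\Bc_{3,\flat}$, and there remain exactly two error integrals: the $\theta$-term, which after one integration by parts becomes $\frac{\mu}{\rhob}\int_\R \p_3^3\Zv\cdot\p_3(\p_3\theta\,\uvb\,\p_3\Phi)\,dx_3$, and the source term $\int_\R(-\Gv+\Nv)\cdot\p_3^4\Zv\,dx_3$.

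The $\theta$-term is controlled by the same mechanism as in \cref{Lem-est4}: expanding $\p_3(\p_3\theta\,\uvb\,\p_3\Phi)=\p_3^2\theta\,\uvb\,\p_3\Phi+\p_3\theta\,\uvb\,\p_3^2\Phi$ and using \cref{bdd-theta-1,bdd-theta} in the forms $\abs{\p_3^2\theta}\lesssim\kappa^2\lesssim(t+\Lambda)^{-1}$ and $\abs{\p_3\theta}\lesssim\kappa\lesssim(t+\Lambda)^{-\frac12}$, Young's inequality gives a bound $\eta\norm{\p_3^3\Zv}_{L^2(\R)}^2+C_\eta[(t+\Lambda)^{-2}\norm{\p_3\Phi}_{L^2(\R)}^2+(t+\Lambda)^{-1}\norm{\p_3^2\Phi}_{L^2(\R)}^2]$, the $\eta$-term absorbed by the dissipation; this is precisely where the largeness of $\Lambda$ enters the large-amplitude estimate. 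For the $\Gv$-part of the source term, I transfer two derivatives onto $\Gv$ and invoke \cref{Lem-F} together with \cref{small-Zv} to get $\abs{\int_\R\p_3^2\Zv\cdot\p_3^2\Gv\,dx_3}\lesssim\norm{\p_3^2\Zv}_{L^2(\R)}\norm{\Gv}_{H^3(\R)}\lesssim\e\nu\Lambda^{\frac14}e^{-\alpha t}$.

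The main work is the nonlinear part $\int_\R\p_3^4\Zv\cdot\Nv\,dx_3$ with $\Nv=-\Qv_1^\od-\Qv_2^\od+\p_3(\Qv_3^\od+\Qv_4^\od)$. Since \cref{Lem-Q} supplies pointwise bounds only on $\Qv_i$ and their first $x_3$-derivatives, I would integrate by parts so that at most one derivative lands on $\Qv_1^\od,\Qv_2^\od$ (at the price of $\p_3^3\Zv$, which lies in the dissipation), and exploit the explicit structure of $\Qv_3,\Qv_4$ in \cref{Q} — $\Qv_3$ is quadratic in $(\phi,\psi)$ and $\Qv_4$ carries a factor $\e e^{-\alpha t}$, both with coefficients that are smooth functions of $(\rhot,\mvt)$ with controlled derivatives — to bound $\p_3^2\Qv_3^\od$, $\p_3^2\Qv_4^\od$ by sums $\abs{\p_3^{j_1}(\phi,\psi)}\,\abs{\p_3^{j_2}(\phi,\psi)}$ with $j_1+j_2\le 2$ (times coefficients decaying like $(t+\Lambda)^{-\frac12}$ or $\e e^{-\alpha t}$). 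These products are then estimated by the Gagliardo--Nirenberg inequality \cref{Lem-GN} applied after decomposing $(\phi,\psi)$ into its $1$D, $2$D and $3$D modes, by the Poincar\'e inequality \cref{poincare} for the non-zero modes, and by the $L^4$-bound \cref{L4}, together with the identities $\phi^\od=\p_3\Phi$, $\p_3^3\Phi=\p_3^2\phi^\od$ and $\psi=\rho\zeta+\phi\uvt$ (so that $\norm{\nabla^3\psi}_{L^2(\Omega)}\lesssim\norm{\nabla^3\zeta}_{L^2(\Omega)}+\norm{\nabla^3\phi}_{L^2(\Omega)}+(\text{lower order})$). Every top-order norm $\norm{\p_3^3\Zv}$, $\norm{\nabla^2\phi}_{H^1(\Omega)}$, $\norm{\nabla^3\zeta}_{L^2(\Omega)}$ that appears carries a small prefactor $\nu$ (or $\eta$) and is absorbed, which yields the right-hand side of \cref{est-6}. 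I expect the derivative bookkeeping in this last step to be the main obstacle — in particular, verifying that the $\p_3(\Qv_3^\od+\Qv_4^\od)$ contribution never needs $\p_3^3\Zv$ with a non-small coefficient nor more than $\nabla^3$ of $(\phi,\zeta)$; apart from this, the argument runs parallel to the proof of \cref{Lem-est4}.
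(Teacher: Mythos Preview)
Your proposal is correct and follows the same overall scheme as the paper: differentiate \cref{equ-Zv} once in $x_3$ and test with $(-p'(\rhob)\p_3^3\Phi,-\p_3^3\Zv)$ (equivalent to your formulation), split the remainder into the $\theta$-contribution and the source contribution, and treat the $\theta$-term and the $\Gv$-part exactly as you describe.

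The one point of departure is the $\Qv_3,\Qv_4$ piece of $\Nv$. You integrate by parts so that the pairing becomes $\p_3^2(\Qv_3^\od+\Qv_4^\od)\cdot\p_3^3\Zv$, and then expand $\p_3^2\Qv_3$, $\p_3^2\Qv_4$ directly from \cref{Q} (one derivative beyond what \cref{Lem-Q} records). The paper integrates by parts in the \emph{opposite} direction, keeping only $\p_3(\Qv_3^\od+\Qv_4^\od)$ so that \cref{Lem-Q} applies verbatim, at the price of pairing against $\p_3^4\Zv$; it then estimates $\norm{\p_3^4\Zv}_{L^2(\R)}$ separately via \cref{Zv} and \cref{rel-4}, obtaining \cref{ineq-4}, and closes the product through the refined $L^4$ bounds \cref{L4*}--\cref{I82}. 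Both routes work: yours avoids the auxiliary bound \cref{ineq-4} on $\p_3^4\Zv$ but must redo the quadratic expansion of $\Qv_3$ at second order and then invoke \cref{rel-2} together with Poincar\'e to convert any resulting $\nu\norm{\nabla^2\zeta}_{L^2(\Omega)}^2$ into the allowed $\nu\norm{\p_3^3\Zv}_{L^2(\R)}^2+\nu\norm{\nabla^3\zeta}_{L^2(\Omega)}^2+\ldots$; the paper's route keeps the $\Qv_i$-estimates black-boxed but spends the extra step \cref{ineq-4}. Either way the right-hand side of \cref{est-6} emerges.
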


\begin{proof}
Multiplying $ -p'(\rhob) \p_3^3 \Phi $ and $ -\p_3^3 \Zv $ on $ \p_3 $\cref{equ-Zv}$ _1 $ and $ \p_3 $\cref{equ-Zv}$ _2 $, respectively and adding up the resulting two equations, one can get that
\begin{align*}
	& \frac{d}{dt} \left(\frac{p'(\rhob)}{2}\norm{\p^2_3\Phi}_{L^2(\R)}^2+\frac{1}{2}\norm{\p^2_3\mathbf{Z}}_{L^2(\R)}^2\right) + \frac{\mu}{\rhob}\norm{\p_3^3\mathbf{Z}}_{L^2(\R)}^2 + \frac{\mu+\lambda}{\rhob}\norm{\p_3^3\mathbf{Z}_{3}}_{L^2(\R)}^2 \\
	& \qquad = \underbrace{-\frac{\mu}{\rhob} \int_{\R} \p_3 \big(\p_3\theta \p_3 \Phi\big) \bar{\uv} \cdot\p_3^3\mathbf{Z} dx_3 }_{I_7} + \underbrace{\int_\R \left( \p_3\Gv - \p_3 \mathbf{N}\right)\cdot \p_3^3\Zv dx_3}_{I_8}.
\end{align*}
Using \cref{est-theta}, one has that
\begin{align}
	I_7 & \leq \frac{\mu}{4\rhob} \norm{\p_3^3 \Zv}_{L^2(\R)}^2 +  C \sum_{j=1}^{2} (t+\Lambda)^{j-3} \norm{\p_3^j \Phi}_{L^2(\R)}^2. \label{I7}
\end{align}

To achieve the optimal rate of $(\phi^\od,\zeta^\od)$, we need to deal with the term $I_8$ carefully. 
Indeed, it follows from Lemmas \ref{Lem-F} and \ref{Lem-Q} that
\begin{align}
	I_8 & = \int_{\R} \left(\p_3 \Gv + \p_3 \Qv_1^\od + \p_3 \Qv_2^\od\right) \cdot \p_3^3 \Zv dx_3 + \int_\R \big(\p_3 \Qv_3^\od + \p_3 \Qv_4^\od \big) \p_3^4 \Zv dx_3 \notag \\
%	& \lesssim \e \nu e^{-\alpha t} + \nu \norm{\p_3^3 \Zv}_{H^1(\R)}^2 + \nu^{-1} \norm{\p_3 \Qv_1, \p_3 \Qv_3}_{L^2(\Omega)}^2 \notag \\
	& \lesssim \e \nu \Lambda^{\frac{1}{4}} e^{-\alpha t} + \norm{\p_3^3 \Zv}_{H^1(\R)} (I_{8,1} + I_{8,2}), \label{I8}
\end{align}
where 
\begin{equation}\label{I8-12}
	I_{8,1} =  (t+\Lambda)^{-\frac{1}{2}} \norm{\phi, \psi}_{L^4(\Omega)}^2 \andd I_{8,2} = \norm{\nabla\phi, \nabla\psi}_{L^4(\Omega)}  \norm{\phi,\psi}_{L^4(\Omega)}.
\end{equation}
First, it follows from \cref{Zv,rel-4} that
\begin{equation}\label{ineq-4}
	\begin{aligned}
		\norm{\p_3^4 \Zv}_{L^2(\R)} & \lesssim (t+\Lambda)^{-\frac{3}{2}} \norm{\p_3 \Phi, \Phi \kappa}_{L^2(\R)} + (t+\Lambda)^{-1} \norm{\p_3^2 \Phi}_{L^2(\R)} \\
		& \quad +  \norm{\nabla^2 \phi}_{H^1(\Omega)} + \norm{\nabla^3 \zeta}_{L^2(\Omega)} + \e \nu e^{-\alpha t}.
	\end{aligned}
\end{equation}
%\begin{equation}\label{ineq-4}
%	\begin{aligned}
%		\norm{\p_3^4 \Zv}_{L^2(\R)} & \lesssim \norm{\nabla^3 \zeta}_{L^2(\Omega)} + \delta (t+1)^{-1} \Bc_{2,\flat}^{\frac{1}{2}} + \delta (t+1)^{-\frac{3}{2}} \Bc_{0,0}^{\frac{1}{2}} \\
%		& \quad + (\delta+\nu) \norm{\nabla^2 \phi}_{H^1(\Omega)} + \e \nu e^{-\alpha t}.
%	\end{aligned}
%\end{equation}
Using \cref{L4,poincare}, it holds that
\begin{equation}\label{I81}
	\begin{aligned}
		I_{8,1} & \lesssim \nu \big[ (t+\Lambda)^{-\frac{1}{2}} \norm{\p_3^2 \Phi,\p_3^2 \Zv}_{L^2(\R)} +  (t+\Lambda)^{-1} \norm{\p_3 \Phi, \Phi \kappa}_{L^2(\R)} \\
		& \qquad\ + \norm{\nabla^3 \phi^\md, \nabla^3 \zeta^\md}_{L^2(\Omega)}\big].
	\end{aligned}
\end{equation}
To estimate $I_{8,2},$ it follows from \cref{Lem-GN}, \cref{Sobolev-1,poincare} that for $ j=0,1, $
\begin{align}
	\norm{\nabla^j \phi}_{L^4(\Omega)} & \lesssim \norm{\p_3^{j+1} \Phi}_{L^4(\R)} + \norm{\nabla^j \phi^\md}_{L^4(\Omega)} \notag \\
	& \lesssim  \norm{\p_3^3 \Phi}_{L^2(\R)}^{\frac{3+4j}{10}} \norm{\Phi}_{L^\infty(\R)}^{\frac{7-4j}{10}} + \norm{\nabla^j \phi^\md}_{H^1(\Omega)} \notag \\
	& \lesssim \nu^{\frac{7-4j}{10}} \big(\norm{\p_3^3 \Phi}_{L^2(\R)}^{\frac{3+4j}{10}} + \norm{\nabla^3 \phi^\md}_{L^2(\Omega)}^{\frac{3+4j}{10}}\big),
	\label{L4*}
\end{align}
%and 
%\begin{align}
%	\norm{\phi}_{L^4(\Omega)} & \lesssim \norm{\p_3\Phi}_{L^4(\R)} + \norm{\phi^\md}_{L^4(\Omega)} \notag \\
%	& \lesssim  \norm{\p_3^2 \phi^\od}_{L^2(\R)}^{\frac{3}{10}} \norm{\Phi}_{L^\infty(\R)}^{\frac{7}{10}} + \norm{\phi^\md}_{H^1(\Omega)} \notag \\
%	& \lesssim \nu^{\frac{7}{10}} \big( \norm{\p_3^2 \phi^\od}_{L^2(\R)}^{\frac{3}{10}} + \norm{ \phi^\md}_{H^1(\Omega)}^{\frac{3}{10}} \big), \label{L4*}
%\end{align}
and similarly,
\begin{equation}\label{L4**}
	\begin{aligned}
		\norm{\nabla^j \psi}_{L^4(\Omega)} & \lesssim \nu^{\frac{7-4j}{10}} \big(\norm{\p_3^3 \Psi}_{L^2(\R)}^{\frac{3+4j}{10}} + \norm{\nabla^3 \psi^\md}_{L^2(\Omega)}^{\frac{3+4j}{10}}\big).
%		\norm{\psi}_{L^4(\Omega)} & \lesssim\nu^{\frac{3}{4}} \big( \norm{\p_3^2 \psi^\od}_{L^2(\R)}^{\frac{1}{4}} + \norm{\nabla^2 \psi^\md}_{L^2(\Omega)}^{\frac{1}{4}} \big).
	\end{aligned}
\end{equation}
These, together with \cref{L4*}, yield that
\begin{align}
	I_{8,2} & \lesssim \nu \big( \norm{\p_3^3 \Phi,\p_3^3\Psi}_{L^2(\R)} + \norm{\nabla^3 \phi, \nabla^3 \psi}_{L^2(\Omega)}\big) \notag \\
	& \lesssim \nu \big[ \norm{\p_3^3 \Zv}_{L^2(\R)} + (t+\Lambda)^{-1} \norm{\p_3 \Phi, \Phi \kappa}_{L^2(\R)} \notag \\
	& \qquad\ + (t+\Lambda)^{-\frac{1}{2}} \norm{\p_3^2 \Phi}_{L^2(\R)} + \norm{\nabla^3 \phi, \nabla^3 \psi}_{L^2(\Omega)} \big]. \label{I82}
\end{align}
Thus, it follows from \cref{I82,I81,rel-4} that 
\begin{align*}
	I_{8,1} + I_{8,2} & \lesssim \nu \big[ \norm{\p_3^3 \Zv}_{L^2(\R)}  +  (t+\Lambda)^{-1} \norm{\p_3 \Phi, \Phi \kappa}_{L^2(\R)} \\
	& \qquad + (t+\Lambda)^{-\frac{1}{2}} \norm{\p_3^2 \Phi,\p_3^2 \Zv}_{L^2(\R)} + \norm{\nabla^2 \phi}_{H^1(\Omega)} + \norm{\nabla^3 \zeta}_{L^2(\Omega)} + \e \nu e^{-\alpha t}\big].
\end{align*}
This, together with \cref{ineq-4,I7}, yields \cref{est-6}.

\end{proof}

\vspace{.3cm}

Then collecting Lemmas \ref{Lem-est4}--\ref{Lem-est6}, there exist generic numbers $ M_{i,\flat}\geq 1 $ for $ i=1,2,3, $ such that if $ \Lambda^{-1}, \e $ and $ \nu $ are small, then it holds that
\begin{equation}\label{est-1*}
	\begin{aligned}
		& \frac{d}{dt} \Big( \sum_{i=1}^{3} M_{i,\flat} \Ac_{i,\flat} \Big) + \frac{1}{2} \Big(\sum_{i=1}^{3} c_{i,\flat} M_{i,\flat}  \Bc_{i,\flat}\Big) \\
		& \qquad \lesssim  (t+\Lambda)^{-1} \norm{\p_3\Phi, \Phi\kappa}_{L^2(\R)}^2 +  \nu \norm{\nabla^2 \phi}_{H^1(\Omega)}^2 + \nu \norm{\nabla^3 \zeta}_{L^2(\Omega)}^2 + \e \nu \Lambda^{\frac{1}{4}} e^{-\alpha t},
	\end{aligned}
\end{equation}
and 
\begin{equation}\label{est-1**}
	\begin{aligned}
		\sum_{i=1}^{3} M_{i,\flat} \Ac_{i,\flat}  & \sim  \norm{\p_3\Phi,\p_3\Zv}_{H^1(\R)}^2, \\
		\sum_{i=1}^{3} c_{i,\flat} M_{i,\flat} \Bc_{i,\flat} & \sim \norm{\p_3^2 \Phi}_{L^2(\R)}^2 + \norm{\p_3^2 \Zv}_{H^1(\R)}^2.
	\end{aligned}
\end{equation}

\vspace{.3cm}

\subsection{Estimates of Non-Zero Modes}\label{Sec-md}

The estimate of the non-zero modes is similar to \cite[Section 7]{Yuan2023n}. However, if we follow the analysis of \cite{Yuan2023n} to study the system of $(\phi^\md, \psi^\md)$, we will encounter a new difficulty in the energy estimates due to the large amplitude of the background wave \cref{profile}. Nevertheless, this difficulty can be overcome by considering the system of $(\phi^\md, \zeta^\md)$ instead of $(\phi^\md, \psi^\md)$.

To formulate the system for $(\phi^\md, \zeta^\md),$ we first subtract \cref{equ-phipsi} by \cref{equ-pert-od} to get that
\begin{equation}\label{equ-md}
	\begin{cases}
		\p_t \phi^\md + \dv \psi^\md = -f_0^\md, \\
		\p_t \psi^\md + \sum\limits_{i=1}^3 \p_i \big( u_i^\vs \psi^\md +  \mv^\vs \zeta_i^\md + \er_{1,i} \big) + \nabla \big(p'(\rhob) \phi^\md + n_2 \big) \\
		\qquad\qquad\qquad\quad = \mu \lap \zeta^\md + (\mu+\lambda) \nabla \dv \zeta^\md - \gv^\md,
	\end{cases}
\end{equation}
where 
\begin{equation}\label{r-1-2}
	\begin{aligned}
		\er_{1,i} & =  \big( u_i \mv - \ut_i \mvt \big)^\md - u_i^\vs \psi^\md - \mv^\vs \zeta_i^\md, \\
		n_2 & = \big(p(\rho)-p(\rhot)\big)^\md - p'(\rhob) \phi^\md.
	\end{aligned}
\end{equation}
Note that $ \psi = \rhot \zeta + \uvt \phi + \phi \zeta $, which implies that
\begin{align}
	\psi^\md = \rhob \zeta^\md + \uv^\vs \phi^\md +  \er_3, \label{psi-zeta}
\end{align}
where 
\begin{equation}\label{r-3}
	\er_3 = [ (\rhot-\rhob) \zeta + (\uvt-\uv^\vs) \phi ]^\md + \zeta^\od \phi^\md + \phi^\od \zeta^\md + (\phi^\md \zeta^\md)^\md.
\end{equation}
Then plugging \cref{psi-zeta} into \cref{equ-md} and using the fact that $\dv \uv^\vs = 0$, the system of $(\phi^\md, \zeta^\md)$ can be formulated as
\begin{equation}\label{equ-md*}
	\begin{cases}
		\p_t \phi^\md + \rhob \dv \zeta^\md + \uv^\vs \cdot \nabla\phi^\md = - f_0^\md - \dv \er_3, \\
		\rhob \p_t\zeta^\md + \rhob \uv^\vs \dnab \zeta^\md + p'(\rhob) \nabla\phi^\md + \p_t \uv^\vs \phi^\md - \mu \lap \zeta^\md - (\mu+\lambda) \nabla\dv \zeta^\md \\
		\qquad = \er_4 - \p_t \er_3  +\uv^\vs f_0^\md - \gv^\md.
	\end{cases}
\end{equation}
with $ \er_4:=  - \sum_{i=1}^{3} \p_i \er_{1,i} - \nabla n_2 + \uv^\vs \dv \er_3 - \uv^\vs \dnab \er_3. $

Combining \cref{poincare,Sobolev-1}, the remainder \cref{r-3} satisfies that
\begin{equation}\label{est-r-3}
	\begin{aligned}
		\norm{\nabla^j \er_3}_{L^2(\Omega)} & \lesssim \nu \norm{\nabla^j \phi^\md, \nabla^j \zeta^\md}_{L^2(\Omega)} + \e \nu e^{-\alpha t }, \qquad j=0,1,2.
	\end{aligned}
\end{equation}
This, together with \cref{psi-zeta,poincare}, yields that
\begin{align}
	\pm \norm{\nabla^j \psi^\md}_{L^2(\Omega)} & \lesssim \pm \norm{\nabla^j \zeta^\md}_{L^2(\Omega)} + \norm{ \nabla^j\phi^\md}_{L^2(\Omega)} + \e \nu e^{-\alpha t}, \quad j=0,1,2. \label{psi-est}
\end{align}
It follows from \cref{r-1-2}\textsubscript{1} that
\begin{align*}
	\er_{1,i}
	& = \big[(\ut_i -u_i^\vs) \psi + (\mvt - \mv^\vs) \zeta_i \big]^\md + \zeta_i^\md \psi^\od  + \zeta_i^\od \psi^\md + (\zeta_i^\md \psi^\md)^\md,
\end{align*}
and \cref{r-1-2}\textsubscript{2}, with the fact that $\rho = \rhob + \phi^\od + (\rhot-\rhob) + \phi^\md,$ yields that
\begin{align*}
	n_2 & = \big[ \big(p(\rho) - p(\rhob+\phi^\od)\big) - \big( p(\rhot) - p(\rhob) \big) - p'(\rhob) \phi^\md\big]^\md \\
	& = \big[ a_1 (\rhot-\rhob) \big]^\md + \big(a_2 \phi^\md\big)^\md,
\end{align*}
where
\begin{align*}
	a_1 & = \int_0^1 \big[ p'(\rhob+\phi^\od + s(\rhot-\rhob) + s \phi^\md) - p'(\rhob+s(\rhot-\rhob)) \big] ds \\
	& = O(1) \big(\abs{\phi^\od}+ \abs{\phi^\md}\big) \\
	%		& = O(1) \big(\abs{\phi^\od}+\abs{\phi^\md}\big), \\
	a_2 & = \int_0^1 \big[ p'(\rhob+\phi^\od + s(\rhot-\rhob) + s \phi^\md) - p'(\rhob) \big] ds \\
	& = O(1) \big(\abs{\rhot-\rhob} + \abs{\phi^\od}+\abs{\phi^\md}\big).
\end{align*}
Thus, it holds that
\begin{align}
	\norm{\er_{1,i}}_{H^1(\Omega)} & \lesssim \nu \norm{\nabla \zeta^\md,\nabla \phi^\md}_{L^2(\Omega)} + \e \nu e^{-\alpha t} \qquad \text{for} \ i=1,2,3, \label{r-1-est} \\
	\norm{n_2}_{H^1(\Omega)} & \lesssim \nu \norm{\nabla \phi^\md}_{L^2(\Omega)} + \e \nu e^{-\alpha t}. \label{r-2-est}
\end{align}
These, together with \cref{est-r-3}, yield that
\begin{equation}\label{est-r-4}
	\norm{\er_4}_{L^2(\Omega)} \lesssim \nu \norm{\nabla\phi^\md,\nabla\zeta^\md}_{L^2(\Omega)} + \e \nu e^{-\alpha t}.
\end{equation}
Moreover, we claim that
\begin{equation}\label{est-t-r3}
	\norm{\p_t \er_3}_{L^2(\Omega)} \lesssim  \nu \norm{\nabla \phi^\md, \nabla^2 \zeta^\md}_{L^2(\Omega)} + \e \nu \Lambda^{\frac{1}{4}} e^{-\alpha t }. 
\end{equation}
In fact, it follows from \cref{equ-md*,est-r-4} that
\begin{align*}
	\norm{\p_t\phi^\md,\p_t\zeta^\md}_{L^2(\Omega)} \lesssim \norm{\nabla\phi^\md, \nabla^2 \zeta^\md}_{L^2(\Omega)} + \norm{\p_t \er_3}_{L^2(\Omega)} + \e \Lambda^{\frac{1}{4}} e^{-\alpha t}.
\end{align*}
Using the Sobolev inequality and \cref{equ-phizeta}, it holds that
\begin{align*}
	\norm{\p_t \phi^\od, \p_t \zeta^\od}_{L^\infty(\R)} \lesssim \norm{\p_t \phi^\od, \p_t \zeta^\od}_{H^1(\R)} \lesssim \norm{\p_t \phi, \p_t \zeta}_{H^1(\Omega)},
\end{align*}
which, together with \cref{equ-phizeta,small-phizeta}, yields that
\begin{equation*}
	\norm{\p_t \phi^\od, \p_t \zeta^\od}_{L^\infty(\R)} \lesssim \norm{\phi,\zeta}_{H^3(\Omega)} + \norm{f_0,\gv}_{H^1(\Omega)} \lesssim \nu + \e \Lambda^{\frac{1}{4}} e^{-\alpha t}.
\end{equation*}
Then it follows from \cref{r-3} that
\begin{align*}
	\norm{\p_t \er_3}_{L^2(\Omega)} & \lesssim \e \nu e^{-\alpha t } + \norm{\p_t \zeta^\od,\p_t \phi^\od}_{L^\infty(\R)} \norm{ \phi^\md, \zeta^\md}_{L^2(\Omega)} + \nu \norm{\p_t \phi^\md, \p_t \zeta^\md}_{L^2(\Omega)} \\
	& \lesssim \nu \norm{\nabla \phi^\md, \nabla^2 \zeta^\md}_{L^2(\Omega)} + \nu \norm{\p_t \er_3}_{L^2(\Omega)} + \e \nu \Lambda^{\frac{1}{4}} e^{-\alpha t },
\end{align*}
which yields \cref{est-t-r3}.

\vspace{.1cm}

Now we establish the a priori estimates of $(\phi^\md,\zeta^\md).$

\begin{Lem}\label{Lem-exp-1}
	Under the assumptions of \cref{Prop-apriori}, there exist a large constant $ \Lambda_0\geq 1 $ and small positive constants $ \e_0 $ and $ \nu_0 $ such that, if $ \Lambda \geq \Lambda_0 $, $ \e \leq \e_0 $ and $ \nu \leq \nu_0, $ then
	\begin{equation}\label{est-md-1}
		\begin{aligned}
			& \frac{d}{dt} \Ac_{1,\sharp}  + c_{1,\sharp} \Bc_{1,\sharp} \lesssim (\Lambda^{-1}+\nu) \norm{\nabla\phi^\md, \nabla^2 \zeta^\md}^2_{L^2(\Omega)} + \e \nu \Lambda^{\frac{1}{4}} e^{-\alpha t},
		\end{aligned}
	\end{equation}
	where
	\begin{align*}
		\Ac_{1,\sharp} = \frac{p'(\rhob)}{\rhob^2} \norm{\phi^\md}_{L^2(\Omega)}^2 + \norm{ \zeta^\md}_{L^2(\Omega)}^2 \andd \Bc_{1,\sharp} =  \norm{\nabla \zeta^\md}_{L^2(\Omega)}^2.
	\end{align*}
\end{Lem}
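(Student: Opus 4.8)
The plan is to carry out an $L^2$ energy estimate directly on the system \cref{equ-md*}: I would multiply \cref{equ-md*}\textsubscript{1} by $\tfrac{2p'(\rhob)}{\rhob^2}\phi^\md$, multiply \cref{equ-md*}\textsubscript{2} by $\tfrac{2}{\rhob}\zeta^\md$, add the two resulting identities, and integrate over $\Omega$; since the a priori assumption provides $(\phi,\zeta)\in C(0,T;H^3(\Omega))$, all integrations by parts in $x_3$ are legitimate. Three structural cancellations occur: the convective terms $\uv^\vs\cdot\nabla\phi^\md$ and $\uv^\vs\dnab\zeta^\md$ drop out after integration because $\dv\uv^\vs=0$ and $\uv^\vs=(\ub_1\theta,\ub_2\theta,0)$ is independent of $\xp$; the pressure and divergence terms combine into $\tfrac{2p'(\rhob)}{\rhob}\dv(\phi^\md\zeta^\md)$, which integrates to zero; and the two viscous terms yield the good dissipation $\tfrac{2\mu}{\rhob}\norm{\nabla\zeta^\md}_{L^2(\Omega)}^2+\tfrac{2(\mu+\lambda)}{\rhob}\norm{\dv\zeta^\md}_{L^2(\Omega)}^2\ge\tfrac{2\mu}{\rhob}\norm{\nabla\zeta^\md}_{L^2(\Omega)}^2$, which furnishes $c_{1,\sharp}\Bc_{1,\sharp}$. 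What remains is to bound the linear coupling term $-\tfrac{2}{\rhob}\int_\Omega\p_t\uv^\vs\,\phi^\md\cdot\zeta^\md\,dx$ and the source contributions from $-f_0^\md-\dv\er_3$ and $\er_4-\p_t\er_3+\uv^\vs f_0^\md-\gv^\md$.

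The main obstacle is the coupling term $\int_\Omega\p_t\uv^\vs\,\phi^\md\cdot\zeta^\md\,dx$; this is exactly why one works with $(\phi^\md,\zeta^\md)$ rather than $(\phi^\md,\psi^\md)$, since in the $\psi$-formulation the analogous coefficient would be $\uv^\vs$ itself, of size $O(\abs{\uvb})$ and not small. Here instead $\p_t\uv^\vs=\tfrac{\mu}{\rhob}\p_3^2\theta\,\uvb$, so \cref{bdd-theta-1} gives $\abs{\p_t\uv^\vs}\lesssim\Lambda^{-1}$ uniformly in $t$; combining Cauchy--Schwarz with the Poincaré inequality \cref{poincare} — available because $\phi^\md$ and $\zeta^\md$ have zero average over $\Torus^2$ — yields
\[
\Big|\tfrac{2}{\rhob}\int_\Omega\p_t\uv^\vs\,\phi^\md\cdot\zeta^\md\,dx\Big|\lesssim\Lambda^{-1}\norm{\phi^\md}_{L^2(\Omega)}\norm{\zeta^\md}_{L^2(\Omega)}\lesssim\Lambda^{-1}\norm{\nabla\phi^\md}_{L^2(\Omega)}^2+\Lambda^{-1}\norm{\nabla\zeta^\md}_{L^2(\Omega)}^2,
\]
so the $\zeta^\md$-part is absorbed into the dissipation once $\Lambda$ is large, while $\Lambda^{-1}\norm{\nabla\phi^\md}_{L^2(\Omega)}^2$ is retained on the right-hand side. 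This is the one place where the largeness of $\Lambda$ (equivalently, the smallness of the derivatives of the viscous wave) is indispensable.

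It then remains to dispose of the source terms. The purely external errors $f_0^\md$, $\uv^\vs f_0^\md$ and $\gv^\md$ are controlled by \cref{Lem-F}, $\norm{f_0}_{H^3(\Omega)}+\norm{\gv}_{H^2(\Omega)}\lesssim\e\Lambda^{1/4}e^{-\alpha t}$, together with the a priori bound $\norm{\phi^\md,\zeta^\md}_{L^2(\Omega)}\lesssim\nu$; pairing each against $\phi^\md$ or $\zeta^\md$ by Cauchy--Schwarz gives only contributions of size $\e\nu\Lambda^{1/4}e^{-\alpha t}$. For the remainders, $\int_\Omega\dv\er_3\,\phi^\md\,dx=-\int_\Omega\er_3\cdot\nabla\phi^\md\,dx$ is treated via \cref{est-r-3}, $\int_\Omega\er_4\cdot\zeta^\md\,dx$ via \cref{est-r-4}, and $\int_\Omega\p_t\er_3\cdot\zeta^\md\,dx$ via \cref{est-t-r3}; in each case the extra factor of $\nu$ in those bounds, together with Poincaré and Young's inequalities, converts the term into a sum of $\nu\norm{\nabla\phi^\md,\nabla^2\zeta^\md}_{L^2(\Omega)}^2$ (kept on the right), $\nu\norm{\nabla\zeta^\md}_{L^2(\Omega)}^2$ (absorbed into the dissipation for $\nu$ small), and $\e\nu\Lambda^{1/4}e^{-\alpha t}$, where one repeatedly uses $\e\nu e^{-\alpha t}\le1$, $\e\Lambda^{1/4}\le\e_0$ and $\Lambda\ge1$ to simplify the decaying error. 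Note the $\nu\norm{\nabla^2\zeta^\md}_{L^2(\Omega)}^2$ on the right of \cref{est-md-1} comes solely from the $\p_t\er_3$-term and cannot be absorbed at this level; it will be handled by the subsequent higher-order non-zero-mode estimates. Collecting all bounds and choosing $\Lambda_0$ large and $\e_0,\nu_0$ small yields \cref{est-md-1} with $c_{1,\sharp}\sim\mu/\rhob$.
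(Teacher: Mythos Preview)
Your proposal is correct and follows essentially the same approach as the paper: multiply \cref{equ-md*}\textsubscript{1} and \cref{equ-md*}\textsubscript{2} by suitable constant multiples of $\phi^\md$ and $\zeta^\md$, exploit $\dv\uv^\vs=0$ for the convective cancellations, and close with \cref{est-r-3}, \cref{est-r-4}, \cref{est-t-r3} and \cref{Lem-F}. Your write-up is in fact more explicit than the paper's, particularly in singling out the coupling term $\int_\Omega\p_t\uv^\vs\,\phi^\md\cdot\zeta^\md\,dx$ as the place where the largeness of $\Lambda$ enters, and in tracing the $\nu\norm{\nabla^2\zeta^\md}_{L^2(\Omega)}^2$ contribution back to $\p_t\er_3$.
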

\begin{proof}
	Multiplying $ \frac{p'(\rhob)}{\rhob} \phi^\md $ and $ \zeta^\md $ on \cref{equ-md*}$ _1 $ and \cref{equ-md*}$ _2 $, respectively, and using the fact that $\dv\uv^\vs = 0,$ one can obtain that
	\begin{align*}
		& \p_t \Big( \frac{p'(\rhob)}{2\rhob} \abs{\phi^\md}^2 + \frac{\rhob}{2} \abs{\zeta^\md}^2 \Big) + \mu \abs{\nabla\zeta^\md}^2 + (\mu+\lambda) \abs{\dv\zeta^\md}^2 \\
		& \quad = \dv(\cdots) + (\er_4 - \p_t \er_3) \cdot \zeta^\md - \frac{p'(\rhob)}{\rhob} \dv \er_3 \phi^\md - \p_t \uv^\vs \cdot \zeta^\md \phi^\md \\
		& \qquad - \frac{p'(\rhob)}{\rhob} f_0^\md \phi^\md + (\uv^\vs f_0^\md - \gv^\md) \cdot \zeta^\md.
	\end{align*}
	Then integrating the equality above over $\Omega$, and using \cref{est-r-4,est-t-r3,est-r-3}, one can get \cref{est-md-1}.
\end{proof}

\vspace{.1cm}

\begin{Lem}\label{Lem-exp-2}
	Under the assumptions of \cref{Prop-apriori}, there exist a large constant $ \Lambda_0\geq 1 $ and small positive constants $ \e_0 $ and $ \nu_0 $ such that, if $ \Lambda \geq \Lambda_0 $, $ \e \leq \e_0 $ and $ \nu \leq \nu_0, $ then
	\begin{equation}\label{est-md-2}
		\begin{aligned}
			& \frac{d}{dt} \Ac_{2,\sharp}  + c_{2,\sharp} \Bc_{2,\sharp}\lesssim \norm{\nabla\zeta^\md}_{L^2(\Omega)}^2 + \nu \norm{\nabla^2 \zeta^\md}_{L^2(\Omega)}^2 + \e \nu \Lambda^{\frac{1}{4}} e^{-\alpha t},
		\end{aligned}
	\end{equation}
	where
\begin{align*}
	\Ac_{2,\sharp} = \frac{\mut}{2\rhob^2} \norm{\nabla\phi^\md}_{L^2(\Omega)}^2 + \int_\Omega \zeta^\md\cdot \nabla\phi^\md dx \andd
	\Bc_{2,\sharp} =  \norm{\nabla \phi^\md}_{L^2(\Omega)}^2.
\end{align*}
\end{Lem}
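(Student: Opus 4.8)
The plan is to run a Matsumura--Nishida type ``density-gradient'' energy estimate on the non-zero-mode system \cref{equ-md*}, keeping careful track of the fact that $\uv^\vs=\theta\uvb$ is \emph{not} small. Concretely, I would differentiate $\Ac_{2,\sharp}$ in time, using \cref{equ-md*}$_{2}$ for $\p_t\zeta^\md$, \cref{equ-md*}$_{1}$ for $\p_t\phi^\md$, and the spatial gradient of \cref{equ-md*}$_{1}$, namely $\p_t\nabla\phi^\md=-\rhob\nabla\dv\zeta^\md-\nabla(\uv^\vs\cdot\nabla\phi^\md)-\nabla f_0^\md-\nabla\dv\er_3$, for $\p_t\nabla\phi^\md$. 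Testing the pressure term $p'(\rhob)\nabla\phi^\md$ in \cref{equ-md*}$_{2}$ against $\tfrac1\rhob\nabla\phi^\md$ produces exactly the coercive quantity $\tfrac{p'(\rhob)}{\rhob}\norm{\nabla\phi^\md}_{L^2(\Omega)}^{2}$, which will be $c_{2,\sharp}\Bc_{2,\sharp}$; and from $\int_\Omega\zeta^\md\cdot\nabla\p_t\phi^\md\,dx=-\int_\Omega\dv\zeta^\md\,\p_t\phi^\md\,dx$ together with \cref{equ-md*}$_{1}$ one picks up $\rhob\norm{\dv\zeta^\md}_{L^2(\Omega)}^{2}\le\norm{\nabla\zeta^\md}_{L^2(\Omega)}^{2}$ plus cross terms $\int_\Omega\dv\zeta^\md\,(\uv^\vs\cdot\nabla\phi^\md+f_0^\md+\dv\er_3)\,dx$.

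The key algebraic point --- and the reason $\Ac_{2,\sharp}$ carries the correction $\tfrac{\mut}{2\rhob^{2}}\norm{\nabla\phi^\md}_{L^2(\Omega)}^{2}$ --- concerns the viscous term. Using the identity $\int_\Omega\lap\zeta^\md\cdot\nabla\phi^\md\,dx=\int_\Omega\nabla\dv\zeta^\md\cdot\nabla\phi^\md\,dx$ (two integrations by parts; equivalently the solenoidal part of $\lap\zeta^\md$ integrates to zero against $\nabla\phi^\md$), the viscosities collapse, upon testing \cref{equ-md*}$_{2}$ against $\tfrac1\rhob\nabla\phi^\md$, to $+\tfrac{\mut}{\rhob}\int_\Omega\nabla\dv\zeta^\md\cdot\nabla\phi^\md\,dx$ with $\mut=2\mu+\lambda$. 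On the other hand $\tfrac{d}{dt}\big(\tfrac{\mut}{2\rhob^{2}}\norm{\nabla\phi^\md}_{L^2(\Omega)}^{2}\big)=\tfrac{\mut}{\rhob^{2}}\int_\Omega\p_t\nabla\phi^\md\cdot\nabla\phi^\md\,dx$, and substituting the gradient of the continuity equation, its leading piece $-\rhob\nabla\dv\zeta^\md$ yields $-\tfrac{\mut}{\rhob}\int_\Omega\nabla\dv\zeta^\md\cdot\nabla\phi^\md\,dx$, cancelling the viscous contribution exactly. What survives from $\p_t\nabla\phi^\md$ is lower order: the term $\nabla(\uv^\vs\cdot\nabla\phi^\md)$, integrated against $\nabla\phi^\md$, reduces --- since $\uv^\vs=\theta\uvb$ with $\uvb=(\uvb_\perp,0)$ and $\dv\uv^\vs=0$ --- to $\int_\Omega\p_3\theta\,(\uvb_\perp\cdot\nabla_{\xp}\phi^\md)\,\p_3\phi^\md\,dx$, which is $\lesssim\Lambda^{-1/2}\norm{\nabla\phi^\md}_{L^2(\Omega)}^{2}$ by \cref{bdd-theta-1}, while $\nabla f_0^\md$ and $\nabla\dv\er_3$ are handled by \cref{Lem-F} and \cref{est-r-3}.

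It then remains to estimate the error terms. The transport contributions $\int_\Omega\uv^\vs\dnab\zeta^\md\cdot\nabla\phi^\md\,dx$ and $\int_\Omega\dv\zeta^\md\,\uv^\vs\cdot\nabla\phi^\md\,dx$ carry the large factor $\abs{\uvb}$, so I would bound them via Cauchy--Schwarz and Young as $\delta\norm{\nabla\phi^\md}_{L^2(\Omega)}^{2}+C(\delta,\uvb)\norm{\nabla\zeta^\md}_{L^2(\Omega)}^{2}$; this is admissible precisely because $\norm{\nabla\zeta^\md}_{L^2(\Omega)}^{2}$ is permitted on the right-hand side of \cref{est-md-2} with a generic (possibly $\uvb$-dependent) constant, while only a small multiple of $\norm{\nabla\phi^\md}_{L^2(\Omega)}^{2}$ is produced. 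The term $\tfrac1\rhob\p_t\uv^\vs\,\phi^\md\cdot\nabla\phi^\md$ is $\lesssim\Lambda^{-1}\norm{\nabla\phi^\md}_{L^2(\Omega)}^{2}$ after Poincar\'e (as $\p_t\uv^\vs=\tfrac\mu\rhob\p_3^{2}\theta\,\uvb$ and $\abs{\p_3^{2}\theta}\lesssim\Lambda^{-1}$), and the remainders $\er_4,\p_t\er_3,\dv\er_3,f_0^\md,\gv^\md$ are controlled by \cref{Lem-F} and \cref{est-r-3,est-r-4,est-t-r3}; paired against $\nabla\phi^\md$ (which is $\lesssim\nu$ by \cref{small-phizeta}) and $\dv\zeta^\md$, they contribute only quantities of the form $\nu\norm{\nabla\phi^\md,\nabla^2\zeta^\md}_{L^2(\Omega)}^{2}$ and $\e\nu\Lambda^{\frac14}e^{-\alpha t}$. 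Choosing $\Lambda^{-1}$, $\e$ and $\nu$ small enough that all $\delta$-, $\Lambda^{-1/2}$- and $\nu$-multiples of $\norm{\nabla\phi^\md}_{L^2(\Omega)}^{2}$ are absorbed into $\tfrac{p'(\rhob)}{\rhob}\norm{\nabla\phi^\md}_{L^2(\Omega)}^{2}$ then gives \cref{est-md-2}. The main obstacle is exactly the viscous term above: because $\uv^\vs$ is not small, one cannot afford to integrate $\int_\Omega\nabla\dv\zeta^\md\cdot\nabla\phi^\md\,dx$ by parts onto $\nabla^{2}\phi^\md$ or $\nabla^{2}\zeta^\md$ with a generic constant, and it is only the exact cancellation supplied by the correction $\tfrac{\mut}{2\rhob^{2}}\norm{\nabla\phi^\md}_{L^2(\Omega)}^{2}$ in $\Ac_{2,\sharp}$, together with the identity $\dv\uv^\vs=0$, that closes the estimate.
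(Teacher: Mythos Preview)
Your overall strategy coincides with the paper's: test \cref{equ-md*}$_2$ against $\tfrac{1}{\rhob}\nabla\phi^\md$, add $\tfrac{\mut}{\rhob^2}\int\nabla\phi^\md\cdot\p_t\nabla\phi^\md$ computed from $\nabla$\cref{equ-md*}$_1$, use the curl identity $\int\nabla\phi^\md\cdot(\lap\zeta^\md-\nabla\dv\zeta^\md)=0$ to collapse the viscous terms, and absorb the large-$\uvb$ transport pieces by Young into the $\norm{\nabla\zeta^\md}^2$ budget. All of that is correct and matches the paper's \cref{eq-md-1} exactly.

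There is one genuine gap. You dispose of $\int_\Omega\nabla\phi^\md\cdot\nabla\dv\er_3\,dx$ by citing \cref{est-r-3}, but \cref{est-r-3} with $j=2$ gives $\norm{\nabla^2\er_3}_{L^2}\lesssim\nu\norm{\nabla^2\phi^\md,\nabla^2\zeta^\md}_{L^2}+\e\nu e^{-\alpha t}$, so a direct pairing produces $\nu\norm{\nabla\phi^\md}_{L^2}\norm{\nabla^2\phi^\md}_{L^2}$. No amount of Young or Poincar\'e converts this into a term admissible on the right of \cref{est-md-2}: Poincar\'e only bounds lower by higher, and the dissipation budget of the non-zero-mode block (see \cref{est-2**}) contains $\norm{\nabla\phi^\md}^2$ and $\norm{\nabla\zeta^\md}_{H^1}^2$ but never $\norm{\nabla^2\phi^\md}^2$. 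The paper avoids this by unpacking \cref{r-3} term by term: for the piece $\zeta^\od\cdot\nabla(\nabla\phi^\md)$ it integrates by parts to get $\tfrac12\int\dv\zeta^\od\,|\nabla\phi^\md|^2\lesssim\nu\norm{\nabla\phi^\md}^2$; for the piece $(\zeta^\md\cdot\nabla)\nabla\phi^\md$ it uses the a priori bound $\norm{\nabla^2\phi^\md}_{L^2}\lesssim\nu$ together with $\norm{\zeta^\md}_{L^\infty}\lesssim\norm{\zeta^\md}_{H^2}\lesssim\norm{\nabla^2\zeta^\md}_{L^2}$ (Sobolev plus two applications of Poincar\'e), yielding $\nu\norm{\nabla\phi^\md}_{L^2}\norm{\nabla^2\zeta^\md}_{L^2}$, which \emph{is} admissible. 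Once you insert this argument (the paper's \cref{ineq-md-1}) your proof closes as written.
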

\begin{proof}
	Multiplying $ \frac{\mut}{\rhob} \nabla \phi^\md $ and $ \nabla \phi^\md $ on $ \nabla $\cref{equ-md*}$ _1 $ and \cref{equ-md*}$ _2 $, respectively, leads to
	\begin{equation}\label{eq-md-1}
		\begin{aligned}
		& \p_t \Big( \frac{\mut}{2\rhob} \abs{\nabla\phi^\md}^2 + \rhob \zeta^\md \dnab \phi^\md \Big) + p'(\rhob) \abs{\nabla\phi^\md}^2 \\
		& \quad = \dv(\cdots) - \frac{\mut}{\rhob} \nabla\phi^\md \cdot \nabla \uv^\vs \nabla\phi^\md - \rhob \uv^\vs \cdot \nabla\zeta^\md \nabla\phi^\md - \phi^\md \p_t \uv^\vs \dnab \phi^\md \\
		& \qquad + \rhob \dv \zeta^\md \big(\rhob \dv\zeta^\md + \uv^\vs \dnab \phi^\md + f_0^\md + \dv \er_3\big) \\
		& \qquad + \nabla\phi^\md \cdot \Big( -\frac{\mut}{\rhob} \nabla f_0^\md + \uv^\vs f_0^\md - \gv^\md \Big) \\
		& \qquad + \nabla\phi^\md \cdot (\er_4 - \p_t \er_3) - \frac{\mut}{\rhob} \nabla \phi^\md  \dnab \dv\er_3,
	\end{aligned}
	\end{equation}
	where we have used the fact that for any $ h_0\in H^1(\Omega), \mathbf{h}=(h_1,h_2,h_3) \in H^2(\Omega), $
	\begin{equation}\label{fact}
		\nabla h_0 \cdot (\lap \mathbf{h} - \nabla\dv \mathbf{h}) = \dv (\nabla h_0 \times \text{curl} \mathbf{h}).
	\end{equation}
	To estimate the term $-\frac{\mut}{\rhob} \nabla \phi^\md  \dnab \dv\er_3$ on the right-hand side of \cref{eq-md-1}, one first notes that
	\begin{align*}
		\Big|\int_\Omega \nabla \phi^\md \cdot  \left(\zeta^\od \dnab\right) \nabla \phi^\md dx\Big| & = \frac{1}{2} \Big|\int_\Omega \dv \zeta^\od \abs{\nabla\phi^\md}^2 dx \Big| \lesssim \nu \norm{\nabla\phi^\md}_{L^2(\Omega)}^2,
	\end{align*}
	and it follows from \cref{Sob-inf,poincare} that
	\begin{align*}
		\norm{\nabla\phi^\md \cdot \big( (\zeta^\md \cdot \nabla) \nabla \phi^\md \big)^\md}_{L^1(\Omega)}
		& \lesssim \norm{\nabla\phi^\md}_{L^2(\Omega)} \norm{\zeta^\md}_{L^\infty(\Omega)} \norm{\nabla^2 \phi^\md}_{L^2(\Omega)} \\
		& \lesssim \nu \norm{\nabla\phi^\md}_{L^2(\Omega)} \norm{\zeta^\md}_{H^2(\Omega)} \\
		& \lesssim \nu \norm{\nabla\phi^\md}_{L^2(\Omega)} \norm{\nabla^2\zeta^\md}_{L^2(\Omega)}.
	\end{align*}
	These, together \cref{r-3}, yield that
	\begin{equation}\label{ineq-md-1}
		\Big|\int_\Omega \nabla \phi^\md \cdot \nabla\dv\er_3 dx\Big| \lesssim \nu \norm{\nabla \phi^\md,\nabla^2 \zeta^\md}_{L^2(\Omega)} + \e \nu e^{-\alpha t}.
	\end{equation} 
	Collecting \cref{est-t-r3,est-r-3,est-r-4,ineq-md-1}, one can get \cref{est-md-2} by integrating \cref{eq-md-1} over $\Omega$.
\end{proof}

\vspace{.1cm}

\begin{Lem}\label{Lem-exp-3}
	Under the assumptions of \cref{Prop-apriori}, there exist a large constant $ \Lambda_0\geq 1 $ and small positive constants $ \e_0 $ and $ \nu_0 $ such that, if $ \Lambda \geq \Lambda_0 $, $ \e \leq \e_0 $ and $ \nu \leq \nu_0, $ then
	\begin{equation}\label{est-md-3}
		\frac{d}{dt} \Ac_{3,\sharp} + c_{3,\sharp} \Bc_{3,\sharp} \lesssim \Bc_{1,\sharp} + \Bc_{2,\sharp}  + \e \nu \Lambda^{\frac{1}{4}} e^{-\alpha t},
	\end{equation}
where
\begin{align*}
	\Ac_{3,\sharp} = \norm{\nabla\zeta^\md}^2_{L^2(\Omega)} \andd \Bc_{3,\sharp} =  \norm{\nabla^2 \zeta^\md}_{L^2(\Omega)}^2.
\end{align*}
\end{Lem}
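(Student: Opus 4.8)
The plan is a direct $L^2$ energy estimate on the spatially differentiated $\zeta^\md$-momentum equation. Apply $\p_k$ to \cref{equ-md*}$_2$, multiply by $\p_k\zeta^\md$, sum over $k=1,2,3$, and integrate over $\Omega=\Torus^2\times\R$: the time-derivative term gives $\frac{\rhob}{2}\frac{d}{dt}\norm{\nabla\zeta^\md}_{L^2(\Omega)}^2$, while the viscous terms, after integration by parts (there are no boundary contributions on $\Omega$), produce the dissipation $\mu\norm{\nabla^2\zeta^\md}_{L^2(\Omega)}^2+(\mu+\lambda)\norm{\nabla\dv\zeta^\md}_{L^2(\Omega)}^2$, the second summand being nonnegative. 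Every remaining term is then to be absorbed either into $\delta\norm{\nabla^2\zeta^\md}_{L^2(\Omega)}^2$ for small $\delta$, or into $\Bc_{1,\sharp}+\Bc_{2,\sharp}$, or into $\e\nu\Lambda^{\frac14}e^{-\alpha t}$.

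For the transport term $\rhob\,\uv^\vs\dnab\zeta^\md$, differentiation produces $\rhob\,\p_k\uv^\vs\dnab\zeta^\md+\rhob\,\uv^\vs\dnab\p_k\zeta^\md$; the second piece integrates to zero since $\dv\uv^\vs\equiv0$ (recall $\uv^\vs=\theta\uvb$ with $\uvb=(\ub_1,\ub_2,0)$ and $\theta=\theta(x_3,t)$), and $\p_k\uv^\vs=\delta_{k3}\,\uvb\,\p_3\theta$ is bounded by $C(t+\Lambda)^{-\frac12}\le C\Lambda^{-\frac12}$ by \cref{est-theta}, giving $\lesssim\Lambda^{-\frac12}\Bc_{1,\sharp}$. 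The pressure term $p'(\rhob)\nabla\phi^\md$, after one integration by parts in $x_k$, is $\le\delta\norm{\nabla^2\zeta^\md}_{L^2(\Omega)}^2+C\norm{\nabla\phi^\md}_{L^2(\Omega)}^2=\delta\Bc_{3,\sharp}+C\Bc_{2,\sharp}$. The term $\p_t\uv^\vs\,\phi^\md$ has coefficients controlled by $\abs{\p_t\uv^\vs}\lesssim(t+\Lambda)^{-1}$ and $\abs{\nabla\p_t\uv^\vs}\lesssim(t+\Lambda)^{-\frac32}$ (via $\p_t\theta=\frac{\mu}{\rhob}\p_3^2\theta$ and \cref{est-theta}), so with the Poincar\'{e} inequality $\norm{\phi^\md}_{L^2(\Omega)}\lesssim\norm{\nabla\phi^\md}_{L^2(\Omega)}$ of \cref{Lem-poin} it is $\lesssim\Lambda^{-1}(\Bc_{1,\sharp}+\Bc_{2,\sharp})$.

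It remains to handle the sources on the right of \cref{equ-md*}$_2$. The exponentially decaying pieces $\uv^\vs f_0^\md-\gv^\md$ are paired directly with $\nabla\zeta^\md$, using $\norm{\nabla(\uv^\vs f_0^\md)}_{L^2(\Omega)}+\norm{\nabla\gv^\md}_{L^2(\Omega)}\lesssim\norm{f_0}_{H^1(\Omega)}+\norm{\gv}_{H^1(\Omega)}\lesssim\e\Lambda^{\frac14}e^{-\alpha t}$ from \cref{exp-f-g} together with the a priori bound $\norm{\nabla\zeta^\md}_{L^2(\Omega)}\lesssim\nu$ from \cref{apriori}, which yields $\lesssim\e\nu\Lambda^{\frac14}e^{-\alpha t}$. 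For $\er_4-\p_t\er_3$ one integrates by parts once to move $\p_k$ onto $\zeta^\md$ and then applies Young's inequality with \cref{est-r-4,est-t-r3}, obtaining $\le\delta\norm{\nabla^2\zeta^\md}_{L^2(\Omega)}^2+C\nu^2\norm{\nabla\phi^\md,\nabla\zeta^\md,\nabla^2\zeta^\md}_{L^2(\Omega)}^2+C(\e\nu\Lambda^{\frac14}e^{-\alpha t})^2$. Collecting all the estimates, choosing $\delta$ small and then $\Lambda^{-1},\e,\nu$ small, and using $(\e\nu\Lambda^{\frac14}e^{-\alpha t})^2\lesssim\e\nu\Lambda^{\frac14}e^{-\alpha t}$, yields \cref{est-md-3}.

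The computation is otherwise routine; the points demanding attention are (i) checking that every term involving the possibly large amplitude $\abs{\uvb}$ either vanishes because $\dv\uv^\vs\equiv0$ or carries a derivative of $\theta$ of size $O(\Lambda^{-\frac12})$, so that the large wave strength is harmless, and (ii) absorbing the $\nu^2\norm{\nabla^2\zeta^\md}_{L^2(\Omega)}^2$ coming from $\p_t\er_3$ — whose bound \cref{est-t-r3} genuinely involves the dissipation norm — into the good term via the smallness of $\nu$, while extracting the factor $\nu$ in the exponentially small terms from the a priori control of $\norm{\nabla\zeta^\md}_{L^2(\Omega)}$ rather than from a $\delta$-Young inequality, so that the right-hand side has precisely the claimed structure.
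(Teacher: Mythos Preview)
Your proposal is correct and follows essentially the same approach as the paper. The only cosmetic difference is that the paper multiplies \cref{equ-md*}$_2$ directly by $-\lap\zeta^\md$ (using $\norm{\lap\zeta^\md}_{L^2(\Omega)}=\norm{\nabla^2\zeta^\md}_{L^2(\Omega)}$ and the identity $\lap\zeta^\md\cdot\nabla\dv\zeta^\md=\dv(\cdots)+\abs{\nabla\dv\zeta^\md}^2$), whereas you first apply $\p_k$, multiply by $\p_k\zeta^\md$, and sum; after integration by parts the two computations coincide term by term, and your handling of the source terms via \cref{est-r-4,est-t-r3,exp-f-g} matches the paper's.
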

\begin{proof}
	Multiplying $ -\lap \zeta^\md $ on \cref{equ-md*}$ _2 $ and using the fact that $\lap \zeta^\md \cdot \nabla\dv \zeta^\md = \dv(\cdots) + \abs{\nabla\dv \zeta^\md}^2 $, one can obtain that
	\begin{align*}
		& \p_t \Big(\frac{\rhob}{2} \abs{\nabla\zeta^\md}^2 \Big) + \mu \abs{\lap \zeta^\md}^2 + (\mu+\lambda) \abs{\nabla\dv \zeta^\md}^2 \\
		& \qquad = \dv(\cdots) + \rhob (\uv^\vs \dnab) \zeta^\md \cdot \lap \zeta^\md + p'(\rhob) \nabla\phi^\md \cdot \lap \zeta^\md + \phi^\md \p_t \uv^\vs \cdot\lap \zeta^\md \\
		& \qquad\quad - (\er_4 -\p_t \er_3) \cdot \lap \zeta^\md - (\uv^\vs f_0^\md - \gv^\md) \cdot \lap \zeta^\md.
	\end{align*}
	By integration by parts, it holds that $\norm{\lap \zeta^\md}_{L^2(\Omega)}^2 = \norm{\nabla^2 \zeta^\md}_{L^2(\Omega)}^2 $.
	Then one can obtain \cref{est-md-3} with the use of \cref{est-t-r3,est-r-4}.
\end{proof}

\vspace{.1cm}

Collecting Lemmas \ref{Lem-exp-1}--\ref{Lem-exp-3}, there exist some generic numbers $ M_{i,\neq}\geq 1 $ for $ i=1,2,3, $ such that if $ \Lambda^{-1}, \e $ and $ \nu $ are small, then
\begin{equation}\label{est-2*}
	\begin{aligned}
		& \frac{d}{dt} \Big( \sum_{i=1}^{3} M_{i,\neq} \Ac_{i,\neq} \Big) + \frac{1}{2} \Big(\sum_{i=1}^{3} c_{i,\neq} M_{i,\neq} \Bc_{i,\neq} \Big) \lesssim \e \nu \Lambda^{\frac{1}{4}} e^{-\alpha t},
	\end{aligned}
\end{equation}
and 
\begin{equation}\label{est-2**}
	\begin{aligned}
		\sum_{i=1}^{3} M_{i,\neq} \Ac_{i,\neq} & \sim \norm{\phi^\md, \zeta^\md}_{H^1(\Omega)}^2, \\
%		M_1' \Ac_{1,\sharp} + M_2' \Ac_{2,\sharp} + \Ac_{3,\sharp} & \gtrsim \norm{\phi^\md, \psi^\md}_{H^1(\Omega)}^2 - \e \nu e^{-\alpha t}, \\
		\sum_{i=1}^{3} c_{i,\neq} M_{i,\neq} \Bc_{i,\neq} & \sim \norm{\nabla \phi^\md}_{L^2(\Omega)}^2 + \norm{\nabla \zeta^\md}_{H^1(\Omega)}^2.
	\end{aligned}
\end{equation}
%By the Poincar\'{e} inequality, it holds that $ \norm{\phi^\md, \psi^\md}_{L^2(\Omega)} \lesssim \norm{\nabla \phi^\md,\nabla\psi^\md}_{L^2(\Omega)}. $
It follows from \cref{poincare} that
\begin{align*}
	\sum_{i=1}^{3} M_{i,\neq} \Ac_{i,\neq} \lesssim \norm{\nabla\phi^\md, \nabla\zeta^\md}_{L^2(\Omega)}^2 \lesssim \sum_{i=1}^{3} c_{i,\neq} M_{i,\neq} \Bc_{i,\neq}.
\end{align*}
Then there exists a generic constant $ \alpha_1 \in (0,\alpha) $ such that
\begin{equation}\label{exp-md}
	\norm{\phi^\md, \zeta^\md}_{H^1(\Omega)}^2 \lesssim \big( \norm{ \phi_0,\psi_0 }_{H^1(\Omega)}^2 + \e \nu \Lambda^{\frac{1}{4}} \big) e^{-\alpha_1 t}.
\end{equation}

%Moreover, it follows from \cref{est-2*} that
%\begin{equation}\label{est-md}
%	\begin{aligned}
%		& \sup_{t \in(0, T)} \norm{\phi^\md,\psi^\md}_{H^1(\Omega)}^2 + \int_0^T \big(\norm{\phi^\md, \psi^\md}_{H^1(\Omega)}^2 + \norm{\nabla^2 \zeta^\md}_{L^2(\Omega)}^2 \big) dt \\
%		& \qquad \lesssim \norm{\phi_0,\psi_0}_{H^2(\Omega)}^2 + \e.
%	\end{aligned}
%\end{equation}

\vspace{.3cm}

\subsection{Estimates of Higher-Order Derivatives.}\label{Sec-orn}

%Combining \cref{est-od,est-md} and noting that
%\begin{align*}
%	\norm{\psi^\od}_{H^2(\R)}^2 \lesssim \norm{\p_3 \Zv}_{H^2(\R)}^2 + \delta \norm{\p_3 \Phi}_{H^2(\R)}^2 + \delta \int_\R \Phi^2 \kappa^2 dx_3,
%\end{align*}
%one has that
%\begin{equation}\label{key}
%	\begin{aligned}
%		& \sup_{t \in(0, T)} \big( \norm{\Phi, \Psi}_{L^2(\R)}^2 + \norm{\phi,\psi}_{H^1(\Omega)}^2 \big)  + \int_0^T \big(\norm{\phi, \psi}_{H^1(\Omega)}^2 + \norm{\nabla^2 \zeta^\md}_{L^2(\Omega)}^2 \big) dt \\
%		& \qquad \lesssim \norm{\phi_0,\psi_0}_{H^2(\Omega)}^2 + \e + \nu \int_0^T \norm{\nabla^2\phi, \nabla^2 \psi}_{H^1(\Omega)}^2 dt.
%	\end{aligned}
%\end{equation}

Now we return to the original perturbed system \cref{equ-phizeta} to estimate the higher-order derivatives of the perturbation. It is noted that in the proof of Lemmas \ref{Lem-est7} and \ref{Lem-est8}, we use only the a priori assumption that $ \sup_{t \in(0, T)}\norm{(\phi,\zeta)}_{H^2(\Omega)} $ is small. Thus, with a higher-order assumption \cref{apriori}, one can use similar arguments to estimate the third-order derivatives, i.e. Lemmas \ref{Lem-est9} and \ref{Lem-est10}, whose proofs are omitted for convenience.

\begin{Lem}\label{Lem-est7}
	Under the assumptions of \cref{Prop-apriori}, there exist a large constant $ \Lambda_0\geq 1 $ and small positive constants $ \e_0 $ and $ \nu_0 $ such that, if $ \Lambda\geq \Lambda_0 $, $ \e \leq \e_0 $ and $ \nu \leq \nu_0, $ then
	\begin{equation}\label{est7}
		\begin{aligned}
			\frac{d}{dt} \Ac_{1} + c_{1} \Bc_{1} & \lesssim  \sum_{j=0}^{1} (t+\Lambda)^{-2+j} \norm{\nabla^j (\phi, \zeta) }_{L^2(\Omega)}^2 \\
			& \quad + \norm{ \nabla^2 \zeta}_{L^2(\Omega)}^2 + \nu \norm{ \nabla^3 \zeta}_{L^2(\Omega)}^2 + \e \nu \Lambda^{\frac{1}{4}} e^{-\alpha t},
		\end{aligned}
	\end{equation}
	where
	\begin{equation}\label{AB1}
		\Ac_{1} =  \sum_{i=1}^{3} \int_\Omega \Big(\frac{\mut}{2\rho^2} \abs{\nabla \p_i   \phi}^2 + \nabla\p_i\phi  \cdot \p_i \zeta \Big) dx \andd
		\Bc_{1} =  \norm{\nabla^2 \phi}_{L^2(\Omega)}^2.
	\end{equation}
\end{Lem}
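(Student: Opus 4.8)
The plan is to derive an evolution identity for the functional $\Ac_1$ by taking $\nabla\p_i$ of the continuity equation \cref{equ-phizeta}$_1$, multiplying by $\frac{\mut}{\rho^2}\nabla\p_i\phi$, taking $\p_i$ of the momentum equation \cref{equ-phizeta}$_2$, multiplying by $\nabla\p_i\phi$, and summing over $i=1,2,3$. The structure mimics that of Lemmas \ref{Lem-est2}, \ref{Lem-est5} and \ref{Lem-exp-2}: the good term $p'(\rhob)\norm{\nabla^2\phi}_{L^2(\Omega)}^2=\Bc_1$ is produced by the pressure gradient coupled against $\nabla^2\phi$, while the term $-\mu\lap\zeta$ contributes, after integration by parts and using the identity \cref{fact}, only the harmless $\norm{\nabla^2\zeta}_{L^2(\Omega)}^2$ on the right-hand side rather than a bad sign-indefinite term. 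First I would write out this identity carefully and collect all the remainder terms into three groups: (i) commutator and transport terms such as $\nabla^2\phi\cdot(\uv\cdot\nabla)\nabla\phi$, $\nabla\p_i\uv\,\nabla\phi$, and $\p_i\rho\,\p_i\dv\zeta$ type expressions; (ii) terms carrying the ansatz derivatives $\p_t\uvt$, $\nabla\uvt$, $\nabla\rhot$, which decay like $(t+\Lambda)^{-1/2}$ or faster by \cref{small-ans,bdd-theta}; and (iii) the source terms $f_0,\gv$ and their derivatives, controlled by \cref{exp-f-g}.

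The main obstacle is controlling the terms in group (i) and (ii) without picking up uncontrollable factors. The transport-type term $\int_\Omega \frac{\mut}{\rho^2}\nabla\p_i\phi\cdot(\uv\cdot\nabla)\nabla\p_i\phi\,dx$ is handled by integration by parts, moving the derivative onto $\uv/\rho^2$ and using $\norm{\nabla\uv,\nabla\rho}_{L^\infty}\lesssim \Lambda^{-1/2}+\e+\nu$ from \cref{small-uv}; crucially, the part of $\uv$ equal to $\uv^\vs=\uvb\theta$ has $\dv\uv^\vs=0$, so only $\nabla\theta$ (size $\lesssim(t+\Lambda)^{-1/2}$) and the perturbation $\nabla\zeta$ survive, matching the claimed bound $\sum_{j=0}^1(t+\Lambda)^{-2+j}\norm{\nabla^j(\phi,\zeta)}_{L^2}^2+\norm{\nabla^2\zeta}_{L^2}^2$. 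The coefficient terms of type $\phi\,\p_t\uvt$ and $\phi\,\uvt\cdot\nabla\uvt$ contribute $(t+\Lambda)^{-1}\norm{\phi}_{L^2}^2$ after Cauchy–Schwarz and absorbing one derivative, since $\abs{\p_t\uvt},\abs{\nabla\uvt}\lesssim(t+\Lambda)^{-1/2}+\e e^{-\alpha t}$ by \cref{small-ans,exp-diff-u,bdd-theta}. The cross term $\int_\Omega\nabla\p_i\phi\cdot\p_i\zeta\,dx$ in $\Ac_1$, when differentiated in $t$, produces $\int_\Omega\nabla\p_i\phi\cdot\p_i\p_t\zeta$ and $\int_\Omega\nabla\p_i\p_t\phi\cdot\p_i\zeta=-\int_\Omega\p_i\p_t\phi\,\p_i\dv\zeta$; substituting \cref{equ-phizeta}$_1$ for $\p_t\phi$ and \cref{equ-phizeta}$_2$ for $\p_t\zeta$ yields the good term $p'(\rhob)\norm{\nabla^2\phi}_{L^2}^2$ together with $\norm{\nabla^2\zeta}_{L^2}^2$ (with a favorable constant, after absorbing into the viscous dissipation on the left) plus lower-order and nonlinear leftovers.

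Once the identity is assembled, I would apply Cauchy–Schwarz and Young's inequality throughout, absorbing the top-order dissipative quantities $\norm{\nabla^2\zeta}_{L^2}^2$ and $\nu\norm{\nabla^3\zeta}_{L^2}^2$ onto the right-hand side (they appear with the stated factors), and use \cref{small-phizeta,small-uv,nu-od,nu-md} to bound every cubic term by $\nu$ times a quadratic term. The nonlinear products are estimated in $L^2$ by putting the lower-order factor in $L^\infty$ via the Gagliardo–Nirenberg inequality \cref{Sob-inf} (valid on $\Omega$ by \cref{Lem-GN}); this is where the a priori smallness of $\norm{(\phi,\zeta)}_{H^2(\Omega)}$ alone — not $H^3$ — suffices, as remarked before the lemma. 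Finally, verifying that $\Ac_1$ is a genuine energy (i.e. that the cross term $\int_\Omega\nabla\p_i\phi\cdot\p_i\zeta$ is dominated by the square terms, so $\Ac_1$ need not itself be positive but $\frac{\mut}{2\rho^2}\abs{\nabla^2\phi}^2$ dominates after using $\rho\sim\rhob$ and Young) completes the proof; this is routine and I would not belabor it. The source terms $f_0,\gv,\nabla f_0,\nabla\gv$ are absorbed into $\e\nu\Lambda^{1/4}e^{-\alpha t}$ directly from \cref{exp-f-g} by Cauchy–Schwarz against the (bounded by $\nu$) perturbation norms.
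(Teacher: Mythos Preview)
Your plan is essentially the paper's own approach: differentiate \cref{equ-phizeta}$_1$ by $\nabla\p_i$, test against $\frac{\mut}{\rho^2}\nabla\p_i\phi$; take $\p_i$ of $\frac{1}{\rho}\cdot$\cref{equ-phizeta}$_2$, test against $\nabla\p_i\phi$; sum the two so that the $\frac{\mut}{\rho}\nabla\p_i\phi\cdot\nabla\p_i\dv\zeta$ terms cancel and the pressure yields $\frac{p'(\rho)}{\rho}\abs{\nabla\p_i\phi}^2$. One small correction: the identity \cref{fact} is stated for constant-coefficient pairing and is not what is used here; because the momentum equation has been divided by $\rho$, the curl part $\frac{\mu}{\rho}\nabla\p_i\phi\cdot(\lap\p_i\zeta-\nabla\dv\p_i\zeta)$ is handled instead by the weighted identity the paper writes out below \cref{eq7a}, which produces an extra commutator $\frac{1}{\rho^2}\p_j\rho\,\nabla\p_i\phi\cdot(\p_j\p_i\zeta-\nabla\p_i\zeta_j)$ --- this lands in your group (i) and is harmless, but you should account for it explicitly rather than invoking \cref{fact}.
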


\begin{proof}
	For $ i \in \{1,2,3\} $, taking the derivative $ \nabla \p_i  $ on \cref{equ-phizeta}\textsubscript{1} and multiplying the resulting equation by $ \cdot \frac{\nabla \p_i \phi}{\rho^2}, $
	one can get that
	\begin{equation}\label{eq7}
		\p_t \Big(\frac{\abs{\nabla \p_i   \phi}^2}{2\rho^2}\Big) + \frac{1}{\rho} \nabla \p_i  \dv\zeta \cdot \nabla \p_i\phi  = \dv (\cdots) + I_9,
	\end{equation}
	where
	\begin{equation*}
		\begin{aligned}
			I_9 & =  - \frac{1}{\rho^2} \nabla \p_i \phi \cdot \Big\{ \big[ \nabla \p_i (\rho \dv \zeta ) - \rho \nabla \p_i \dv \zeta\big] + \big[ \nabla\p_i (\uv \cdot\nabla\phi) - (\uv \cdot \nabla) \nabla \p_i \phi\big] \\
			& \qquad + \nabla \p_i \big[ \dv \uvt \phi + \nabla\rhot \cdot \zeta \big] - \frac{3}{2} \big(\dv \uvt + \dv \zeta\big) \nabla\p_i \phi + \nabla\p_i f_0 \Big\}.
		\end{aligned}
	\end{equation*}
%	Here we have used the fact that $\dv \uv^\vs = 0.$
	Using \cref{exp-diff,exp-diff-u,exp-f-g}, one can get that
	\begin{equation}\label{I9a}
		\begin{aligned}
		\norm{I_9}_{L^1(\Omega)} & \lesssim \norm{\nabla^2 \phi}_{L^2(\Omega)} \cdot \big[ \norm{\nabla\phi}_{L^4(\Omega)} \norm{\nabla^2 \zeta}_{L^4(\Omega)} + \norm{\nabla^2 \phi }_{L^2(\Omega)} \norm{\nabla \zeta}_{L^\infty(\Omega)} \\
		& \qquad + \sum_{j=0}^{2} (t+\Lambda)^{-\frac{3-j}{2}} \norm{\nabla^j \phi}_{L^2(\Omega)} + \e e^{-\alpha t} \norm{\phi, \zeta}_{H^2(\Omega)} + \e \Lambda^{\frac{1}{4}} e^{-\alpha t} \big]. 
	\end{aligned}
	\end{equation}
	It follows from \cref{Lem-GN} that
	\begin{equation}\label{tool3}
		\begin{aligned}
			\norm{	\nabla\phi}_{L^4(\Omega)} & \lesssim \sum_{k=1}^{3} \norm{ \nabla^2 \phi}_{L^2(\Omega)}^{\frac{1}{2} + \frac{k}{8}} \norm{\phi}_{L^2(\Omega)}^{\frac{1}{2} - \frac{k}{8}} \lesssim \nu^{\frac{1}{2}} \norm{\nabla^2 \phi}_{L^2(\Omega)}^{\frac{1}{2}}, \\
			\norm{\nabla^2 \zeta}_{L^4(\Omega)} & \lesssim \sum_{k=1}^{3} \norm{ \nabla^3 \zeta}_{L^2(\Omega)}^{\frac{k}{4}} \norm{\nabla^2 \zeta}_{L^2(\Omega)}^{1 - \frac{k}{4}},
		\end{aligned}
	\end{equation}
	which yields that
	\begin{align}
		\norm{\nabla\phi}_{L^4(\Omega)} \norm{\nabla^2 \zeta}_{L^4(\Omega)} \lesssim \nu \norm{\nabla^2 \phi}_{L^2(\Omega)} + \nu \norm{\nabla^2 \zeta}_{H^1(\Omega)}. \label{tool1}
	\end{align}
	Using \cref{Sob-inf} with $h=\nabla \zeta$ therein, it holds that
	\begin{align}
		\norm{\nabla^2 \phi }_{L^2(\Omega)} \norm{\nabla \zeta}_{L^\infty(\Omega)}
		& \lesssim \nu \norm{\nabla^2 \phi}_{L^2(\Omega)} + \nu \norm{\nabla^2 \zeta}_{H^1(\Omega)}. \label{tool2}
	\end{align}
	Then combining \cref{tool1,tool2,I9a}, one has that
	\begin{equation}\label{I9b}
		\begin{aligned}
			\norm{I_9}_{L^1(\Omega)} & \lesssim (\Lambda^{-\frac{1}{2}}+\nu) \norm{\nabla^2 \phi}_{L^2(\Omega)}^2 + \sum_{j=0}^{1} (t+\Lambda)^{-2+j} \norm{\nabla^j \phi}_{L^2(\Omega)}^2 \\
			& \quad + \nu \norm{\nabla^2 \zeta}_{H^1(\Omega)}^2 + \e \nu \Lambda^{\frac{1}{4}} e^{-\alpha t}.
		\end{aligned}
	\end{equation}
	
	On the other hand, multiplying $ \cdot \nabla \p_i \phi $ on $ \p_i \big(  \frac{1}{\rho}\cdot$\cref{equ-phizeta}$ _2 \big) $, 
	one can get that
	\begin{equation}\label{eq7a}
		\begin{aligned}
			& \p_t\big( \nabla\p_i\phi  \cdot \p_i \zeta \big) + \frac{p'(\rho)}{\rho} \abs{\nabla\p_i \phi}^2 - \frac{\mut}{\rho} \nabla \p_i\phi  \cdot \nabla \dv \p_i \zeta  = \dv (\cdots) + I_{10},
		\end{aligned}
	\end{equation}
	where 
	\begin{align*}
		I_{10} & =  - \nabla \p_i \phi \cdot \Big[ \p_i (\uv \cdot \nabla\zeta) + \p_i \Big( \frac{p'(\rho)}{\rho} \Big)  \nabla\phi  +\p_i \Big( \frac{p'(\rho)-p'(\rhot)}{\rho} \nabla \rhot \Big) + \p_i (\zeta \dnab \uvt)  \\
		& \qquad\qquad + \p_i \Big( \frac{\phi}{\rho}  \left(\p_t \uvt + \uvt \dnab \uvt\right) \Big) + \frac{\p_i \rho}{\rho^2} \big( \mu \lap \zeta + (\mu+\lambda) \nabla \dv \zeta \big)  + \p_i \Big( \frac{\gv - f_0 \uvt}{\rho}\Big) \Big] \\
		&  \qquad + \frac{\mu }{\rho^2} \sum_{j=1}^{3}  \p_j \rho \nabla \p_i \phi \cdot \big( \p_j \p_i \zeta - \nabla\p_i \zeta_j \big) - \dv\p_i \zeta \p_i \p_t \phi.
	\end{align*} 
	Here we have used the fact that for any $(h_0,\mathbf{h}) \in H^1(\Omega) \times H^2(\Omega), $
	\begin{align*}
		& \frac{1}{\rho} \nabla h_0 \cdot \left( \lap \mathbf{h} - \nabla\dv \mathbf{h} \right) \notag \\
		& \qquad = \sum_{j=1}^{3} \p_j \Big[ \frac{1}{\rho} \nabla h_0 \cdot (\p_j \mathbf{h}- \nabla h_j) \Big] +  \frac{1}{\rho^2} \sum_{j=1}^{3}  \p_j \rho  \nabla h_0 \cdot \big( \p_j \mathbf{h} - \nabla h_j \big).
	\end{align*}
	Note that $ \norm{\phi,\zeta}_{H^2(\Omega)} \lesssim \nu $, then it holds that
	\begin{equation}\label{I10a}
		\begin{aligned}
		\norm{I_{10}}_{L^1(\Omega)} & \lesssim \norm{\nabla^2 \phi}_{L^2(\Omega)} \Big[  \norm{\nabla^2 \zeta}_{L^2(\Omega)} + \norm{\nabla\zeta}_{L^4(\Omega)}^2 + \norm{\nabla\phi}_{L^4(\Omega)}^2  \\
		& \qquad + (t+\Lambda)^{-\frac{1}{2}} \norm{\nabla\phi, \nabla\zeta}_{L^2(\Omega)}  + (t+\Lambda)^{-1} \norm{\phi, \zeta}_{L^2(\Omega)}  \\
		& \qquad + \e e^{-\alpha t} \norm{\phi,\zeta}_{H^2(\Omega)}  + \norm{\nabla \phi}_{L^4(\Omega)} \norm{\nabla^2 \zeta}_{L^4(\Omega)} + \e \Lambda^{\frac{1}{4}} e^{-\alpha t} \Big] \\
		& \quad + \norm{\nabla^2 \zeta}_{L^2(\Omega)} \norm{\nabla\p_t \phi}_{L^2(\Omega)}.
	\end{aligned}
	\end{equation}
	Using \cref{tool3}$ _1 $, one has that
	\begin{equation}\label{tool4}
		\norm{\nabla \phi}_{L^4(\Omega)}^2 \lesssim \nu \norm{\nabla^2 \phi}_{L^2(\Omega)}  \andd
		\norm{\nabla\zeta}_{L^4(\Omega)}^2 \lesssim \nu \norm{\nabla^2 \zeta}_{L^2(\Omega)}.
	\end{equation}
	It follows from \cref{equ-phizeta}$ _1 $ that 
	\begin{equation}\label{ineq7}
		\begin{aligned}
		\norm{\nabla\p_t \phi}_{L^2(\Omega)} & \lesssim \norm{\nabla^2 \zeta}_{L^2(\Omega)} + \norm{\nabla\phi}_{L^4(\Omega)} \norm{\nabla\zeta}_{L^4(\Omega)} + \norm{\nabla^2 \phi}_{L^2(\Omega)} \\
		& \quad + \sum_{j=0}^{1} (t+\Lambda)^{-\frac{2-j}{2}} \norm{\nabla^j \phi}_{L^2(\Omega)} + \e \Lambda^{\frac{1}{4}} e^{-\alpha t}.
	\end{aligned}
	\end{equation}
	Then using \cref{I10a,ineq7,tool1,tool4}, one can get that
	\begin{equation}\label{I10b}
		\begin{aligned}
			\norm{I_{10}}_{L^1(\Omega)} & \leq \Big(\frac{p'(\rho)}{16\rho}+ \nu\Big) \norm{\nabla^2 \phi }_{L^2(\Omega)}^2 + C \norm{ \nabla^2 \zeta}_{L^2(\Omega)}^2 + C\nu \norm{ \nabla^3 \zeta}_{L^2(\Omega)}^2 \\
			& \quad  + C\sum_{j=0}^{1} (t+\Lambda)^{-2+j} \norm{\nabla^j (\phi, \zeta) }_{L^2(\Omega)}^2 + C\e \nu \Lambda^{\frac{1}{4}} e^{-\alpha t}.
		\end{aligned}
	\end{equation}
	Thus, using \cref{I9b,I10b}, one can sum $\frac{1}{\mut}\cdot$\cref{eq7} and \cref{eq7a} together to obtain \cref{est7}.

\end{proof}

\vspace{.2cm}

\begin{Lem}\label{Lem-est8}
	Under the assumptions of \cref{Prop-apriori}, there exist a large constant $ \Lambda_0\geq 1 $ and small positive constants $ \e_0 $ and $ \nu_0 $ such that, if $ \Lambda\geq \Lambda_0 $, $ \e \leq \e_0 $ and $ \nu \leq \nu_0, $ then
	\begin{equation}\label{est8}
		\begin{aligned}
			\frac{d}{dt} \Ac_2 + c_2 \Bc_2
			& \lesssim \sum_{j=0}^{1} (t+\Lambda)^{-2+j} \norm{\nabla^j (\phi, \zeta) }_{L^2(\Omega)}^2 + \norm{\nabla^2 \phi, \nabla^2 \zeta}_{L^2(\Omega)}^2 + \e \nu \Lambda^{\frac{1}{4}} e^{-\alpha t},
		\end{aligned}
	\end{equation}
	where
	\begin{align*}
		\Ac_2 = \norm{\nabla^2 \zeta}_{L^2(\Omega)}^2 \andd
		\Bc_2 = \norm{\nabla^3 \zeta}_{L^2(\Omega)}^2.
	\end{align*}
\end{Lem}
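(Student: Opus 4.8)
The plan is to run a weighted $L^2$ energy estimate at the second-derivative level, now purely on the velocity perturbation $\zeta$ (there is no $\phi$-piece in $\Ac_2$, unlike in \cref{Lem-est7}). Concretely, for each pair $k,l\in\{1,2,3\}$ I would apply $\p_k\p_l$ to the velocity equation \cref{equ-phizeta}$_2$, test the result against $\frac{1}{\rho}\p_k\p_l\zeta$ so that the principal part of the time term is $\p_t\p_k\p_l\zeta$ with unit coefficient and the resulting energy functional is exactly $\frac12\Ac_2=\frac12\norm{\nabla^2\zeta}_{L^2(\Omega)}^2$, then sum over $k,l$ and integrate over $\Omega$. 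Since $\mu$ and $\mu+\lambda$ are \emph{constant} coefficients, the viscous part gives $c_2\norm{\nabla^3\zeta}_{L^2(\Omega)}^2$ plus a nonnegative term $(\mu+\lambda)\int_\Omega \rho^{-1}\abs{\nabla^2\dv\zeta}^2\,dx$, plus a remainder carrying the small factor $\norm{\nabla\rho}_{L^\infty(\Omega)}\lesssim\Lambda^{-1/2}+\e+\nu$ from differentiating the weight $\rho^{-1}$; the nonnegative term is discarded and the remainder absorbed into $c_2\Bc_2$. The time term produces $\frac{d}{dt}\Ac_2$ up to $-\frac12\int_\Omega\p_t\rho\,\abs{\nabla^2\zeta}^2$ (with $\p_t\rho=-\dv\mv$ small) and commutators $[\p_k\p_l,\rho]\p_t\zeta$, which are handled after expressing $\p_t\zeta$ and $\nabla\p_t\zeta$ through the equation itself.

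The only term whose naive count would require $\norm{\nabla^3\phi}$ is the pressure gradient: I would integrate by parts once, $\int_\Omega\p_k\p_l\nabla(p(\rho)-p(\rhot))\cdot\p_k\p_l\zeta\,dx=-\int_\Omega\p_k\p_l(p(\rho)-p(\rhot))\,\p_k\p_l\dv\zeta\,dx$, expand $\nabla^2(p(\rho)-p(\rhot))=p'(\rho)\nabla^2\phi+(\text{lower order})$ using $\rho=\rhot+\phi$ together with the exponential decay of $\nabla^j\rhot$ (recall $\rho^\vs\equiv\rhob$, so \cref{exp-diff} gives $\norm{\nabla^j\rhot}_{L^\infty}\lesssim\e e^{-\alpha t}$), and then apply Young's inequality, $\norm{\nabla^2\phi}_{L^2}\norm{\nabla^2\dv\zeta}_{L^2}\le\frac{c_2}{4}\norm{\nabla^3\zeta}_{L^2}^2+C\norm{\nabla^2\phi}_{L^2}^2$; this is exactly where the non-small term $\norm{\nabla^2\phi,\nabla^2\zeta}^2$ on the right of \cref{est8} originates. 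All remaining contributions — the transport term $\rho\uv\cdot\nabla\zeta$ (whose top part $\int_\Omega\rho\uv\cdot\nabla(\nabla^2\zeta)\cdot\nabla^2\zeta=-\frac12\int_\Omega\dv(\rho\uv)\abs{\nabla^2\zeta}^2$ is handled by a further integration by parts), the shear term $\rho\zeta\cdot\nabla\uvt$, the inhomogeneity $\phi(\p_t\uvt+\uvt\cdot\nabla\uvt)$, and the ansatz errors $-\gv+f_0\uvt$ — I would estimate by Hölder and the Gagliardo--Nirenberg inequalities on $\Omega=\Torus^2\times\R$ from \cref{Lem-GN} (used as in \cref{tool3,Sobolev-1} for $\norm{\nabla\zeta}_{L^4},\norm{\nabla^2\zeta}_{L^4},\norm{\nabla^2\phi}_{L^4}$), combined with: the a priori smallness $\norm{(\phi,\zeta)}_{H^2(\Omega)}\lesssim\nu$ and $\norm{\nabla\rho,\nabla\uv}_{L^\infty}\lesssim\Lambda^{-1/2}+\e+\nu$ (\cref{small-phizeta,small-uv}); the decay of the ansatz $\norm{\nabla\rhot,\nabla\uvt}_{W^{3,\infty}}\lesssim\e+\Lambda^{-1/2}$ (\cref{small-ans}) and $\norm{(\rhot,\uvt)-(\rho^\vs,\uv^\vs)}_{W^{4,\infty}}\lesssim\e e^{-\alpha t}$ (\cref{exp-diff,exp-diff-u}); the profile bounds $\abs{\p_3^j\theta}\lesssim(t+\Lambda)^{-(j-1)/2}\kappa$ and $\abs{\p_t\uvt}\lesssim(t+\Lambda)^{-1}$ (\cref{bdd-theta,bdd-theta-1}); and $\norm{f_0}_{H^3(\Omega)}+\norm{\gv}_{H^2(\Omega)}\lesssim\e\Lambda^{1/4}e^{-\alpha t}$ (\cref{Lem-F}). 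The slowly-decaying derivatives of $\uvt$ hitting lower-order norms of $(\phi,\zeta)$ produce the $\sum_{j=0}^1(t+\Lambda)^{-2+j}\norm{\nabla^j(\phi,\zeta)}_{L^2(\Omega)}^2$ terms; the remaining top-order products yield small multiples of $\norm{\nabla^3\zeta}^2$ and of $\norm{\nabla^2\phi,\nabla^2\zeta}^2$; and everything involving the exponentially small ansatz errors collapses into the tail $\e\nu\Lambda^{1/4}e^{-\alpha t}$.

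The main obstacle is the bookkeeping at the highest order: one must arrange the integrations by parts so that (i) $\phi$ never appears with three derivatives — secured by the single integration by parts in the pressure term above, which is precisely why only $H^2$-smallness of the perturbation is used here and the argument bootstraps to the third-derivative estimates \cref{Lem-est9,Lem-est10}; and (ii) every occurrence of $\norm{\nabla^3\zeta}_{L^2}$ — from the pressure term, from the $\rho^{-1}$-weighted viscous remainder, from the commutator $(\nabla\rho)(\nabla\p_t\zeta)(\nabla^2\zeta)$ with $\nabla\p_t\zeta$ expressed through the equation as $O(\nabla^3\zeta)+\cdots$, and from the nonlinear products handled by Gagliardo--Nirenberg — is multiplied either by an arbitrarily small Young constant or by one of the parameters $\Lambda^{-1/2},\e,\nu$, so that choosing $\Lambda_0$ large and $\e_0,\nu_0$ small lets it be absorbed into $c_2\Bc_2$ on the left. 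Collecting these estimates then yields \cref{est8} directly.
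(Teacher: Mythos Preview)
Your plan is correct and would prove the lemma, but the paper organizes the estimate a bit differently. Instead of applying two derivatives $\p_k\p_l$ to \cref{equ-phizeta}$_2$ and testing against $\frac{1}{\rho}\p_k\p_l\zeta$, the paper first divides the equation by $\rho$, then applies a \emph{single} derivative $\p_i$, and multiplies the result by $-\lap\p_i\zeta$; summing over $i$ and using $\norm{\lap\p_i\zeta}_{L^2(\Omega)}=\norm{\nabla^2\p_i\zeta}_{L^2(\Omega)}$ yields the same energy $\Ac_2$ and dissipation $\Bc_2$. The advantage of the paper's scheme is economy: because only one derivative hits the equation, the pressure term already lands at the $\nabla^2\phi$ level (as $\frac{p'(\rho)}{\rho}\nabla\p_i\phi\cdot\lap\p_i\zeta$) without the extra integration by parts you need, and the commutators $[\p_k\p_l,\rho]\p_t\zeta$ that you have to resolve by re-expressing $\nabla\p_t\zeta$ through the equation simply do not arise. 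Your route is the more standard ``differentiate twice, test against the same'' scheme; it costs you one additional integration by parts in the pressure term and a few more commutators, all of which you handle correctly. One small slip: the remainder $-\tfrac12\int_\Omega\p_t\rho\,|\nabla^2\zeta|^2$ you mention does not actually appear in your scheme (the energy $\tfrac12|\nabla^2\zeta|^2$ carries no $\rho$-weight); the leftover from the time term is exactly the commutator $[\p_k\p_l,\rho]\p_t\zeta\cdot\tfrac{1}{\rho}\p_k\p_l\zeta$ you already list.
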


\begin{proof}
	For fixed $ i \in \{1,2,3\}, $ taking the derivative $ \p_i  $ on $ \frac{1}{\rho}\cdot $\cref{equ-phizeta}$ _2 $ and multiplying the resulting equation by $ \cdot(- \lap \p_i \zeta), $ yield that
	\begin{equation}\label{eq8}
		\begin{aligned}
			& \frac{1}{2} \p_t \big( \abs{\nabla\p_i  \zeta}^2 \big) + \frac{\mu}{\rho} \abs{ \lap\p_i\zeta}^2 + \frac{\mu+\lambda}{\rho} \abs{\nabla\dv \p_i \zeta}^2 = \dv (\cdots)  +  I_{11},
		\end{aligned}
	\end{equation}
	where 
	\begin{align*}
		I_{11} & = \lap \p_i \zeta \cdot \p_i \Big[  \uv\cdot \nabla\zeta +\frac{p'(\rho)}{\rho} \nabla \phi + \frac{p'(\rho)-p'(\rhot)}{\rho} \nabla\rhot  + \zeta \cdot \nabla\uvt + \frac{\phi}{\rho} \left(\p_t \uvt + \uvt \dnab \uvt\right) \Big]  \\
		& \quad + \lap \p_i \zeta \cdot \frac{\p_i \rho}{\rho^2} \big[ \mu \lap\zeta + (\mu+\lambda) \nabla \dv \zeta \big] + \lap \p_i \zeta \cdot \p_i \Big( \frac{\gv - f_0 \uvt}{\rho} \Big) \\
		& \quad -\frac{\mu+\lambda}{\rho^2} \dv\p_i \zeta \nabla \rho \cdot (\lap \p_i\zeta - \nabla \dv \p_i\zeta).
	\end{align*}
	Here we have used the fact that for any $\mathbf{h} \in H^2(\Omega),$
	\begin{align*}
		& \frac{1}{\rho} \nabla\dv \mathbf{h} \cdot \big( \nabla \dv \mathbf{h} - \lap \mathbf{h} \big) = \dv \Big[\frac{1}{\rho} \dv \mathbf{h} \big( \nabla \dv \mathbf{h} - \lap \mathbf{h} \big)\Big] + \frac{\dv \mathbf{h}}{\rho^2}  \nabla\rho \cdot  \big( \nabla \dv \mathbf{h} - \lap \mathbf{h} \big).
	\end{align*}
%	Note that $ (\rho^\vs, m_3^\vs) = (\rhob, 0). $ 
	Then it holds that
	\begin{equation}\label{I11}
		\begin{aligned}
		\norm{I_{11}}_{L^1(\Omega)} & \lesssim \norm{\nabla^3 \zeta}_{L^2(\Omega)} \Big[ \norm{\nabla^2 \zeta}_{L^2(\Omega)} + \norm{\nabla\zeta}_{L^4(\Omega)}^2 + \norm{\nabla^2 \phi}_{L^2(\Omega)} + \norm{\nabla\phi}_{L^4(\Omega)}^2 \\
		& \qquad + \norm{\nabla\phi}_{L^4(\Omega)} \norm{\nabla^2 \zeta}_{L^4(\Omega)} + \sum_{j=0}^{1}  (t+\Lambda)^{-\frac{2-j}{2}} \norm{\nabla^j (\phi, \zeta)}_{L^2(\Omega)} \\
		& \qquad  + \e e^{-\alpha t} \sup_{ t\in(0,T)} \norm{\phi,\zeta}_{H^2(\Omega)} + \e \Lambda^{\frac{1}{4}} e^{-\alpha t}  \Big],
		\end{aligned}
	\end{equation}
	which, together with \cref{tool1,tool4}, yields that
	\begin{align*}
		\norm{I_{11}}_{L^1(\Omega)} & \leq \Big(\frac{\mu}{16\rho}+\nu\Big) \norm{\nabla^3 \zeta}_{L^2(\Omega)}^2 + C \norm{\nabla^2 \phi, \nabla^2\zeta}_{L^2(\Omega)}^2 \\
		& \quad + C \sum_{j=0}^{1} (t+\Lambda)^{-2+j} \norm{\nabla^j (\phi, \zeta) }_{L^2(\Omega)}^2 + C \e \nu \Lambda^{\frac{1}{4}} e^{-\alpha t}.
	\end{align*}
	Note that $\norm{\lap \p_i\zeta}_{L^2(\Omega)} = \norm{\nabla^2 \p_i \zeta}_{L^2(\Omega)}. $ Then one can integrate \cref{eq8} on $ \Omega $ to obtain \cref{est8}.

\end{proof}

With a higher-order a priori assumption that $ \sup_{t \in(0, T)}\norm{(\phi,\zeta)}_{H^3(\Omega)} $ is small, one can prove the estimate of the third-order derivatives of $(\phi,\zeta)$ similarly.

\begin{Lem}\label{Lem-est9}
	Under the assumptions of \cref{Prop-apriori}, there exist a large constant $ \Lambda_0\geq 1 $ and small positive constants $ \e_0 $ and $ \nu_0 $ such that, if $ \Lambda\geq \Lambda_0 $, $ \e \leq \e_0 $ and $ \nu \leq \nu_0, $ then
	\begin{equation}\label{est9}
		\begin{aligned}
			\frac{d}{dt} \Ac_3 + c_3 \Bc_3
			& \lesssim  \sum_{j=0}^{1} (t+\Lambda)^{-3+j} \norm{\nabla^j (\phi, \zeta) }_{L^2(\Omega)}^2  \\
			& \quad + \norm{\nabla^2 \phi}_{L^2(\Omega)}^2 + \norm{\nabla^2 \zeta}_{H^1(\Omega)}^2 + \e \nu \Lambda^{\frac{1}{4}} e^{-\alpha t},
		\end{aligned}
	\end{equation}
	where
	\begin{align*}
		\Ac_3 = \sum_{i,j=1}^{3} \int_\Omega \Big(\frac{\mut}{2\rho^2} \abs{\nabla \p_{ij} \phi}^2 + \nabla \p_{ij} \phi  \cdot \p_{ij} \zeta \Big) dx \andd
		\Bc_3 = \norm{\nabla^3 \phi}_{L^2(\Omega)}^2.
	\end{align*}
\end{Lem}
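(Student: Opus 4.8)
The plan is to reproduce the argument of \cref{Lem-est7} one spatial derivative higher, invoking the higher-order a priori bound \cref{apriori} so that $\sup_{t\in(0,T)}\norm{(\phi,\zeta)}_{H^3(\Omega)}\lesssim\nu$ is available. Fix $i,j\in\{1,2,3\}$. First I would apply $\nabla\p_{ij}$ to \cref{equ-phizeta}$_1$ and multiply the result by $\frac{\nabla\p_{ij}\phi}{\rho^2}$, which gives
\[
\p_t\Big(\frac{\abs{\nabla\p_{ij}\phi}^2}{2\rho^2}\Big) + \frac{1}{\rho}\,\nabla\p_{ij}\dv\zeta\cdot\nabla\p_{ij}\phi = \dv(\cdots) + I,
\]
where $I$ gathers the commutators $[\nabla\p_{ij},\rho\,\dv]\zeta$ and $[\nabla\p_{ij},\uv\dnab]\phi$, the derivatives of $\dv\uvt\,\phi+\nabla\rhot\cdot\zeta$, a term of the form $(\dv\uvt+\dv\zeta)\nabla\p_{ij}\phi$ produced by the time derivative of $\rho^{-2}$, and $\nabla\p_{ij}f_0$. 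Next I would apply $\p_{ij}$ to $\frac{1}{\rho}\cdot$\cref{equ-phizeta}$_2$, use the identity $\frac{1}{\rho}\nabla h_0\cdot(\lap\mathbf{h}-\nabla\dv\mathbf{h})=\dv(\cdots)+\frac{1}{\rho^2}\sum_k\p_k\rho\,\nabla h_0\cdot(\p_k\mathbf{h}-\nabla h_k)$ exactly as in \cref{Lem-est7}, and multiply by $\nabla\p_{ij}\phi$ to obtain
\[
\p_t\big(\nabla\p_{ij}\phi\cdot\p_{ij}\zeta\big) + \frac{p'(\rho)}{\rho}\abs{\nabla\p_{ij}\phi}^2 - \frac{\mut}{\rho}\,\nabla\p_{ij}\phi\cdot\nabla\dv\p_{ij}\zeta = \dv(\cdots) + II,
\]
with $II$ a similar collection of quadratic and source error terms together with $-\dv\p_{ij}\zeta\,\p_{ij}\p_t\phi$. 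Adding a suitable constant multiple of the first identity to the second, the terms carrying $\nabla\p_{ij}\dv\zeta$ cancel identically, the good term $\frac{p'(\rho)}{\rho}\abs{\nabla\p_{ij}\phi}^2$ survives, and summing over $i,j$ and integrating over $\Omega$ produces $\frac{d}{dt}\Ac_3 + c_3\Bc_3$ on the left with $\Bc_3=\norm{\nabla^3\phi}_{L^2(\Omega)}^2$, up to the space integral of the error terms $I$ and $II$.

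The estimate of that error then runs exactly as in \cref{Lem-est7,Lem-est8}. The genuinely top-order commutator pieces are products of a single derivative of $\uv$ against $\nabla^3\phi$ (resp.\ $\nabla^3\zeta$); bounding the first factor by $\norm{\nabla\uv}_{W^{2,\infty}(\Omega)}\lesssim\Lambda^{-\frac{1}{2}}+\e+\nu$ via \cref{small-uv} and \cref{Sob-inf}, they are absorbed into $\Bc_3$ with a small constant. The mixed quadratic terms $\norm{\nabla\phi}_{L^4(\Omega)}\norm{\nabla^3\zeta}_{L^4(\Omega)}$, $\norm{\nabla^2\phi}_{L^4(\Omega)}\norm{\nabla^2\zeta}_{L^4(\Omega)}$, $\norm{\nabla\phi}_{L^4(\Omega)}^2$, $\norm{\nabla\zeta}_{L^4(\Omega)}^2$, etc.\ are controlled by the Gagliardo--Nirenberg inequalities of \cref{Lem-GN} as in \cref{tool3,tool1,tool4}, yielding $\nu$ times $\norm{\nabla^3\phi}_{L^2(\Omega)}^2+\norm{\nabla^2\zeta}_{H^1(\Omega)}^2$. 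The ansatz-dependent terms pick up the algebraic weights $(t+\Lambda)^{-\frac{3-k}{2}}$ from \cref{est-theta} multiplying $\norm{\nabla^k(\phi,\zeta)}_{L^2(\Omega)}$, $k=0,1$, together with the exponentially small contributions $\e e^{-\alpha t}$ from \cref{exp-diff,exp-diff-u,exp-f-g}. Finally, $-\dv\p_{ij}\zeta\,\p_{ij}\p_t\phi$ is handled by differentiating \cref{equ-phizeta}$_1$ to bound $\norm{\nabla^2\p_t\phi}_{L^2(\Omega)}$ by $\norm{\nabla^3\zeta}_{L^2(\Omega)}$, lower-order norms and the sources, in the manner of \cref{ineq7}. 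Collecting everything and choosing $\Lambda^{-1}$, $\e$ and $\nu$ suitably small yields \cref{est9}.

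I expect the main obstacle to be purely organizational bookkeeping: one must verify that each dangerous contribution scaling like $\norm{\nabla^3\phi}_{L^2(\Omega)}^2$ or $\norm{\nabla^3\zeta}_{L^2(\Omega)}^2$ is multiplied either by a small structural constant $\Lambda^{-\frac{1}{2}}+\e+\nu$ (absorbable on the left-hand side here) or by $\nu$ (absorbable later, once Lemmas \ref{Lem-est7}--\ref{Lem-est9} are combined), so that no $\nabla^3\phi$ error ever consumes the $\phi$-dissipation and every $\nabla^3\zeta$ error either carries a $\nu$ or is thrown into $\Bc_2=\norm{\nabla^3\zeta}_{L^2(\Omega)}^2$. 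Since every one of these manipulations is structurally identical to the ones already carried out for the first- and second-order derivatives, the paper states that the detailed proof may be omitted.
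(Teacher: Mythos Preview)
Your proposal is correct and follows exactly the approach the paper indicates: it is the argument of \cref{Lem-est7} carried one derivative higher, using the $H^3$ a priori assumption, with the same cancellation between the $\mut$-multiple of the mass equation identity and the momentum equation identity. The paper itself omits the details for precisely this reason, so your outline matches; the only quibble is that the bound $\norm{\nabla\uv}_{W^{2,\infty}(\Omega)}\lesssim\Lambda^{-1/2}+\e+\nu$ is not available at $H^3$ regularity, but your argument only actually uses $\norm{\nabla\uv}_{L^\infty(\Omega)}$ on the top-order pieces, which is exactly \cref{small-uv}.
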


\vspace{.2cm}

\begin{Lem}\label{Lem-est10}
	Under the assumptions of \cref{Prop-apriori}, there exist a large constant $ \Lambda_0\geq 1 $ and small positive constants $ \e_0 $ and $ \nu_0 $ such that, if $ \Lambda\geq \Lambda_0 $, $ \e \leq \e_0 $ and $ \nu \leq \nu_0, $ then
	\begin{equation}\label{est10}
		\begin{aligned}
			\frac{d}{dt} \Ac_4 + c_4 \Bc_4
			& \lesssim \sum_{j=0}^{1} (t+\Lambda)^{-3+j} \norm{\nabla^j (\phi, \zeta) }_{L^2(\Omega)}^2  \\
			& \quad +  \norm{\nabla^2 \phi,\nabla^2 \zeta}_{H^1(\Omega)}^2 +\e \nu \Lambda^{\frac{1}{4}} e^{-\alpha t},
		\end{aligned}
	\end{equation}
	where
	\begin{align*}
		\Ac_4 = \norm{\nabla^3 \zeta}_{L^2(\Omega)}^2 \andd \Bc_4 = \norm{\nabla^4 \zeta}_{L^2(\Omega)}^2.
	\end{align*}
\end{Lem}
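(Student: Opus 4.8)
The plan is to run the fourth-order analogue of the proof of \cref{Lem-est8}, now under the additional a priori smallness of $\sup_{t\in(0,T)}\norm{(\phi,\zeta)}_{H^3(\Omega)}$ supplied by \cref{apriori}. For each fixed pair $i,j\in\{1,2,3\}$ I would apply $\p_{ij}$ to $\frac{1}{\rho}\cdot$\cref{equ-phizeta}$ _2 $, take the $L^2(\Omega)$ inner product of the result with $-\lap\p_{ij}\zeta$, sum over $i,j$, and integrate over $\Omega$. As in \cref{eq8}, the time term yields $\frac12\frac{d}{dt}\norm{\nabla^3\zeta}_{L^2(\Omega)}^2$ after an integration by parts, the viscous terms yield $\frac{\mu}{\rho}\norm{\nabla^4\zeta}_{L^2(\Omega)}^2+\frac{\mu+\lambda}{\rho}\norm{\nabla^3\dv\zeta}_{L^2(\Omega)}^2$ up to a divergence (using $\rhob/4\le\rho\le 4\rhob$ and the identity
\begin{equation*}
	\tfrac{1}{\rho}\nabla\dv\mathbf{h}\cdot\big(\nabla\dv\mathbf{h}-\lap\mathbf{h}\big)=\dv(\cdots)+\tfrac{\dv\mathbf{h}}{\rho^2}\,\nabla\rho\cdot\big(\nabla\dv\mathbf{h}-\lap\mathbf{h}\big)
\end{equation*}
exactly as in that proof), and everything else is collected into a single error $I_{12}$, the direct fourth-order counterpart of $I_{11}$. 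This leaves $\frac12\frac{d}{dt}\norm{\nabla^3\zeta}_{L^2(\Omega)}^2+\frac{\mu}{4\rhob}\norm{\nabla^4\zeta}_{L^2(\Omega)}^2\le\norm{I_{12}}_{L^1(\Omega)}$ after integration over $\Omega$.

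The core of the work is the bound on $\norm{I_{12}}_{L^1(\Omega)}$. Schematically, $I_{12}$ is a sum of terms in which $\nabla^4\zeta$ (occasionally $\nabla^3\zeta$) is paired with third derivatives of $\uv\dnab\zeta$, $\tfrac{p'(\rho)}{\rho}\nabla\phi$, $\tfrac{p'(\rho)-p'(\rhot)}{\rho}\nabla\rhot$, $\zeta\dnab\uvt$ and $\tfrac{\phi}{\rho}(\p_t\uvt+\uvt\dnab\uvt)$, together with the curvature-type terms $\tfrac{\nabla^3\rho}{\rho^2}\big(\mu\lap\zeta+(\mu+\lambda)\nabla\dv\zeta\big)$, the source term $\p_{ij}\big(\tfrac{\gv-f_0\uvt}{\rho}\big)$, and the curl-correction $\tfrac{\mu+\lambda}{\rho^2}\nabla\dv\p_{ij}\zeta\,\nabla\rho\cdot(\lap\p_{ij}\zeta-\nabla\dv\p_{ij}\zeta)$. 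I would bound each by extracting a factor $\norm{\nabla^3\zeta}_{H^1(\Omega)}$ and estimating the remaining factor in $L^2(\Omega)$ via the $\Omega$-adapted Gagliardo--Nirenberg inequalities of \cref{Lem-GN} --- which are indispensable here because $\phi,\zeta$ need not have zero transverse average --- interpolating the intermediate norms $\norm{\nabla^2\phi}_{L^4(\Omega)}$, $\norm{\nabla^3\zeta}_{L^4(\Omega)}$, $\norm{\nabla\phi}_{L^\infty(\Omega)}$, $\norm{\nabla\zeta}_{L^\infty(\Omega)}$ with the $H^3$-smallness $\nu$, just as in \cref{tool1,tool4}. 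The factors $\nabla^k\rhot,\nabla^k\uvt$ furnish the time weights $(t+\Lambda)^{-\frac{3-j}{2}}$ through \cref{bdd-theta,small-ans}, while \cref{exp-diff,exp-diff-u,exp-f-g} convert the ansatz errors and sources into $\e\nu\Lambda^{\frac14}e^{-\alpha t}$. Wherever a time derivative $\p_{ij}\p_t\phi$ shows up, I would eliminate it using \cref{equ-phizeta}$ _1 $, trading it for $\nabla^3\zeta$ plus strictly lower-order and exponentially small contributions, as in \cref{ineq7}. Collecting all of this should give
\begin{equation*}
	\norm{I_{12}}_{L^1(\Omega)}\le\Big(\tfrac{\mu}{8\rhob}+C\nu\Big)\norm{\nabla^4\zeta}_{L^2(\Omega)}^2+C\norm{\nabla^2\phi,\nabla^2\zeta}_{H^1(\Omega)}^2+C\sum_{j=0}^{1}(t+\Lambda)^{-3+j}\norm{\nabla^j(\phi,\zeta)}_{L^2(\Omega)}^2+C\e\nu\Lambda^{\frac14}e^{-\alpha t},
\end{equation*}
and absorbing the $\norm{\nabla^4\zeta}_{L^2(\Omega)}^2$-piece into the dissipation on the left-hand side for $\nu$ small proves \cref{est10}.

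I expect the main obstacle to be the genuinely top-order coupling term $\norm{\nabla^2\phi}_{L^4(\Omega)}\norm{\nabla^3\zeta}_{L^4(\Omega)}$ (and its siblings): both norms sit at the critical level for this estimate, so one must use the precise exponents of the $\Omega$-Gagliardo--Nirenberg inequality to write the product as $\nu$ times $\big(\norm{\nabla^3\phi}_{L^2(\Omega)}+\norm{\nabla^4\zeta}_{L^2(\Omega)}\big)$ plus quantities already present on the right-hand side of \cref{est10}, so that the surviving $\norm{\nabla^4\zeta}_{L^2(\Omega)}$-factor can be absorbed precisely because of the $\nu$ smallness; using the standard $\R^3$ Sobolev exponents in place of the $\Omega$-geometry would ruin this. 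The remaining terms form a lengthy but entirely routine bookkeeping exercise strictly parallel to \cref{Lem-est8} (just as \cref{Lem-est9} parallels \cref{Lem-est7}), which is why its proof is omitted.
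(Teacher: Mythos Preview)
Your proposal is correct and follows exactly the approach the paper indicates: the proof of \cref{Lem-est10} is omitted there precisely because it is the one-order-higher analogue of \cref{Lem-est8}, obtained by applying $\p_{ij}$ to $\tfrac{1}{\rho}\cdot$\cref{equ-phizeta}$_2$, multiplying by $-\lap\p_{ij}\zeta$, and estimating the resulting remainder via the $\Omega$-adapted Gagliardo--Nirenberg inequalities under the $H^3$-smallness of $(\phi,\zeta)$. One minor remark: the term $\p_{ij}\p_t\phi$ you hedge against does not actually arise in this estimate (it appears in the companion \cref{Lem-est9}, the analogue of \cref{Lem-est7}), so that step is unnecessary here, but this does not affect the argument.
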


\vspace{.1cm}

Collecting Lemmas \ref{Lem-est7}--\ref{Lem-est10}, there exist some generic numbers $ M_{i}\geq 1 $ for $ i=1,2,3,4 $ such that if $ \Lambda^{-1}, \e $ and $ \nu $ are small, then
\begin{equation}\label{est-3}
	\begin{aligned}
		& \frac{d}{dt} \Big( \sum_{i=1}^{4} M_{i} \Ac_{i} \Big) + \frac{1}{2} \Big(\sum_{i=1}^{4} c_{i} M_{i} \Bc_{i} \Big) \\
		& \qquad  \lesssim \sum_{j=0}^{1} (t+\Lambda)^{-2+j} \norm{\nabla^j (\phi, \zeta) }_{L^2(\Omega)}^2 + \norm{ \nabla^2 \zeta}_{L^2(\Omega)}^2 + \e \nu\Lambda^{\frac{1}{4}} e^{-\alpha t},
	\end{aligned}
\end{equation}
and 
\begin{equation}\label{est-3a}
	\begin{aligned}
		\pm \sum_{i=1}^{4} M_{i} \Ac_{i} & \lesssim \pm \norm{\nabla^2 \phi, \nabla^2 \zeta}_{H^1(\Omega)}^2 + \norm{\nabla\zeta}_{L^2(\Omega)}^2, \\
		%		M_1' \Ac_{1,\sharp} + M_2' \Ac_{2,\sharp} + \Ac_{3,\sharp} & \gtrsim \norm{\phi^\md, \psi^\md}_{H^1(\Omega)}^2 - \e \nu e^{-\alpha t}, \\
		\sum_{i=1}^{4} c_{i} M_{i} \Bc_{i} & \sim \norm{\nabla^2 \phi, \nabla^3 \zeta}_{H^1(\Omega)}^2.
	\end{aligned}
\end{equation}
Using \cref{poincare,rel-2,rel-1,rel-0}, one has that
\begin{equation}\label{est-3b}
	\begin{aligned}
		& \frac{d}{dt} \Big( \sum_{i=1}^{4} M_{i} \Ac_{i} \Big) + \frac{1}{4} \Big(\sum_{i=1}^{4} c_{i} M_{i} \Bc_{i} \Big) \\
		& \qquad \lesssim (t+\Lambda)^{-1} \norm{\p_3^2 \Phi, \p_3^2 \Zv }_{L^2(\R)}^2  + (t+\Lambda)^{-2} \norm{\p_3 \Phi, \p_3 \Zv, \Phi \kappa }_{L^2(\R)}^2 \\
		& \qquad \quad +  \norm{\p_3^3 \Zv}_{L^2(\R)}^2 + \e \nu \Lambda^{\frac{1}{4}} e^{-\alpha t}.
	\end{aligned}
\end{equation}

\vspace{.2cm}

\begin{proof}[Proof of \cref{Prop-apriori}]
	Collecting \cref{est-1*,est-2*,est-3b}, one can choose a linear combination of  $ \Ac_{i,\flat}, \Ac_{i,\neq}$ and $\Ac_{j}$ (resp. $ \Bc_{i,\flat}, \Bc_{i,\neq} $ and $ \Bc_{j} $) for $i=1,2,3 $ and $ j=1,2,3,4$, denoted by $ \Ec_1 $ (resp. $ \Dc_1 $), such that
	\begin{equation}\label{est-3*}
		\begin{aligned}
			\frac{d}{dt} \Ec_1 + \Dc_1 & \lesssim  (t+\Lambda)^{-1} \norm{\p_3 \Phi, \p_3 \Zv, \Phi \kappa}_{L^2(\R)}^2 + \e \nu\Lambda^{\frac{1}{4}} e^{-\alpha t},
		\end{aligned}
	\end{equation}
	and
	\begin{equation}\label{est-3**}
		\begin{aligned}
			\pm \Ec_1 & \lesssim \pm \left( \norm{\phi}_{H^3(\Omega)}^2 + \norm{\p_3 \Zv}_{H^1(\R)}^2 +  \norm{\zeta^\md}_{H^1(\Omega)}^2 + \norm{\nabla^2 \zeta}_{H^1(\Omega)}^2 \right) \\
			& \quad + (t+\Lambda)^{-1} \norm{\Phi\kappa}_{L^2(\R)}^2 + \e^2 \nu^2 e^{-\alpha t}, \\
			%	\Ac_1 & \gtrsim \norm{\phi, \psi}_{H^1(\Omega)}^2 - \e e^{-\alpha t} - \delta^2 (t+1)^{-1} \norm{\Phi}_{L^2(\R)}^2, 
			\Dc_1 & \sim \norm{\nabla \phi}_{H^2(\Omega)}^2 + \norm{\p_3^2 \Zv}_{H^1(\R)}^2 + \norm{\nabla \zeta^\md}_{H^1(\Omega)}^2 + \norm{\nabla^3 \zeta}_{H^1(\Omega)}^2.
		\end{aligned}
	\end{equation}
	Here we have used \cref{rel-1} in the proof of \cref{est-3**}\textsubscript{1}.
	
	\vspace{.1cm}

	With \cref{est2a,exp-md}, integrating \cref{est-3*} with respect to $t\in(0,T)$ yields that
	\begin{equation}\label{est-E1}
		\sup_{t\in(0,T)} \Ec_1(t) + \int_0^T \Dc_1(t) dt \lesssim \norm{\Phi_0,\Psi_0}_{L^2(\R)}^2 + \norm{\phi_0,\psi_0}_{H^3(\Omega)}^2 + \e \nu\Lambda^{\frac{1}{4}}.
	\end{equation}
	It follows from  \cref{rel-1,rel-2,est-3**} that
	\begin{equation}\label{ED1-eqv}
		\begin{aligned}
			\pm \Ec_1 &  \lesssim \pm \norm{\phi,\zeta}_{H^3(\Omega)}^2 +  \norm{\Phi\kappa}_{L^2(\R)}^2 + \e^2 \nu^2  e^{-\alpha t}, \\
			\pm \Dc_1 &  \lesssim \pm \big(\norm{\nabla\phi}_{H^2(\Omega)}^2 + \norm{\nabla \zeta}_{H^3(\Omega)}^2\big) + (t+\Lambda)^{-1} \norm{\p_3\Phi, \Phi\kappa}_{L^2(\R)}^2 + \e^2 \nu^2 e^{-\alpha t}.
		\end{aligned}
	\end{equation}
	This, together with \cref{est2a,est-E1,exp-md}, yields that
	\begin{equation}\label{est}
		\begin{aligned}
			& \sup_{ t\in(0,T)} \big( \norm{\Phi, \Psi}_{L^2(\R)}^2 + \norm{\phi, \zeta}_{H^3(\Omega)}^2 \big) \\
			& \qquad \qquad + \int_0^T \big(\norm{\p_3 \Phi, \p_3 \Zv, \Phi \kappa, \Zv \kappa}_{L^2(\R)}^2  + \norm{\nabla \phi}_{H^2(\Omega)}^2 + \norm{\nabla \zeta}_{H^3(\Omega)}^2\big) dt  \\
%			& \qquad \lesssim \sup_{ t\in(0,T)} \norm{\Phi,\Zv}_{L^2(\R)}^2 + \sup_{ t\in(0,T)} \Ec_1 + \int_0^T \Dc_1(t) dt + \int_0^T \norm{\p_3 \Phi, \Phi\kappa}_{L^2(\R)}^2 dt + \e^2 \nu^2 \\
			& \qquad \lesssim \norm{\Phi_0,\Psi_0}_{L^2(\R)}^2 + \norm{\phi_0,\psi_0}_{H^3(\Omega)}^2 + \e \Lambda^{\frac{1}{4}}.
		\end{aligned}
	\end{equation}
	Here we have used the fact that $\e \nu \Lambda^{\frac{1}{4}} \lesssim \e \Lambda^{\frac{1}{4}} \nu^2 + \e \Lambda^{\frac{1}{4}}$ with \cref{nu} and the smallness of $\e \Lambda^{\frac{1}{4}}.$
	Then the proof of \cref{Prop-apriori} is finished.
	
\end{proof}

\vspace{.3cm}

\section{Decay Rates}\label{Sec-rate}

Combining \cref{Thm-local} and \cref{Prop-apriori}, for the problem \cref{equ-phizeta}, \cref{ic-pertur}, there exist $\Lambda_0\geq 1$ and $\e_0>0$ such that, if $\Lambda\geq \Lambda_0$ and $\e \Lambda^{\frac{1}{4}} \leq \e_0,$ then the global existence of the solution $(\phi, \zeta) \in \mathbb{B}(0,+\infty) $ can be proved by a standard argument. 
To finish the proof of \cref{Thm-pert}, it remains to prove \cref{behavior-inf}.

\vspace{.1cm}

Since the estimate \cref{est} holds for $T=+\infty$, then using \cref{ic-phipsi,ic-anti}, one has that
\begin{equation}\label{est*}
	\begin{aligned}
		& \sup_{ t>0} \big( \norm{\Phi, \Psi}_{L^2(\R)}^2 + \norm{\phi, \zeta}_{H^3(\Omega)}^2 \big) \\
		& \qquad + \int_0^{+\infty} \left(\norm{\p_3 \Phi, \p_3 \Zv, \Phi \kappa, \Zv \kappa}_{L^2(\R)}^2  + \norm{\nabla \phi}_{H^2(\Omega)}^2 + \norm{\nabla \zeta}_{H^3(\Omega)}^2\right) dt \lesssim \e \Lambda^{\frac{1}{4}}.
	\end{aligned}
\end{equation}
Then it holds that $\nu^2 \leq \sup_{t>0} \big( \norm{\Phi, \Psi}_{L^2(\R)}^2 + \norm{\phi, \zeta}_{H^3(\Omega)}^2 \big) \lesssim \e \Lambda^{\frac{1}{4}} $ and 
\begin{equation}\label{est**}
	\sup_{t>0} \Ec_1(t) + \int_0^{+\infty} \Dc_1(t) dt \lesssim \e \Lambda^{\frac{1}{4}}.
\end{equation}
Thus, it follows from \cref{exp-md} that
\begin{align*}
	\norm{\phi^\md, \zeta^\md}_{H^1(\Omega)}^2 \lesssim \big( \e \Lambda^{\frac{1}{4}} \big)^{\frac{3}{2}} e^{-\alpha_1 t}.
\end{align*}
This, together with \cref{Sob-inf}, yields that 
\begin{align*}
	\norm{\phi^\md, \zeta^\md}_{L^\infty(\Omega)} \lesssim \nu^{\frac{3}{4}} \norm{\phi^\md, \zeta^\md}_{L^2(\Omega)}^{\frac{1}{4}} \lesssim \big( \e \Lambda^{\frac{1}{4}} \big)^{\frac{1}{2}} e^{-\frac{\alpha_1}{8} t},
\end{align*}
which proves \cref{behavior-inf}\textsubscript{2}.

Now we show \cref{behavior-inf}\textsubscript{1}. It follows from \cref{rel-2,est-3**,est*} that
\begin{equation}\label{ED-1}
	\Ec_1 \lesssim \Dc_1 + \norm{\p_3 \Phi, \p_3 \Zv}_{L^2(\Omega)}^2 +  (t+\Lambda)^{-1} \norm{\Phi \kappa}_{L^2(\R)}^2 + \e^3 \Lambda^{\frac{1}{4}} e^{-\alpha t}.
\end{equation}
With \cref{ED-1,est*,est**}, multiplying $ (t+1) $ on \cref{est-3*} and integrating the resulting inequality with respect to $ t $, one can get that
\begin{align}
	& (t+1) \Ec_1 + \int_0^t (\tau+1) \Dc_1 d\tau \notag \\
	& \qquad \lesssim \Ec_1(0) + \int_0^t \Ec_1 d\tau +  \int_0^t \norm{\p_3\Phi, \p_3 \Zv, \Phi\kappa}_{L^2(\R)}^2 d\tau + (\e \Lambda^{\frac{1}{4}})^{\frac{3}{2}} \notag \\
	& \qquad \lesssim \e \Lambda^{\frac{1}{4}}. \label{decay1}
\end{align}
On the other hand, collecting \cref{est-6,est-3b}, one can choose a linear combination of  $ \Ac_{3,\flat} $ and $\Ac_{i}$ (resp. $ \Bc_{3,\flat} $ and $ \Bc_{i} $) for $ i=1,2,3,4$, denoted by $ \Ec_2 $ (resp. $ \Dc_2 $), such that
\begin{equation}\label{est-6*}
	\begin{aligned}
		\frac{d}{dt} \Ec_2 + \Dc_2 & \lesssim (t+\Lambda)^{-2} \norm{\p_3 \Phi, \p_3 \Zv, \Phi \kappa}_{L^2(\R)}^2 \\
		& \quad + (t+\Lambda)^{-1} \norm{\p_3^2 \Phi, \p_3^2 \Zv}_{L^2(\R)}^2 + (\e \Lambda^{\frac{1}{4}})^{\frac{3}{2}} e^{-\alpha t},
%		& \lesssim \delta (t+1)^{-2} \norm{\p_3 \Phi, \p_3 \Zv, \Phi \kappa}_{L^2(\R)}^2 + \Dc_1 + \e \nu e^{-\alpha t},
	\end{aligned}
\end{equation}
and
\begin{equation}\label{est-6**}
	\begin{aligned}
		\pm\Ec_2 & \lesssim \pm \Big( \norm{\p_3^2 \Phi, \p_3^2 \Zv}_{L^2(\R)}^2 + \norm{\nabla^2 \phi, \nabla^2 \zeta}_{H^1(\Omega)}^2 \Big) \\
		& \quad  + (t+\Lambda)^{-1} \norm{\p_3 \Phi, \Phi\kappa}_{L^2(\R)}^2 + \big( \e \Lambda^{\frac{1}{4}} \big)^{\frac{3}{2}} e^{-\alpha t}, \\
%		& \lesssim \Dc_3 + \delta (t+1)^{-1}  \int_\R \Phi^2 \kappa^2 dx_3 + \delta (t+1)^{-2} \norm{\p_3 \Phi}_{L^2(\R)}^2 + \e e^{-\alpha t}, \\
 		\Dc_2 & \sim  \norm{\p_3^3 \Zv}_{L^2(\R)}^2  + \norm{\nabla^2 \phi,\nabla^3 \zeta}_{H^1(\Omega)}^2.
	\end{aligned}
\end{equation}
It follows from \cref{rel-2} and \cref{est-3**}\textsubscript{2} that
\begin{align*}
	\Ec_2 & \lesssim \Dc_1 + (t+\Lambda)^{-1} \norm{\p_3 \Phi, \Phi\kappa}_{L^2(\R)}^2 + \big( \e \Lambda^{\frac{1}{4}} \big)^{\frac{3}{2}} e^{-\alpha t}.
\end{align*}

Then multiplying $ (t+1)^2 $ on \cref{est-6*} and integrating the resulting inequality with respect to $ t $, one has that
\begin{align}
	& (t+1)^2 \Ec_2 + \int_0^t (\tau+1)^2 \Dc_2 d\tau \notag \\
	& \quad \lesssim \Ec_2(0) + \int_0^t (\tau+1) \Dc_1 d\tau  + \int_0^t \norm{\p_3 \Phi, \p_3 \Zv, \Phi\kappa}_{L^2(\R)}^2 d\tau + (\e \Lambda^{\frac{1}{4}})^{\frac{3}{2}} \notag \\
	&\quad \lesssim \e\Lambda^{\frac{1}{4}}. \label{decay2}
\end{align}
Note that $ \norm{\kappa}_{L^\infty(\R)}^2 \lesssim (t+\Lambda)^{-1}. $
Then it follows from \cref{est*,ED1-eqv,decay1} that
\begin{align}
	\norm{\phi^\od,\zeta^\od}_{L^2(\R)}^2 & \lesssim \Ec_1 + (t+\Lambda)^{-1} \norm{\Phi}_{L^2(\R)}^2 + \e^3 \Lambda^{\frac{1}{4}} e^{-\alpha t} \notag \\
	& \lesssim \e \Lambda^{\frac{1}{4}} (t+1)^{-1}. \label{decay3}
\end{align}
In addition, it follows from \cref{rel-1,decay3,est-6**,decay2} that
\begin{align}
	\norm{\p_3 \phi^\od,\p_3 \zeta^\od}_{L^2(\R)}^2 & \lesssim \Ec_2+ (t+\Lambda)^{-1} \norm{\phi^\od}_{L^2(\R)}^2 + (t+\Lambda)^{-2} \norm{\Phi}_{L^2(\R)}^2 + \big( \e \Lambda^{\frac{1}{4}} \big)^{\frac{3}{2}} e^{-\alpha t} \notag \\
	& \lesssim \e \Lambda^{\frac{1}{4}} (t+1)^{-2}. \label{decay4}
\end{align}
Combining \cref{decay3,decay4}, one can get that
%\begin{equation}\label{decay-phizeta}
%	\begin{aligned}
%		\norm{\phi,\zeta}_{L^2(\Omega)}^2 \lesssim \e^2 (t+\Lambda)^{-1} \andd
%		\norm{\nabla\phi,\nabla\zeta}_{L^2(\Omega)}^2  \lesssim \e^2 (t+\Lambda)^{-2}.
%	\end{aligned}
%\end{equation}
%Then one has that 
\begin{align*}
	\norm{\phi^\od, \zeta^\od}_{L^\infty(\R)} & \lesssim \norm{\p_3\big(\phi^\od, \zeta^\od\big)}_{L^2(\R)}^{\frac{1}{2}} \norm{\phi^\od, \zeta^\od}_{L^2(\R)}^{\frac{1}{2}} \lesssim (\e \Lambda^{\frac{1}{4}})^\frac{1}{2} (t+1)^{-\frac{3}{4}},
\end{align*}
which yields \cref{behavior-inf}\textsubscript{1}.
The proof of \cref{Thm-pert} is finished.

\vspace{.3cm}

	\textbf{Proof of \cref{Thm}.}
	Once \cref{Thm-pert} is proved, it remains to prove  \cref{behavior} to complete the proof of \cref{Thm}. 
	In fact, using \cref{exp-diff,exp-diff-u,behavior-inf}, one has that
	\begin{align*}
		\norm{(\rho^\od, \uv^\od) - (\rho^\vs, \uv^\vs)}_{L^\infty(\R)} & \lesssim \norm{(\rhot^\od, \uvt^\od) - (\rho^\vs, \uv^\vs)}_{L^\infty(\R)} + \norm{(\phi^\od, \zeta^\od)}_{L^\infty(\R)} \\
		& \lesssim (\e \Lambda^{\frac{1}{4}})^\frac{1}{2} (t+\Lambda)^{-\frac{3}{4}}.
	\end{align*}
	To prove \cref{behavior}\textsubscript{2}, note that $ (\rho^\md, \uv^\md) = (\rhot^\md, \uvt^\md) +  (\phi^\md, \zeta^\md). $ 
	Note that if $ h(x) \in L^\infty(\R^3) $ is periodic on $ \Torus^3 $ with the average $ \bar{h} = \int_{\Torus^3} h(x) dx, $ then it holds that $ h^\md = (h-\bar{h})^\md $.
	Then it follows from \cref{ansatz,uvt-1} that
	\begin{align*}
		\rhot^\md 
%		& = \frac{1}{2 } \big[\rho^\md_- (1-\Theta_\sigma) + \rho_+^\md (1+\Theta_\sigma)\big] \\
		& = \frac{1}{2 } \big[v_-^\md  (1-\Theta_\sigma) + v_+^\md (1+\Theta_\sigma)\big], \\
		\uvt^\md & = \frac{1}{2 } \big[ \zv_-^\md (1-\Theta_\sigma) + \zv_+^\md (1+\Theta_\sigma)\big] \\
		& \quad + \Big[ \frac{1}{\rhot} (v_+ - v_-) (\uv_+ - \uv_-) \Big]^\md (1-\Theta^2_\sigma),
	\end{align*}
	where $ v_\pm $ and $ \zv_\pm $ defined in \cref{vw-pm}.
	Thus, it follows from \cref{exp-per} that
	\begin{equation}\label{exp-rho}
		\norm{\rhot^\md,\uvt^\md}_{L^\infty(\Omega)} \lesssim \norm{v_-, v_+,\zv_-,\zv_+}_{L^\infty(\Omega)} \lesssim \e e^{-\alpha t}.
	\end{equation}
	This, together with \cref{behavior-inf}$ _2 $, yields that
	\begin{align*}
		\norm{(\rho^\md, \uv^\md)}_{L^\infty(\Omega)} & \lesssim \norm{(\rhot^\md, \uvt^\md)}_{L^\infty(\Omega)} + \norm{(\phi^\md, \zeta^\md)}_{L^\infty(\Omega)} \lesssim \big( \e \Lambda^{\frac{1}{4}} \big)^{\frac{1}{2}} e^{-\frac{\alpha_1}{8} t}.
	\end{align*}
	The proof of \cref{Thm} is finished.

\vspace{.3cm}

\section{Localized Perturbations}\label{Sec-local}

In this section, we outline the main ideas to solve the problem \cref{NS} with the initial data \cref{ic-loc}.
In order to use the anti-derivative technique, we first follow \cite{Liu-Xin,HXY} to introduce some \textit{diffusion waves} propagating along the transverse characteristics to carry the excessive mass of the perturbation $ (\phib_0,\psib_0). $

As the viscosity in \cref{system-1d} vanishes, the resulting hyperbolic system takes the form,
\begin{equation}
	\p_t \Big(\begin{array}{c}
			\rho\\
			\mv\end{array}
	\Big) + A(\rho,\uv) \p_3 \Big(\begin{array}{c}
			\rho\\
			\mv\end{array}\Big) = 0,
\end{equation}
where
\begin{equation}
	A(\rho,\uv) = \left(\begin{array}{cccc}
			0 & 0 & 0 & 1 \\
			-u_1 u_3 & u_3 & 0 & u_1 \\
			-u_2 u_3 & 0 & u_3 & u_2 \\
			p'(\rho)-u_3^2 & 0 & 0 & 2u_3
		\end{array} \right).
\end{equation}
This matrix has four real eigenvalues,
\begin{equation}
	\lambda_0 = u_3 - \sqrt{p'(\rho)}, \quad \lambda_1 = \lambda_2 = u_3, \quad \lambda_3 = u_3 + \sqrt{p'(\rho)},
\end{equation}
with the associated right eigenvectors,
\begin{equation}
	\rv_0(\rho,\uv) = \left(\begin{array}{c}
			1 \\
			u_1 \\
			u_2 \\
			\lambda_0
		\end{array}\right), \quad \rv_1 = \left( \begin{array}{c}
			0 \\
			1 \\
			0\\
			0
		\end{array}\right), \quad \rv_2 = \left( \begin{array}{c}
			0 \\
			0 \\
			1 \\
			0
		\end{array}\right), \quad \rv_3(\rho,\uv) = \left(\begin{array}{c}
			1 \\
			u_1 \\
			u_2 \\
			\lambda_3
		\end{array}\right).
\end{equation}
For the constant states $ (\rhob, \pm\uvb) $ in \cref{vs}, we denote
\begin{align*}
	\lambda_i^\pm := \lambda_i(\rhob, \pm\uvb) \andd \rv_i^\pm := \rv_i(\rhob, \pm\uvb) \quad \text{for } i=0,3.
\end{align*}
\textit{Diffusion waves}. Define $ \vartheta(x_3,t) = \frac{1}{2\sqrt{\pi(t+\Lambda)}} \exp\big\{-\frac{x_3^2}{4(t+\Lambda)}\big\} $ and
\begin{equation}\label{ker-0-3}
	\begin{aligned}
		\vartheta_-(x_3,t) & := \vartheta(x_3 -\lambda_0^- (t+\Lambda), t), \\
		\vartheta_+(x_3,t) & := \vartheta(x_3 -\lambda_3^+ (t+\Lambda), t).
	\end{aligned}
\end{equation}
%which satisfies that $ \p_t \vartheta = \p_3^2 \vartheta $ and $ \int_\R \vartheta(x_3,t) dx_3 \equiv 1. $ 
%And define
%\begin{equation}\label{ker-0-3}
%	\vartheta_-(x_3,t) := \vartheta(x_3-\lambda_0^-(t+1), t) \andd \vartheta_+(x_3,t) := \vartheta(x_3-\lambda_3^+(t+1), t),
%\end{equation}
Then it holds that
\begin{align*}
	\p_t \vartheta_- + \lambda_0^- \p_3 \vartheta_- = \p_3^2 \vartheta_-, \qquad \p_t \vartheta_+ + \lambda_3^+ \p_3 \vartheta_+ = \p_3^2 \vartheta_+,
\end{align*}
and
\begin{align*}
	\int_\R \vartheta_-(x_3,t) dx_3 = \int_\R \vartheta_+(x_3,t) dx_3 = 1 \qquad \forall t\geq 0.
\end{align*}

Denote
\begin{equation}
	\epsilon := \norm{\phib_0, \psib_0}_{L^2_{3}(\Omega)} + \norm{\nabla \phib_0, \nabla \psib_0}_{H^2(\Omega)}.
\end{equation}
Decompose the mass, $ \int_\Omega (\phib_0, \psib_0) dx, $ into
\begin{equation}\label{alpha-i}
	\int_\Omega (\phib_0, \psib_0) dx =  \alpha_0 \rv_0^- + \alpha_1 \rv_1 + \alpha_2 \rv_2 + \alpha_3 \rv_3^+,
\end{equation}
where each $ \alpha_i \in \R $ is a constant, satisfying that
\begin{align*}
	\max_{i=0,\cdots,3} \abs{\alpha_i} \lesssim \norm{\phib_0, \psib_0}_{L^1(\Omega)} \lesssim \norm{\phib_0, \psib_0}_{L^2_{3}(\Omega)} \lesssim \epsilon.
\end{align*}

\textit{Ansatz.} In the case for the $H^3(\Omega)$-perturbations, we set the ansatz as
\begin{equation}\label{ansatz-loc}
	\begin{aligned}
		(\rhot,\mvt)(x_3,t) & := (\rho^\vs, \mv^\vs)(x_3,t) + \alpha_0  \rv_0^- \vartheta_-(x_3,t) + \alpha_3\rv_3^+  \vartheta_+(x_3,t) \\
		& \qquad +  (\alpha_1 \rv_1 + \alpha_2 \rv_2) \vartheta(x_3,t),
	\end{aligned}
\end{equation}
and 
\begin{equation*}
	\uvt(x_3,t) := \frac{\mvt(x_3,t)}{\rhot(x_3,t)}.
\end{equation*}
Equivalently, \cref{ansatz-loc} reduces to
\begin{equation}\label{ansatz-loc-1}
	\begin{aligned}
		\rhot & = \rhob + \alpha_0 \vartheta_- + \alpha_3 \vartheta_+, \\
		\mt_i & = \rhob u_i^\vs + \ub_i (\alpha_3 \vartheta_+ - \alpha_0 \vartheta_-) + \alpha_i \vartheta, \qquad i=1,2, \\
		\mt_3 & = \lambda_0^- \alpha_0 \vartheta_- + \lambda_3^+ \alpha_3 \vartheta_+.
	\end{aligned}
\end{equation}
\begin{Rem}
	The construction \cref{ansatz-loc} is different from \cite{HXY}, since we do not shift the background vortex sheet to cancel the excessive mass in the linearly degenerate characteristic fields. 
	In fact, we find that the shift is not essential for the contact discontinuities, since after shifting, the difference between two contact discontinuities decays just same as the diffusion waves.
	However, it should be mentioned that the shifts are an essential point in the stability theory of shocks.
\end{Rem}
It follows from \cref{ansatz-loc-1} that
\begin{equation}\label{equ-ansatz}
	\begin{cases}
		\p_t \rhot + \dv \mvt  = \p_3 F, \\
		\p_t \mvt + \dv \big(\frac{\mvt\otimes\mvt}{\rhot}\big) + \nabla p(\rhot) = \mu \lap \uvt + (\mu+\lambda) \nabla\dv \uvt + \p_3 \Gv,
	\end{cases}
\end{equation}
where $ F=F(x_3,t)\in\R $ and $ \Gv =\Gv(x_3,t) = (G_1,G_2,G_3)(x_3,t) \in \R^3 $ are given by
\begin{equation}\label{F-G}
	\begin{aligned}
		F & =  \alpha_0 \p_3 \vartheta_- + \alpha_3 \p_3 \vartheta_+, \\
		G_i & = \alpha_i \p_3 \vartheta + \Big[ \frac{\mt_3 \mt_i}{\rhot} + \ub_i \big( \alpha_0 \lambda_0^- \vartheta_- - \alpha_3 \lambda_3^+ \vartheta_+ \big) \Big] \\
		& \quad - \ub_i \big( \alpha_0 \p_3 \vartheta_- - \alpha_3 \p_3 \vartheta_+ \big) - \mu \p_3 (\ut_i - u_i^\vs), \qquad i=1,2, \\
		G_3 & = \frac{\mt_3^2}{\rhot} + \big[p(\rhot) - p(\rhob) - p'(\rhob) (\rhot - \rhob)\big] - \mut \p_3 \ut_3 \\
		& \quad + \lambda_0^- \alpha_0 \p_3\vartheta_- + \lambda_3^+ \alpha_3 \p_3 \vartheta_+.
	\end{aligned}
\end{equation}
Here we have used the fact that $p'(\rhob) = \abs{\lambda_0^-}^2 = \abs{\lambda_3^+}^2.$

In the following part of this section, we set 
\begin{equation}\label{Re}
	\Rc = e^{- \frac{c_0 \abs{x_3}^2}{t+\Lambda} } + e^{- \frac{c_0 \abs{x_3-\lambda_0^-(t+\Lambda)}^2}{t+\Lambda}} + e^{ - \frac{c_0 \abs{x_3-\lambda_3^+(t+\Lambda)}^2}{t+\Lambda}}
\end{equation} 
for some generic constant $ c_0>0 $ .

\begin{Lem}
	For $ j=0,1,2, \cdots, $ it holds that
	\begin{equation}\label{bdd-F-G}
		\abs{\p_3^j F} + \abs{\p_3^j \Gv} \lesssim \epsilon(t+\Lambda)^{-1-\frac{j}{2}} \Rc.
	\end{equation}
\end{Lem}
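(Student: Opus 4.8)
The plan is to reduce the estimate to sharp pointwise Gaussian bounds on the elementary building blocks $\vartheta$, $\vartheta_\pm$, $\theta$ and on $1/\rhot$, and then expand $F$ and $\Gv$ by the Leibniz rule into a finite sum of monomials in these quantities, estimating each monomial in one of three ways. First I would record the preliminary bounds. From the explicit heat kernel, $\abs{\p_3^k\vartheta_*(x_3,t)}\lesssim (t+\Lambda)^{-\frac{1+k}{2}}\exp\{-c\abs{x_3-x_*(t)}^2/(t+\Lambda)\}$ for each $\vartheta_*\in\{\vartheta,\vartheta_-,\vartheta_+\}$, with centers $x_*(t)\in\{0,\lambda_0^-(t+\Lambda),\lambda_3^+(t+\Lambda)\}$ and any $c<1/4$; and by \cref{Lem-theta}, $\abs{\theta}\le1$ together with $\abs{\p_3^k\theta}\lesssim (t+\Lambda)^{-\frac{k}{2}}\exp\{-\rhob x_3^2/(8\mu(t+\Lambda))\}$ for $k\ge1$. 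Choosing the generic constant $c_0$ in \cref{Re} smaller than all the exponents just named, each of these quantities is $\lesssim (t+\Lambda)^{-\frac{1+k}{2}}\Rc$ (resp. $(t+\Lambda)^{-\frac{k}{2}}\Rc$). Since $\Lambda\ge1$ and $\abs{\alpha_i}\lesssim\epsilon$, one also has $\rhot=\rhob+\alpha_0\vartheta_-+\alpha_3\vartheta_+\in[\rhob/2,2\rhob]$ for $\epsilon$ small, so $1/\rhot$ is bounded and its derivatives only improve the decay (for instance $\p_3(1/\rhot)=-F/\rhot^2$). Two elementary facts will be invoked repeatedly: a product of two of the above Gaussians \emph{centered at distinct points} is $\lesssim e^{-c(t+\Lambda)}\lesssim (t+\Lambda)^{-N}\Rc$ for every $N$, since the centers are separated by a distance $\gtrsim t+\Lambda$; and a product of two Gaussians \emph{centered at the same point} is a steeper single Gaussian, hence $\lesssim\Rc$.

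The term $F=\alpha_0\p_3\vartheta_-+\alpha_3\p_3\vartheta_+$ is then immediate: $\abs{\p_3^jF}\lesssim\epsilon(\abs{\p_3^{j+1}\vartheta_-}+\abs{\p_3^{j+1}\vartheta_+})\lesssim\epsilon(t+\Lambda)^{-1-\frac{j}{2}}\Rc$. For $\Gv$ I would expand each $G_i$ ($i=1,2$) and $G_3$ via Leibniz and sort the resulting monomials into three types. Type (i) consists of linear monomials $c\,\alpha_*\p_3^k\vartheta_{**}$ with $k\ge1$ and bounded coefficient $c$: these are $\alpha_i\p_3\vartheta$, $\ub_i\alpha_*\p_3\vartheta_\pm$, $\lambda_*^\pm\alpha_*\p_3\vartheta_\pm$, and the linear parts of $\mut\p_3\ut_3$ and $\mu\p_3(\ut_i-u_i^\vs)$; each is $\lesssim\epsilon(t+\Lambda)^{-\frac{1+k}{2}}\Rc$, and since $k\ge1$, after $j$ further derivatives it is $\lesssim\epsilon(t+\Lambda)^{-1-\frac{j}{2}}\Rc$. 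Type (ii) consists of monomials that are quadratic or higher in the $\alpha$'s, hence carry at least two diffusion-wave factors: namely $\mt_3^2/\rhot$, the remainder $p(\rhot)-p(\rhob)-p'(\rhob)(\rhot-\rhob)=O(\abs{\rhot-\rhob}^2)$, the Taylor tail of $1/\rhot$, and the genuinely quadratic part of $\mt_3\mt_i/\rhot$; by fact (b) above and $\epsilon^2\le\epsilon$ each is $\lesssim\epsilon(t+\Lambda)^{-1}\Rc$, with each additional $\p_3$ contributing a further $(t+\Lambda)^{-1/2}$.

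The only delicate point, and the one where I expect the real work to lie, is Type (iii), which occurs only in $G_i$: the $O(1)$-amplitude part of $\mt_3\mt_i/\rhot$ together with the explicit correction $\ub_i(\alpha_0\lambda_0^-\vartheta_--\alpha_3\lambda_3^+\vartheta_+)$. Separately these are only of size $\epsilon(t+\Lambda)^{-1/2}$, which is too large, so one must exhibit their cancellation. Using $\lambda_0^-=-\sqrt{p'(\rhob)}=-\lambda_3^+$, $u_i^\vs=\ub_i\theta$ and $\mt_3=\sqrt{p'(\rhob)}(\alpha_3\vartheta_+-\alpha_0\vartheta_-)$, this combination collapses, modulo Type (ii) remainders coming from $1/\rhot=1/\rhob+\cdots$, to $\ub_i\sqrt{p'(\rhob)}\big[\alpha_3\vartheta_+(\theta-1)-\alpha_0\vartheta_-(\theta+1)\big]$, which is precisely the residue the correction terms in \cref{ansatz-loc-1} were designed to leave. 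Now $\vartheta_+$ concentrates near $x_3=\lambda_3^+(t+\Lambda)\to+\infty$, where $1-\theta=1-\Theta(x_3/\srt)$ vanishes exponentially fast by \cref{Lem-theta}, while away from that point $\vartheta_+$ itself is exponentially small; splitting the real line at $x_3=\tfrac12\lambda_3^+(t+\Lambda)$ and applying Leibniz (using that $\abs{\p_3^m(1-\theta)}=\abs{\p_3^m\theta}$ decays exponentially at $x_3\sim t+\Lambda$ for $m\ge1$, while $\abs{1-\theta}$ does for $m=0$), one gets $\abs{\p_3^j[\vartheta_+(\theta-1)]}\lesssim (t+\Lambda)^{-\frac{j}{2}}e^{-c(t+\Lambda)}\lesssim (t+\Lambda)^{-1-\frac{j}{2}}\Rc$, and symmetrically for $\vartheta_-(\theta+1)$. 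Summing the three types over all monomials yields $\abs{\p_3^jF}+\abs{\p_3^j\Gv}\lesssim\epsilon(t+\Lambda)^{-1-\frac{j}{2}}\Rc$. The main obstacle, to reiterate, is isolating the algebraic identity behind (iii) and quantifying its residue through the sharp decay of $1\mp\theta$ on the supports of $\vartheta_\pm$; everything else is routine Gaussian bookkeeping.
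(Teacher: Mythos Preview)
Your approach is essentially the paper's: both reduce the only nontrivial part of the estimate to the residue $\ub_i\big[\alpha_3\lambda_3^+\vartheta_+(\theta-1)+\alpha_0\lambda_0^-\vartheta_-(\theta+1)\big]$ and then exploit that $\vartheta_\pm$ and $1\mp\theta$ concentrate near points separated by a distance $\sim t+\Lambda$. Your Type (i)/(ii)/(iii) organization is a clean repackaging of what the paper does implicitly, and the paper splits at $x_3=0$ rather than at the midpoint, which is immaterial.

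One step in Type (iii) is written incorrectly and needs repair. The chain
\[
\abs{\p_3^j[\vartheta_+(\theta-1)]}\ \lesssim\ (t+\Lambda)^{-\frac{j}{2}}e^{-c(t+\Lambda)}\ \lesssim\ (t+\Lambda)^{-1-\frac{j}{2}}\Rc
\]
fails at the second inequality: the middle quantity is independent of $x_3$, while $\Rc\to0$ as $|x_3|\to\infty$. What you must say instead is that on each half-line one factor contributes $e^{-c(t+\Lambda)}$ \emph{and the other still carries its Gaussian tail}, so that the product is $\lesssim (t+\Lambda)^{-\frac12-\frac{j}{2}}e^{-c(t+\Lambda)}\cdot[\text{a Gaussian appearing in }\Rc]\lesssim(t+\Lambda)^{-1-\frac{j}{2}}\Rc$. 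This is exactly what the paper does in its two displayed bounds for $\vartheta_-(\theta+1)$ (keeping the $\vartheta_-$ Gaussian when $x_3\ge0$, and the $\theta$ Gaussian when $x_3<0$). A smaller point: the ``linear parts of $\mu\p_3(\ut_i-u_i^\vs)$'' are not literally of the form $c\,\alpha_*\p_3^k\vartheta_{**}$, since $\ut_i-u_i^\vs=\rhob^{-1}\big[\ub_i\alpha_3\vartheta_+(1-\theta)-\ub_i\alpha_0\vartheta_-(1+\theta)+\alpha_i\vartheta\big]+O(\alpha^2)$; after expanding by Leibniz these extra pieces are either Type (i) with the bounded coefficient $1\mp\theta$ or products of Gaussians at distinct centers, hence harmless, but they should be noted.
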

\begin{proof}
	We prove \cref{bdd-F-G}  for $ j=0 $ only, since the case for higher orders is similar.
	For convenience, we write $ A \cong B $ if $ \abs{A-B} \lesssim \epsilon(t+\Lambda)^{-1} \Rc $.
	First, it is direct to show that $ \abs{\p_3 \vartheta} + \abs{\vartheta}^2 \cong 0, $
	which yields that $ \abs{F} + \abs{G_3} \cong 0. $ To prove that $ G_i \cong 0 $ for $ i=1,2, $ it suffices to show that
	\begin{align*}
		I := \frac{\mt_3 \mt_i}{\rhot} - \ub_i \big( \alpha_3 \lambda_3^+ \vartheta_+ - \alpha_0 \lambda_0^- \vartheta_- \big) \cong 0.
	\end{align*}
	In fact, it follows from \cref{ansatz-loc-1} that
	\begin{align*}
		I & \cong \ub_i \theta \big(\lambda_0^- \alpha_0 \vartheta_- + \lambda_3^+ \alpha_3 \vartheta_+\big) + \ub_i \big( \alpha_0 \lambda_0^- \vartheta_- - \alpha_3 \lambda_3^+ \vartheta_+ \big) \\
		& = \ub_i \big[ \alpha_0 \lambda_0^- \vartheta_- (\theta+1) + \alpha_3 \lambda_3^+ \vartheta_+ (\theta-1) \big].
	\end{align*}
	Note that $ \lambda_0^- = -\sqrt{p'(\rhob)}<0. $ Thus, if $ x_3 \geq 0, $ then
	\begin{align*}
		\vartheta_- (\theta+1) & \lesssim \vartheta_- \lesssim e^{-\frac{\abs{\lambda_0^-}^2}{8}(t+\Lambda) } e^{- \frac{ \abs{x_3-\lambda_0^-(t+\Lambda)}^2}{8(t+\Lambda)}}.
	\end{align*}
	If $ x_3 <0, $ then using \cref{bdd-theta} and the fact that $ \lambda_0^-<0, $ it holds that
	\begin{align*}
		0<\vartheta_- (\theta+1) & = \vartheta_- \int_{-\infty}^{x_3} \p_{y_3} \theta(y_3,t) dy_3
%		& \lesssim  \vartheta_- \int_{-\infty}^{x_3} \kappa(y_3,t) dy_3 \\
		\lesssim e^{- \frac{ \abs{x_3-\lambda_0^-(t+\Lambda)}^2}{4(t+\Lambda)} -\frac{\rhob x_3^2 }{16\mu(t+\Lambda)}} \lesssim e^{- c_1 (t+\Lambda)} e^{ -\frac{\rhob x_3^2 }{32\mu(t+\Lambda)}},
	\end{align*}
	for some $ c_1>0 $ which depends only on $ \abs{\lambda_0^-} $ and $ \frac{\rhob}{\mu}. $ 
%	Here we have used the fact that
%	for any constants $c_1>0, c_2>0$ and $a_1, a_2$ with $a_1\neq a_2$, there exist 
%	\begin{align*}
%		e^{-\frac{c_1\abs{x_3-a_1(t+\Lambda)}}{t+\Lambda} - \frac{c_2\abs{x_3-a_2(t+\Lambda)}}{{t+\Lambda}}} \lesssim e^{-c_3 (t+\Lambda) - c_3 \abs{x_3} }
%	\end{align*}
	Similarly, there exists a constant $ c_2>0 $, depending on $ \lambda_3^+ $ and $ \frac{\rhob}{\mu}, $ such that
	\begin{align*}
		0<\vartheta_+ (1-\theta) \lesssim e^{-c_2 (t+\Lambda)} e^{-\frac{\rhob x_3^2 }{32\mu(t+\Lambda)} }, \qquad x_3 \in\R, t\geq 0. 
	\end{align*}
\end{proof}
It follows from \cref{equ-ansatz,NS} that $ \frac{d}{dt} \big\{\int_\Omega [ (\rho,\mv) - (\rhot, \mvt)] dx \big\} = 0. $ This, together with \cref{alpha-i}, yields that
\begin{equation}\label{zero-mass*}
	\int_\Omega [ (\rho,\mv) - (\rhot, \mvt)] dx \equiv \int_\Omega [ (\rho,\mv) - (\rhot, \mvt)]|_{t=0} dx = 0.
\end{equation}
Using the same notations \cref{phipsi,Phi-Psi,Zv,Q}, the systems for $ (\phi,\zeta) $  and $ (\Phi,\Psi) $ are almost same as \cref{equ-PhiPsi,equ-phizeta}, respectively, except the source terms on the right-hand sides, in which the new ones satisfy \cref{bdd-F-G} instead of \cref{exp-f-g}. 

Denote 
\begin{align*}
	(\phi_0,\psi_0)(x)  & := (\phi,\psi)(x,0)  \\
	 &  = (\phib_0,\psib_0)(x)+ \alpha_0 \rv_0^- \vartheta_-(x_3,0) + \alpha_3\rv_3^+  \vartheta_+(x_3,0) +  (\alpha_1 \rv_1 + \alpha_2 \rv_2) \vartheta(x_3,0),\\
	 (\Phi_0,\Psi_0)(x_3) & := (\Phi,\Psi)(x_3,0) \\
	  & = \int_{-\infty}^{x_3} (\phi_0^\od,\psi_0^\od)(y_3) dy_3 = - \int_{x_3}^{+\infty} (\phi_0^\od,\psi_0^\od)(y_3) dy_3.
\end{align*}
Then it follows from \cref{small*} that $(\phi_0,\psi_0)\in H^3(\Omega)$ and $(\Phi_0,\Psi_0) \in L^2(\R).$ In fact,
\begin{align}
	\norm{\Phi_0}_{L^2(\R)}^2 & = \int_{-\infty}^0 \Big(\int_{-\infty}^{x_3} \phi_0^\od(y_3) dy_3\Big)^2 dx_3 + \int_{0}^{+\infty} \Big(\int_{x_3}^{+\infty} \phi_0^\od(y_3) dy_3\Big)^2 dx_3 \notag \\
	& \lesssim \norm{\phib_0}_{L^2_{3}(\Omega)}^2 \int_0^{+\infty} \int_{x_3}^{+\infty} (y_3^2+1)^{-\frac{3}{2}} dy_3  dx_3  + \epsilon \notag \\
	& \lesssim \epsilon. \label{L2-Anti}
\end{align}
The $L^2$-estimate of $\Psi_0$ is similar.

\vspace{.2cm}

Now we establish the a priori estimates.
First, we assume a priorily that for a fixed $ T>0, $
\begin{align*}
	(\Phi,\Psi) \in C(0,T; L^2(\R)) \andd (\phi,\zeta) \in C(0,T; H^3(\Omega)).
\end{align*}
Denote
\begin{equation}\label{nu*}
	\nu:= \sup\limits_{t\in(0,T)} \big(\norm{ \Phi,\Psi}_{L^\infty(\R)} + \norm{ \phi,\psi}_{H^3(\Omega)} \big).
\end{equation}
\begin{Rem}
	The definition of $ \nu $ here is different from \cref{nu}. In fact, same as in \cite{HXY}, due to the diffusion waves, the $ L^2 $-norm of the anti-derivatives, $\norm{\Phi,\Psi}_{L^2(\R)},$ grows actually at the rate $ (t+1)^{\frac{1}{4}} $ . 
	Nevertheless, we will show that the $L^2$-norm of their derivatives, $ \norm{\p_3 \Phi,\p_3 \Psi}_{L^2(\R)}, $ decays at the rate $ (t+1)^{-\frac{1}{4}}, $ which ensures the uniform boundedness of the $L^\infty$-norm, $\norm{\Phi,\Psi}_{L^\infty(\R)}.$
\end{Rem}
It is noted that in the anti-derivative system, the estimates of $ \Qv_i $ for $ i=1,2,3,4 $ are slightly different from \cref{est-Q}, which read
\begin{equation}\label{Q*}
	\begin{aligned}
		\abs{\Qv_1 } + \abs{\Qv_3 } & \lesssim \abs{\phi}^2 + \abs{ \psi}^2, \\
		\abs{\p_3 \Qv_1 } + \abs{\p_3 \Qv_3 } & \lesssim  (t+\Lambda)^{-\frac{1}{2}} \big(\abs{\phi}^2 + \abs{\psi_3}^2 \big) + \big(\abs{\p_3\phi} + \abs{\p_3\psi} \big) \left(\abs{\phi} + \abs{\psi}\right), \\
		\abs{\nabla^k \Qv_2 } +\abs{\nabla^k \Qv_4 }  & \lesssim \sum_{j=0}^{k}  \epsilon(t+\Lambda)^{-\frac{k-j+1}{2}} \big(\abs{\nabla^j \phi} + \abs{\nabla^j \psi} \big), \qquad k=0,1.
	\end{aligned}
\end{equation}
Besides, the estimates \cref{rel-0,rel-1,rel-2,rel-3,rel-4} still hold true  with $\e=0 $.

\vspace{.1cm} 

\begin{Lem}\label{Lem-1*}
	If $ \Lambda^{-1}, \epsilon $ and $ \nu $ are small enough, then 
	\begin{align}
		& \frac{d}{dt} \Ac_{0,\od} + c_{0,\od} \Bc_{0,\od} \lesssim \nu \norm{\nabla\phi^\md, \nabla\zeta^\md}_{L^2(\Omega)}^2 + \epsilon (t+1)^{-1} \norm{\Phi,\Zv}_{L^2(\R)}^2 + \epsilon (t+1)^{-\frac{1}{2}}, \label{A0} \\
		& \frac{d}{dt} \big(\Ac_{1,\flat} + \frac{\mut}{4\rhob} \Ac_{2,\flat}\big) + c_{1,\flat} \Bc_{1,\flat} + \frac{\mut}{4\rhob} c_{2,\flat} \Bc_{2,\flat} \notag \\
		& \qquad \lesssim (\epsilon+\nu) \big( \norm{\p_3^3 \Zv}_{L^2(\R)}^2 +  \norm{\nabla \zeta^\md}_{L^2(\Omega)}^2 +  \norm{\nabla^2 \phi}_{H^1(\Omega)}^2 \big) \notag \\
		& \qquad \quad  + (t+1)^{-1} \norm{\p_3 \Phi, \p_3 \Zv, \Phi \kappa}_{L^2(\R)}^2 + \epsilon (t+1)^{-\frac{3}{2}}, \label{A1&2} \\
		&  \frac{d}{dt} \Ac_{3,\flat} + c_{3,\flat} \Bc_{3,\flat} \notag \\
		& \qquad \lesssim (\epsilon+\nu) \big(\norm{\nabla^2 \phi}_{H^1(\Omega)}^2 +  \norm{\nabla^3 \zeta}_{L^2(\Omega)}^2\big) + (t+1)^{-1} \norm{\p_3^2 \Phi, \p_3^2 \Zv}_{L^2(\R)}^2 \notag \\
		& \qquad \quad + (t+1)^{-2} \norm{\p_3 \Phi, \p_3 \Zv, \Phi \kappa}_{L^2(\R)}^2 + \epsilon (1+t)^{-\frac{5}{2}}, \label{A3}
	\end{align}
	where $ \Ac_{i,\od} $ and $ \Bc_{i,\od} $ for $ i=0,\cdots, 3, $ denote the same notations as in \cref{Sec-od}, satisfying that
	\begin{align*}
		& \Ac_{0,\od} \sim \norm{\Phi}_{H^1(\R)}^2 + \norm{\Zv}_{L^2(\R)}^2, \quad \Bc_{0,\od} \sim \norm{\p_3\Phi,\p_3\Zv, \Phi\kappa,\Zv\kappa}_{L^2(\R)}^2, \\
		& \Ac_{1,\od}+ \frac{\mut}{4\rhob}\Ac_{2,\od} \sim \norm{\p_3 \Phi}_{H^1(\R)}^2 + \norm{\p_3\Zv}_{L^2(\R)}^2, \quad \Bc_{1,\od} + \frac{\mut}{4\rhob}\Bc_{2,\od} \sim \norm{\p_3^2\Phi,\p_3^2\Zv}_{L^2(\R)}^2, \\
		& \Ac_{3,\od} \sim \norm{\p_3^2 \Phi, \p_3^2\Zv}_{L^2(\R)}^2, \quad \Bc_{3,\od} \sim \norm{\p_3^3 \Zv}_{L^2(\R)}^2.
	\end{align*}
\end{Lem}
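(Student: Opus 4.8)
The plan is to re-run, essentially verbatim, the weighted $L^2$-energy scheme of Section \ref{Sec-od} (Lemmas \ref{Lem-est1}--\ref{Lem-est6}) on the localized analogue of the reformulated system: one uses the new variable $\Zv$ from \cref{Zv} to pass from the localized analogue of \cref{equ-PhiPsi} to the analogue of \cref{equ-Zv}, \cref{equ-Zp}, \cref{equ-Z3}, the only structural change being that the ansatz errors $F,\Gv$ and the anti-derivative source appearing on the right of these equations now obey the \emph{polynomial} bounds \cref{bdd-F-G} instead of the exponential ones \cref{exp-f-g}. Since $\int_\R\Rc^2\,dx_3\lesssim(t+\Lambda)^{1/2}$ uniformly in the three travelling Gaussian bumps, \cref{bdd-F-G} translates into $\norm{\p_3^j\Gv}_{L^2(\R)}\lesssim\epsilon(t+\Lambda)^{-3/4-j/2}$, and similarly for $F$. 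The remainders $\Qv_1,\dots,\Qv_4$ now satisfy \cref{Q*} in place of \cref{est-Q}, while the elliptic/relation bounds \cref{rel-0}--\cref{rel-4} continue to hold with $\e=0$; the largeness of $\Lambda$ again supplies the smallness of $\p_3\theta$ needed to tame the linear terms such as $\uv^\vs\p_3\Psi_3$ and $\tfrac{\mu}{\rhob}\uv^\vs\p_3^2\Phi$.

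For \cref{A0}: multiply the first equation of the localized analogue of \cref{equ-Zv} by $p'(\rhob)\Phi$ and the second by $\Zv$, add and integrate over $\R$, which reproduces \cref{eq1} with source $I_1=(-\Gv+\Nv)\cdot\Zv$ and linear coupling $I_2=\tfrac{\mu}{\rhob}\p_3\theta\,\p_3\Phi\,\uvb_\perp\cdot\Zv_\perp$. The coupling $I_2$ is handled exactly as in the proof of Lemma \ref{Lem-est1}, using \cref{equ-Z3}, \cref{bdd-theta-1}, \cref{bdd-theta}, which produces the correction $\Ac_{0,\od}^{(1)}$ and the weighted contributions; adjoining the $\Kc^2$-weighted estimate from Lemma \ref{Lem-HLM} with $h=\Zv_\perp$ together with the $\Phi\,\mathfrak{K}$-multiplier on \cref{equ-Z3} (the analogue of Lemma \ref{Lem-weight}) and the $\p_3\Phi$-multiplier of Lemma \ref{Lem-est2} yields $\Bc_{0,\od}$ on the left. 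In $I_1$, the $\Nv$-part is bounded by \cref{Q*} and \cref{rel-0}--\cref{rel-3} exactly as in \cref{tool-Q}--\cref{est-I1}. The one new point is the $\Gv$-part: by \cref{bdd-F-G} and Young's inequality with the time weight split off,
\[
\Big|\int_\R\Gv\cdot\Zv\,dx_3\Big|\lesssim \epsilon^{-1}(t+1)\norm{\Gv}_{L^2(\R)}^2+\epsilon(t+1)^{-1}\norm{\Zv}_{L^2(\R)}^2\lesssim \epsilon(t+1)^{-1/2}+\epsilon(t+1)^{-1}\norm{\Zv}_{L^2(\R)}^2,
\]
which gives precisely the last two terms of \cref{A0}; the terms $\norm{\nabla\phi^\md,\nabla\zeta^\md}_{L^2(\Omega)}^2$ are absorbed by $\Lambda^{-1/2}$- and $\nu$-smallness as before.

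For \cref{A1&2} and \cref{A3}: these are the localized versions of (Lemma \ref{Lem-est4} $+$ Lemma \ref{Lem-est5}) and of Lemma \ref{Lem-est6}, obtained by differentiating the system once or twice in $x_3$ and testing with $-p'(\rhob)\p_3^2\Phi,\ -\p_3^2\Zv$ and with $\tfrac{\mut}{\rhob}\p_3^2\Phi,\ \p_3^2\Phi$ (and the $\p_3^3$-level analogues). The linear terms $I_4,I_7$ carrying $\p_3\theta$ are controlled by \cref{est-theta}, producing the $(t+\Lambda)^{-1}$- and $(t+\Lambda)^{-2}$-weighted factors; the nonlinear contributions $I_5,I_6,I_8$ are treated through \cref{Q*}, the Gagliardo--Nirenberg bounds \cref{GN-2}, \cref{GN-3}, \cref{L4*}, \cref{L4**} and the relations \cref{rel-4}, exactly as in Section \ref{Sec-od}; wherever $\Gv,\p_3\Gv$ or $\p_3^2\Gv$ enters, a small-coefficient Young split against the time weight (using $\norm{\p_3^j\Gv}_{L^2(\R)}\lesssim\epsilon(t+\Lambda)^{-3/4-j/2}$) contributes the residuals $\epsilon(t+1)^{-3/2}$ and $\epsilon(1+t)^{-5/2}$. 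The coupling of $\Ac_{1,\flat}$ with $\tfrac{\mut}{4\rhob}\Ac_{2,\flat}$ in \cref{A1&2} is forced: it is the linear combination for which the cross term $\norm{\p_3^2Z_3}_{L^2(\R)}^2$ arising after integration by parts is dominated by the genuine dissipation $\Bc_{1,\flat}$, mirroring the passage from \cref{est-4}, \cref{est-5} to \cref{est-1*}.

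The main obstacle is not a new analytic difficulty but the bookkeeping of decay exponents: one must check that every pairing of an ansatz error (now decaying only like $(t+\Lambda)^{-3/4}$ in $L^2(\R)$ rather than exponentially) against a possibly growing anti-derivative quantity either yields one of the admissible residuals $\epsilon(t+1)^{-1/2-k}$ ($k=0,1,2$) on the right of \cref{A0}--\cref{A3}, or reduces to an $\epsilon(t+1)^{-k}\norm{\Phi,\Zv}_{L^2(\R)}^2$-type term that will be absorbed once the time weights $(t+1)$, $(t+1)^2$ are applied and integrated; and that the $L^\infty$-based a priori quantity $\nu$ of \cref{nu*} (not the $L^2$-based one of \cref{nu}) still dominates all quadratic error terms, which is what lets the nonlinear estimates close despite the anti-derivatives themselves growing. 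Once \cref{A0}--\cref{A3} are established, the equivalences listed at the end of the lemma follow from Poincaré's inequality \cref{poincare} and the relations \cref{rel-0}--\cref{rel-2} exactly as in \cref{Ac0}, \cref{est-1**}.
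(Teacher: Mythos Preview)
Your proposal is correct and follows essentially the same approach as the paper: re-run Lemmas \ref{Lem-est1}--\ref{Lem-est6} verbatim, with the only modifications being the handling of the source terms $(F,\Gv)$ via the polynomial bounds \cref{bdd-F-G} (yielding $\norm{\p_3^j\Gv}_{L^2(\R)}\lesssim\epsilon(t+\Lambda)^{-3/4-j/2}$) and of $(\Qv_2,\Qv_4)$ via \cref{Q*}, each time split by a time-weighted Young inequality to produce the residuals $\epsilon(t+1)^{-1/2-k}$ and the $\epsilon(t+1)^{-1}\norm{\Phi,\Zv}_{L^2}^2$-type terms. The paper's own proof says exactly this and gives the same sample computations you outline for the $\Gv\cdot\Zv$ pairing and the $I_8$-level estimate.
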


\begin{proof}
The proof is almost same as those of Lemmas \ref{Lem-est1}--\ref{Lem-est6}.
Indeed, the only different ingredients are the estimates related to the terms $ (F,\Gv) $ and $ (\Qv_2, \Qv_4). $ 

For instance, in \cref{tool-G,tool-Q}, we use the different estimates, 
	\begin{align*}
		\norm{F,\Gv}_{L^2(\R)} \norm{\Phi, \Zv}_{L^2(\R)} & \lesssim \epsilon (t+1)^{-\frac{3}{4}} \norm{\Phi, \Zv}_{L^2(\R)} \\
		& \lesssim \epsilon (t+1)^{-1} \norm{\Phi, \Zv}_{L^2(\R)}^2 + \epsilon (t+1)^{-\frac{1}{2}},
	\end{align*}
	and
	\begin{align*}
		\norm{\Qv_2, \Qv_4}_{L^2(\Omega)} \norm{\Zv}_{H^1(\R)} &  \lesssim \epsilon (t+1)^{-1} \norm{\Zv}_{H^1(\R)}^2 + \epsilon \norm{\phi,\psi}_{L^2(\Omega)}^2  \\
		& \lesssim \epsilon (t+1)^{-1} \norm{\Zv}_{L^2(\R)}^2 + \epsilon\norm{\p_3\Phi,\p_3 \Zv,\Phi\kappa}_{L^2(\R)}^2 + \epsilon\norm{\nabla\phi^\md,\nabla\zeta^\md}_{L^2(\Omega)}^2;
	\end{align*}
	and in the estimate of \cref{I8}, we use the different estimates,
	\begin{align*}
		\norm{\p_3^2 F, \p_3^2 \Gv}_{L^2(\R)} \norm{\p_3^2 \Phi,\p_3^2\Zv}_{L^2(\R)} & \lesssim \epsilon (t+1)^{-1} \norm{\p_3^2 \Phi,\p_3^2\Zv}_{L^2(\R)}^2 + \epsilon (t+1)^{-\frac{5}{2}}, \\
		\norm{\p_3 \Qv_2, \p_3 \Qv_4}_{L^2(\Omega)} \norm{\p_3^3 \Zv}_{H^1(\R)} &  \lesssim \epsilon \norm{\p_3^3 \Zv}_{H^1(\R)}^2 + \epsilon (t+1)^{-2} \norm{\phi,\psi}_{L^2(\Omega)}^2 \\
		&   \quad + \epsilon (t+1)^{-1} \norm{\nabla \phi, \nabla \psi}_{L^2(\Omega)}^2.
	\end{align*}
%	where $ I_{8,1} $ and $ I_{8,2} $ are the same as in \cref{I8-12}, 
%The remaining proofs are exactly the same as in \cref{Sec-od}.
\end{proof}

\begin{Lem}\label{Lem-md*}
	If $ \Lambda^{-1}, \epsilon $ and $ \nu $ are small enough, then 
	\begin{equation}\label{est-2}
		\begin{aligned}
			& \frac{d}{dt} \Ac_{\neq} + \Bc_{\neq} \leq 0,
		\end{aligned}
	\end{equation}
	where  $ \Ac_\md $ and $ \Bc_\md $ are two energy functionals, satisfying that
	\begin{align*}
		\Ac_{\neq} \sim \norm{\phi^\md, \zeta^\md}_{H^1(\Omega)}^2, 
		%		M_1' \Ac_{1,\sharp} + M_2' \Ac_{2,\sharp} + \Ac_{3,\sharp} & \gtrsim \norm{\phi^\md, \psi^\md}_{H^1(\Omega)}^2 - \e \nu e^{-\alpha t}, 
		\qquad
		\Bc_{\neq} \sim \norm{\nabla \phi^\md}_{L^2(\Omega)}^2 + \norm{\nabla \zeta^\md}_{H^1(\Omega)}^2.
	\end{align*}
\end{Lem}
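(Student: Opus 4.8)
The plan is to reproduce, almost verbatim, the three-step energy argument of Lemmas \ref{Lem-exp-1}--\ref{Lem-exp-3}, exploiting one structural simplification peculiar to the localized problem: the ansatz \cref{ansatz-loc} depends on $x_3$ alone, so the residuals $F$ and $\Gv$ in \cref{equ-ansatz} carry no transverse oscillation and the forcing terms $f_0,\gv$ of the perturbed system are functions of $x_3$ only; hence $f_0^\md\equiv 0$ and $\gv^\md\equiv 0$. Consequently the system for $(\phi^\md,\zeta^\md)$ is the localized counterpart of \cref{equ-md*} in which the inhomogeneous contributions $-f_0^\md$ and $\uv^\vs f_0^\md-\gv^\md$ both vanish, leaving
\begin{equation*}
	\begin{cases}
		\p_t \phi^\md + \rhob \dv \zeta^\md + \uv^\vs \cdot \nabla\phi^\md = - \dv \er_3, \\
		\rhob \p_t\zeta^\md + \rhob \uv^\vs \dnab \zeta^\md + p'(\rhob) \nabla\phi^\md + \p_t \uv^\vs \phi^\md - \mu \lap \zeta^\md - (\mu+\lambda) \nabla\dv \zeta^\md = \er_4 - \p_t \er_3,
	\end{cases}
\end{equation*}
with $\er_3,\er_4,\er_{1,i},n_2$ given as in \cref{r-3,r-1-2}. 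Since this system is homogeneous modulo the quadratic remainders, the conclusion is the clean inequality $\frac{d}{dt}\Ac_\neq+\Bc_\neq\leq 0$, with no surviving source.

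First I would record the remainder bounds replacing \cref{est-r-3,r-1-est,r-2-est,est-r-4,est-t-r3}: they hold in the same form, with the exponentially small factors $\e e^{-\alpha t}$ now replaced by the diffusion-wave contributions $\rhot-\rhob$ and $\uvt-\uv^\vs$ (of size $\epsilon$, decaying like $(t+\Lambda)^{-\frac{1}{2}}$) read off from \cref{ansatz-loc-1}. Since $\rhot-\rhob$ and $\uvt-\uv^\vs$ depend only on $x_3$, each term of $\er_3$ is the product of one of those factors, or of a perturbation norm bounded by $\nu$, with $\phi^\md$ or $\zeta^\md$ (possibly differentiated); this gives $\norm{\er_4,\nabla\er_3,\dv\er_3}_{L^2(\Omega)}+\sum_i\norm{\er_{1,i}}_{H^1(\Omega)}+\norm{n_2}_{H^1(\Omega)}\lesssim(\epsilon+\nu)\norm{\nabla\phi^\md,\nabla\zeta^\md}_{L^2(\Omega)}$, and, bootstrapping through the evolution equation exactly as in the proof of \cref{est-t-r3} (using $\norm{\p_t\phi^\od,\p_t\zeta^\od}_{L^\infty(\R)}\lesssim\nu+\epsilon(t+\Lambda)^{-\frac{1}{2}}$, obtained from \cref{equ-phizeta} and Sobolev embedding), $\norm{\p_t\er_3}_{L^2(\Omega)}\lesssim(\epsilon+\nu)\norm{\nabla\phi^\md,\nabla^2\zeta^\md}_{L^2(\Omega)}$. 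The point is that every remainder is absorbable into $\Bc_\neq$ once $\Lambda^{-1},\epsilon,\nu$ are small, with no algebraically decaying leftover.

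Then I would carry out the three weighted estimates. Step 1 (as in Lemma \ref{Lem-exp-1}): multiply the first equation by $\frac{p'(\rhob)}{\rhob}\phi^\md$ and the second by $\zeta^\md$, add and integrate over $\Omega$, using $\dv\uv^\vs=0$ to delete the transport terms, $\norm{\p_t\uv^\vs}_{L^\infty(\Omega)}\lesssim\Lambda^{-1}$ for the term $\p_t\uv^\vs\cdot\zeta^\md\phi^\md$, the Poincar\'{e} inequality \cref{poincare} for $\norm{\phi^\md,\zeta^\md}_{L^2(\Omega)}$, and the remainder bounds above, to obtain $\frac{d}{dt}\Ac_{1,\sharp}+c_{1,\sharp}\Bc_{1,\sharp}\lesssim(\Lambda^{-1}+\epsilon+\nu)\norm{\nabla\phi^\md,\nabla^2\zeta^\md}^2_{L^2(\Omega)}$. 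Step 2 (as in Lemma \ref{Lem-exp-2}): multiply $\nabla$ of the first equation by $\frac{\mut}{\rhob}\nabla\phi^\md$ and the second by $\nabla\phi^\md$, add, use the curl identity \cref{fact} to dispose of $\nabla\phi^\md\cdot(\lap\zeta^\md-\nabla\dv\zeta^\md)$ and the bound \cref{ineq-md-1} for $\nabla\phi^\md\cdot\nabla\dv\er_3$, recovering $p'(\rhob)\norm{\nabla\phi^\md}^2_{L^2(\Omega)}$ at the cost of $\Bc_{1,\sharp}$ and absorbable remainders. Step 3 (as in Lemma \ref{Lem-exp-3}): multiply the second equation by $-\lap\zeta^\md$ to recover $\norm{\nabla^2\zeta^\md}^2_{L^2(\Omega)}$ at the cost of $\Bc_{1,\sharp}+\Bc_{2,\sharp}$ and absorbables. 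Finally, forming $\Ac_\neq:=\sum_{i=1}^{3}M_{i,\neq}\Ac_{i,\sharp}$ with generic weights $M_{i,\neq}\gg1$ chosen so the dissipative terms dominate (exactly as for \cref{est-2*}) and invoking \cref{poincare} once more to get $\Ac_\neq\lesssim\Bc_\neq$, all the error terms --- each carrying a factor $\Lambda^{-1}+\epsilon+\nu$ --- are swallowed and $\frac{d}{dt}\Ac_\neq+\Bc_\neq\leq0$ follows.

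I expect the main obstacle, exactly as in the periodic case, to be the term $\p_t\er_3$ paired against $\lap\zeta^\md$ in Step 3: it cannot be bounded directly and must be closed by the bootstrap through the evolution equation, which is legitimate only because the genuine forcing of the non-zero-mode system has already been shown to vanish. A secondary point requiring care is to check that the diffusion-wave factors $\rhot-\rhob$, $\uvt-\uv^\vs$ and the viscous-wave derivatives $\nabla\uv^\vs,\p_t\uv^\vs$ always enter with genuinely small coefficients (of size $\epsilon$ or $\Lambda^{-1}$) despite the possibly large amplitude of the vortex sheet --- which is what the construction \cref{ansatz-loc-1} together with the largeness of $\Lambda$ provides.
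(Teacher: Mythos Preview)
Your proposal is correct and follows essentially the same approach as the paper. The paper's own proof is a two-line sketch: it observes that the ansatz \cref{ansatz-loc} depends only on $x_3$, so $(\p_3 F)^\md=0$ and $(\p_3\Gv)^\md=0$, and then defers entirely to the argument of \cref{Sec-md}; you have correctly identified this structural simplification and carried out the three-step energy argument of Lemmas~\ref{Lem-exp-1}--\ref{Lem-exp-3} with the appropriate bookkeeping (the $\e\nu e^{-\alpha t}$ source terms disappearing because $(\rhot-\rhob)^\md=(\uvt-\uv^\vs)^\md=0$ in the localized case, so every remainder carries a factor of $\epsilon+\nu$ against $\norm{\nabla\phi^\md,\nabla\zeta^\md}_{L^2}$ and is absorbable).
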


\begin{proof}
	Note that the ansatz \cref{ansatz-loc} depends only on $x_3$, and so are the error terms $F$ and $\Gv$, which implies that $ (\p_3 F)^\md = 0 $ and $ (\p_3 \Gv)^\md =0 $. Then the proof is similar as in \cref{Sec-md}.
\end{proof}

\begin{Lem}
	If $ \Lambda^{-1}, \epsilon $ and $ \nu $ are small enough, then 
	\begin{equation}\label{est-tor}
		\begin{aligned}
			\frac{d}{dt} \Ac + c \Bc & \lesssim  \norm{\p_3^3 \Zv}_{L^2(\R)}^2 + \norm{ \nabla^2 \zeta^\md}_{L^2(\Omega)}^2  + (t+1)^{-1} \norm{\p_3^2\Phi,\p_3^2 \Zv}_{L^2(\R)}^2 \\
			& \quad + (t+1)^{-2} \norm{\p_3 \Phi, \p_3 \Zv, \Phi \kappa}_{L^2(\R)}^2 + \epsilon (t+1)^{-\frac{7}{2}},
		\end{aligned}
	\end{equation}
	where $ \Ac $ and $ \Bc $ are two energy functionals, satisfying that
	\begin{equation}
		\pm \Ac \lesssim \pm \norm{\nabla^2 \phi, \nabla^2 \zeta}_{H^1(\Omega)}^2 + \norm{\nabla\zeta}_{L^2(\Omega)}^2, \qquad \Bc \sim \norm{\nabla^2 \phi, \nabla^3 \zeta}_{H^1(\Omega)}^2.
	\end{equation}
\end{Lem}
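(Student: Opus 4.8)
The plan is to repeat the energy estimates of \cref{Lem-est7}--\cref{Lem-est10} from \cref{Sec-orn}, applied to the perturbed system \cref{equ-phizeta} (with the localized ansatz \cref{ansatz-loc} in place of \cref{ansatz}), keeping track only of the terms that differ from the periodic case. The energy functional $\Ac$ will be the same kind of linear combination as the one built after \cref{Lem-est7}--\cref{Lem-est10}, namely $\Ac = \sum_{i=1}^4 M_i \Ac_i$ with the $\Ac_i$ from \cref{AB1}, \cref{AB-2}\,(momentum analogue), \cref{est9} and \cref{est10}; likewise $\Bc = \sum_{i=1}^4 c_i M_i \Bc_i \sim \norm{\nabla^2\phi,\nabla^3\zeta}_{H^1(\Omega)}^2$, with the $M_i$ chosen large in increasing order to absorb cross terms exactly as in \cref{est-3}. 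The equivalences $\pm\Ac \lesssim \pm \norm{\nabla^2\phi,\nabla^2\zeta}_{H^1(\Omega)}^2 + \norm{\nabla\zeta}_{L^2(\Omega)}^2$ and $\Bc\sim\norm{\nabla^2\phi,\nabla^3\zeta}_{H^1(\Omega)}^2$ then follow verbatim from \cref{est-3a}.

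First I would carry out the four differential inequalities for $\Ac_1,\dots,\Ac_4$ exactly as in \cref{Lem-est7}--\cref{Lem-est10}. The structural terms (transport, pressure, viscous, and the self-interaction nonlinearities $\norm{\nabla\phi}_{L^4}\norm{\nabla^2\zeta}_{L^4}$, $\norm{\nabla^2\phi}_{L^2}\norm{\nabla\zeta}_{L^\infty}$, etc.) are controlled by the Gagliardo--Nirenberg inequalities \cref{Lem-GN} and the smallness of $\nu$ in precisely the same way, producing $\nu\norm{\nabla^2\phi,\nabla^3\zeta}_{H^1(\Omega)}^2$ plus lower-order $L^2$ terms. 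The only genuine change is in the terms involving the ansatz: the derivatives of the background, $\nabla^j(\rhot,\uvt)-\nabla^j(\rho^\vs,\uv^\vs)$ and the source terms $f_0,\gv$, which in \cref{Sec-orn} decayed exponentially via \cref{exp-diff,exp-diff-u,exp-f-g}, now decay only algebraically. Here one uses $\p_3 F,\p_3\Gv$ and $\nabla^j(\rhot,\uvt)-\nabla^j(\rho^\vs,\uv^\vs)$, all bounded by $\epsilon(t+\Lambda)^{-\cdots}\Rc$ through \cref{bdd-F-G} and \cref{ansatz-loc-1}, so wherever \cref{Sec-orn} produced $\e\nu\Lambda^{1/4}e^{-\alpha t}$ one now gets a term like $\epsilon(t+\Lambda)^{-7/2}$ after using $\norm{\Rc}_{L^\infty}\lesssim 1$ and the $L^2$-weighted Cauchy--Schwarz against the fast-decaying Gaussian $\Rc$; the extra $(t+\Lambda)^{-1/2}$-type factors combine with the three derivatives to give the $-7/2$ power. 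Then I would form $\frac{d}{dt}\Ac + c\Bc$ by the same weighting, use \cref{Lem-poin} and the relations \cref{rel-0,rel-1,rel-2,rel-4} (valid with $\e=0$ here) to convert $\norm{\nabla^j\zeta}_{L^2(\Omega)}$ (zero-mode part) and $\norm{\nabla^3\psi}_{L^2(\Omega)}$ into the anti-derivative quantities $\norm{\p_3^j\Phi,\p_3^j\Zv,\Phi\kappa}_{L^2(\R)}$, yielding exactly the right-hand side of \cref{est-tor}.

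The main obstacle I expect is bookkeeping the decay bounds for the ansatz-related remainders: in the localized case $(\rhot,\mvt)-(\rho^\vs,\mv^\vs)$ does not decay in $t$ at all (the diffusion waves only spread), so one cannot simply discard those contributions as in \cref{Sec-orn}; instead one must exploit the Gaussian localization $\Rc$ of their derivatives together with the algebraic time-decay of $\p_3^j F,\p_3^j\Gv$ from \cref{bdd-F-G}, and confirm that the total power of $(t+\Lambda)^{-1}$ produced is at least $7/2$ so that the remainder is time-integrable with room to spare. A secondary, purely technical point is that \cref{est-t-r3}-type estimates and the $\norm{\p_t\phi,\p_t\zeta}$ bounds used to close the cross terms in $\Ac_1$ must be re-derived with the localized source terms; but since those only appear multiplied by small factors, they cause no loss. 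Once \cref{est-tor} is established, it is fed, together with \cref{A0,A1&2,A3} and \cref{est-2}, into the weighted time-integration scheme (multiplying by powers of $(t+1)$) to extract the decay rate $(t+1)^{-1/2}$ for the zero mode, completing the proof of \cref{Thm-loc}.
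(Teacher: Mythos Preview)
Your proposal is correct and follows essentially the same approach as the paper: the paper's proof consists only of noting the key algebraic decay bound $\abs{\p_3^j \rhot} + \abs{\p_3^j \mt_3} + \abs{\p_3^j (\mvt_\perp - \mv_\perp^\vs)} \lesssim \epsilon (t+\Lambda)^{-\frac{j+1}{2}}$ for $j\geq 1$ and then saying the rest is ``similar to \cref{Sec-orn}'', which is precisely the scheme you outline. One small correction: you write that $(\rhot,\mvt)-(\rho^\vs,\mv^\vs)$ ``does not decay in $t$ at all'', but in fact the diffusion waves $\vartheta,\vartheta_\pm$ decay like $(t+\Lambda)^{-1/2}$ in $L^\infty$; the point (which you use correctly anyway) is that each $x_3$-derivative gains an extra $(t+\Lambda)^{-1/2}$, and it is only these derivatives of the ansatz that enter the higher-order estimates.
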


\begin{proof}
	Note that
	\begin{align*}
		\abs{\p_3^j \rhot} + \abs{\p_3^j \mt_3} + \abs{\p_3^j (\mvt_\perp - \mv_\perp^\vs)} \lesssim \epsilon (t+\Lambda)^{-\frac{j+1}{2}}, \qquad j=1,2,\cdots.
	\end{align*}
	The proof is similar to \cref{Sec-orn}.
\end{proof}

%More precisely, the following inequality holds,
%\begin{align}
%	&\begin{aligned}\label{est-tor}
%		& \frac{d}{dt} \Big( \sum_{i=1}^{4} M_{i} \Ac_{i} \Big) + \frac{1}{4} \Big(\sum_{i=1}^{4} c_{i} M_{i} \Bc_{i} \Big) \\
%		& \qquad \lesssim (t+\Lambda)^{-1} \norm{\p_3^2 \Phi, \p_3^2 \Zv }_{L^2(\R)}^2  + (t+\Lambda)^{-2} \norm{\p_3 \Phi, \p_3 \Zv, \Phi \kappa }_{L^2(\R)}^2 \\
%		& \qquad \quad +  \norm{\p_3^3 \Zv}_{L^2(\R)}^2 + \norm{ \nabla^2 \zeta^\md}_{L^2(\Omega)}^2 + \varepsilon(1+t)^{-\frac{5}{2}}.
%	\end{aligned}
%\end{align}

%Multiplying \cref{est-od} by $(1+t)^{-C_0\varepsilon}$ and using the Gronwall's inequality yield,
%\begin{align}
%	\begin{aligned}
%		&\mathcal{A}_{4,\flat}\lesssim \left(\mathcal{A}_{4,\flat}(0)+\varepsilon+ \norm{ \phi_0,\psi_0 }_{H^1(\Omega)}^2  \right)(1+t)^{\frac{1}{2}},\\
%		& \int_0^T\mathcal{B}_{4,\flat}\lesssim \left(\mathcal{A}_{4,\flat}(0)+\varepsilon+ \norm{ \phi_0,\psi_0 }_{H^1(\Omega)}^2  \right)(1+t)^{\frac{1}{2}}.
%	\end{aligned}
%\end{align} 

Now we show the decay rate.
First, it follows from \cref{Lem-md*} that
\begin{equation}\label{est-md}
	\begin{aligned}
		\norm{\phi^\md,\zeta^\md}_{H^1(\Omega)} \lesssim \epsilon e^{-\alpha_1 t}.
	\end{aligned}
\end{equation}
This, together with \cref{A0}, yields that
\begin{equation}\label{est-od}
	\begin{aligned}
		\frac{d}{dt}  \Ac_{0,\od} + c_{\flat}\Bc_{0,\flat} & \leq C_0 \epsilon (t+1)^{-1} \Ac_{0,\od} + C_0 \epsilon (t+1)^{-\frac{1}{2}}.
	\end{aligned}
\end{equation}
By Gronwall's inequality, multiplying $ (t+1)^{-C_0\epsilon} $ on \cref{est-od} yields that if $\epsilon$ is suitably small, then
\begin{equation}\label{AB-od}
	\begin{aligned}
		& \Ac_{0,\od} + \int_0^t \Bc_{0,\od} d\tau \lesssim \epsilon (t+1)^{\frac{1}{2}}.
	\end{aligned}
\end{equation}

Similar to \cref{est-3*}, one can choose a linear combination of $ \Ac_{i,\od} $ for $ i=1,2,3, $ $ \Ac_\md $ and $ \Ac $ (resp. $ \Bc_{i,\od} $ for $ i=1,2,3, $ $ \Bc_\md $ and $ \Bc $), denoted by $ \Ec_1 $ (resp. $ \Dc_1 $), such that
\begin{equation}
	\frac{d}{dt} \Ec_1 + \Dc_1 \lesssim (t+1)^{-1} \norm{\p_3 \Phi, \p_3 \Zv, \Phi\kappa}_{L^2(\R)}^2 + \epsilon (t+1)^{-\frac{3}{2}}.
\end{equation}
and 
\begin{equation}
	\begin{aligned}
		\pm \Ec_1 & \lesssim \pm \left( \norm{\phi}_{H^3(\Omega)}^2 + \norm{\p_3 \Zv}_{H^1(\R)}^2 +  \norm{\zeta^\md}_{H^1(\Omega)}^2 + \norm{\nabla^2 \zeta}_{H^1(\Omega)}^2 \right) \\
		& \quad + (t+1)^{-1} \norm{\Phi\kappa}_{L^2(\R)}^2, \\
		%	\Ac_1 & \gtrsim \norm{\phi, \psi}_{H^1(\Omega)}^2 - \e e^{-\alpha t} - \delta^2 (t+1)^{-1} \norm{\Phi}_{L^2(\R)}^2, 
		\Dc_1 & \sim \norm{\nabla \phi}_{H^2(\Omega)}^2 + \norm{\p_3^2 \Zv}_{H^1(\R)}^2 + \norm{\nabla \zeta^\md}_{H^1(\Omega)}^2 + \norm{\nabla^3 \zeta}_{H^1(\Omega)}^2.
	\end{aligned}
\end{equation}
Then it holds that
\begin{equation}
	(t+1) \Ec_1 + \int_0^t (\tau+1) \Dc_1 d\tau \lesssim \Ec_1(0) +  \int_0^t \Bc_{0,\od} d\tau + \epsilon (t+1)^{\frac{1}{2}},
\end{equation}
which, together with \cref{AB-od}, yields that
\begin{align}\label{1}
	\mathcal{E}_1 \lesssim \epsilon (t+1)^{-\frac{1}{2}},\qquad \int_0^t(\tau+1)\mathcal{D}_1d\tau\lesssim \epsilon (t+1)^{\frac{1}{2}}.
\end{align}

Also, similar to \cref{est-6*,est-6**}, one can choose a linear combination of $ \Ac_{3,\od} $ $ \Ac_\md $ and $ \Ac $ (resp. $ \Bc_{3,\od} $, $ \Bc_\md $ and $ \Bc $), denoted by $ \Ec_2 $ (resp. $ \Dc_2 $), such that
\begin{equation}
	\begin{aligned}
		\frac{d}{dt} \Ec_2 + \Dc_2 & \lesssim  (t+1)^{-1} \norm{\p_3^2 \Phi, \p_3^2\Zv}_{L^2(\R)}^2  \\
		& \quad +  (t+1)^{-2} \norm{\p_3 \Phi, \p_3 \Zv, \Phi \kappa}_{L^2(\R)}^2 + \epsilon (t+1)^{-\frac{5}{2}},
	\end{aligned}
\end{equation}
and
\begin{equation}
	\begin{aligned}
		\pm\Ec_2 & \lesssim \pm \Big( \norm{\p_3^2 \Phi, \p_3^2 \Zv}_{L^2(\R)}^2 + \norm{\nabla^2 \phi, \nabla^2 \zeta}_{H^1(\Omega)}^2 \Big)  + (t+1)^{-1} \norm{\p_3 \Phi, \Phi\kappa}_{L^2(\R)}^2, \\
		%		& \lesssim \Dc_3 + \delta (t+1)^{-1}  \int_\R \Phi^2 \kappa^2 dx_3 + \delta (t+1)^{-2} \norm{\p_3 \Phi}_{L^2(\R)}^2 + \e e^{-\alpha t}, \\
		\Dc_2 & \sim  \norm{\p_3^3 \Zv}_{L^2(\R)}^2  + \norm{\nabla^2 \phi,\nabla^3 \zeta}_{H^1(\Omega)}^2.
	\end{aligned}
\end{equation}
Note that $\Ec_2 \lesssim \Dc_1 + (t+1)^{-2}  \norm{\p_3\Phi, \Phi\kappa}_{L^2(\R)}^2. $
Then one has
\begin{align}\label{2}
	\begin{aligned}
		& (t+1)^2 \mathcal{E}_2 + \int_0^t(\tau+1)^2\mathcal{D}_2 d \tau \\ 
		& \qquad \lesssim \int_0^t (\tau+1) \Dc_1 d\tau + \int_0^t \Bc_{0,\od} d\tau + \epsilon (t+1)^{\frac{1}{2}} \\
		& \qquad \lesssim \epsilon (t+1)^{\frac{1}{2}},
	\end{aligned}
\end{align} 
which yields that
\begin{align}
	\mathcal{E}_2\lesssim \epsilon(t+1 )^{-\frac{3}{2}}.
\end{align}
Then combining \cref{AB-od,1}, one has that
\begin{align*}
	\norm{\Phi,\Psi}_{L^\infty(\R)} \lesssim \norm{\p_3\Phi,\p_3\Psi}_{L^2(\R)}^{\frac{1}{2}} \norm{\Phi,\Psi}_{L^2(\R)}^{\frac{1}{2}} \lesssim \Ec_1^{\frac{1}{4}} \Ac_{0,\od}^{\frac{1}{4}} \lesssim 1.
\end{align*}
Using \cref{rel-0,rel-1} with $\e=0$ yields that
\begin{align*}
	\norm{\phi^\od, \zeta^\od}_{L^2(\R)}^2 
%	& \lesssim \norm{\p_3 \Phi,\p_3 \Zv, \Phi \kappa}_{L^2(\R)}^2 + \norm{\nabla \phi^\md}_{L^2(\Omega)}^2  \\
	\lesssim \Ec_1 + (t+\Lambda)^{-1} \Ac_{0,\od} \lesssim \epsilon (t+1)^{-\frac{1}{2}},
\end{align*}
and
\begin{align*}
	\norm{\p_3 \phi^\od, \p_3 \zeta^\od}_{L^2(\R)}^2 
%	& \lesssim \norm{\p_3^2\Phi, \p_3^2 \Zv}_{L^2(\R)}^2 + (t+1)^{-1} \norm{\p_3 \Phi, \p_3 \Zv, \Phi\kappa}_{L^2(\R)}^2  + \norm{\nabla\phi^\md, \nabla\zeta^\md}_{L^2(\Omega)}^2 \\
	\lesssim \Ec_2 + (t+1)^{-1} \Ec_1 + (t+1)^{-2} \Ac_{0,\od} \lesssim \epsilon (t+1)^{-\frac{3}{2}}.
\end{align*}
Finally, one has that
\begin{align}\label{3}
	\norm{\phi^\od, \zeta^\od}_{L^\infty(\R)} & \lesssim \norm{\p_3\big(\phi^\od, \zeta^\od\big)}_{L^2(\R)}^{\frac{1}{2}} \norm{\phi^\od, \zeta^\od}_{L^2(\R)}^{\frac{1}{2}} \lesssim \epsilon^{\frac{1}{2}} (t+1)^{-\frac{1}{2}}.
\end{align}
Note that the diffusion waves propagating along the transverse characteristics, i.e. $ \vartheta, \vartheta_{\pm} $ in \cref{ansatz-loc}, also decay at the rate $ (t+1)^{-\frac{1}{2}}. $ Hence, one can obtain \cref{behavior-loc}$ _1 $ immediately. The proof of \cref{Thm-loc} is complete.

\vspace{.3cm}

\begin{appendices}

\section{Existence and Decay of Shift Curve} \label{Sec-app1}
\begin{proof}[Proof of \cref{Lem-shift}]
	If $\e$ is small, then it follows from \cref{Lem-per} that the periodic solutions $ (\rho_\pm, \mv_\pm) $ to the problem \cref{NS}, \cref{ic-per} belong to the $ C(0,+\infty; W^{4,\infty}(\Torus^3)) $ space.
	Then the existence and uniqueness of \cref{ode-shift} can be derived from the Cauchy-Lipschitz theorem.
	It remains to show \cref{exp-sigma}. In fact, using \cref{exp-per} and Cauchy's inequality, one has that $ \rho_-^\od = \rhob + O(1) \e e^{-\alpha t} $ and $ m_{3-}^\od = O(1) \e e^{-\alpha t}. $
%	Note that the function $ \theta(x_3,t)= \Theta\big(\frac{x_3}{\srt}\big) $, which solves \cref{equ-theta}, is odd with respect to $ x_3, $ then so is the function $ x_3 \Theta'\big(\frac{x_3}{\sqrt{1+t}}\big) $.
%	Then it holds that
	Then it follows from \cref{DN} that
	\begin{equation}\label{D}
		\mathfrak{D} = 2 \rhob + O(1) \e e^{-\alpha t},
	\end{equation}
	and
	\begin{equation}\label{N}
		\mathfrak{N} =  -\frac{\rhob}{2\srt} \int_\R \frac{x_3-\sigma}{ t+\Lambda} \Theta'\Big(\frac{x_3-\sigma}{\srt}\Big) dx_3  + O(1) \e e^{-\alpha t} = O(1) \e e^{-\alpha t}.
	\end{equation}
	Here in \cref{N} we have used the fact that $ \xi \Theta'(\xi) $ is odd with respect to $ \xi $ (see \cref{Lem-theta}).
	Thus, it holds that $\abs{\sigma'(t)} \lesssim \e e^{-\alpha t}$.
	
	Now we compute the limit of $ \sigma(t) $ as $ t\to +\infty. $ For $ i=1 $ or $ 2, $ it follows from \cref{equ-t}\textsubscript{2} that
	\begin{equation}\label{equ-mt-i}
		\p_t \mt_i^\od + \p_3 \big[ (\ut_3 \mt_i)^\od \big] = \mu \p_3^2 \ut_i^\od + g_i^\od.
	\end{equation}
	For any fixed $ t>0, $ integer $ N>0 $ and constant $ a\in \Torus=(0,1) $, define the bounded domain
	\begin{align*}
		\Omega_{a,N}(t) := \{ (x_3,\tau): \sigma(\tau) - N +a < x_3 < \sigma(\tau) +N +a, \ 0<\tau<t \}.
	\end{align*}
%	Recall that $\sigma(0)=\sigma_0.$
	Integrating \cref{equ-mt-i} over $ B_{a,N}(t) $ yields that
	\begin{equation}\label{eq-1}
		\begin{aligned}
			\int_{\Omega_{a,N}(t)} g_{i}^\od(x_3,\tau) \ dx_3d\tau & = \int_{\sigma(t)-N+a}^{\sigma(t)+N+a} \mt_i^\od(x_3,t) d x_3 - \int_{-N+a}^{N+a} \mt_i^\od(x_3,0) d x_3 \\
			& \quad - \int_0^t \big[ (\ut_3 \mt_i)^\od - \mu \p_3 \ut_i^\od - \sigma'(\tau) \mt_i^\od \big](\sigma(\tau)+N+a, \tau) d\tau \\
			& \quad + \int_0^t \big[ (\ut_3 \mt_i)^\od - \mu \p_3 \ut_i^\od - \sigma'(\tau) \mt_i^\od \big](\sigma(\tau)-N+a, \tau) d\tau \\
			& := I_1 + I_2 + I_3.
		\end{aligned}
	\end{equation}
	Denote $ \xi = \frac{x_3}{\srt}. $ Then it holds that
	\begin{equation}\label{I-1}
		\begin{aligned}
			I_1 & = \frac{1}{2} \int_{-N+a}^{N+a} \big[ w_{i-}^\od(x_3+\sigma,t) (1-\Theta(\xi)) + w_{i+}^\od(x_3+\sigma,t) (1+\Theta(\xi)) \big] dx_3 \\
			& \quad + \mb_i \Big[ \int_{-N+a}^{N+a} \Theta(\xi) dx_3 - \int_{-N+a}^{N+a} \Theta\Big(\frac{x_3}{\sqrt{\Lambda}}\Big) dx_3\Big] \\
			& := I_{1,1} + I_{1,2}.
		\end{aligned}
	\end{equation}
	Since $ w_{i\pm}(\cdot,t) $ have zero averages on $ \Torus^3 $, then
	\begin{align*}
		I_{1,1} & = \frac{1}{2} \Big[ \int_a^{N+a} w_{i-}^\od(x_3+\sigma,t) (1-\Theta(\xi)) dx_3 - \int_{-N+a}^a w_{i-}^\od(x_3+\sigma,t) (1+\Theta(\xi)) dx_3 \\
		& \qquad + \int_{-N+a}^a w_{i+}^\od(x_3+\sigma,t) (1+\Theta(\xi)) dx_3 - \int_a^{N+a} w_{i+}^\od(x_3+\sigma,t) (1-\Theta(\xi)) dx_3 \Big].
	\end{align*}
	By \cref{coin-pm}\textsubscript{2}, one has that $ w_{i+}^\od \equiv w_{i-}^\od + 2 \ub_i v_-^\od. $
	Thus, it holds that
	\begin{align*}
		I_{1,1} = \ub_i \Big[ \int_{-N+a}^a v_-^\od(x_3+\sigma,t) (1+\Theta(\xi)) dx_3 - \int_a^{N+a} v_-^\od(x_3+\sigma,t) (1-\Theta(\xi)) dx_3 \Big],
	\end{align*}
	which yields that
	\begin{align}
		\lim_{N\to +\infty} \int_0^1 I_{1,1} da & = \ub_i \int_0^1 \Big[ \int_{-\infty}^a v_-^\od(x_3+\sigma,t) (1+\Theta(\xi)) dx_3 \notag \\
		& \qquad\qquad\ - \int_a^{+\infty} v_-^\od(x_3+\sigma,t) (1-\Theta(\xi)) dx_3 \Big] da. \label{int-I-1}
	\end{align}
	On the other hand, since $ \Theta(\cdot) $ is an odd function, then the first integral in $I_{1,2}$ satisfies that
	\begin{align*}
		\int_{-N+a}^{N+a} \Theta(\xi) dx_3 & = \int_{N}^{N+a} \Theta(\xi) dx_3 + \int_{-N}^{N} \Theta(\xi) dx_3 - \int_{-N}^{-N+a} \Theta(\xi) dx_3 \\
		& = \int_{N}^{N+a} \Theta(\xi) dx_3 - \int_{-N}^{-N+a} \Theta(\xi) dx_3.
	\end{align*}
	This, together with the dominated convergence theorem, yields that
	\begin{align*}
		\lim_{N\to +\infty} \int_0^1 \int_{-N+a}^{N+a} \Theta(\xi) dx_3 da & = \int_0^1 \Big[  \lim_{N\to +\infty} \int_{N}^{N+a} \Theta(\xi) dx_3 - \lim_{N\to +\infty} \int_{-N}^{-N+a} \Theta(\xi) dx_3 \Big] da \\
		& = 1.
	\end{align*}
	Similarly, one can show that
	\begin{align*}
		\lim_{N\to +\infty} \int_0^1 \int_{-N+a}^{N+a} \Theta\Big(\frac{x_3}{\sqrt{\Lambda}}\Big) dx_3 da = 1.
	\end{align*}
	Thus, it holds that 
	\begin{equation}\label{int-I-12}
		\lim_{N\to +\infty} \int_0^1 I_{1,2} da = 0.
	\end{equation}

	For $ I_2, $ it holds that
	\begin{align*}
		I_2 & = -\int_0^t \big[ (u_{3+} m_{i+})^\od - \mu \p_3 u_{i+}^\od - \sigma'(t) m_{i+}^\od \big](\sigma(\tau)+a, \tau) d\tau \\
		& \quad\ - \int_0^t I_{2,1}^\od(\sigma(\tau)+N+a, \tau) d\tau,
	\end{align*}
	where 
	$I_{2,1} = \big[\ut_3 \mt_i-u_{3+}m_{i+} - \mu \p_3 (\ut_i - u_{i+}) - \sigma'(\tau) (\mt_i-m_{i+})\big], $ which satisfies that $\abs{I_{2,1}^\od(\sigma(\tau)+N+a, \tau)} \lesssim 1-\Theta(\frac{N+a}{\sqrt{\tau+\Lambda}})$. Then one has that
	\begin{align*}
		\lim_{N\to +\infty} \int_0^1 I_2 da & = -\int_0^t \int_0^1 (u_{3+} m_{i+})^\od (\sigma(\tau)+a, \tau) da d\tau + \mb_i(\sigma(t)-\sigma(0)).
	\end{align*}
	Similarly, one can prove that
	\begin{align*}
		\lim_{N\to +\infty} \int_0^1 I_3 da & = \int_0^t \int_0^1 (u_{3-} m_{i-})^\od (\sigma(\tau)+a, \tau) da d\tau + \mb_i(\sigma(t)-\sigma(0)).
	\end{align*}
	Using \cref{coin-pm}\textsubscript{3} and \cref{ic-sigma}, one has that
	\begin{equation}\label{sum-I2-3}
		\lim_{N\to +\infty} \int_0^1 (I_2+I_3) da = -2 \ub_i \int_0^t \int_0^1 m_{3-}^\od(\sigma(\tau)+a, \tau) da d\tau + 2 \mb_i \sigma(t) =  2 \mb_i \sigma(t)  .
	\end{equation}
	By \cref{F1,f2}, one can prove that $g_i^\od \in L^\infty(0,t; L^1(\R)).$ Then it follows from the dominated convergence theorem and \cref{zero-mass-f} that
	\begin{align*}
		\lim_{N\to +\infty} \int_0^1 \int_{B_{a,N}(t)} g_i^\od(x_3,t) dx_3 dt da = \int_0^t \int_\R g_i^\od(x_3,t) dx_3 dt = \int_0^t \int_\R f_{2,i}^\od(x_3,t) dx_3 dt = 0. 
	\end{align*}
	This, together with \cref{sum-I2-3,int-I-1,int-I-12}, yields that
	\begin{align*}
		\sigma(t) & = \frac{1}{2\rhob} \int_0^1 \Big[ \int_a^{+\infty} v_-^\od(x_3+\sigma,t) (1-\Theta(\xi)) dx_3 - \int_{-\infty}^a v_-^\od(x_3+\sigma,t) (1+\Theta(\xi)) dx_3 \Big] da \\
		& = O(1) \e \Lambda^{\frac{1}{2}} e^{-\alpha t}.
	\end{align*}
	The proof is finished.
\end{proof}

\section{Estimates of Error Terms}\label{Sec-app2}
\begin{proof}[Proof of \cref{Lem-F}]
	For convenience, we use the convention $ A \approx B $ to denote that $ \norm{A-B}_{L^2(\Omega)} \lesssim \e \Lambda^{\frac{1}{4}} e^{-\alpha t}. $
	It follows from \cref{uvt-1} that
	\begin{equation}\label{approx-1}
		\begin{aligned}
			\uvt & \approx \frac{1}{2} \big[ \uv_- (1-\Theta_\sigma) + \uv_+ (1+\Theta_\sigma) \big],
		\end{aligned}
	\end{equation}
	which, together with \cref{ansatz}, yields that
	\begin{align}
		\ut_3 \mvt & \approx \frac{1}{4} \big[ u_{3-} \mv_- (1-\Theta_\sigma)^2 + u_{3+} \mv_+ (1+\Theta_\sigma)^2 \big] \notag \\
		& \approx \frac{1}{2} \big[ u_{3-} \mv_- (1-\Theta_\sigma) + u_{3+} \mv_+ (1+\Theta_\sigma) \big]. \label{approx-2}
	\end{align}
	Similarly, one can verify that
	\begin{align}
		& p(\rhot) - \frac{1}{2} p(\rho_-) (1-\Theta_\sigma) - \frac{1}{2} p(\rho_+) (1+\Theta_\sigma) \notag \\ 
		& \qquad = \frac{1}{2} \int_0^1 p'(\rho_- + r (\rhot-\rho_-)) dr (\rhot-\rho_-) (1-\Theta_\sigma) \notag \\
		& \qquad \quad + \frac{1}{2} \int_0^1 p'(\rho_+ + r (\rhot-\rho_+)) dr (\rhot-\rho_+) (1+\Theta_\sigma) \notag \\
		&  \qquad \approx 0. \label{approx-3}
	\end{align}

	1) 	We now prove the estimate of $ \Gv $. By \cref{zero-mass-f,f=0}, one has that for all $ x_3\in\R $ and $ t\geq 0, $
	\begin{align*}
		G_i(x_3,t) & = F_{1,3i}^\od(x_3,t) + \int_{-\infty}^{x_3} f_{2,i}^\od(y_3,t) dy_3 \\
		& = -F_{1,3i}^\od(x_3,t) - \int_{x_3}^{+\infty} f_{2,i}^\od(y_3,t) dy_3 \quad \text{for }\ i=1,2, \\
		G_3(x_3,t) & = F_{1,33}^\od(x_3,t).
	\end{align*}	
	With the aid of \cref{approx-1,approx-2,approx-3} and the fact that 
	\begin{align*}
		\p_3 \uvt & \approx \frac{1}{2} \big[ \p_3 \uv_-  (1-\Theta_\sigma) + \p_3\uv_+ (1+\Theta_\sigma) \big] + \uvb \p_3 \Theta_\sigma, \\
		\dv \uvt &\approx \frac{1}{2} \big[ \dv \uv_-  (1-\Theta_\sigma) + \dv \uv_+ (1+\Theta_\sigma) \big],
	\end{align*} 
	one can use \cref{F1} to get that 
	\begin{equation}\label{F-13o}
		\Fv_{1,3}^\od \approx -\mu\uvb \p_3 \Theta_\sigma = \frac{-\mu\uvb}{\srt} \Theta'\Big(\frac{x_3-\sigma}{\srt}\Big).
	\end{equation}
	Then one has that $ G_3 = F_{1,33}^\od \approx 0. $
	For $ i=1 $ or $ 2, $ it follows from \cref{f-2-i} that
	\begin{equation}\label{f-2i}
		\int_{\pm\infty}^{x_3} f_{2,i}^\od(y_3,t) dy_3 =  \frac{-\rhob\ub_i}{2\srt} \int_{\pm\infty}^{\frac{x_3-\sigma}{\srt}} \xi \Theta'(\xi) d\xi + R_{i\pm},
	\end{equation}
	where
	\begin{align*}
		R_{i\pm} & = \frac{\ub_i}{\srt} \int_{\pm\infty}^{x_3} \Big[-\rho_-^\od \sigma'(t) -(\rho_-^\od-\rhob) \frac{x_3-\sigma}{2(t+\Lambda)}  + m_{3-}^\od \Big] \Theta'_\sigma dy_3.
		%		R_{i+} & = \frac{\ub_i}{\srt} \int^{+\infty}_{x_3} \big[ -\rho_-^\od \sigma'(t) + m_{3-}^\od \big] \Theta'_\sigma dy_3,
	\end{align*}
	Since $\Theta'>0$, then it holds that $ \abs{R_{i\pm}} \lesssim \e e^{-\alpha t} (1\mp\Theta_\sigma). $
	It follows from \cref{equ-Theta} that
	\begin{align*}
		\mu \Theta'(\xi) = -\frac{\rhob}{2} \int_{-\infty}^{\xi} \eta \Theta'(\eta) d\eta =\frac{\rhob}{2} \int_{\xi}^{+\infty} \eta \Theta'(\eta) d\eta \qquad \forall \xi\in\R.
	\end{align*}
	This, together with \cref{F-13o,f-2i}, yields that
	\begin{align*}
		F_{1,3i}^\od + \int_{-\infty}^{x_3} f_{2,i} dy_3 \approx R_{i-} \andd -F_{1,3i}^\od - \int_{x_3}^{+\infty} f_{2,i}^\od dy_3 \approx R_{i+},
	\end{align*}
	which implies that $ G_i \approx R_{i-} $ and meanwhile, $ G_i \approx R_{i+}. $ Thus, it holds that
	\begin{align*}
		\norm{G_i}_{L^2(\R)} \lesssim \e \Lambda^{\frac{1}{4}} e^{-\alpha t} + \Big(\int_{-\infty}^{\sigma} \abs{R_{i-}}^2 dx_3\Big)^{\frac{1}{2}} + \Big(\int^{+\infty}_{\sigma} \abs{R_{i+}}^2 dx_3\Big)^{\frac{1}{2}} \lesssim \e \Lambda^{\frac{1}{4}} e^{-\alpha t}.
	\end{align*}
	
	2) Then we prove the estimates of $f_0$ and $\gv.$
	Using \cref{f0} and the fact that $ (\rho_+-\rho_-, m_{3+}-m_{3-}) = O(1) \e e^{-\alpha t} $, one can easily obtain that $ \norm{f_0}_{L^2(\Omega)} \lesssim \e \Lambda^{-\frac{1}{4}} e^{-\alpha t}. $
	It follows from \cref{ansatz,uvt-1} that for $ i=1,2,3 $,
	\begin{align*}
		\p_i (\ut_i \mvt) & \approx \frac{1}{2} \big[\p_i (u_{i-} \mv_-) (1-\Theta_\sigma) + \p_i (u_{i+} \mv_+) (1+\Theta_\sigma)\big], \\
		\p_i p(\rhot) & \approx \frac{1}{2} \p_j p(\rho_-) (1-\Theta_\sigma) + \frac{1}{2} \p_i p(\rho_+) (1+\Theta_\sigma), \\
		\p_i^2 \uvt & \approx \frac{1}{2} \p_i^2 \uv_- (1-\Theta_\sigma) + \frac{1}{2} \p_i^2 \uv_+ (1+\Theta_\sigma) + \frac{ \delta_{i3} \uvb}{t+\Lambda}  \Theta''_\sigma, \\
		\p_i \dv \uvt & \approx \frac{1}{2} \p_i \dv \uv_- (1-\Theta_\sigma) + \frac{1}{2} \p_i \dv \uv_+ (1+\Theta_\sigma).
	\end{align*}
	Using the relations above in \cref{F1}, one can get that
	\begin{equation}\label{eq-F}
		\begin{aligned}
			\sum_{i=1}^{3} \p_i \Fv_{1,i} \approx -\frac{\mu \uvb}{t+\Lambda} \Theta''\Big(\frac{x_3-\sigma}{\srt}\Big).
		\end{aligned}
	\end{equation}
	On the other hand, it follows from \cref{f2} that
	\begin{align*}
		\fv_2 \approx -\frac{ \mvb}{2(t+\Lambda)} \cdot \frac{x_3-\sigma}{\srt} \Theta'\Big(\frac{x_3-\sigma}{\srt}\Big),
	\end{align*}
	which, together with \cref{equ-Theta}, yields that $ \gv = \sum_{i=1}^{3} \p_i \Fv_{1,i} + \fv_2 \approx 0. $
	The estimates of the derivatives of $f_0$ and $ \gv $ can be proved similarly. We omit the details.
\end{proof}

\vspace{.2cm}

\section{Perturbations of Velocity and Momentum}\label{Sec-app4}

\begin{proof}[Proof of \cref{Lem-rel}]
	1) We first prove \cref{rel-0,rel-1,rel-2}. Note that 
	\begin{equation}\label{zeta-1}
		\zeta^\od = \frac{1}{\rhob} \big( \p_3 \Zv + \p_3 \uv^\vs \Phi\big) + \Big[\Big(\frac{1}{\rhot}-\frac{1}{\rhob}\Big) \psi \Big]^\od - \Big[\Big(\frac{\uvt}{\rhot}-\frac{\uv^\vs}{\rhob}\Big) \phi \Big]^\od - \Big(\frac{1}{\rhot} \phi\zeta\Big)^\od.
	\end{equation}
	Then it holds that
	\begin{align}
		 \pm\norm{\zeta^\od}_{L^2(\R)} & \lesssim \pm \norm{\p_3 \Zv}_{L^2(\R)} +  \norm{\Phi \kappa}_{L^2(\R)} + \e \nu e^{-\alpha t} + \nu \norm{\zeta}_{L^2(\Omega)}, \notag
	\end{align}
	which yields \cref{rel-0} directly.
	Similarly, one has that
	\begin{align*}
	 \pm \norm{\p_3 \zeta^\od}_{L^2(\Omega)} & \lesssim \pm \norm{\p_3^2 \Zv}_{L^2(\Omega)} + (t+\Lambda)^{-\frac{1}{2}} \norm{\p_3 \Phi, \Phi \kappa}_{L^2(\Omega)}  \\
		& \quad  + \e \nu e^{-\alpha t} + \nu \norm{\p_3\phi,\p_3 \zeta}_{L^2(\Omega)},
	\end{align*}
	which gives \cref{rel-1}. Moreover, it holds that
	\begin{align*}
		\pm \norm{\p_3^2 \zeta^\od}_{L^2(\R)}
		& \lesssim \pm \norm{\p_3^3 \Zv}_{L^2(\R)} + (t+\Lambda)^{-\frac{1}{2}} \norm{\p_3^2 \Phi}_{L^2(\R)} + (t+\Lambda)^{-1} \norm{\p_3 \Phi, \Phi \kappa}_{L^2(\R)} \\
		& \quad  + \e \nu e^{-\alpha t} + \norm{\nabla \phi}_{L^4(\Omega)} \norm{\nabla\zeta}_{L^4(\Omega)} + \nu \norm{\nabla^2 \phi, \nabla^2 \zeta}_{L^2(\Omega)},
	\end{align*}
	which, together with \cref{tool4}, yields \cref{rel-2}.
	
	\vspace{.1cm}
	
	2) Then we prove \cref{rel-3,rel-4}. It follows from \cref{poincare}, \cref{psi-zeta} and \cref{est-r-3} that
	\begin{align*}
		\pm \norm{\nabla^2 \psi^\md}_{L^2(\Omega)} & \lesssim \pm \norm{\nabla^2 \zeta^\md}_{L^2(\Omega)} + \norm{\phi^\md}_{H^2(\Omega)} + \norm{\nabla^2 \mathbf{n}_3}_{L^2(\Omega)} \\
		& \lesssim \pm \norm{\nabla^2 \zeta^\md}_{L^2(\Omega)} + \nu \norm{\nabla^2 \phi^\md, \nabla^2 \zeta^\md}_{L^2(\Omega)} + \e \nu e^{-\alpha t},
	\end{align*}
	which yields \cref{rel-3}.	
	Using the identity that 
	$$\psi = \rhob \zeta + \uv^\vs \phi + \phi \zeta + \big[ (\rhot-\rhob) \zeta + (\uvt-\uv^\vs) \phi \big], $$
	one can get that
	\begin{equation}\label{app-1}
		\begin{aligned}
			\pm \norm{\nabla^3 \psi}_{L^2(\Omega)} & \lesssim \pm \norm{\nabla^3 \zeta}_{L^2(\Omega)} + \sum_{j=0}^{1} (t+\Lambda)^{\frac{j-3}{2}} \norm{\p_3^j \phi^\od}_{L^2(\R)} + \Lambda^{-1} \norm{\phi^\md}_{H^1(\Omega)} \\
			& \quad + \Lambda^{-\frac{1}{2}} \norm{\nabla^2 \phi}_{L^2(\Omega)} + \norm{\nabla^3 \phi}_{L^2(\Omega)} + \nu \norm{\nabla^3 \zeta}_{L^2(\Omega)} \\
			& \quad + \norm{\nabla\phi}_{L^4(\Omega)} \norm{\nabla^2 \zeta}_{L^4(\Omega)} + \e e^{-\alpha t} \norm{\phi,\zeta}_{H^3(\Omega)}. 
		\end{aligned}
	\end{equation}
	It follows from \cref{tool3}\textsubscript{1} that $$ \norm{\nabla\phi}_{L^4(\Omega)}  \lesssim \nu^{\frac{1}{2}} \norm{\nabla^2 \phi}_{L^2(\Omega)}^{\frac{1}{2}}, \qquad \norm{\nabla^2 \zeta}_{L^4(\Omega)} \lesssim \nu^{\frac{1}{2}} \norm{\nabla^3 \zeta}_{L^2(\Omega)}^{\frac{1}{2}}. $$
	Applying these two inequalities to \cref{app-1} and using \cref{poincare}, one can obtain \cref{rel-4}.
	
\end{proof}

\end{appendices}

\vspace{.2cm}

\textbf{Acknowledgments.} F. Huang is partially supported by the National Natural Science Foundation of China No. 12288201 and the National Key R\&D Program of China No. 2021YFA1000800.
Z. Xin is supported in part by the Zheng Ge Ru Foundation, Hong Kong RGC Earmarked
Research Grants CUHK-14301421, CUHK-14300917, CUHK-14302819 and CUHK-14300819, the
key project of NSFC (Grant No. 12131010) and by Guangdong Basic and Applied Basic Research
Foundation 2020B1515310002. 
Q. Yuan is partially supported by the National Natural Science Foundation of China 12201614, Youth Innovation Promotion Association of CAS 2022003 and CAS Project for Young Scientists in Basic Research YSBR-031.

\vspace{.3cm}

%%%%%%%%%%%%%%%%%%%%%%%%%%%%%%%%%%%%%%%%%%%
%%%%% Bib
%%%%%%%%%%%%%%%%%%%%%%%%%%%%%%%%%%%%%%%%%%%

\providecommand{\bysame}{\leavevmode\hbox to3em{\hrulefill}\thinspace}
\providecommand{\MR}{\relax\ifhmode\unskip\space\fi MR }
% \MRhref is called by the amsart/book/proc definition of \MR.
\providecommand{\MRhref}[2]{%
	\href{http://www.ams.org/mathscinet-getitem?mr=#1}{#2}
}
\providecommand{\href}[2]{#2}

%\bibliographystyle{amsplain}
%\bibliography{bibli}

\end{sloppypar}

%\vspace{1.5cm}

\end{document}